\documentclass[11pt]{amsart}
\usepackage{xypic}
\usepackage{amssymb}\pagestyle{headings}
\oddsidemargin=0in \evensidemargin=0in \textwidth=6.7in
\textheight=9.0in
\parindent=0.0in

\sloppy 
\topmargin 0.0cm

\newcommand\Rl{{\mathbb{R}}}

\catcode`@=11

\let\latex@newcommand\newcommand
\let\latex@renewcommand\renewcommand

\def\if@undefined#1{\ifx#1\un@@@defined@@@}
\def\newcommand#1{\if@undefined{#1}
  \let\next@=\latex@newcommand \else
  \let\next@=\latex@renewcommand \fi
  \next@{#1}}

\catcode`@=12

\begin{document}
\theoremstyle{plain}
\newtheorem{thm}{Theorem}[section]
\newtheorem*{thm*}{Theorem}
\newtheorem{prop}[thm]{Proposition}
\newtheorem*{prop*}{Proposition}
\newtheorem{lemma}[thm]{Lemma}
\newtheorem{cor}[thm]{Corollary}
\newtheorem*{conj*}{Conjecture}
\newtheorem*{cor*}{Corollary}
\newtheorem{defn}[thm]{Definition}
\newtheorem{cond}{Condition}
\theoremstyle{definition}
\newtheorem*{defn*}{Definition}
\newtheorem{rems}[thm]{Remarks}
\newtheorem*{rems*}{Remarks}
\newtheorem*{proof*}{Proof}
\newtheorem*{not*}{Notation}
\newcommand{\npartial}{\slash\!\!\!\partial}
\newcommand{\Heis}{\operatorname{Heis}}
\newcommand{\Solv}{\operatorname{Solv}}
\newcommand{\Spin}{\operatorname{Spin}}
\newcommand{\SO}{\operatorname{SO}}
\newcommand{\ind}{\operatorname{ind}}
\newcommand{\Index}{\operatorname{index}}
\newcommand{\ch}{\operatorname{ch}}
\newcommand{\rank}{\operatorname{rank}}
\newcommand{\abs}[1]{\lvert#1\rvert}
 \newcommand{\A}{{\mathcal A}}
 \newcommand{\E}{{\mathcal E}}
        \newcommand{\D}{{\mathcal D}}\newcommand{\HH}{{\mathcal H}}
        \newcommand{\LL}{{\mathcal L}}
        \newcommand{\B}{{\mathcal B}}
         \newcommand{\S}{{\mathcal S}}
 \newcommand{\F}{{\mathcal F}}

        \newcommand{\K}{{\mathcal K}}
\newcommand{\oo}{{\mathcal O}}
         \newcommand{\PP}{{\mathcal P}}
        \newcommand{\s}{\sigma}
\newcommand{\al}{\alpha}
        \newcommand{\coker}{{\mbox coker}}
        \newcommand{\p}{\partial}
        \newcommand{\dd}{|\D|}
        \newcommand{\n}{\parallel}
\newcommand{\bma}{\left(\begin{array}{cc}}
\newcommand{\ema}{\end{array}\right)}
\newcommand{\bca}{\left(\begin{array}{c}}
\newcommand{\eca}{\end{array}\right)}
\def\clsp{\overline{\operatorname{span}}}
\def\T{\mathbb T}
\def\Aut{\operatorname{Aut}}

\newcommand{\sr}{\stackrel}
\newcommand{\da}{\downarrow}
\newcommand{\tD}{\tilde{\D}}
        \newcommand{\Q}{\mathbb Q}
        \newcommand{\R}{\mathbb R}
        \newcommand{\C}{\mathbb C}
        \newcommand{\h}{\mathbf H}
\newcommand{\Z}{\mathbb Z}
\newcommand{\N}{\mathbb N}
\newcommand{\tto}{\longrightarrow}
\newcommand{\ben}{\begin{displaymath}}
        \newcommand{\een}{\end{displaymath}}
\newcommand{\be}{\begin{equation}}
\newcommand{\ee}{\end{equation}}

        \newcommand{\bean}{\begin{eqnarray*}}
        \newcommand{\eean}{\end{eqnarray*}}
\newcommand{\nno}{\nonumber\\}
\newcommand{\bea}{\begin{eqnarray}}
        \newcommand{\eea}{\end{eqnarray}}

\def\cross#1{\rlap{\hskip#1pt\hbox{$-$}}}
        \def\intcross{\cross{0.3}\int}
        \def\bigintcross{\cross{2.3}\int}

\newcommand{\supp}[1]{\operatorname{#1}}
\newcommand{\norm}[1]{\parallel\, #1\, \parallel}
\newcommand{\ip}[2]{\langle #1,#2\rangle}
\setlength{\parskip}{.3cm}
\newcommand{\nc}{\newcommand}
\nc{\nt}{\newtheorem} \nc{\gf}[2]{\genfrac{}{}{0pt}{}{#1}{#2}}
\nc{\mb}[1]{{\mbox{$ #1 $}}} \nc{\real}{{\mathbb R}}
\nc{\comp}{{\mathbb C}} \nc{\ints}{{\mathbb Z}}
\nc{\Ltoo}{\mb{L^2({\mathbf H})}} \nc{\rtoo}{\mb{{\mathbf R}^2}}
\nc{\slr}{{\mathbf {SL}}(2,\real)} \nc{\slz}{{\mathbf
{SL}}(2,\ints)} \nc{\su}{{\mathbf {SU}}(1,1)} \nc{\so}{{\mathbf
{SO}}} \nc{\hyp}{{\mathbb H}} \nc{\disc}{{\mathbf D}}
\nc{\torus}{{\mathbb T}}
\newcommand{\tk}{\widetilde{K}}
\newcommand{\boe}{{\bf e}}\newcommand{\bt}{{\bf t}}
\newcommand{\vth}{\vartheta}
\newcommand{\CGh}{\widetilde{\CG}}
\newcommand{\db}{\overline{\partial}}
\newcommand{\tE}{\widetilde{E}}
\newcommand{\tr}{\mbox{tr}}
\newcommand{\ta}{\widetilde{\alpha}}
\newcommand{\tb}{\widetilde{\beta}}
\newcommand{\txi}{\widetilde{\xi}}
\newcommand{\hV}{\hat{V}}
\newcommand{\IC}{\mathbf{C}}
\newcommand{\IZ}{\mathbf{Z}}
\newcommand{\IP}{\mathbf{P}}
\newcommand{\IR}{\mathbf{R}}
\newcommand{\IH}{\mathbf{H}}
\newcommand{\IG}{\mathbf{G}}
\newcommand{\CC}{{\mathcal C}}
\newcommand{\CS}{{\mathcal S}}
\newcommand{\CG}{{\mathcal G}}
\newcommand{\CL}{{\mathcal L}}
\newcommand{\CO}{{\mathcal O}}
\nc{\ca}{{\mathcal A}} \nc{\cag}{{{\mathcal A}^\Gamma}}
\nc{\cg}{{\mathcal G}} \nc{\chh}{{\mathcal H}} \nc{\ck}{{\mathcal
B}} \nc{\cl}{{\mathcal L}} \nc{\cm}{{\mathcal M}}
\nc{\cn}{{\mathcal N}} \nc{\cs}{{\mathcal S}} \nc{\cz}{{\mathcal
Z}} \nc{\cM}{{\mathcal M}}
\nc{\sind}{\sigma{\rm -ind}}
\newcommand{\la}{\langle}
\newcommand{\ra}{\rangle}

\renewcommand{\labelitemi}{{}}

\def\title#1{\begin{center}\bf\large #1\end{center}\vskip 0.3 in}
\def\author#1{\begin{center}#1\end{center}}

\title{Families of Type {\rm III KMS} States on a Class of $C^*$-Algebras
containing $O_n$ and $\mathcal{Q}_\N$}

\centerline {A.L. Carey$^a$, J. Phillips$^{b,c}$, 
I.F. Putnam$^{b}$,
A. Rennie$^{a}$}
\vspace*{0.1in}

\centerline {$^a$ Mathematical Sciences Institute, Australian National 
University,
Canberra, ACT, AUSTRALIA}

\centerline{ $^b$ Department of Mathematics and Statistics,
University of Victoria,
Victoria, BC,  CANADA}

\centerline{ $^c$ Corresponding Author. email: johnphil@uvic.ca;
telephone: 250-721-7450; fax: 250-721-8962}
\email{ acarey@maths.anu.edu.au, phillips@math.uvic.ca, putnam@math.uvic.ca,
        rennie@maths.anu.edu.au}

\centerline{\bf Abstract}
We construct a family of purely infinite $C^*$-algebras, $\mathcal{Q}^\lambda$ 
for $\lambda\in (0,1)$ 
that are classified by their $K$-groups.
There is an action of the circle 
$\T$ with a unique ${\rm KMS}$ state $\psi$ on each $\mathcal{Q}^\lambda.$
For $\lambda=1/n,$ $\mathcal{Q}^{1/n}\cong O_n$, with its 
usual $\T$ action and ${\rm KMS}$ state. 
For $\lambda=p/q,$  
rational in lowest terms, $\mathcal{Q}^\lambda\cong O_n$ ($n=q-p+1$) with  
UHF fixed point algebra of type $(pq)^\infty.$ For any $n>0,$ 
$\mathcal{Q}^\lambda\cong O_n$ for infinitely many $\lambda$ 
with distinct KMS 
states
and UHF fixed-point algebras. For any $\lambda\in (0,1),$ 
$\mathcal{Q}^\lambda\neq O_\infty.$ For $\lambda$ irrational
the fixed point algebras, are NOT AF and the $\mathcal{Q}^\lambda$ are usually 
NOT Cuntz algebras. For $\lambda$ transcendental, 
$K_1\cong K_0\cong\Z^\infty$, so that 
$\mathcal{Q}^\lambda$ is Cuntz' $\mathcal Q_{\N}$, \cite{Cu1}. If 
$\lambda^{\pm 1}$ are both algebraic integers, 
the {\bf only} $O_n$ which appear satisfy $n\equiv 3(mod\;4).$  
For each $\lambda$, 
the representation of $\mathcal{Q}^\lambda$ defined by the KMS state $\psi$ 
generates a type 
${\rm III}_\lambda$ factor. These algebras fit into the framework of modular 
index (twisted cyclic) theory of \cite{CPR2,CRT} and \cite{CNNR}.

\vspace{.05in}
{\bf Keywords:} KMS state, $III_\lambda$ factor, modular index,
twisted cyclic theory, K-Theory. 

\vspace{.05in}
AMS Classification codes: 46L80, 58J22, 58J30.
\section{Introduction}

In this paper we introduce some new examples of KMS states on a large
class of purely infinite $C^*$-algebras that were
motivated by the `modular index theory' of  \cite{CPR2,CNNR}. We were aiming to 
find examples of algebras that were not Cuntz-Krieger algebras (or the CAR 
algebra)
and were not previously known
in order to explore the possibilities opened by \cite{CNNR}. These algebras,
denoted by $\mathcal{Q}^\lambda$ for $0<\lambda<1$, 
are not constructed as graph algebras, but as
``corner algebras'' of certain crossed product $C^*$-algebras.
The $\mathcal{Q}^\lambda$ have similar structural properties to the Cuntz algebras,  
however
there are important new features, such as

1) when $\lambda=p/q$ is rational in lowest terms, then
$\mathcal{Q}^\lambda\cong O_{q-p+1}$ as mentioned in the Abstract,

2) when $\lambda$ is algebraic, the $K$-groups depend on the minimal polynomial
(and its coefficients) 
of 
$\lambda$,

3) when $\lambda$ is transcendental, $\mathcal{Q}^\lambda\cong \mathcal{Q}_\N$, 
Cuntz' 
algebra, \cite{Cu1}. 

We prove in Section 3 that the $\mathcal{Q}^\lambda$ are purely infinite, simple, 
separable, nuclear $C^*$-algebras, so there is no nontrivial trace
on them. Also in Section 3 we determine in many cases the $K$-groups of these 
algebras and use classification
theory to identify them when these algebras have the same $K$-groups as others 
found previously (these facts are summarised in the Abstract). 
As each $\mathcal{Q}^\lambda$ comes equipped with a gauge action of the circle,
our results thus give an uncountable family of distinct circle actions on
$\mathcal Q_{\N}$, each with its own unique KMS state. Indeed, for all 
$0<\lambda<1$,
we find a unique
KMS state, \cite{BR2}, for this gauge action, and we prove in Section 4 that the 
GNS representation of $\mathcal {Q}^\lambda$ associated to our KMS state generates 
a type ${\rm III}_\lambda$ von Neumann algebra.
The result of \cite{CPR2} that motivated this paper was the construction of 
a `modular spectral triple' with which one may
compute an index pairing using the KMS state. 
In \cite{CNNR} it was shown 
how modular spectral triples arise naturally for KMS states of circle actions 
and lead to
`twisted residue cocycles'
using a variation on the semifinite residue cocycle
of \cite{CPRS2}. 
It is well known that such twisted cocycles can not pair with ordinary $K_1$. 
In \cite{CPR2, CRT}  a substitute was introduced which is called `modular $K_1$'.
The correct definition of modular $K_1$ was found in \cite{CNNR}, and there is
a general spectral flow formula which defines
the pairing of modular $K_1$
with our `twisted residue cocycle'.

There is a strong analogy with the local index formula
of noncommutative geometry 
in the $\LL^{1,\infty}$-summable
 case, however, there are important
differences: the usual residue cocycle is replaced
by a twisted residue cocycle and the Dixmier trace 
arising in the standard
situation is replaced by a KMS-Dixmier functional. 
The common ground with \cite{CPRS2} stems from the
use of the spectral flow formula of \cite{CP2} to derive
the twisted residue cocycle and this has the corollary that we have a homotopy 
invariant.
To illustrate the theory for these examples we compute, for particular modular 
unitaries
in matrix algebras over the algebras $\mathcal{Q}^\lambda$, the precise 
numerical values arising from the general formalism. 
\section{The algebras $\mathcal{Q}^\lambda$ for $0<\lambda<1$.}
\label{lambda-alg}
\subsection{The  $C^*$-algebras $C^*(\Gamma_\lambda) = 
C(\hat{\Gamma}_\lambda)$ and their $K$-theory}
We will construct our algebras $\mathcal{Q}^\lambda$ as ``corner'' algebras in
certain crossed product $C^*$-algebras but first we need some preliminaries.
For $0<\lambda<1$, let $\Gamma_\lambda$ be the countable additive
abelian subgroup of $\R$ defined by:
$$
\Gamma_\lambda=\left\{\left.\sum_{k=-N}^{k=N} n_k\lambda^{k}\;
\right|\; N\geq 0\;\;{\rm and}\;\; n_k\in\Z
\right\}.
$$
Loosely speaking, $\Gamma_\lambda$ consists of Laurent
polynomials in $\lambda$ and $\lambda^{-1}$ with integer coefficients.
It is not only a dense subgroup of $\R$, but is clearly a unital 
{\bf subring} of $\R.$

\begin{prop}\label{Zgroups}
Let $0<\lambda<1.$\\
{\rm (1)}\hspace{.1in} If
$\lambda= p/q$ where $0<p<q$ are integers in lowest terms,
then $\Gamma_\lambda= \Z[1/n],$ where $n=pq.$\\
{\rm (2)}\hspace{.1in} If $\lambda$ and $\lambda^{-1}$ are both algebraic 
integers, then 
$\Gamma_\lambda=\Z+\Z\lambda+\cdots+\Z\lambda^{d-1}$ is an internal direct sum
where $d\geq 2$ is the
degree of the minimal (monic) polynomial in $\Z[x]$ 
satisfied by $\lambda.$\\
{\rm (3)}\hspace{.1in} If $\lambda$ is transcendental then,
$\Gamma_\lambda=\bigoplus_{k\in\Z} \Z\lambda^k$ is an internal direct sum.\\
{\rm (4)}\hspace{.1in} If $\lambda=1/\sqrt{n}$ with $n\geq 2$ a 
square-free positive integer,
then $\Gamma_\lambda=\Z[1/n]+\Z[1/n]\cdot\sqrt{n}$ is an internal direct sum.\\
{\rm (5)}\hspace{.1in} In general, if $\lambda$ is algebraic with minimal
polynomial, $n\lambda^d+\cdots+m=0$ over $\Z,$ then
$$\Z\oplus\Z\lambda\oplus\cdots\oplus\Z\lambda^{d-1}\subseteq\Gamma_\lambda
\subseteq\Z[\frac{1}{mn}]
\oplus\Z[\frac{1}{mn}]\lambda\oplus\cdots\oplus\Z[\frac{1}{mn}]\lambda^{d-1}.$$
Hence, $\rank(\Gamma_\lambda):=\dim_{\Q}(\Gamma_\lambda\otimes_{\Z}\Q)=
d.$
\end{prop}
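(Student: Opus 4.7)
The plan is to observe that $\Gamma_\lambda$ is simply the unital subring $\Z[\lambda,\lambda^{-1}]$ of $\R$ generated by $\lambda$ and $\lambda^{-1}$, so each assertion amounts to rewriting this subring concretely. The uniform tool is the minimal polynomial of $\lambda$ (when one exists): it lets us reduce $\lambda^d,\lambda^{d+1},\ldots$ to $\Z$-combinations of $1,\lambda,\ldots,\lambda^{d-1}$ after inverting the leading coefficient, and solving the same relation for $\lambda^{-1}$ does the same for negative powers after inverting the constant term. Directness of each asserted sum comes from $\Q$-linear independence of the chosen basis, which is exactly the defining property of the minimal polynomial (or of transcendence).

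For (1), $\Gamma_\lambda\subseteq\Z[1/n]$ with $n=pq$ is immediate since $\lambda^k=p^k/q^k$ lies in $\Z[1/n]$ for every $k\in\Z$; for the reverse, Bezout supplies integers $a,b$ with $ap^2+bq^2=1$ (using $\gcd(p^2,q^2)=1$), and then $a\lambda+b\lambda^{-1}=(ap^2+bq^2)/(pq)=1/n$. Case (3) follows immediately from the definition, together with the observation that $\{\lambda^k\}_{k\in\Z}$ is $\Q$-linearly independent when $\lambda$ is transcendental (clear denominators to obtain a polynomial relation). Case (2) is the key algebraic input: when $\lambda^{-1}$ is also an algebraic integer, the monic minimal polynomial $f(x)=x^d+a_{d-1}x^{d-1}+\cdots+a_0$ of $\lambda$ must satisfy $a_0=\pm 1$, since $a_0^{-1}x^df(1/x)$ is the minimal polynomial of $\lambda^{-1}$ and hence must have integer coefficients. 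Then $\lambda^{-1}=\mp(\lambda^{d-1}+a_{d-1}\lambda^{d-2}+\cdots+a_1)\in\Z[\lambda]$, and induction handles all $\lambda^{-k}$.

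For (5), writing the minimal polynomial as $n\lambda^d+a_{d-1}\lambda^{d-1}+\cdots+a_1\lambda+m=0$, the identities $\lambda^d=-n^{-1}(a_{d-1}\lambda^{d-1}+\cdots+m)$ and $\lambda^{-1}=-m^{-1}(n\lambda^{d-1}+a_{d-1}\lambda^{d-2}+\cdots+a_1)$, combined with an obvious induction, give the upper containment $\Gamma_\lambda\subseteq\bigoplus_{i=0}^{d-1}\Z[1/(mn)]\lambda^i$; the lower is trivial, and sandwiching after $-\otimes_\Z\Q$ (each outer term has $\Q$-dimension $d$ by the minimality of the polynomial) yields $\rank(\Gamma_\lambda)=d$. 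Case (4) is the instance $\lambda=1/\sqrt{n}$ with minimal polynomial $nx^2-1$, in which the upper bound in (5) is sharp: $\lambda^2=1/n$ yields $\Z[1/n]\subseteq\Gamma_\lambda$ and $\lambda^{-1}=\sqrt{n}$ yields $\Z[1/n]\sqrt{n}\subseteq\Gamma_\lambda$, with directness from the irrationality of $\sqrt{n}$ for $n$ squarefree. The main obstacle is the recognition in case (2) that requiring $\lambda^{-1}$ to also be an algebraic integer forces the constant term of the minimal polynomial to be $\pm 1$, which is precisely what keeps the expression for $\lambda^{-1}$ inside $\Z[\lambda]$ rather than in a strict localisation; everything else reduces to manipulations of the minimal polynomial, a Bezout identity in (1), and elementary dimension bookkeeping.
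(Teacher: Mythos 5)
Your proposal is correct and follows essentially the same route as the paper: identify $\Gamma_\lambda$ as the ring $\Z[\lambda,\lambda^{-1}]$, use a Bezout identity for case (1), observe that the constant term of the minimal polynomial must be $\pm1$ in case (2) so that $\lambda^{-1}\in\Z[\lambda]$, and reduce powers of $\lambda^{\pm1}$ via the minimal polynomial (after inverting $mn$) for case (5). The only differences are cosmetic --- you apply Bezout to $p^2,q^2$ to produce $1/(pq)$ directly where the paper produces $1/p$ and $1/q$ separately, and you write out the computations for cases (4) and (5) that the paper leaves to the reader.
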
 

\begin{proof}
In case {\rm (1)}, since $gcd(p,q)=1,$ there exist $a,b\in\Z$
so that $1=ap + bq.$ Therefore,
$\frac{1}{q} = \frac{ap+bq}{q}= a\lambda +b\in
\Gamma_\lambda;$ and similarly, $\frac{1}{p}
\in\Gamma_\lambda.$
Since, $\Gamma_\lambda$ is a commutative ring, for any $k,m\in\Z$ 
with $k\geq 1$ we have:
$\frac{m}{n^k}=\frac{m}{(pq)^k}$ is in $\Gamma_\lambda.$
That is, $\Z[1/n]\subseteq\Gamma_\lambda.$ On the other hand, for
$k\geq 1$ we have 
$$
\lambda^k=\frac{p^k}{q^k}=p^{2k}\frac{1}{(pq)^k}=p^{2k}\frac{1}{n^k}
\in\Z[1/n]\;\;{\rm and}\;\;\lambda^{-k}=\frac{q^k}{p^k}=q^{2k}\frac{1}{(pq)^k}
=q^{2k}\frac{1}{n^k}\in\Z[1/n].
$$
That is, $\Z[1/n]=\Gamma_\lambda.$ 

In case {\rm (2)}, it is not hard to see the minimal
polynomial of $\lambda$ in $\Z[x]$ is not only monic, but also has
constant term $=\pm 1;$
say, $p(\lambda)= \lambda^d + a\lambda^{d-1} + \cdots \pm 1=0.$ Clearly,
$\lambda\in \Z+\Z\lambda+\cdots +\Z\lambda^{d-1}.$ Since 
$\lambda^{-1} p(\lambda) =0,$ we also have
$\lambda^{-1}\in \Z+\Z\lambda+\cdots +\Z\lambda^{d-1}.$ By an easy induction,
we have $\lambda^k\in \Z+\Z\lambda+\cdots +\Z\lambda^{d-1},$ for all
$k\in \Z.$ Hence, 
$\Gamma_\lambda=\Z+\Z\lambda+\cdots+\Z\lambda^{d-1}.$ The sum is 
direct by the minimality of the degree of the minimal polynomial.

In case {\rm (3)} the sum is direct because if $\lambda$
satisfied a Laurent polynomial over $\Z$, then by multipling by a high power
of $\lambda$ it would also satisfy a genuine polynomial over $\Z.$

Case {\rm (4)} is an easy calculation which we leave to the 
reader. Case {\rm (5)} is proved by similar methods used in case {\rm (2)}.
Again, the sum $\Z[\frac{1}{mn}]
+\Z[\frac{1}{mn}]\lambda +\cdots +\Z[\frac{1}{mn}]\lambda^{d-1}$
is direct by the minimality of the degree of the minimal polynomial.
\end{proof}

\begin{prop}\label{hatKtheory}
Let $0<\lambda<1.$\\
{\rm (1)}\hspace{.1in} If $\lambda =p/q$ is rational in lowest terms so that
$\Gamma_\lambda= \Z[1/n],$ where $n=pq,$ then
$$
K_0(C(\hat{\Gamma}_\lambda))=\Z[1_{\hat{\Gamma}_\lambda)}]\;\;\;{\rm and}
\;\;\; 
K_1(C(\hat{\Gamma}_\lambda)) = \Z[1/n].
$$
{\rm (2)}\hspace{.1in} If $\lambda$ and $\lambda^{-1}$ are both algebraic 
integers, so that 
$\Gamma_\lambda=\Z+\Z\lambda+\cdots+\Z\lambda^{d-1}$ is an internal direct sum
as above, then 
$$
K_0(C(\hat{\Gamma}_\lambda))=\bigwedge^{even}(\Gamma_\lambda)
={\bigoplus_{k=0, k\; even}^d} \bigwedge^k(\Gamma_\lambda)\;\;\;
{\rm and}\;\;\; K_1(C(\hat{\Gamma}_\lambda))=\bigwedge^{odd}(\Gamma_\lambda)
=\bigoplus_{k=1, k\; odd}^d \bigwedge^k(\Gamma_\lambda).
$$
{\rm (3)}\hspace{.1in} If $\lambda$ is transcendental then,
$$
K_0(C(\hat{\Gamma}_\lambda))=\bigwedge^{even}(\Gamma_\lambda)
={\bigoplus_{k=0, k\; even}^\infty} \bigwedge^k(\Gamma_\lambda)\;\;\;
{\rm and}\;\;\; K_1(C(\hat{\Gamma}_\lambda))=\bigwedge^{odd}(\Gamma_\lambda)
=\bigoplus_{k=1, k\; odd}^\infty \bigwedge^k(\Gamma_\lambda).
$$
{\rm (4)}\hspace{.1in} If $\lambda=1/\sqrt{n}$ with $n\geq 2$ a 
square-free positive integer,
then 
$$
K_0(C(\hat{\Gamma}_\lambda))\cong \Z\oplus\Z[1/n]\;\;\;{\rm and}\;\;\;
K_1(C(\hat{\Gamma}_\lambda))\cong \Z[1/n]\oplus\Z[1/n]    
$$
{\rm (5)}\hspace{.1in} In general, if $\lambda$ is algebraic with
$n\lambda^d+\cdots+m=0$ over $\Z$ then the composition of the inclusions
$$
\Z\oplus\Z\lambda\oplus\cdots\oplus\Z\lambda^{d-1}
\subseteq\Gamma_\lambda\subseteq\Z[\frac{1}{mn}]
\oplus\Z[\frac{1}{mn}]\lambda\oplus\cdots\oplus\Z[\frac{1}{mn}]\lambda^{d-1}
$$
induces an inclusion on $K$-Theory, so that both of the following maps are 
one-to-one
$$
\bigwedge^{even}(\Z^d)\cong K_0(C^*(\Z\oplus\cdots\Z\lambda^{d-1}))
\hookrightarrow 
K_0(C(\hat{\Gamma}_\lambda))\;\;{\rm and}\;\; 
\bigwedge^{odd}(\Z^d)\cong K_1(C^*(\Z\oplus\cdots\Z\lambda^{d-1}))
\hookrightarrow 
K_1(C(\hat{\Gamma}_\lambda)).
$$
\end{prop}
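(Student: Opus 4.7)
The plan is to reduce everything to Pontryagin duality: $C^*(\Gamma_\lambda) \cong C(\hat{\Gamma}_\lambda)$, so the problem becomes computing $K^*(\hat{\Gamma}_\lambda)$. Three tools drive the calculations: (i) the base case $K_*(C^*(\Z^d)) \cong K^*(\T^d) \cong \bigwedge^*\Z^d$ with the $\Z/2$-grading matching even/odd exterior powers; (ii) continuity of $K$-theory under direct limits of $C^*$-algebras; and (iii) the Künneth theorem, which applies since $C^*(A)\otimes C^*(B)\cong C^*(A\oplus B)$ and our algebras are commutative (hence nuclear and in the UCT class).

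For \textbf{case (1)}, I would write $\Z[1/n]=\varinjlim(\Z\xrightarrow{\times n}\Z\xrightarrow{\times n}\cdots)$. The dual of $\times n:\Z\to\Z$ is the covering $\T\to\T$, $z\mapsto z^n$; on $K_*(C(\T))$ this acts as the identity on $K_0=\Z$ (the class $[1]$ pulls back to $[1]$) and as multiplication by $n$ on $K_1=\Z$ (the Bott generator $[z]$ pulls back to $[z^n]=n[z]$). Passing to the limit gives $K_0=\Z$ and $K_1=\Z[1/n]$. \textbf{Case (2)} is immediate from Proposition~\ref{Zgroups}(2) and tool (i), with $\hat\Gamma_\lambda=\T^d$. \textbf{Case (3)} writes $\Gamma_\lambda=\varinjlim_N\bigoplus_{|k|\le N}\Z\lambda^k\cong\varinjlim\Z^{2N+1}$, where the connecting maps are split inclusions; continuity of $K_*$ combined with the fact that $\bigwedge^*$ commutes with direct limits of free abelian groups gives the stated formulas. \textbf{Case (4)} applies Künneth to $C^*(\Z[1/n])\otimes C^*(\Z[1/n])\cong C^*(\Z[1/n]\oplus\Z[1/n]\sqrt n)$; since $K_0=\Z$ and $K_1=\Z[1/n]$ are both torsion-free (the latter a localization, hence flat), no Tor terms appear, and the bigrading assembly yields $K_0=\Z\oplus(\Z[1/n]\otimes\Z[1/n])=\Z\oplus\Z[1/n]$ and $K_1=\Z[1/n]\oplus\Z[1/n]$.

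For \textbf{case (5)}, functoriality of $C^*$ of discrete abelian groups and of $K_*$ applied to the chain of group inclusions from Proposition~\ref{Zgroups}(5) produces a chain
\[
\bigwedge{}^{even}(\Z^d)\longrightarrow K_0(C(\hat\Gamma_\lambda))\longrightarrow \bigwedge{}^{even}\bigl(\Z[\tfrac{1}{mn}]^d\bigr),
\]
and similarly in odd degree. The outer groups are computed by cases (1)+Künneth (applied $d$-fold); the composition is induced at the level of group rings by the tensor product of the $d$ inclusions $\Z\hookrightarrow\Z[1/(mn)]$. Since $\Z[1/(mn)]$ is flat over $\Z$, these inclusions remain injective on every exterior power, so the composite on $K$-theory is injective, which forces the first arrow to be injective.

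The main obstacle is honestly tracking the identification of the connecting maps under direct limits in cases (1) and (3), and confirming that the functoriality in case (5) truly realizes the K-theory map as the exterior power of the underlying inclusion of abelian groups; once this compatibility between $C^*$-functoriality and the Künneth/exterior-algebra identification is established, injectivity follows from flatness. The remaining steps are bookkeeping with direct limits and Künneth, which I would relegate to standard references rather than grind through by hand.
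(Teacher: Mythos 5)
Your proposal is correct and follows essentially the same route as the paper: case (1) via the direct limit $\Z\xrightarrow{\times n}\Z\to\cdots$ dualized to the coverings $z\mapsto z^n$ of $\T$, cases (2)--(3) from the standard exterior-algebra description of $K^*(\T^d)$ and continuity under direct limits, and cases (4)--(5) from Schochet's K\"unneth theorem together with injectivity of $K_*(C^*(\Z))\to K_*(C^*(\Z[1/mn]))$. Your explicit appeal to flatness of $\Z[1/(mn)]$ in case (5) is a slightly cleaner justification of the injectivity that the paper leaves implicit in its invocation of the K\"unneth theorem.
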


\begin{proof}
In case (1), $\Gamma_\lambda = \underrightarrow{\lim}\,\Z$ 
where each map 
is multiplication by $n,$ so that $\hat{\Gamma}_\lambda
=\underleftarrow{\lim}\, \T.$ Since $K_0(C(\T))=\Z[1]$ 
is generated by multiples of the 
trivial rank one bundle, the maps in the direct limit
$K_0(C(\hat{\Gamma}_\lambda)=\underrightarrow{\lim}\,K_0(C(\T))$ are
the identity map in each case, so that $K_0(C(\hat{\Gamma}_\lambda))=\Z[1].$
On the other hand, $K_1(C(\T))$ is generated by the maps on $C(\T)$ 
$z\mapsto z^k$, and each map in the direct limit is the same map induced by
$z\mapsto z^n.$ Thus, $K_1(C(\hat{\Gamma}_\lambda))=\Z[1/n].$ 

Cases (2) and (3) are well-known facts about the $K$-Theory of
tori.

Case (4): first one uses item (4) of the previous Proposition, 
then the proof of case (1) above in order to apply Proposition 2.11 of 
\cite{Sc}. The proof is finished off with the easily proved observation 
that $\Z[1/n]\otimes\Z[1/n] = \Z[1/n].$

Case (5) the composed embedding is just containment:\\ 
$\Z\oplus\Z\lambda\oplus\cdots\oplus\Z\lambda^{d-1}
\subseteq\Z[\frac{1}{mn}]
\oplus\Z[\frac{1}{mn}]\lambda\oplus\cdots\oplus\Z[\frac{1}{mn}]\lambda^{d-1}.$
Since we know that $K_*(C^*(\Z))\to K_*(C^*(\Z[1/mn]))$ is one-to-one
(even an isomorphism after tensoring with $\Q$),
an application of C. Schochet's K\"{u}nneth Theorem, \cite{Sc} shows that 
the induced map on $K$-Theory:
$$K_*(C^*(\Z\oplus\Z\lambda\oplus\cdots\oplus\Z\lambda^{d-1}))\longrightarrow
K_*(C^*(\Z[\frac{1}{mn}]\oplus\Z[\frac{1}{mn}]\lambda\oplus\cdots\oplus
\Z[\frac{1}{mn}]\lambda^{d-1}))$$
is one-to-one (even an isomorphism after tensoring with $\Q$). 
\end{proof}

\begin{cor}
If $\lambda$ is algebraic with minimal polynomial of degree $d$ so that 
${\rm rank}(\Gamma_\lambda) = d$ then
$${\rm rank}(K_0(C(\hat{\Gamma}_\lambda)))={\rm rank}(\bigwedge^{even}(\Z^d))
=2^{d-1}={\rm rank}(\bigwedge^{odd}(\Z^d))={\rm rank}
(K_1(C(\hat{\Gamma}_\lambda))).$$
\end{cor}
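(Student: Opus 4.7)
The strategy I would take is to exploit the sandwich of abelian groups
$$\Z\oplus\Z\lambda\oplus\cdots\oplus\Z\lambda^{d-1} \;\subseteq\; \Gamma_\lambda \;\subseteq\; \Z[\tfrac{1}{mn}]\oplus\Z[\tfrac{1}{mn}]\lambda\oplus\cdots\oplus\Z[\tfrac{1}{mn}]\lambda^{d-1}$$
from Proposition \ref{Zgroups}(5), together with the observation inside the proof of Proposition \ref{hatKtheory}(5) that the induced composite on $K$-theory becomes a $\Q$-linear isomorphism after applying $-\otimes_\Z\Q$. Since this composite factors through $K_*(C(\hat\Gamma_\lambda))\otimes\Q$, the middle group must have the same $\Q$-dimension as each of the outer two groups.

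Next I would record the classical $K$-theory of the $d$-torus, obtained either from the Künneth theorem or from iterated Bott periodicity:
$$K_0(C^*(\Z^d))\cong\bigwedge^{even}(\Z^d),\qquad K_1(C^*(\Z^d))\cong\bigwedge^{odd}(\Z^d).$$
The ranks of these exterior powers are computed by the elementary binomial identities
$$\sum_{k\text{ even}}\binom{d}{k}\;=\;2^{d-1}\;=\;\sum_{k\text{ odd}}\binom{d}{k},$$
which follow at once by evaluating $(1+1)^d$ and $(1-1)^d$. Assembling this with the previous paragraph yields the common value $2^{d-1}$ for all four ranks in the statement.

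The main (and only) point requiring care is the squeeze argument: one needs that both inclusions in the sandwich induce injective maps on $K$-theory, and that their composite is a rational isomorphism, so that $K_*(C(\hat\Gamma_\lambda))\otimes\Q$ is forced to sit inside a fixed $\Q$-vector space of dimension $2^{d-1}$ while simultaneously containing a subspace of that same dimension. There is no genuine obstacle here beyond this bookkeeping, because the nontrivial input (the Schochet Künneth calculation and the rational invertibility of multiplication by $mn$ on $K$-theory) has already been established in the proof of Proposition \ref{hatKtheory}(5).
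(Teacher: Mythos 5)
Your identification of the sandwich from Proposition \ref{Zgroups}(5) and the binomial count $\sum_{k\ \mathrm{even}}\binom{d}{k}=2^{d-1}=\sum_{k\ \mathrm{odd}}\binom{d}{k}$ is exactly the right raw material, but the squeeze step as written does not close. From a composite $A\to B\to C$ of maps of $\Q$-vector spaces being an isomorphism you may conclude that $A\to B$ is injective and $B\to C$ is surjective, hence $\dim B\geq\dim A=\dim C$; you get no upper bound on $\dim B$ at all (consider $\Q\hookrightarrow V\twoheadrightarrow\Q$ with $V$ infinite dimensional, composing to the identity). So the sentence ``the middle group must have the same $\Q$-dimension as each of the outer two groups'' is a non sequitur unless you separately prove that the second inclusion $\Gamma_\lambda\subseteq\Z[\frac{1}{mn}]\oplus\cdots\oplus\Z[\frac{1}{mn}]\lambda^{d-1}$ induces an injection on rational $K$-theory. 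You do flag that ``both inclusions'' must induce injections, but Proposition \ref{hatKtheory}(5) only establishes injectivity of the \emph{composite}; injectivity of the second leg on its own is not ``already established,'' and it is in fact the whole difficulty, since $C^*(\Gamma_\lambda)$ is an infinite direct limit and a priori its $K$-theory could be much larger than $\Q^{2^{d-1}}$ in each degree.

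The paper obtains the missing upper bound by a different device: it exhausts $\Gamma_\lambda$ by the finitely generated, torsion-free (hence free abelian, $\cong\Z^d$) subgroups $\Gamma_N=\Z\lambda^{-N}+\cdots+\Z\lambda^{N}$, so that each $K_i(C^*(\Gamma_N))\cong K_i(C(\T^d))\cong\Z^{2^{d-1}}$ exactly. Injectivity of the composite then forces each map $K_*(C^*(\Z\oplus\cdots\oplus\Z\lambda^{d-1}))\otimes_{\Z}\Q\to K_*(C^*(\Gamma_N))\otimes_{\Z}\Q$ to be an isomorphism (an injection between $\Q$-spaces of the same finite dimension), whence the connecting maps $K_*(C^*(\Gamma_N))\otimes_{\Z}\Q\to K_*(C^*(\Gamma_{N+1}))\otimes_{\Z}\Q$ are isomorphisms, and continuity of $K$-theory under direct limits gives $K_i(C^*(\Gamma_\lambda))\otimes_{\Z}\Q\cong\Q^{2^{d-1}}$. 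To repair your argument you must either reproduce this exhaustion, or give an independent proof that $K_*(C^*(\Gamma_\lambda))\otimes_{\Z}\Q\to K_*(C^*(\Z[\frac{1}{mn}]^d))\otimes_{\Z}\Q$ is injective; as it stands that step is assumed rather than proved.
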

\begin{proof}
For each $N\geq d-1,$ let $\Gamma_N=\Z\lambda^{-N}+\cdots+\Z\lambda^{N}
\subseteq
\Gamma_\lambda.$ Then each $\Gamma_N$ is a finitely generated torsion free 
(and hence free abelian) subgroup of $\Gamma_\lambda.$ Moreover,
$$\Z\oplus\Z\lambda\oplus\cdots\oplus\Z\lambda^{d-1}
\subseteq\Gamma_N\subseteq\Gamma_\lambda\subseteq\Z[\frac{1}{mn}]
\oplus\Z[\frac{1}{mn}]\lambda\oplus\cdots\oplus\Z[\frac{1}{mn}]\lambda^{d-1},$$ 
so that by tensoring with $\Q$ the induced inclusions are all equalities,
and hence all are $\Q$-vector spaces of dimension $d.$ 
Since $\Gamma_N$ is free abelian, $\Gamma_N\cong\Z^d.$
 Now,
$$K_0(C^*(\Gamma_N))\cong K_0(C(\T^d))\cong\bigwedge^{even}(\Z^d)
\cong\Z^{2^{d-1}}
\;\;\;{\rm and}\;\;\;K_1(C^*(\Gamma_N))\cong K_1(C(\T^d))
\cong\bigwedge^{odd}(\Z^d)\cong
\Z^{2^{d-1}}.$$ So, each $K_i(C^*(\Gamma_N))\otimes_{\Z}\Q$ is a 
$\Q$-vector space of dimension $2^{d-1}$ and the map: 
$$K_*(C^*(\Z\oplus\Z\lambda\oplus\cdots\oplus\Z\lambda^{d-1}))\otimes_{\Z}\Q
\longrightarrow K_*(C^*(\Gamma_N))\otimes_{\Z}\Q$$ is one-to-one and hence an
isomorphism of $\Q$-vector spaces. Since the corresponding isomorphism onto
$K_*(C^*(\Gamma_{N+1}))\otimes_{\Z}\Q$ factors through 
$K_*(C^*(\Gamma_N))\otimes_{\Z}\Q$ the maps 
$$K_*(C^*(\Gamma_N))\otimes_{\Z}\Q\to K_*(C^*(\Gamma_{N+1}))\otimes_{\Z}\Q$$
are all isomorphisms. Now, $C^*(\Gamma_\lambda)=\lim_N C^*(\Gamma_N)$ 
and so $K_i(C^*(\Gamma_\lambda))=\lim_N K_i(C^*(\Gamma_N)),$ and therefore,
$$K_i(C^*(\Gamma_\lambda))\otimes_{\Z}\Q=
\lim_N K_i(C^*(\Gamma_N))\otimes_{\Z}\Q \cong \Q^{2^{d-1}}$$
for each $i=1,2.$
\end{proof}

Now, let $G_\lambda\supset G^0_\lambda$ be the following countable discrete 
groups of matrices:
$$
G_\lambda =\left\{\left.\bma \lambda^n & a\\0 & 1 \ema\;\right|\; 
a\in \Gamma_\lambda,\;n\in\Z\right\}\;\supset\;
G^0_\lambda =\left\{\left.\bma 1 & a\\0 & 1 \ema\;\right|\; 
a\in \Gamma_\lambda\right\}.
$$
Of course, $G^0_\lambda$ is isomorphic to the additive group
$\Gamma_\lambda$, and $G_\lambda$ is semidirect product of $\Z$ acting on
$G^0_\lambda\cong\Gamma_\lambda.$
We let $G_\lambda$ act on $\R$ as an ``ax+b'' group, noting that the action 
leaves $\Gamma_\lambda$ invariant. That is,
$$
\text{for}\;\;t\in\R\;\;\text{and}\;\;g=\bma \lambda^n & a\\0 & 1 \ema
\in G_\lambda\;\;\text{define}\;\; g\cdot t:=\lambda^n  t + a.
$$
\noindent{\bf Notation.} For such an element $g\in G_\lambda$ we will use the 
notation 
$g:=[\lambda^n\, :\,a]$ in place of 
the matrix for $g$ and $|g|:=\det(g)=\lambda^n$ 
for the determinant of $g.$ Note: $G^0_\lambda=\{g\in G_\lambda\;|\;|g|=1\}
\lhd G_\lambda.$ 

We use this action on $\R$ to define the transpose action $\alpha$ of 
$G_\lambda$ on $\mathcal{L}^\infty(\R):$
$$
\alpha_g(f) (t)=f(g^{-1}t)\;\;\text{for}\;\;f\in\mathcal{L}^\infty(\R)\;\;
\text{and}\;\; t\in\R.
$$

Now let $C_0^\lambda(\R)$ be the separable $C^*$-subalgebra of 
$\mathcal{L}^\infty(\R)$ generated by the countable family of projections
$\mathcal X_{[a,b)}$ where $a,b\in\Gamma_\lambda.$ That is,

$$
C_0^\lambda(\R)={\rm closure}\left(
\left\{\left.\sum_{k=1}^{n}c_k\mathcal X_{[a_k,b_k)}\;\right|
\;c_k\in\C;\;\;a_k,b_k\in\Gamma_\lambda\right\}\right).
$$
We observe that $C_0^\lambda(\R)$ is a commutative AF-algebra.
Clearly, $C_0(\R)\subset C_0^\lambda(\R)$ and since 
$\alpha_g(\mathcal X_{[a,b)})=\mathcal X_{[g(a),g(b))}$ both are invariant 
under 
the action $\alpha$ of $G_\lambda.$ We define the separable $C^*$-algebras
$A^\lambda\supset A_0^\lambda$  as the crossed products:
$$
A^\lambda = G_\lambda \rtimes_\alpha C_0^\lambda(\R)
=\Z\rtimes (G^0_\lambda \rtimes_\alpha C_0^\lambda(\R))\;\supset\;
A_0^\lambda = G^0_\lambda \rtimes_\alpha C_0^\lambda(\R).
$$
Since $G_\lambda$ and $G^0_\lambda$ are amenable these equal 
the reduced crossed products by \cite[Theorem 7.7.7 ]{Ped}. 
Let $C_{00}^\lambda(\R)$ denote the dense $*$-subalgebra of 
$C_0^\lambda(\R)$ consisting of finite linear combinations of the generating 
projections, $\mathcal X_{[a,b)},$ and let 
$A_c^\lambda\subset l^1_\alpha(G_\lambda,C_0^\lambda(\R))\subset A^\lambda$ 
denote the dense
$*$-subalgebra of $A^\lambda$ consisting of finitely supported functions
$x:G_\lambda\to C_{00}^\lambda(\R).$ Similarly we define 
$A_{0,c}^\lambda\subset A_0^\lambda.$
\begin{prop}\label{bootstrap1}
For any $\lambda\in (0,1)$ 
$A_0^\lambda$ and $A^\lambda$ are in the bootstrap class $\mathfrak N_{nuc}.$
\end{prop}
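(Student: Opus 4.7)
The strategy is to build both $A_0^\lambda$ and $A^\lambda$ from a commutative AF-algebra by iterated crossed products with $\Z$, and invoke the standard closure properties of the bootstrap class. The starting point is that $B:=C_0^\lambda(\R)$ is a separable commutative AF-algebra (observed in the text just before the Proposition), hence lies in $\mathfrak N_{nuc}$.

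The key structural remark is that $\Gamma_\lambda$, being a countable subgroup of $\R$, is a countable \emph{torsion-free} abelian group. Hence we may write
\[
\Gamma_\lambda=\bigcup_{N\geq 1}\Gamma_N,
\]
an increasing union of finitely generated subgroups $\Gamma_N$; each such $\Gamma_N$ is torsion-free and finitely generated, hence free abelian: $\Gamma_N\cong\Z^{r_N}$. In the algebraic and transcendental cases of Proposition~\ref{Zgroups} one can take the concrete choices $\Gamma_N=\sum_{|k|\leq N}\Z\lambda^k$, but in fact all we need is the abstract filtration. Restricting $\alpha$ to each $\Gamma_N$ and forming the crossed product gives $\Gamma_N\rtimes_\alpha B$, and by iterating the Pimsner--Voiculescu six-term sequence $r_N$ times (at each stage peeling off one $\Z$-factor from $\Z^{r_N}=\Z\times\Z^{r_N-1}$) together with closure of $\mathfrak N_{nuc}$ under extensions and KK-equivalence, we conclude that $\Gamma_N\rtimes_\alpha B\in\mathfrak N_{nuc}$ for every $N$.

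Because $G_\lambda^0\cong\Gamma_\lambda$ is amenable the full and reduced crossed products agree, and the natural inclusions of the algebraic crossed products $C_c(\Gamma_N,C_{00}^\lambda(\R))\hookrightarrow C_c(\Gamma_\lambda,C_{00}^\lambda(\R))$ are isometric for the crossed-product $C^*$-norm; taking closures exhibits
\[
A_0^\lambda\;=\;\Gamma_\lambda\rtimes_\alpha B\;=\;\varinjlim_N\bigl(\Gamma_N\rtimes_\alpha B\bigr)
\]
as a $C^*$-inductive limit. Since $\mathfrak N_{nuc}$ is closed under countable inductive limits of separable $C^*$-algebras, $A_0^\lambda\in\mathfrak N_{nuc}$. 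Finally, $A^\lambda=\Z\rtimes A_0^\lambda$ for the $\Z$-action induced by the diagonal matrices $[\lambda^n:0]$, so one more application of Pimsner--Voiculescu gives $A^\lambda\in\mathfrak N_{nuc}$. The main point requiring care is the inductive-limit step (verifying the norm compatibility of the nested crossed products); the rest is bookkeeping with standard closure properties, so no essential obstacle is expected.
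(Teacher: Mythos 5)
Your proposal is correct and follows essentially the same route as the paper: write $\Gamma_\lambda$ as an increasing union of finitely generated (hence free) abelian groups $\Gamma_N$, realize $A_0^\lambda$ as the inductive limit of the crossed products $\Gamma_N\rtimes_\alpha C_0^\lambda(\R)$, invoke the closure of $\mathfrak N_{nuc}$ under crossed products by $\Z$ and under inductive limits, and finish with $A^\lambda=\Z\rtimes A_0^\lambda$. The paper's proof is just a terser version of the same argument, with the filtration $\Gamma_N$ borrowed from the proof of the preceding Corollary.
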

\begin{proof}
Since $A^\lambda=\Z\rtimes A_0^\lambda,$ it suffices to see that $A_0^\lambda$
is in $\mathfrak N_{nuc}.$ By the proof of the previous Corollary,
we can write $\Gamma_\lambda$
as an increasing union of finitely generated torsion-free abelian groups
$\Gamma_N$ which are free abelian group of finite rank so that 
$A_0^\lambda$ is the direct limit of crossed products of the separable
commutative $C^*$-algebra $C_0^\lambda(\R)$ by $\Z^{m_i}$ and
hence is in $\mathfrak N_{nuc}.$ 
\end{proof}
\noindent{\bf Notation:}
We remind the reader of the crossed product operations in our setting 
(Definition 7.6.1 of \cite{Ped}) together 
with some particular notations we use. To this end, let 
$x,y\in l^1_\alpha(G_\lambda,C_0^\lambda(\R))$ then we have the product and 
adjoint formulas:
\bean
(x\cdot y)(g)&=&\sum_{h\in G_\lambda}x(h)\alpha_h(y(h^{-1}g))
\;\;\text{for}\;\;
g\in G_\lambda;\\
x^*(g)&=&\alpha_g((x(g^{-1}))^*)\;\;\text{for}\;\;
g\in G_\lambda.
\eean

If $x\in l^1_\alpha(G_\lambda,C_0^\lambda(\R))$ is supported on the single
element $g\in G_\lambda$ and $x(g)=f\in C_0^\lambda(\R)$, then we write
$x=f\cdot\delta_g$. Since $A_c^\lambda$ (respectively, $A_{0,c}^\lambda$)  
is dense in $A^\lambda$ (respectively, $A_0^\lambda$) we often do our 
calculations with these elements and we have the
following easily verified calculus for them.

\begin{lemma}\label{lambdacalc}
Let $f_1\cdot\delta_{g_1}, f_2\cdot\delta_{g_2},f\cdot\delta_g\in A_c^\lambda,$ 
then:\\
{\rm (1)}\hspace{.1in} $(f_1\cdot\delta_{g_1})\cdot(f_2\cdot\delta_{g_2})=
f_1\alpha_{g_1}(f_2)\cdot\delta_{g_1 g_2}$\\
{\rm (2)}\hspace{.1in} $(f\cdot\delta_g)^*=
\alpha_{g^{-1}}(\bar{f})\cdot\delta_{g^{-1}}.$\\
{\rm (3)}\hspace{.1in} $f\cdot\delta_g$ is self-adjoint if and only if $f$ is
self-adjoint and $g=1.$\\
{\rm (4)}\hspace{.1in} $f\cdot\delta_g$ is a projection if and only if $f$ is
a projection and $g=1.$\\
{\rm (5)}\hspace{.1in} $f\cdot\delta_g$ is a partial isometry if and only if 
$|f|$ is a projection.\\
{\rm (6)}\hspace{.1in} The product of partial isometries of the form
$\mathcal X_{[a,b)}\cdot\delta_g$ is a partial isometry of the same form.\\
{\rm (7)}\hspace{.1in} Consider the partial isometry,
$v=\mathcal X_{[a,b)}\cdot\delta_g.$ Any two of the following: $vv^*,\;v^*v,\;g$
completely determine the interval $[a,b)$ and the element $g.$ 
\end{lemma}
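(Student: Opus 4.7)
The plan is to dispatch each of the seven parts by direct substitution into the product and adjoint formulas for $l^1_\alpha(G_\lambda, C_0^\lambda(\R))$ displayed just above; the content of the lemma is bookkeeping rather than theorem-proving, so I would carry the items out in the order stated.

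For (1) and (2), substitute $x = f_1\cdot\delta_{g_1}$ and $y = f_2\cdot\delta_{g_2}$ into the two displayed formulas: the convolution sum $\sum_h x(h)\alpha_h(y(h^{-1}g))$ collapses to a single term at $h = g_1$ that is nonvanishing only when $g = g_1 g_2$, and $x^*(h) = \alpha_h(x(h^{-1})^*)$ is supported at $h = g^{-1}$. For (3), formula (2) reduces self-adjointness of a nonzero $f\cdot\delta_g$ to the two conditions $g^{-1} = g$ and $\alpha_{g^{-1}}(\bar f) = f$. The one non-formal step in the whole lemma is here: writing $g = [\lambda^n : a]$ one computes $g^{-1} = [\lambda^{-n} : -\lambda^{-n} a]$, so $g = g^{-1}$ forces first $n = 0$ and then $a = 0$, i.e.\ $g = 1$; the remaining condition becomes $\bar f = f$. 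Part (4) follows at once, since by (1) we have $(f\cdot\delta_1)^2 = f^2\cdot\delta_1$, so idempotence becomes $f^2 = f$.

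For (5), (1) and (2) give $vv^* = f\bar f\cdot\delta_1 = |f|^2\cdot\delta_1$ and hence $vv^*v = |f|^2 f\cdot\delta_g$, so $v = vv^*v$ iff $|f|^2 f = f$, equivalently $|f|$ takes values only in $\{0,1\}$, i.e.\ $|f|$ is a projection in $C_0^\lambda(\R)$. For (6), (1) rewrites the product of two such partial isometries as $\mathcal X_{[a,b)}\alpha_{g_1}(\mathcal X_{[c,d)})\cdot\delta_{g_1 g_2} = \mathcal X_{[a,b)\,\cap\, g_1\cdot [c,d)}\cdot\delta_{g_1 g_2}$; since each $g\in G_\lambda$ acts as an orientation-preserving affine map ($\det g = \lambda^n > 0$) that preserves $\Gamma_\lambda$, the intersection is either empty or a half-open interval with endpoints in $\Gamma_\lambda$, giving a partial isometry of the required form by (5).

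Finally, for (7), combining (5) with (2) yields $vv^* = \mathcal X_{[a,b)}\cdot\delta_1$ and $v^*v = \mathcal X_{[g^{-1}\cdot a,\,g^{-1}\cdot b)}\cdot\delta_1$. Either of these together with $g$ trivially determines both $[a,b)$ and $g$; from $vv^*$ and $v^*v$ alone, one reads off $\lambda^n$ as the ratio of the lengths of the two intervals (using $\det g^{-1} = \lambda^{-n}$) and then recovers the translation part of $g$ from the left endpoints via $c = a - \lambda^n(g^{-1}\cdot a)$. There is no real obstacle anywhere in the argument; the only step that uses anything beyond formal manipulation of the crossed-product formulas is the torsion-freeness observation for $G_\lambda$ used in part (3).
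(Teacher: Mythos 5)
Your proof is correct and is exactly the direct verification from the displayed convolution and adjoint formulas that the paper intends (the paper simply asserts the lemma is "easily verified" and gives no proof). All seven computations check out, including the two genuinely non-formal points you isolate: that $g=g^{-1}$ forces $g=1$ in part (3), and the recovery of $\lambda^n$ from the ratio of interval lengths in part (7).
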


\begin{defn}
Let $e\in A_{0,c}^\lambda$ be the projection 
$e=\mathcal X_{[0,1)}\cdot\delta_1.$
We define the separable unital $C^*$-algebras:
$$
\mathcal{Q}^\lambda:=eA^\lambda e\supset eA_0^\lambda e =:F^\lambda.
$$
We will also have occasion to use the dense subalgebras
$\mathcal Q_c^\lambda:=eA_c^\lambda e,$ and 
$F_c^\lambda:= eA_{0,c}^\lambda e.$
\end{defn}

\begin{prop}\label{stable}
The orthogonal family of projections 
$e_n=\mathcal X_{[n,n+1)}\cdot\delta_1 \in A_0^\lambda$
for $n\in\Z$ are mutually equivalent by partial isometries in $A_0^\lambda$
of the form $V_{n,k}:=\mathcal X_{[n,n+1)}\cdot\delta_{g_{n-k}}$
where $g_{n-k}=[1\;:\;(n-k)].$ Moreover, the finite sums 
$E_N:=\sum_{n=-N}^{N-1} e_n=\mathcal X_{[-N,N)}\cdot\delta_1$ form an 
approximate identity for $A^\lambda$
so that 
$$
A^\lambda\cong \mathcal{Q}^\lambda\otimes \mathcal K(l^2(\Z)) 
\;\;{\rm and}\;\;
A_0^\lambda\cong F^\lambda\otimes\mathcal K(l^2(\Z)).
$$
\end{prop}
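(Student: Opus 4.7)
The plan is to prove the three assertions in order, relying entirely on the product/adjoint calculus of Lemma \ref{lambdacalc} and the standard fact that a system of matrix units with strictly convergent identity sum gives a tensor factorization with $\mathcal K$.

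First I would establish the mutual equivalence by direct computation. Since $g_{n-k} = [1:(n-k)] \in G^0_\lambda$ acts on $\R$ by the translation $t\mapsto t+(n-k)$, the transpose action gives $\alpha_{g_{n-k}^{-1}}(\mathcal X_{[n,n+1)}) = \mathcal X_{[k,k+1)}$. Plugging into Lemma \ref{lambdacalc} (2) and (1) yields
\[
V_{n,k}^* = \mathcal X_{[k,k+1)}\cdot\delta_{g_{k-n}}, \qquad V_{n,k}^*V_{n,k} = e_k, \qquad V_{n,k}V_{n,k}^* = e_n.
\]
Note these partial isometries lie in $A_{0,c}^\lambda$ because each $g_{n-k}$ has determinant $1$.

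Next, to see that $E_N = \mathcal X_{[-N,N)}\cdot\delta_1$ is an approximate identity for $A^\lambda$, I would check the condition on the dense $*$-subalgebra $A_c^\lambda$. Since the projections $E_N$ are norm-one and increasing, it suffices to verify convergence on a generator $x = \mathcal X_{[a,b)}\cdot\delta_g$ with $a,b\in\Gamma_\lambda$ and $g=[\lambda^n:a']$. By Lemma \ref{lambdacalc} (1),
\[
E_N\cdot x = \mathcal X_{[-N,N)\cap [a,b)}\cdot\delta_g, \qquad x\cdot E_N = \mathcal X_{[a,b)\cap [g(-N),g(N))}\cdot\delta_g.
\]
Because $[a,b)$ is bounded while both $[-N,N)$ and $[g(-N),g(N)) = [-N\lambda^n+a',\,N\lambda^n+a')$ exhaust $\R$ as $N\to\infty$, for $N$ large enough these products equal $x$ on the nose. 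The same bound $\|E_N\|\le 1$ plus density of $A_c^\lambda$ in $A^\lambda$ gives the approximate identity property globally.

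For the stability isomorphism I would set $W_{m,n} := V_{m,n}$ and check the matrix unit relations. The adjoint formula above gives $W_{m,n}^* = W_{n,m}$. For the composition, Lemma \ref{lambdacalc} (1) and the identities $\alpha_{g_{m-n}}(\mathcal X_{[n,n+1)}) = \mathcal X_{[m,m+1)}$ and $g_{m-n}g_{n-k}=g_{m-k}$ yield
\[
W_{m,n}W_{n,k} = \mathcal X_{[m,m+1)}\mathcal X_{[m,m+1)}\cdot\delta_{g_{m-k}} = W_{m,k}.
\]
Thus $\{W_{m,n}\}_{m,n\in\Z}$ is a system of matrix units in $A_0^\lambda\subset A^\lambda$ with $W_{n,n}=e_n$ and $\sum_n W_{n,n}$ converging strictly (in $M(A^\lambda)$) to $1$ by the approximate identity assertion. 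The standard argument (define $\Phi:eA^\lambda e\otimes\mathcal K(\ell^2(\Z))\to A^\lambda$ by $\Phi(a\otimes e_{mn}) = V_{m,0}\,a\,V_{0,n} = V_{m,0}aV_{n,0}^*$ and check it is an isometric $*$-homomorphism with dense range) now yields $A^\lambda \cong \mathcal Q^\lambda\otimes\mathcal K(\ell^2(\Z))$, and the same recipe restricted to $A_0^\lambda$ (legitimate since all $W_{m,n}$ lie in $A_0^\lambda$) gives $A_0^\lambda\cong F^\lambda\otimes\mathcal K(\ell^2(\Z))$.

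The only real work is the bookkeeping of the translation action on characteristic functions; once one commits to the identities $\alpha_{g_{n-k}^{-1}}(\mathcal X_{[n,n+1)}) = \mathcal X_{[k,k+1)}$ and $g_{m-n}g_{n-k} = g_{m-k}$ every assertion follows mechanically from Lemma \ref{lambdacalc}. There is no conceptual obstacle: the approximate identity step depends only on the boundedness of generator supports, and the stability step is the textbook matrix-unit-to-$\mathcal K$ construction.
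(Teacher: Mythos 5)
Your proposal is correct and follows essentially the same route as the paper: verify $V_{n,k}^*V_{n,k}=e_k$ and $V_{n,k}V_{n,k}^*=e_n$ via Lemma \ref{lambdacalc}, check the approximate identity property on the dense subalgebra of compactly supported elements, and deduce the stabilization. The only difference is that you spell out the matrix-unit relations and the explicit isomorphism with $\mathcal{Q}^\lambda\otimes\mathcal K(l^2(\Z))$, which the paper leaves implicit as the standard consequence.
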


\begin{proof}
By Lemma \ref{lambdacalc}, one easily calculates that:
$$
\text{for\;\;each\;\;pair}\;\;n,k\in\Z,\;\;V_{n,k}V_{n,k}^*=e_n\;\;\text{and}
\;\;V_{n,k}^*V_{n,k}=e_k.
$$  
Now for each positive integer $N$ if we have
$y\in A^\lambda_c$ that satisfies $\text{supp}(y_h)\subseteq [-N,N)$ 
for all $h,$ then using Lemma \ref{lambdacalc} again we see that $E_N\cdot y=y.$
Since the collection of all such
elements $y\in A^\lambda_c$ is dense in $A^\lambda$, we see that the 
increasing sequence of projections $\{E_N\}$ form an approximate identity for
$A^\lambda.$
\end{proof}

\begin{cor}\label{bootstrap2}It follows from Proposition 2.4.7 of \cite{RS}
and Proposition \ref{bootstrap1} that for any $\lambda\in (0,1),$ 
$\mathcal{Q}^\lambda$ and $F^\lambda$ are both in $\mathfrak N_{nuc}.$
\end{cor}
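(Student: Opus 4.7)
The proof is essentially an immediate assembly of results already proved. The plan is simply to combine Proposition \ref{bootstrap1}, which places the crossed products $A^\lambda$ and $A_0^\lambda$ in the bootstrap class $\mathfrak{N}_{nuc}$, with the stability statement of Proposition \ref{stable}, which identifies
\[
A^\lambda\cong \mathcal{Q}^\lambda\otimes\mathcal K(l^2(\Z)),\qquad A_0^\lambda\cong F^\lambda\otimes\mathcal K(l^2(\Z)).
\]
The cited Proposition 2.4.7 of \cite{RS} provides exactly the permanence property we need: membership in $\mathfrak{N}_{nuc}$ is preserved under stable isomorphism (equivalently, Morita equivalence of separable $C^*$-algebras). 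Applying this to each of the two displayed isomorphisms yields $\mathcal{Q}^\lambda,F^\lambda\in\mathfrak{N}_{nuc}$.

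The only point of care is that $\mathcal{Q}^\lambda$ and $F^\lambda$ are unital full corners of the non-unital algebras $A^\lambda$ and $A_0^\lambda$, cut down by the projection $e=\mathcal{X}_{[0,1)}\cdot\delta_1$; it is exactly the stabilization identity established in Proposition \ref{stable} (via the partial isometries $V_{n,k}$ equivalencing all the $e_n$ to $e$) that upgrades this corner relationship to an honest stable isomorphism, so that the Rosenberg--Schochet permanence result applies. Nothing further is required; there is no genuine obstacle.
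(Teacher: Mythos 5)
Your proposal is correct and is exactly the argument the paper intends: the corollary is stated without a separate proof precisely because it is the immediate combination of Proposition \ref{bootstrap1} (membership of $A^\lambda$ and $A_0^\lambda$ in $\mathfrak N_{nuc}$), the stable isomorphisms $A^\lambda\cong\mathcal{Q}^\lambda\otimes\mathcal K(l^2(\Z))$ and $A_0^\lambda\cong F^\lambda\otimes\mathcal K(l^2(\Z))$ from the immediately preceding Proposition \ref{stable}, and the permanence of the bootstrap class under stable isomorphism from \cite{RS}. Your remark that Proposition \ref{stable} is what upgrades the corner relationship to a genuine stable isomorphism is precisely the (implicit) content being used.
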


\begin{lemma}(cf. \cite[Proposition 3.1, Lemma 3.6]{PhR})
The algebra $C_0^\lambda(\R)$ is a commutative separable AF algebra consisting 
of all
functions $f:\R\to\C$ which vanish at $\infty$ and:
are right continuous at each $x\in \Gamma_\lambda$; have a finite 
left-hand limit 
at each $x\in \Gamma_\lambda;$ and are continuous at each
$x\in (\R\setminus\Gamma_\lambda).$ Moreover, if 
$\phi\in\widehat{C_0^\lambda(\R)},$ (the space 
of all nonzero $*$-homomorphisms: $C_0^\lambda(\R)\to\C$) then there exists
a unique $x_0\in\R$ such that:\\
{\rm (1)} if $x_0\in(\R\setminus\Gamma_\lambda)$ \;then\; 
$\phi(f)=f(x_0)$\;
for all\; $f\in C_0^\lambda(\R),$\\ 
{\rm (2)} if $x_0\in\Gamma_\lambda$ then either
$\left\{\begin{array}{ll} \phi(f)=f(x_0) \;\;\mbox{for all} \;\;
f\in C_0^\lambda(\R),\;\;\mbox{or}\\ 
\phi(f)=f^-(x_0)=\lim_{x\to x_0^-} f(x)\;\;
\mbox{for all}\;\; f\in C_0^\lambda(\R). \end{array}\right.$
\end{lemma}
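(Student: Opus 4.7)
The plan is to prove the function-space characterization and AF property first, then describe the characters. Let $\mathcal F\subset\ell^\infty(\R)$ denote the set of functions vanishing at infinity that are right-continuous at each point of $\Gamma_\lambda$, have finite left-hand limits at each point of $\Gamma_\lambda$, and are continuous off $\Gamma_\lambda$. It is immediate that $\mathcal F$ is a uniformly closed $*$-subalgebra of $\ell^\infty(\R)$ containing each generating projection $\mathcal X_{[a,b)}$ with $a,b\in\Gamma_\lambda$; hence $C_0^\lambda(\R)\subseteq\mathcal F$.

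For the reverse inclusion, given $f\in\mathcal F$ and $\epsilon>0$, first use that $f$ vanishes at infinity to pick $a_0<b_0$ in $\Gamma_\lambda$ with $|f|<\epsilon$ outside $[a_0,b_0)$ (possible because $\Gamma_\lambda$ is dense in $\R$). On the compact interval $[a_0,b_0]$, any bounded function with left-hand limits everywhere has only countably many discontinuities, at most finitely many of which exceed size $\epsilon$ in jump; by hypothesis these finitely many points lie in $\Gamma_\lambda$. Between consecutive jumps $f$ is uniformly continuous from the right and has a left limit, so one can refine the partition further at $\Gamma_\lambda$-points (using density) to make the oscillation of $f$ on each subinterval $[a_i,a_{i+1})$ less than $\epsilon$. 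Replacing $f$ on each subinterval by its value at $a_i$ yields a finite linear combination $\sum c_i\mathcal X_{[a_i,a_{i+1})}\in C_0^\lambda(\R)$ within $2\epsilon$ of $f$, proving $\mathcal F\subseteq C_0^\lambda(\R)$. The same finite-partition argument exhibits $C_0^\lambda(\R)$ as the increasing union of the finite-dimensional algebras spanned by indicator functions of $\Gamma_\lambda$-partitions of compact $\Gamma_\lambda$-intervals, giving the AF structure.

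For the spectrum, let $\phi\in\widehat{C_0^\lambda(\R)}$ and set
\[
\mathcal I_\phi:=\{[a,b):a,b\in\Gamma_\lambda,\ a<b,\ \phi(\mathcal X_{[a,b)})=1\}.
\]
Since $\phi$ is multiplicative and nonzero, $\mathcal I_\phi$ is a filter base (closed under finite intersections, never empty) and whenever $[a,b)\in\mathcal I_\phi$ is subdivided into finitely many half-open $\Gamma_\lambda$-intervals, exactly one piece lies in $\mathcal I_\phi$. Define
\[
x_0:=\sup\{a\in\Gamma_\lambda:\exists\,b>a\text{ with }[a,b)\in\mathcal I_\phi\}=\inf\{b\in\Gamma_\lambda:\exists\,a<b\text{ with }[a,b)\in\mathcal I_\phi\},
\]
the equality coming from the subdivision property. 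That $x_0\in\R$ (not $\pm\infty$) follows because $\phi\ne 0$ forces some $\mathcal X_{[a,b)}$ to pair nontrivially. If $x_0\notin\Gamma_\lambda$, then for every $\epsilon>0$ there is $[a,b)\in\mathcal I_\phi$ with $a<x_0<b$ and $b-a<\epsilon$; continuity of every $f\in\mathcal F$ at $x_0$ then forces $\phi(f)=f(x_0)$. If $x_0\in\Gamma_\lambda$, then either some $[x_0,b)$ with $b>x_0$ belongs to $\mathcal I_\phi$, giving $\phi=\text{ev}_{x_0}$ on generators and hence everywhere; or no such interval works, in which case the supremum definition forces arbitrarily short $[a,x_0)$ to lie in $\mathcal I_\phi$, yielding $\phi(f)=f^-(x_0)$.

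The main obstacle is the step-function approximation in the first part, specifically ensuring the approximating partition can be chosen with endpoints in $\Gamma_\lambda$ even though $f$ may have countably many discontinuities accumulating inside a compact set. The resolution combines two ingredients: the classical fact that a function with everywhere-defined one-sided limits has only finitely many jumps above any fixed size (so large jumps are isolable at $\Gamma_\lambda$-points), together with the density of $\Gamma_\lambda$ in $\R$ (so the ``continuous stretches'' between large jumps can be subdivided at $\Gamma_\lambda$-points to control oscillation). Everything else is either routine verification of the algebra structure or a clean bookkeeping exercise on the filter $\mathcal I_\phi$.
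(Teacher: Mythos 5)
Your proposal is correct and follows the same route as the paper: the containment $C_0^\lambda(\R)\subseteq\mathcal F$ via closure of the defining properties under uniform limits, and the converse via step-function approximation, are exactly what the paper sketches, while the description of the spectrum (which the paper simply defers to \cite[Lemma 3.6]{PhR}) is supplied here by the natural filter-of-projections argument. The two points the paper leaves implicit --- that a function with one-sided limits everywhere has only finitely many jumps above any fixed size, so that a $\Gamma_\lambda$-partition controlling oscillation exists, and that the sup and inf defining $x_0$ coincide by the subdivision property of $\mathcal I_\phi$ --- are handled correctly, so nothing is missing.
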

\begin{proof}
Since generating functions for $C_0^\lambda(\R)$ satisfy each of
the properties above which are clearly preserved by passing to 
uniform limits, we see that any function in $C_0^\lambda(\R)$ satisfies these
properties. Conversely, it is easy to show that any function 
satisfying these properties can be 
uniformly approximated by a finite linear combination of the generators.
The remainder of the proof is given in \cite[Lemma 3.6]{PhR}.
\end{proof}

\noindent{\bf Notation.} We denote the dual space,
$\widehat{C_0^\lambda(\R)}$ by $\R_\lambda$ and endow it with
the relative weak-$*$ topology, 
that is the topology of 
pointwise convergence on $C_0^\lambda(\R).$ Of course, $\R_\lambda$
is a locally compact Hausdorff space, and 
$C_0^\lambda(\R)\cong C_0(\R_\lambda).$

\begin{prop}\label{simple}
The algebras
$A^\lambda$ and $A^\lambda_0$ (and hence $\mathcal{Q}^\lambda$ and $F^\lambda$)
are simple $C^*$-algebras. 
Moreover, $A^\lambda$ is purely infinite and hence so is $\mathcal{Q}^\lambda$. 
\end{prop}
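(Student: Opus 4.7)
The plan is to identify $A^\lambda \cong C_0(\R_\lambda) \rtimes G_\lambda$ via the preceding lemma and to deduce both statements from the dynamics of $G_\lambda$ on the spectrum $\R_\lambda$. The dual action is induced by the ax+b-action on $\R$: each $g = [\lambda^n : a]$ acts as an orientation-preserving homeomorphism $t \mapsto \lambda^n t + a$, carrying evaluation characters to evaluation characters and left-limit characters to left-limit characters at the translated points.

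For simplicity I first verify \emph{minimality}: since $\Gamma_\lambda$ is dense in $\R$ and the compact-open sets $[a,b)$ with $a,b\in\Gamma_\lambda$ form a basis of $\R_\lambda$, the $G_\lambda^0$-orbit (translations by $\Gamma_\lambda$) of any $\phi \in \R_\lambda$ is already dense. Next I verify \emph{topological freeness}: for a nontrivial $g = [\lambda^n : a]$, the equation $\lambda^n t + a = t$ has no solution when $n=0$, and a unique solution $t = a/(1-\lambda^n)$ when $n\neq 0$; in either case the fixed-point set gives at most two points of $\R_\lambda$, with empty interior. Since $G_\lambda$ is amenable (indeed solvable), the standard crossed-product simplicity criteria (Archbold--Spielberg / Kishimoto) then yield simplicity of $A^\lambda$, and the same argument applied to the normal subgroup $G_\lambda^0$ (whose translation action is actually free) gives simplicity of $A_0^\lambda$. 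By Proposition \ref{stable}, $\mathcal Q^\lambda$ and $F^\lambda$ are full corners in these simple algebras, hence simple themselves.

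For pure infiniteness the cleanest route is to verify \emph{local contractivity} and invoke the Laca--Spielberg / Anantharaman-Delaroche theorem on purely infinite crossed products. Given a nonempty open $U \subseteq \R_\lambda$, choose a basic clopen set $[a,b) \subseteq U$ with $a,b \in \Gamma_\lambda$, and apply $g = [\lambda : a(1-\lambda)]$, whose fixed point on $\R$ is $a$; this $g$ sends the clopen $[a,b)$ to the proper clopen subset $[a, a+\lambda(b-a))$, so $\overline{g\cdot[a,b)} = g\cdot[a,b) \subsetneq [a,b)$. Combined with minimality, topological freeness, and amenability, this makes $A^\lambda$ purely infinite simple, and hence so is its corner $\mathcal Q^\lambda$. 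As a concrete sanity check, the partial isometry $v = \chi_{[0,\lambda)} \cdot \delta_{[\lambda : 0]}$ lies in $e A^\lambda e = \mathcal Q^\lambda$ and a direct calculation from Lemma \ref{lambdacalc} gives $v^* v = e$ and $vv^* = \chi_{[0,\lambda)} \cdot \delta_1 < e$, exhibiting $e$ as an infinite projection.

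The main obstacle I anticipate is handling the ``doubled'' points of $\Gamma_\lambda$ which distinguish $\R_\lambda$ from $\R$: one must check that the dynamical hypotheses really transfer from the ax+b-action on $\R$ to the action on the spectrum. Fortunately, because each $g \in G_\lambda$ is an \emph{increasing} affine homeomorphism of $\R$, it preserves the character type (evaluation versus left-limit) and permutes the basic clopen sets $[a,b)$ with $a,b \in \Gamma_\lambda$; this allows minimality, topological freeness, and local contractivity to pass cleanly from $\R$ to $\R_\lambda$, after which simplicity and pure infiniteness follow from the general crossed-product machinery.
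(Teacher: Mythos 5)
Your argument is correct, and it runs on the same dynamical fuel as the paper's proof: density of orbits in $\R_\lambda$, the fact that each nontrivial $g\in G_\lambda$ fixes at most finitely many characters, and the existence of contracting elements. The differences are in which crossed-product theorems these facts are fed into. For simplicity the paper invokes Elliott's criterion (Theorem 3.2 of \cite{E}): no nontrivial invariant ideals plus proper outerness, the latter verified by exhibiting, for each $g\neq 1$ and each invariant open set $\mathcal O$, a point $y\in\mathcal O$ with $g(y)\neq y$ and a function separating $y$ from $g^{-1}(y)$ to force $\Vert(\alpha_g-Id)|_{\mathcal I}\Vert=2$. You instead package the same two facts as minimality plus topological freeness and cite Archbold--Spielberg/Kishimoto; since the coefficient algebra is commutative and the group amenable, the two criteria are interchangeable here, so this is a cosmetic rather than substantive divergence. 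For pure infiniteness both proofs rest on the same Laca--Spielberg result (Theorem 9 of \cite{LS}); your verification of the local contractivity hypothesis via $g=[\lambda\,:\,a(1-\lambda)]$ shrinking $[a,b)$ onto $[a,a+\lambda(b-a))$ is exactly the kind of witness that theorem needs, and is a useful explicit detail the paper leaves implicit (the paper instead routes the conclusion through hypothesis (v) of Proposition 4.1.1 of \cite{RS}). Your closing observation that $v=\mathcal X_{[0,\lambda)}\cdot\delta_{[\lambda:0]}$ (which is just $S_{1,0}$ of Definition \ref{Sk,m}) satisfies $v^*v=e$ and $vv^*<e$ is a nice direct confirmation that $e$ is infinite, though by itself it would only show $\mathcal{Q}^\lambda$ is infinite, not purely infinite, so the appeal to Laca--Spielberg is still doing the real work.
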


\begin{proof}
Now, both $G_\lambda$ and $G_\lambda^0$ act on $C_0^\lambda(\R)$ 
as countable discrete groups of outer automorphisms. Thus, we can apply
Theorem 3.2 of \cite{E} once we check that neither action has any nontrivial
invariant ideals in $C_0^\lambda(\R)$ and that the actions are {\bf properly
outer} in the sense of Definition 2.1 of \cite{E}. 

To do this we look at the induced action of $G_\lambda$ and 
$G_\lambda^0$
on $\R_\lambda.$ 
So, for $g\in G_\lambda$ we have $g$ acting on $\R_\lambda$
via $g(\phi)=\phi\circ\alpha_g^{-1}$ so that for $\phi=\phi_x$
given by evaluation at $x\in\R,$ we have as expected 
$g(\phi_x)=\phi_{g(x)}.$ Now, if $x\in\Gamma_\lambda$ we use
the notation $\phi_{x_{-}}$ to denote the $*$-homomorphism 
$\phi_{x_{-}}(f)=f^-(x)=f(x_{-})=\lim_{y\to x_{-}}f(y).$ One easily checks 
that since
$g(x)\in \Gamma_\lambda,$ we have
$g(\phi_{x_{-}})=\phi_{g(x)_{-}}.$\\
Next we claim that each of the sets 
$\{\phi_m\;|\; m\in \Gamma_\lambda\}$ and
$\{\phi_{m_{-}}\;|\; m\in \Gamma_\lambda\}$ is dense in 
$\R_\lambda$ in the relative weak-$*$ topology.
For example, we show that the second set is dense.
To approximate $\phi_x$ for some $x\in\R$ we let $\{m_n\}$ be a sequence in
$\Gamma_\lambda$ converging to $x$ from the right in $\R.$
Let $f\in C_0^\lambda(\R)$ so that $f$ is right continuous at $x.$ 
One easily shows that $|\phi_{m_{n_{-}}}(f)-\phi_x(f)|\to 0;$ that is, 
the sequence $\{\phi_{m_{n_{-}}}\}$
converges to $\phi_x$ in the relative weak-$*$ topology.

It is easy to see that the action of $G_\lambda^0$ on 
$\R_\lambda$ has dense orbits, and so, of course, the 
action of $G_\lambda$ has dense orbits also. 
This implies that the actions of $G_\lambda^0$ and $G_\lambda$ 
on $C_0^\lambda(\R)$ have no nontrivial invariant ideals since 
the induced action on
$\R_\lambda$ has no nontrivial invariant closed sets. 
We complete the proof by 
showing that the action is {\bf properly outer} in the sense of
Definition 2.1 of \cite{E}. Since there are no 
nontrivial $\alpha$-invariant ideals and $C_0^\lambda(\R)$ is commutative this
is the condition that for each $g\neq 1$ and each nonzero closed
two sided ideal $\mathcal I$ invariant under $\alpha_g$ we have 
$\Vert(\alpha_g - Id)_{|_{\mathcal I}}\Vert=2.$ Since $\mathcal I$ is nonzero
there is a nonempty open subset, $\mathcal O$ of $\R_\lambda$
so that $\hat{\mathcal I} =\mathcal O.$
But since $g\neq 1$ and $\mathcal O$ is not finite there exists 
$y\in\mathcal O$ such that $g(y)\neq y$
and $g(y)\in \mathcal O.$ Let $x=g(y)\in\mathcal O$ so that 
$g^{-1}(x)=y\in\mathcal O$ and $x\neq g^{-1}(x).$
So we can choose a continuous compactly supported real-valued function
$f$ on $\mathcal O$ with $f(x)=1,$ $f(g^{-1}(x))=-1$ and $\Vert f\Vert=1.$ 
But then
$f\in \mathcal I$ and
$$
2\geq \Vert(\alpha_g - Id)_{|_{\mathcal I}}\Vert|\geq 
\Vert(\alpha_g - Id)(f)\Vert
= \Vert\alpha_g(f) - f\Vert\geq |f(g^{-1}(x))-f(x)|=2.
$$

Now that we know $A^\lambda$ is simple, we can easily apply
Theorem 9 of \cite{LS} to conclude that $A^\lambda$ satisfies hypothesis (v)
of Proposition 4.1.1 (page 66) of \cite{RS}. For simple $C^*$-algebras,
this is equivalent to being purely infinite by Definition 4.1.2 of \cite{RS}:
the authors of \cite{LS} had used one of the earlier definitions of purely
infinite in their paper (namely, hypothesis (v)). By Proposition 4.1.8 of
\cite{RS} $\mathcal{Q}^\lambda$ is also purely infinite.	 
\end{proof}

\begin{cor}\label{bootstrap3}It follows from Corollaries 8.2.2 and 8.4.1
(Kirchberg-Phillips) of \cite{RS} and the fact that $A^\lambda$ is stable
that for any $\lambda\in (0,1),$  $A^\lambda$ is classified up to isomorphism 
(among Kirchberg algebras in $\frak N_{nuc}$) by its $K$-theory.
\end{cor}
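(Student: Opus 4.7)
The plan is to verify that $A^\lambda$ satisfies all the hypotheses of the Kirchberg--Phillips classification theorem and then simply invoke it. This is essentially a bookkeeping exercise that assembles results already proved in the excerpt.

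First I would check that $A^\lambda$ is a Kirchberg algebra, that is, a separable, nuclear, simple, purely infinite $C^*$-algebra. Separability is immediate because $C_0^\lambda(\R)$ is a separable commutative AF algebra and $G_\lambda$ is a countable discrete group, so the (full = reduced) crossed product $A^\lambda = G_\lambda\rtimes_\alpha C_0^\lambda(\R)$ is separable. Nuclearity and the UCT both follow from membership in the bootstrap class $\mathfrak N_{nuc}$, which is Proposition \ref{bootstrap1}. Simplicity and pure infiniteness are Proposition \ref{simple}.

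Next I would record that $A^\lambda$ is stable. This is the content of Proposition \ref{stable}, which produces a mutually equivalent family of projections $\{e_n\}_{n\in\Z}$ in $A_0^\lambda$ summing to an approximate identity for $A^\lambda$, yielding the isomorphism $A^\lambda\cong\mathcal Q^\lambda\otimes\mathcal K(\ell^2(\Z))$.

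With these facts in hand, the argument is a direct citation: Corollary 8.4.1 of \cite{RS} asserts that every Kirchberg algebra in $\mathfrak N_{nuc}$ is $KK$-equivalent to a stable Kirchberg algebra in $\mathfrak N_{nuc}$, and Corollary 8.2.2 of \cite{RS} is the Kirchberg--Phillips theorem stating that two stable Kirchberg algebras in $\mathfrak N_{nuc}$ are isomorphic if and only if they are $KK$-equivalent, equivalently (by the UCT) if and only if they have isomorphic $K$-theory. Applying these to $A^\lambda$, which we have verified is a stable Kirchberg algebra in $\mathfrak N_{nuc}$, yields the claimed classification.

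There is no real obstacle here; the entire content of the corollary is the fact that the preceding propositions together verify every hypothesis of the Kirchberg--Phillips machine. The only minor point worth mentioning explicitly is that separability of the crossed product requires the group to be countable and the coefficient algebra to be separable, both of which hold by construction.
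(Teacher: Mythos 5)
Your proposal is correct and is essentially the paper's own argument: the paper offers no separate proof of this corollary, treating it as an immediate consequence of Proposition \ref{bootstrap1} (membership in $\mathfrak N_{nuc}$), Proposition \ref{simple} (simplicity and pure infiniteness), Proposition \ref{stable} (stability), and the cited Kirchberg--Phillips results from \cite{RS}. Your write-up simply makes the bookkeeping explicit, which is exactly what the corollary's statement intends.
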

Since we need to calculate with elements of $\mathcal{Q}^\lambda$ and 
$F^\lambda,$ we make 
the following observations.

\begin{lemma}\label{lambdagens}
Now, $\mathcal{Q}^\lambda$ (respectively, $F^\lambda$)
is the norm closure of finite linear combinations of the elements
of the form
$e(\mathcal X_{[a,b)}\cdot\delta_g) e,$ where $g\in G_\lambda$ (respectively,
$g\in G^0_\lambda$), henceforth called the {\em generators}. Thus, we calculate\\
(1) If $f\cdot\delta_g\in A^\lambda,$ (respectively, 
$f\cdot\delta_g\in A_0^\lambda$) where $f\in C_0^\lambda(\R)$, then
$$
e(f\cdot\delta_g)e=\mathcal X_{[a,b)}f\cdot\delta_g\;\;\text{where}\;\;
[a,b)=[0,1)\cap[g(0),g(1)).
$$
(2) Thus, for $g\in G_\lambda,$ (respectively, $g\in G^0_\lambda$) 
$f\cdot\delta_g$ is in $\mathcal{Q}^\lambda$ (respectively, $F^\lambda$) iff 
$supp(f)\subseteq[0,1)\cap[g(0),g(1)).$ In particular, for $g\in G_\lambda,$ 
(respectively, $g\in G^0_\lambda$) 
$\mathcal X_{[a,b)}\cdot\delta_g$ is in $\mathcal{Q}^\lambda$ (respectively, 
$F^\lambda$)
iff 
$[a,b)\subseteq[0,1)\cap[g(0),g(1)).$
\end{lemma}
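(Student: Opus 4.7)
The plan is to derive everything from Lemma \ref{lambdacalc} and the observation that $\mathcal{Q}^\lambda = eA^\lambda e$ (respectively $F^\lambda = eA_0^\lambda e$) consists precisely of those $a\in A^\lambda$ with $eae=a$, since $e$ is a projection.

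First I would establish formula (1) by a direct two‑step computation with the product rule from Lemma \ref{lambdacalc}(1). Since $\alpha_1=\mathrm{id}$ and $e=\mathcal X_{[0,1)}\cdot\delta_1$, the left multiplication gives $e\cdot(f\cdot\delta_g)=\mathcal X_{[0,1)}f\cdot\delta_g$. The right multiplication then yields
\[
(f\cdot\delta_g)\cdot e \;=\; f\,\alpha_g(\mathcal X_{[0,1)})\cdot\delta_g
\;=\; f\,\mathcal X_{[g(0),g(1))}\cdot\delta_g,
\]
because for $g=[\lambda^n:a]$ with $\lambda^n>0$ the $\alpha$-translate satisfies $\alpha_g(\mathcal X_{[0,1)})(t)=\mathcal X_{[0,1)}(g^{-1}t)=\mathcal X_{g([0,1))}(t)=\mathcal X_{[g(0),g(1))}(t)$. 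Combining gives $e(f\cdot\delta_g)e=\mathcal X_{[0,1)\cap[g(0),g(1))}f\cdot\delta_g$, and the intersection of two right‑closed, left‑open intervals is again of that form, producing the claimed interval $[a,b)$.

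Statement (2) is then immediate: $f\cdot\delta_g\in\mathcal{Q}^\lambda$ iff $e(f\cdot\delta_g)e=f\cdot\delta_g$, and by (1) this happens iff $\mathcal X_{[0,1)\cap[g(0),g(1))}f=f$, i.e.\ iff $\mathrm{supp}(f)\subseteq[0,1)\cap[g(0),g(1))$. The $F^\lambda$ version follows by restricting $g$ to $G^0_\lambda$. Specialising to $f=\mathcal X_{[a,b)}$ gives the final clause, using that characteristic functions are supported on their intervals.

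For the principal assertion (density by generators), the map $x\mapsto exe$ is contractive on $A^\lambda$, so it carries the dense $\ast$-subalgebra $A_c^\lambda$ onto a dense subset of $\mathcal{Q}^\lambda$. Elements of $A_c^\lambda$ are finite linear combinations of terms $f\cdot\delta_g$ with $f\in C_{00}^\lambda(\R)$, and such $f$ are by definition finite linear combinations of $\mathcal X_{[a_k,b_k)}$ with $a_k,b_k\in\Gamma_\lambda$. Applying (1) term by term shows that every element of $eA_c^\lambda e$ is a finite linear combination of generators $e(\mathcal X_{[a,b)}\cdot\delta_g)e$, proving the density statement; the $F^\lambda$ case is identical with $g\in G_\lambda^0$. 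The only step requiring a little care is verifying $\alpha_g(\mathcal X_{[0,1)})=\mathcal X_{[g(0),g(1))}$ (which depends on the orientation‑preserving nature of the action, i.e.\ $\lambda^n>0$); beyond that, every assertion reduces to the crossed‑product calculus already packaged in Lemma \ref{lambdacalc}.
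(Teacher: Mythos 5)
Your proof is correct and follows essentially the same route as the paper, which likewise derives (1) from the product rule of Lemma \ref{lambdacalc}(1) together with $\alpha_g(\mathcal X_{[a,b)})=\mathcal X_{[g(a),g(b))}$, and then obtains (2) as an immediate consequence. Your additional spelling-out of the density of $eA_c^\lambda e$ via contractivity of $x\mapsto exe$ is a detail the paper leaves implicit, but it is the intended justification.
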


\begin{proof}
The first item is an easy calculation using part (1) of Lemma
\ref{lambdacalc}
and the fact that $\alpha_g(\mathcal X_{[a,b)})=\mathcal X_{[g(a),g(b))}.$
The second item follows easily from the first.
\end{proof}

\begin{prop}\label{AF}
If $\lambda$ is rational, then $A_0^\lambda$ and $F^\lambda$ are AF-algebras. 
In particular, if $\lambda= p/q$ where $0<p<q$ are in lowest terms,
then $F^\lambda$ is the UHF algebra $n^\infty$ where $n=pq.$ 
Moreover, the minimal projections
in the finite-dimensional subalgebras can all be chosen from the canonical
commutative subalgebra $C_0^\lambda(\R)\cdot\delta_I.$ 
\end{prop}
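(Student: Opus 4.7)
\begin{proof*}[Proof Proposal]
The plan is to exhibit $F^\lambda$ as a unital direct limit of matrix algebras $F^\lambda_k \cong M_{n^k}$ built from indicator functions of a uniform partition of $[0,1)$, and then to deduce the corresponding statement for $A_0^\lambda$ from the stability isomorphism in Proposition \ref{stable}. Throughout write $n=pq$, so that by Proposition \ref{Zgroups}(1) we have $\Gamma_\lambda=\Z[1/n]=\bigcup_{k\ge 0}\Lambda_k$, where $\Lambda_k:=n^{-k}\Z$.

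First I would fix $k$ and set $I_j^{(k)}:=[j/n^k,(j{+}1)/n^k)$ for $j=0,1,\dots,n^k-1$, together with the projections $e_j^{(k)}:=\mathcal X_{I_j^{(k)}}\cdot\delta_1$ and the elements
$$V_{j,l}^{(k)}:=\mathcal X_{I_j^{(k)}}\cdot\delta_{[1\,:\,(j-l)/n^k]}\in F^\lambda,$$
which make sense by Lemma \ref{lambdagens} because $I_j^{(k)}\subseteq[0,1)\cap[(j-l)/n^k,(j-l)/n^k+1)$. A direct computation using Lemma \ref{lambdacalc} and the identity $\alpha_{[1:c]}(\mathcal X_{[a,b)})=\mathcal X_{[a+c,b+c)}$ gives $V_{j,l}^{(k)}(V_{j',l'}^{(k)})^{*}=\delta_{l,l'}V_{j,j'}^{(k)}$, so the $\{V_{j,l}^{(k)}\}_{j,l=0}^{n^k-1}$ form a system of matrix units and span a unital $*$-subalgebra $F^\lambda_k\cong M_{n^k}(\C)$ of $F^\lambda$ with unit $\sum_j e_j^{(k)}=e$.

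Next I would verify $F^\lambda_k\subseteq F^\lambda_{k+1}$ by refining the partition. Since $I_j^{(k)}=\bigsqcup_{r=0}^{n-1}I_{jn+r}^{(k+1)}$ and $(j-l)/n^k=((jn+r)-(ln+r))/n^{k+1}$, one checks that
$$V_{j,l}^{(k)}=\sum_{r=0}^{n-1}V_{jn+r,\,ln+r}^{(k+1)}.$$
This is exactly the standard unital embedding $M_{n^k}\hookrightarrow M_{n^k}\otimes M_n=M_{n^{k+1}}$ sending $a\mapsto a\otimes 1_n$, so $\overline{\bigcup_k F^\lambda_k}$ is the UHF algebra of type $n^\infty$. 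The minimal projections $e_j^{(k)}$ clearly lie in $C_0^\lambda(\R)\cdot\delta_1$, which gives the last sentence of the statement.

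It then remains to show density. Any generator of $F^\lambda$ has the form $\mathcal X_{[a,b)}\cdot\delta_{[1:c]}$ with $a,b,c\in\Gamma_\lambda=\Z[1/n]$ and $[a,b)\subseteq[0,1)\cap[c,c+1)$. Choose $k$ large enough that $a,b,c\in\Lambda_k$; then $[a,b)=\bigsqcup_j I_j^{(k)}$ over the $j$ with $I_j^{(k)}\subseteq[a,b)$, and for each such $j$ we have $l:=j-cn^k\in\{0,\dots,n^k-1\}$ (by the containment $[a,b)\subseteq[c,c+1)$), giving $\mathcal X_{I_j^{(k)}}\cdot\delta_{[1:c]}=V_{j,l}^{(k)}\in F^\lambda_k$. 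Hence every generator, and thus every element of $F^\lambda$, lies in $\overline{\bigcup_k F^\lambda_k}$, and $F^\lambda\cong n^\infty$. Finally, by Proposition \ref{stable} $A_0^\lambda\cong F^\lambda\otimes\K(\ell^2(\Z))$, which is AF; one may equally present $A_0^\lambda$ directly as the limit of the AF corners $E_N A_0^\lambda E_N$, which by the same matrix-unit construction applied to the partition $\{[j/n^k,(j{+}1)/n^k):-Nn^k\le j<Nn^k\}$ are isomorphic to $M_{2Nn^k}(\C)$.

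I expect the only point requiring care is the bookkeeping for the matrix unit relations and the refinement formula $V_{j,l}^{(k)}=\sum_r V_{jn+r,ln+r}^{(k+1)}$; once these are checked, identification with the UHF algebra of type $n^\infty$ and the density argument are routine.
\end{proof*}
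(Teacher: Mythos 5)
Your proposal is correct and follows essentially the same route as the paper: bring $a,b,c$ to a common denominator $n^{-k}$ and recognize the resulting partial isometries $\mathcal X_{[j/n^k,(j+1)/n^k)}\cdot\delta_{[1:(j-l)/n^k]}$ as matrix units generating a unital copy of $M_{n^k}$ inside $F^\lambda$, with $A_0^\lambda$ handled via Proposition \ref{stable}. You simply carry out explicitly the matrix-unit relations, the refinement embedding, and the density bookkeeping that the paper leaves as "it should now be clear."
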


\begin{proof}
We have shown in Proposition \ref{Zgroups} that if $\lambda= p/q$ where $0<p<q$ 
are in lowest terms, then $\Gamma_\lambda= \Z[1/n],$ where $n=pq.$ 
 Now, any element in $\Z[1/n]$ has the form $m/n^k=m(1/n^k)$
where $k\geq 1.$ Therefore any of the generating partial isometries
$\mathcal X_{[a,b)}\cdot\delta_{[1:c]}\in A^\lambda_0$ can (by bringing
$a,b$ and $c$ to a common denominator) be written (assuming $c>0$) as a 
finite linear combination of partial isometries of the form
$\mathcal X_{[l/n^k,(l+1)/n^k)}\cdot\delta_{[1:1/n^k]}.$ For partial isometries
in $F^\lambda$ we would have to restrict $0\leq l\leq n^k-1$ and such 
partial isometries generate an $n^k\;{\rm by}\;n^k$ matrix subalgebra of
$F^\lambda.$ It should now be clear that $F^\lambda$ is a UHF algebra of type
$n^\infty.$
\end{proof}

At this point we define some special elements in $\mathcal{Q}^\lambda$ which 
behave
very much like the isometries $S_\mu\in O_n,$ except for the fact that 
{\bf some} of them are {\bf not} isometries.

\begin{defn}\label{Sk,m}
Fix $0<\lambda <1$ and let $k$ be a positive integer. Define $m_k$ to be the
unique positive integer satisfying: $m_k\lambda^k <1\leq (m_k +1)\lambda^k.$
For $0\leq m \leq m_k$ define {\bf partial isometries} 
$S_{k,m}\in \mathcal{Q}^\lambda$ 
via:
$$
S_{k,m}=\mathcal X_{[m\lambda^k,(m+1)\lambda^k)}\cdot\delta_{g_{k,m}}\;\;
{\rm where}\;\; g_{k,m}=[\lambda^k \; :\; m\lambda^k].
$$
Note: for $m<m_k$ the $S_{k,m}$ are actually {\bf isometries}, and
$S_{k,m_k}$ is an isometry iff $1=(m_k +1)\lambda^k.$
\end{defn}

\begin{rems*}
The defining inequalities $m_k\lambda^k <1\leq (m_k +1)\lambda^k$
for the positive integer $m_k$ are equivalent to:
$0<\lambda^{-k} - m_k \leq 1.$ In particular, these differences  are
positive and bounded above by $1$. In the case of $\mathcal{Q}^{1/n}$ we have
$m_k=n^k-1.$ Generally we have $m_1^k\leq m_k<1\leq(m_k+1)\leq (m_1+1)^k.$
\end{rems*}

\begin{lemma}\label{Smusub}
With the previously defined elements we have:
$$
S_{k,m}^*=\mathcal X_{[0,1)}\cdot\delta_{g^{-1}_{k,m}}\;\;
{\rm and}\;\;S_{k,m_k}^*=
\mathcal X_{[0,\lambda^{-k}-m_k)}\cdot\delta_{g^{-1}_{k,m_k}}\;\;
{\rm where\;\;for\;\;all\;\;}m,\;\; g^{-1}_{k,m}=[\lambda^{-k}\; :\;-m].
$$
Moreover, for $0\leq m<m_k,\; S_{k,m}^*S_{k,m}=
\mathcal X_{[0,1)}\cdot\delta_1=e$
while $S_{k,m_k}^*S_{k,m_k}=
\mathcal X_{[0,\lambda^{-k} -m_k)}\cdot\delta_1.$\\
Finally, for
$0\leq m<m_k,\; S_{k,m}S_{k,m}^*=
\mathcal X_{[m\lambda^k,(m+1)\lambda^k)}\cdot\delta_1$
while $S_{k,m_k}S_{k,m_k}^*=
\mathcal X_{[m_k\lambda^k,1)}\cdot\delta_1,$
so that 
$$
\sum_{m=0}^{m_k}S_{k,m}S_{k,m}^*=\mathcal X_{[0,1)}\cdot\delta_1 =e.
$$
\end{lemma}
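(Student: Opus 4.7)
The proof is a direct computation using the two basic identities from Lemma \ref{lambdacalc}, so I will outline the plan as a sequence of bookkeeping steps, with the only real subtlety being the boundary case $m = m_k$.

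First I would record the inverse of $g_{k,m} = [\lambda^k : m\lambda^k]$: a direct matrix computation (or solving $\lambda^k t + m\lambda^k = s$) gives $g_{k,m}^{-1} = [\lambda^{-k} : -m]$, acting on $\R$ by $t \mapsto \lambda^{-k} t - m$. The key consequences I will repeatedly use are $g_{k,m}^{-1}(m\lambda^k) = 0$, $g_{k,m}^{-1}((m+1)\lambda^k) = 1$, and $g_{k,m}^{-1}(1) = \lambda^{-k} - m$, the last of which is positive and $\le 1$ by the defining inequality for $m_k$.

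Next I would handle the adjoint. Note that for $0 \le m < m_k$ the interval $[m\lambda^k,(m+1)\lambda^k)$ lies inside $[0,1)$ and so $S_{k,m}$ is already in canonical form, whereas for $m = m_k$ the expression in Definition \ref{Sk,m} must first be interpreted inside $\mathcal{Q}^\lambda = eA^\lambda e$, which by Lemma \ref{lambdagens}(1) replaces $\mathcal X_{[m_k\lambda^k,(m_k+1)\lambda^k)}$ by $\mathcal X_{[m_k\lambda^k,1)}$. Then applying Lemma \ref{lambdacalc}(2) together with $\alpha_{g^{-1}}(\mathcal X_{[a,b)}) = \mathcal X_{[g^{-1}(a),g^{-1}(b))}$, the image of $[m\lambda^k,(m+1)\lambda^k)$ under $g_{k,m}^{-1}$ is $[0,1)$, while the image of $[m_k\lambda^k,1)$ under $g_{k,m_k}^{-1}$ is $[0,\lambda^{-k}-m_k)$. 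This gives both formulas claimed for $S_{k,m}^*$ and $S_{k,m_k}^*$.

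The source and range projection formulas are then immediate from Lemma \ref{lambdacalc}(1): for $S_{k,m}^*S_{k,m}$ one computes $\mathcal X_{[0,1)} \alpha_{g_{k,m}^{-1}}(\mathcal X_{[m\lambda^k,(m+1)\lambda^k)}) \cdot \delta_1 = \mathcal X_{[0,1)} \cdot \delta_1 = e$ for $m < m_k$, and the $\mathcal X_{[0,\lambda^{-k}-m_k)}$ version at $m = m_k$ by the same truncation argument. Dually, for $S_{k,m}S_{k,m}^*$ one applies $\alpha_{g_{k,m}}$ to $\mathcal X_{[0,1)}$ (or to $\mathcal X_{[0,\lambda^{-k}-m_k)}$), obtaining $\mathcal X_{[m\lambda^k,(m+1)\lambda^k)}$ (respectively $\mathcal X_{[m_k\lambda^k,1)}$) times $\delta_1$.

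Finally the completeness relation follows by observing that the half-open intervals
\[
[0,\lambda^k),\ [\lambda^k,2\lambda^k),\ \ldots,\ [(m_k-1)\lambda^k, m_k\lambda^k),\ [m_k\lambda^k,1)
\]
partition $[0,1)$ by the definition of $m_k$, so the $m_k+1$ mutually orthogonal range projections sum to $\mathcal X_{[0,1)}\cdot \delta_1 = e$. The only delicate point in the whole argument is consistently treating the $m = m_k$ case through the truncation forced by membership in $eA^\lambda e$; everything else is a mechanical application of Lemma \ref{lambdacalc}.
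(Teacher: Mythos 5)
Your proposal is correct and is exactly the computation the paper has in mind: the published proof simply states that these are ``straightforward calculations based on Lemma \ref{lambdacalc} which we leave to the reader,'' and your write-up supplies those calculations, including the right resolution of the only genuine subtlety (reading $S_{k,m_k}$ as its cut-down $\mathcal X_{[m_k\lambda^k,1)}\cdot\delta_{g_{k,m_k}}$ inside $eA^\lambda e$ via Lemma \ref{lambdagens}).
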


\begin{proof}
These are just straightforward calculations based on Lemma \ref{lambdacalc}
which we leave to the reader.
\end{proof}

\begin{thm}\label{cuntzalg}
For each $\lambda$ with $0<\lambda<1,$ 
consider the partial isometries $S_{1,m}$ for $m=0,1,...,m_1$ where 
$m_1\lambda<1\leq (m_1 +1)\lambda.$ For $m<m_1,$ $S_{1,m}$ is an isometry and
$\sum_{m=0}^{m_1}S_{1,m}S^*_{1,m}=1.$
For $\lambda = 1/n$, $m_1=n-1,$ $S_{1,m_1}$ is also an isometry,  and 
$\mathcal Q^{1/n}\cong O_n,$ the usual Cuntz algebra. 
\end{thm}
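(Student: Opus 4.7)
The general-$\lambda$ assertions are immediate from Lemma \ref{Smusub}: for $0\le m<m_1$ it gives $S_{1,m}^*S_{1,m}=e=1_{\mathcal{Q}^\lambda}$, so each such $S_{1,m}$ is an isometry, and the same lemma asserts $\sum_{m=0}^{m_1}S_{1,m}S_{1,m}^*=e$. When $\lambda=1/n$, the defining inequality $m_1\lambda<1\le(m_1+1)\lambda$ reads $m_1<n\le m_1+1$, forcing $m_1=n-1$ and $(m_1+1)\lambda=1$. Hence $\lambda^{-1}-m_1=1$, and Lemma \ref{Smusub} also yields $S_{1,m_1}^*S_{1,m_1}=\mathcal{X}_{[0,1)}\cdot\delta_1=e$, so $S_{1,m_1}$ is an isometry as well.

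For the identification $\mathcal{Q}^{1/n}\cong O_n$, I would first apply the universal property of $O_n$ to produce a $*$-homomorphism $\phi:O_n\to\mathcal{Q}^{1/n}$ sending the canonical generators $s_{i+1}\mapsto S_{1,i}$ for $0\le i\le n-1$; simplicity of $O_n$, together with $\phi(s_{i+1})\neq 0$, forces $\phi$ to be injective. To establish surjectivity I would show that every generating partial isometry of $\mathcal{Q}^{1/n}$ of the form given in Lemma \ref{lambdagens} lies in $\phi(O_n)$, via a multi-index computation. Induction on $k$ using Lemma \ref{lambdacalc}(1) yields, for any $\mu=(i_1,\dots,i_k)\in\{0,\dots,n-1\}^k$,
\[
S_\mu:=S_{1,i_1}S_{1,i_2}\cdots S_{1,i_k}=S_{k,j},\qquad j=\sum_{r=1}^{k} i_r n^{k-r},
\]
so as $\mu$ varies, $j$ ranges over $\{0,1,\dots,n^k-1\}$. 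The products $S_\mu S_\nu^*$ with $|\mu|=|\nu|=k$ are then precisely the matrix units $\mathcal{X}_{[l/n^k,(l+1)/n^k)}\cdot\delta_{[1:(l-l')/n^k]}$ identified in the proof of Proposition \ref{AF} as generators of the UHF fixed-point algebra $F^{1/n}\cong n^\infty$; in particular $F^{1/n}\subseteq\phi(O_n)$.

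For a general generator $V=\mathcal{X}_{[a,b)}\cdot\delta_{[\lambda^k:c]}\in\mathcal{Q}^{1/n}$ we have $a,b,c\in\Z[1/n]$, so all three lie in $n^{-N}\Z$ for $N$ large. Decomposing $[a,b)$ into constituent intervals $[l/n^N,(l+1)/n^N)$ and identifying each piece with some $S_\mu S_\nu^*$ having $|\mu|=N$, $|\nu|=N-k$ (when $k\ge 0$; otherwise apply the same argument to $V^*$) exhibits $V$ as a finite sum of monomials in $\phi(O_n)$. Taking norm closures then gives $\phi(O_n)=\mathcal{Q}^{1/n}$, and $\phi$ is the desired isomorphism.

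The main obstacle is the surjectivity bookkeeping: one must verify that the multi-indices $(\mu,\nu)$ can always be chosen so that the range $S_\mu S_\mu^*$ and source $S_\nu S_\nu^*$ match the prescribed cut-off intervals, and that the group element $[\lambda^N:l\lambda^N]\,[\lambda^{-(N-k)}:-l']$ produced by $S_\mu S_\nu^*$ collapses to the prescribed $[\lambda^k:c]$. Both are routine applications of Lemma \ref{lambdacalc} and Lemma \ref{Smusub}, but the notation demands care.
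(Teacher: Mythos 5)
Your proposal is correct, and it follows the same overall strategy as the paper: identify the unital copy of $O_n$ generated by $S_{1,0},\dots,S_{1,n-1}$ via the Cuntz relations, and then prove equality with $\mathcal Q^{1/n}$ by showing every generator $\mathcal X_{[a,b)}\cdot\delta_g$ lies in that copy. Where you differ is in how the surjectivity is organized. You write an arbitrary generator directly as a finite sum of monomials $S_\mu S_\nu^*$ with $|\mu|-|\nu|=k$, after cutting $[a,b)$ into dyadic-type pieces $[l/n^N,(l+1)/n^N)$; the bookkeeping you flag does close up, since the constraint $[a,b)\subseteq[0,1)\cap[c,\lambda^k+c)$ from Lemma \ref{lambdagens} is exactly what guarantees the required source index $j'=l-c_0$ lands in $\{0,\dots,n^{N-k}-1\}$. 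The paper instead avoids the double multi-index: it first reduces to group elements of the form $[1:c]$ by multiplying by the isometry $S_{k,0}$ and taking adjoints, absorbs the function part into the already-established projections $\mathcal X_{[a,b)}\cdot\delta_1\in O_n$, and then handles the remaining factor $\mathcal X_{[c,1)}\cdot\delta_{[1:c]}$ by exhibiting it as the $l$-th power of the single element $\sum_{m}S_{k,m}S^*_{k,m-1}=\mathcal X_{[1/n^k,1)}\cdot\delta_{[1:1/n^k]}$. Your route is the more standard ``$O_n$ is the closed span of $S_\mu S_\nu^*$'' argument and makes the matrix-unit structure of $F^{1/n}$ explicit (tying in nicely with Proposition \ref{AF}); the paper's route trades that transparency for a shorter computation with a single cleverly chosen sum. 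Both are complete proofs.
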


\begin{proof}
The first statement is clear.
With $\lambda =1/n$ we have inside $\mathcal Q^{1/n},$ $n$ isometries
one for each $m=0,1,...,(n-1)$ defined by: 
$$
S_m=\mathcal X_{[\frac{m}{n},\frac{m+1}{n})}\cdot\delta_{g_m}\;\;\text{where}
\;\;g_m=[1/n\;:\;m/n]\;\;\text{and\;\;so}\;\;
S_m^*=\mathcal X_{[0,1)}\cdot\delta_{g_m^{-1}}\;\;\text{where}\;\;
g_m^{-1}=[n\; : \;-m].
$$
Using Lemma \ref{lambdagens}, we easily see that for each $m,$
$S_m\in \mathcal Q^{1/n}.$ 

Then, using item (1) of Lemma \ref{lambdacalc} we calculate:
$$
S_m^*S_m=\mathcal X_{[0,1)}\cdot\delta_1=e\;\;\text{and}\;\;
S_mS_m^*=\mathcal X_{[\frac{m}{n},\frac{m+1}{n})}\cdot\delta_1\;\;
\text{and\;\;so}\;\;\sum_{m=0}^{n-1} S_mS_m^*=
\mathcal X_{[0,1)}\cdot\delta_1=e.
$$
Since $e$ is the identity of $\mathcal Q^{1/n},$ we have constructed a 
unital copy of $O_n$ inside $\mathcal Q^{1/n}.$
Now one shows by induction that for each $k>0$ the product 
of exactly
$k$ of these $n$ isometries has the form $S_{k,m}$ where $S_{k,m}$
has the same defining equation as $S_m$ above but with $n^k$
in place of $n$ and $m=0,1,...,(n^k-1).$ These new isometries have
range projections 
$S_{k,m}S^*_{k,m} = \mathcal X_{[\frac{m}{n^k},\frac{m+1}{n^k})}\cdot\delta_1$
which therefore lie in this copy of $O_n.$ By adding up some of these 
projections,
we can get any projection of the form $\mathcal X_{[a,b)}\cdot\delta_1$
where $0\leq a<b\leq1$ and both $a,b$ have the form $m/n^k.$
But any element $a\in\Gamma_{1/n}$ can be written as
$a=\frac{m}{n^k}$ for a sufficiently large $k\geq 0$ and some $m\in\Z$
depending on $k,$ and any pair $a,b$ can be brought to a common denominator
$n^k.$ Hence any projection of the form $\mathcal X_{[a,b)}\cdot\delta_1$
in $\mathcal Q^{1/n}$ is in this copy of $O_n.$

Now, a straightforward calculation gives us:
\begin{eqnarray}
\label{SumS}\sum_{m=1}^{n^k -1} S_{k,m}S^*_{k,m-1}=
\sum_{m=1}^{n^k -1}\mathcal X_{
[\frac{m+1}{n^k},\frac{m+2}{n^k})}\cdot\delta_{[1\;:\;1/n^k]}
=\mathcal X_{
[1/n^k,1)}\cdot\delta_{[1\;:\;1/n^k]}\in O_n.
\end{eqnarray}

Finally, let $\mathcal X_{[a,b)}\cdot\delta_g \in \mathcal Q^{1/n}$ be 
an arbitrary
generator. By taking adjoints if necessary we can assume that $g$ has the
form $g=[n^k\; :\; *]$ where $k\geq0.$ Since $S_{k,0}$ is an isometry in
$O_n$ it suffices to prove that $S_{k,0}(\mathcal X_{[a,b)}\cdot\delta_g) \in 
O_n.$ That is, we are reduced to the case $g=[1\;:\; c]$ and again by taking 
adjoints if necessary we can assume that $c\geq 0.$ The case $c=0$ is done
and so we can assume that $c>0.$ So (with possibly new $a,b$)
we have $\mathcal X_{[a,b)}\cdot\delta_
{[1\;:\;c]}$ where $0<c\leq 1$ and $[a,b)\subseteq [0,1)\cap[c,c+1)=[c,1).$
But, $\mathcal X_{[a,b)}\cdot\delta_{[1\;:\;c]}=
\mathcal X_{[a,b)}\mathcal X_{[c,1)}\cdot\delta_{[1\;:\;c]}=
\mathcal X_{[a,b)}\cdot\delta_1\mathcal X_{[c,1)}\cdot\delta_{[1\;:\;c]}$
and we already know that $\mathcal X_{[a,b)}\cdot\delta_1\in O_n.$ Therefore
it suffices to see that $\mathcal X_{[c,1)}\cdot\delta_{[1\;:\;c]}\in O_n.$
However, $c=l/n^k$ for some $0<l<n^k$ and so:
$$
\mathcal X_{[c,1)}\cdot\delta_{[1\;:\;c]}=
\mathcal X_{[l/n^k,1)}\cdot\delta_{[1\;:\;l/n^k]}=
\left(\mathcal X_{[1/n^k,1)}\cdot\delta_{[1\;:\;1/n^k]}\right)^l
$$
which is in $O_n$ by Equation \ref{SumS}. Since all generators for 
$\mathcal Q^{1/n}$
are in $O_n$ we're done.
\end{proof}

\subsection{$K$-Theory of $\mathcal{Q}^\lambda$ for $\lambda$ 
rational}: 
Since $A^\lambda_0$ is stable and 
stably isomorphic to the UHF algebra $F_\lambda,$ each 
of its projections is equivalent to one in some finite-dimensional subalgebra
and hence to some projection in $C_0^\lambda(\R)$, and in this case
the trace induces an isomorphism from $K_0(A^\lambda_0)$ onto 
$\Gamma_\lambda=\Z[1/(pq)]\subset \R.$ This isomorphism carries the projection
$e=\mathcal X_{[0,1)}\cdot\delta_{1}$ which is the identity of 
$\mathcal{Q}^\lambda$
and $F^\lambda$ onto $1\in\Z[1/(pq)].$
Now, since $A^\lambda_0$ is AF, $K_1(A^\lambda_0)=\{0\},$ and since
$A^\lambda=\Z\rtimes_\lambda A^\lambda_0$ we can use the Pimsner-Voiculescu
exact sequence to calculate $K_*(A^\lambda)=K_*(\mathcal{Q}^\lambda).$ When we 
do this
we get:
$$
K_1(\mathcal{Q}^\lambda)=\{0\},\;\;{\rm and}\;\; K_0(\mathcal{Q}^\lambda)=
\Z[1/(pq)]/(1-\lambda)\Z[1/(pq)].
$$

\begin{prop}\label{ratgroups}
For $\lambda$ rational with $\lambda=p/q$ in lowest terms, we have
$$
K_1(\mathcal{Q}^\lambda)=\{0\},\;\;{\rm and}\;\;
K_0(\mathcal{Q}^\lambda)\cong\Z[1/(pq)]/(1-\lambda)\Z[1/(pq)]\cong\Z_{(q-p)}.
$$
\end{prop}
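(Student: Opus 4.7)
The approach is the one already telegraphed in the paragraph preceding the statement: apply the Pimsner--Voiculescu six-term exact sequence to the crossed-product decomposition $A^\lambda=\Z\rtimes_\lambda A_0^\lambda,$ and then identify the resulting $K$-groups using the AF structure of $A_0^\lambda$ established in Proposition \ref{AF}. Throughout I use that, by Proposition \ref{stable}, $A^\lambda\cong \mathcal{Q}^\lambda\otimes\mathcal{K},$ so stability of $K$-theory gives $K_*(\mathcal{Q}^\lambda)=K_*(A^\lambda).$

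First, since $A_0^\lambda$ is stably isomorphic to the UHF algebra of type $(pq)^\infty,$ it satisfies $K_1(A_0^\lambda)=0,$ and its unique tracial state identifies $K_0(A_0^\lambda)$ with $\Gamma_\lambda=\Z[1/(pq)],$ carrying the class of $e=\mathcal{X}_{[0,1)}\cdot\delta_1$ to $1.$ Plugging $K_1(A_0^\lambda)=0$ into the PV sequence immediately collapses it to
$$K_1(A^\lambda)\cong \ker(1-\alpha_*),\qquad K_0(A^\lambda)\cong \mathrm{coker}(1-\alpha_*),$$
both computed on $\Z[1/(pq)].$

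To identify $\alpha_*$: the generator $[\lambda:0]\in G_\lambda$ of the $\Z$-action scales $\R$ by $\lambda,$ hence sends $\mathcal{X}_{[a,b)}\cdot\delta_1$ to $\mathcal{X}_{[\lambda a,\lambda b)}\cdot\delta_1,$ whose trace is $\lambda$ times the original. Under the trace identification $\alpha_*$ is therefore multiplication by $\lambda=p/q$ on $\Z[1/(pq)],$ so $1-\alpha_*$ is multiplication by $(q-p)/q.$ Since $q$ is a unit in $\Z[1/(pq)],$ the image of $1-\alpha_*$ is $(q-p)\Z[1/(pq)];$ and since $\Z[1/(pq)]$ is torsion-free with $q-p\ne 0,$ the kernel vanishes. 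This yields $K_1(\mathcal{Q}^\lambda)=0$ and $K_0(\mathcal{Q}^\lambda)\cong \Z[1/(pq)]/(q-p)\Z[1/(pq)].$

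The remaining arithmetic step---showing $\Z[1/(pq)]/(q-p)\Z[1/(pq)]\cong \Z/(q-p)\Z$---rests on $\gcd(q-p,pq)=1,$ which holds because any prime dividing both $q-p$ and $pq$ would divide $p$ or $q$ and then also the other, violating $\gcd(p,q)=1.$ Consequently $pq$ is invertible mod $q-p,$ so the inclusion $\Z\hookrightarrow\Z[1/(pq)]$ induces a surjection of quotients, and injectivity follows from $(q-p)\Z[1/(pq)]\cap\Z=(q-p)\Z$ (again by coprimality). The main obstacle is really just verifying the compatibility of the trace with $\alpha_*;$ once $\alpha_*=\times\lambda$ is established, the remainder is formal commutative algebra.
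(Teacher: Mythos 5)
Your proposal is correct and follows essentially the same route as the paper: the Pimsner--Voiculescu sequence applied to $A^\lambda=\Z\rtimes A_0^\lambda$ with $K_1(A_0^\lambda)=0$ and the trace identification $K_0(A_0^\lambda)\cong\Z[1/(pq)]$ (which the paper sets up in the paragraph preceding the proposition), followed by the coprimality argument $\gcd(q-p,pq)=1$ to identify the quotient with $\Z/(q-p)\Z$. Your explicit verification that $\alpha_*$ acts as multiplication by $\lambda$ via the trace scaling, and your clean reformulation of the arithmetic as surjectivity plus $(q-p)\Z[1/(pq)]\cap\Z=(q-p)\Z$, are just slightly more detailed renderings of the same steps the paper carries out.
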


\begin{proof}
By Proposition \ref{Zgroups}, $\Gamma_\lambda=\Z[1/(pq)],$
so we must show that 
$$
\Z[1/(pq)]/(1-(1/(pq))\Z[1/(pq)]\cong\Z_{(q-p)}.
$$
Since $(q-p)=(1-p/q)q$ and every element of $\Z[1/(pq)]$ is of the form
$m/(pq)^N,$ it is easy to see that $(q-p)\Z[1/(pq)]=(1-p/q)\Z[1/(pq)].$
Now, $(q-p)$ and $(pq)^N$ are relatively prime for any $N$ and so there exist
$a,b\in\Z$ so that $1=a(q-p)+b(pq)^N$ and hence $m/(pq)^N=(q-p)am/(pq)^N +mb.$
That is, $m/(pq)^N$ and $mb$ represent the same element in the quotient.
So, every element in the quotient has an integer representative. Two integers
$c,d$ represent the same element in the quotient if and only if
$c-d=(p-q)n/(pq)^N,$ or $(c-d)(pq)^N=n(q-p).$ But then:
$$
(c-d)=(c-d)[a(q-p)+b(pq)^N]=(c-d)a(q-p)+b(c-d)(pq)^N=
[(c-d)a+bn](q-p).
$$
That is, $c,d$ represent the same element in $\Z/(q-p)\Z=\Z_{(q-p)}.$
On the other hand if $(c-d)$ is in $(q-p)\Z$ then 
clearly, $[c]=[d]$ in $\Z[1/(pq)]/(1-(1/(pq))\Z[1/(pq)]$
and we are done.
\end{proof}

\begin{cor}\label{funnyF's}
If $\lambda=p/q$ in lowest terms, then 
$$
F^\lambda=F^{p/q}\cong UHF((pq)^\infty)\;\;\;{\rm and}\;\;\;
\mathcal{Q}^\lambda=\mathcal Q^{p/q}\cong O_{(q-p+1)}.
$$
In particular, if $\lambda =\frac{k}{k+1}$ then
$$
F^\lambda\cong UHF((k(k+1))^\infty)\;\;\;{\rm and}\;\;\;
\mathcal{Q}^\lambda\cong O_2.
$$
\end{cor}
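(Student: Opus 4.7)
The first half of the claim, $F^\lambda \cong UHF((pq)^\infty)$, is already contained in Proposition~\ref{AF}, and the specialization to $\lambda=k/(k+1)$ just substitutes $pq=k(k+1)$. So the real content is identifying $\mathcal{Q}^\lambda$ with $O_{q-p+1}$, which I plan to do via the Kirchberg--Phillips classification theorem.

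First I would collect the structural properties of $\mathcal{Q}^\lambda$: it is unital (with identity $e$), separable, simple and purely infinite by Proposition~\ref{simple}, and nuclear and in the bootstrap class by Corollary~\ref{bootstrap2}. Thus $\mathcal{Q}^\lambda$ is a unital Kirchberg algebra in $\mathfrak{N}_{nuc}$, and by the unital version of the Kirchberg--Phillips theorem it is classified up to isomorphism by the pointed triple $(K_0,[1],K_1)$. (Corollary~\ref{bootstrap3} gives the stable version for $A^\lambda$; the unital statement is the standard companion and applies to $\mathcal{Q}^\lambda = eA^\lambda e$.)

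Next I would compute this invariant. Proposition~\ref{ratgroups} already gives $K_1(\mathcal{Q}^\lambda)=0$ and $K_0(\mathcal{Q}^\lambda)\cong \Z/(q-p)\Z$. The key additional point is to track the class of the unit. The trace on the UHF algebra $F^\lambda \cong UHF((pq)^\infty)$ induces an isomorphism $K_0(A_0^\lambda)\cong \Gamma_\lambda=\Z[1/(pq)]$ sending $[e]=[\mathcal X_{[0,1)}\cdot\delta_1]$ to $1$, and under the Pimsner--Voiculescu sequence used in the proof of Proposition~\ref{ratgroups} this passes to the class $1\in\Z[1/(pq)]/(1-\lambda)\Z[1/(pq)]\cong \Z/(q-p)\Z$. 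So the Elliott invariant of $\mathcal{Q}^\lambda$ is $(\Z/(q-p)\Z,\,1,\,0)$.

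Finally I would compare with $O_n$ for $n=q-p+1$: the standard computation gives $K_0(O_n)=\Z/(n-1)\Z=\Z/(q-p)\Z$ with $[1_{O_n}]=1$ and $K_1(O_n)=0$, matching the invariant of $\mathcal{Q}^\lambda$ on the nose. Kirchberg--Phillips then yields $\mathcal{Q}^\lambda\cong O_{q-p+1}$, and specializing to $\lambda=k/(k+1)$ gives $q-p+1=2$, hence $\mathcal{Q}^{k/(k+1)}\cong O_2$. The only step that requires any care is the unit-class tracking through the Pimsner--Voiculescu sequence; the rest is an assembly of previously proven results.
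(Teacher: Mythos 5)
Your proposal is correct and follows essentially the same route as the paper: both reduce the statement to the Kirchberg--Phillips classification after checking that the class of the unit $e$, which the trace sends to $1\in\Z[1/(pq)]$, becomes a generator of $K_0(\mathcal{Q}^\lambda)\cong\Z/(q-p)\Z$ under the Pimsner--Voiculescu identification. The only cosmetic difference is that the paper phrases the unit-class condition as ``$[e]$ is a generator'' and verifies this directly from the proof of Proposition~\ref{ratgroups}, whereas you phrase it as matching the Elliott invariant $(\Z/(q-p)\Z,\,1,\,0)$ of $O_{q-p+1}$; these are equivalent.
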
 

\begin{proof}
Since each $\mathcal{Q}^\lambda$ is separable, nuclear, simple, purely 
infinite and in the bootstrap category $\mathcal N_{nuc}$
once we show that the class of the identity
$e \in \mathcal{Q}^\lambda$ is a generator for 
$K_0(\mathcal{Q}^\lambda)=\Z/(q-p)\Z,$ the
Kirchberg-Phillips Classification Theorem, Theorem 8.4.1 of \cite{RS}, 
shows that
$\mathcal{Q}^\lambda\cong O_{(q-p+1)}.$ To this end we observe that since $e$ is 
mapped to $1$ in $\Z[1/pq],$ we must show that $[1]$ is a generator 
for $K_0(\mathcal{Q}^\lambda)=\Z[1/pq]/(1-(p/q))\Z[1/pq].$ Now, by the proof of 
the
previous proposition, $k[1]=[k\cdot 1] = 0\in\Z[1/pq]/(1-(p/q))\Z[1/pq]$ if and 
only if $[k\cdot 1] = 0\in\Z/(q-p)\Z$ if and only if $k-0=m(q-p)$ for some
$m\in\Z$ if and only if $k$ is a multiple of $(q-p).$ That is,
$[1],[2\cdot 1],\dots,[(q-p-1)\cdot 1]$ are all nonzero in 
$K_0(\mathcal{Q}^\lambda)=\Z/(q-p)\Z$ and hence $[1]$ is a generator.
\end{proof}

\subsection{The $K$-Theory of the Algebras $A_0^\lambda$ for $\lambda$
irrational} The case $\lambda$ rational is much simpler, and while it does fit
into the following scheme, it does not need this deeper machinery.
Initially, we (and others) believed that the algebras $A_0^\lambda$ were 
AF algebras
when $\lambda$ is irrational. In fact we will show that $A_0^\lambda$ is 
{\bf never} AF
when $\lambda$ is irrational. We will set up our examples to fit the 
situation on
page 1487 of \cite{Put2} so that we can apply the six-term exact sequence
of Theorem 2.1 on page 1489 of \cite{Put2}.

We let $\Gamma=\Gamma_\lambda\cong G^0_\lambda.$ Thus, 
$\Gamma\subset
\R$ is a countable dense subgroup of $\R$ which acts on $\R$ by translations. 
Before looking at the crossed product of $\Gamma$ acting on 
$C_0^\lambda(\R)=C_0(\R_\lambda)$ (which gives us $A_0^\lambda$)
we first consider the crossed product of $\Gamma$ acting on $C_0(\R).$ Since
$\Gamma$ acts on $\R$ by translation we can Fourier transform to get an 
isomorphism:
$$
\Gamma\rtimes C_0(\R)\cong \hat{\R}\rtimes C(\hat{\Gamma}).
$$
Then, by Connes' Thom isomorphism we get for $i=0,1$:
$$
K_i(\Gamma\rtimes C_0(\R))\cong K_i(\hat{\R}\rtimes C(\hat{\Gamma}))
\cong K_{i+1}(C(\hat{\Gamma})).
$$

\begin{prop}\label{Iantrick}
The composition:
$$
K_1(C_0(\R))\stackrel{i_*}{\longrightarrow}K_1(\Gamma\rtimes C_0(\R))
\stackrel{\widehat{}}{\longrightarrow} K_1(\hat{\R}\rtimes C(\hat{\Gamma}))
\stackrel{\cong}{\longrightarrow} K_0(C(\hat{\Gamma}))
$$
takes the generator $[u]\in K_1(C_0(\R))=\Z\cdot[u]$; where $u$ is the Bott 
element in $C_0(\R)^1$
defined by $u(t)=\frac{1+it}{1-it}$; to $[1_{\hat{\Gamma}}]$ where 
$1_{\hat{\Gamma}}$ is the identity function in $C(\hat{\Gamma}).$
\end{prop}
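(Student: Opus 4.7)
The plan is to argue by naturality of Connes' Thom isomorphism, using the unit inclusion $\iota:\C\hookrightarrow C(\hat{\Gamma})$, which is $\hat{\R}$-equivariant (the translation action of $\hat{\R}$ on $C(\hat{\Gamma})$ fixes the constants). This equivariant inclusion induces an injection of crossed products $\hat{\R}\rtimes_{\text{triv}}\C\hookrightarrow\hat{\R}\rtimes C(\hat{\Gamma})$, and by Fourier transform the left-hand side is just $C^{*}(\hat{\R})\cong C_{0}(\R)$.

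The first step is to verify that, under the Pontryagin/Fourier duality isomorphism $\Gamma\rtimes C_{0}(\R)\cong \hat{\R}\rtimes C(\hat{\Gamma})$, the canonical coefficient inclusion $i:C_{0}(\R)\hookrightarrow\Gamma\rtimes C_{0}(\R)$ corresponds precisely to the map $\hat{\R}\rtimes_{\text{triv}}\C\hookrightarrow\hat{\R}\rtimes C(\hat{\Gamma})$ induced by $\iota$. The point is that under the duality, the Fourier-dual copy $C^{*}(\hat{\R})=C_{0}(\R)$ sitting as coefficients on the left-hand side gets sent to the subalgebra of $\hat{\R}\rtimes C(\hat{\Gamma})$ generated by the universal unitary multipliers $\{v_{s}\}_{s\in\hat{\R}}$, which is exactly $C^{*}(\hat{\R})=\hat{\R}\rtimes_{\text{triv}}\C$. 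This identification is the heart of the proof and, in my view, the main technical obstacle: it requires writing the duality isomorphism explicitly on a dense subalgebra (e.g.\ $C_{c}(\Gamma\times\R)$ versus $C_{c}(\hat{\R}\times\hat{\Gamma})$) and checking where the generators go.

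Granted this identification, naturality of Connes' Thom (with respect to the equivariant inclusion $(\C,\text{triv})\hookrightarrow(C(\hat{\Gamma}),\text{translation})$) produces a commutative diagram
\[
\begin{array}{ccc}
K_{1}(C_{0}(\R))=K_{1}(\hat{\R}\rtimes_{\text{triv}}\C) & \stackrel{\cong}{\longrightarrow} & K_{0}(\C)\\
\downarrow\; i_{*} & & \downarrow\;\iota_{*}\\
K_{1}(\hat{\R}\rtimes C(\hat{\Gamma})) & \stackrel{\cong}{\longrightarrow} & K_{0}(C(\hat{\Gamma}))
\end{array}
\]
in which both horizontal arrows are Connes' Thom maps. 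The right-hand vertical arrow $\iota_{*}$ clearly sends the generator $1\in K_{0}(\C)$ to the class of the unit, $[1_{\hat{\Gamma}}]$.

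It therefore remains to show that the top horizontal map sends the Bott generator $[u]$ to $1\in K_{0}(\C)$. This is the classical fact that Connes' Thom isomorphism applied to a trivial $\R$-action recovers the Bott isomorphism $K_{0}(A)\cong K_{1}(A\otimes C_{0}(\R))$, and under this identification the class of $u(t)=(1+it)/(1-it)$ (with $A=\C$) is sent to the generator $1$. With this in hand, chasing $[u]$ through the diagram gives $[u]\mapsto 1\mapsto [1_{\hat{\Gamma}}]$, which is the desired conclusion. The only delicate aspect is that the sign and normalization conventions for Connes' Thom and for the Bott element $u$ must be compatible; this is a matter of tracking a single orientation choice, not a substantive computation.
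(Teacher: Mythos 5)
Your argument is correct, and it rests on the same two pillars as the paper's proof: the identification, under Pontryagin/Fourier duality, of the coefficient inclusion $C_0(\R)\hookrightarrow\Gamma\rtimes C_0(\R)$ with the inclusion of $C^*(\hat{\R})$ as the "group part" of $\hat{\R}\rtimes C(\hat{\Gamma})$, and the normalization of Connes' Thom isomorphism on the unital invariant part. The difference is one of packaging. The paper works with explicit representatives: writing $u=1+\varepsilon$, it tracks $[u]$ under $i_*$ to $[\delta_0\cdot\varepsilon+1]$ and then, via the duality isomorphism, to $[1+(\mbox{convolution by }\hat{\varepsilon}\cdot 1_{\hat{\Gamma}})]$, and separately observes that this same unitary represents the image of $[1_{\hat{\Gamma}}]\otimes[u]$ under the explicit construction of the Thom map; comparing representatives gives the result. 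You instead invoke naturality of the Thom isomorphism with respect to the equivariant unital inclusion $\C\hookrightarrow C(\hat{\Gamma})$ together with the fact that for a trivial action the Thom map reduces to Bott periodicity. Your version makes the logical structure more transparent and avoids manipulating representatives, but it defers the real content to the identification of the coefficient inclusion under duality --- which is exactly the computation the paper carries out concretely on the single element $\delta_0\cdot\varepsilon$ (and which you correctly flag as the main technical point). The orientation issue you raise is harmless: in the application that follows, only the subgroup $\Z[1_{\hat{\Gamma}}]$ matters, so $\pm[1_{\hat{\Gamma}}]$ would serve equally well.
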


\begin{proof} We first work on the right hand side of this sequence of maps.
Let $u(t)= 1 + \varepsilon(t),$ then by the proof of Connes' Thom isomorphism 
from
$$
K_0(C(\hat{\Gamma}))\otimes_{\Z} K_1(C_0(\R))\longrightarrow 
K_1(\R\rtimes C(\hat{\Gamma}))
$$
we see that $[1_{\hat{\Gamma}}]\otimes [u]$ gets mapped to the class 
$[1+(convolution\; by\; \hat{\varepsilon}\cdot1_{\hat{\Gamma}})].$ 
Now in this displayed equation, $K_0(C(\hat{\Gamma}))\otimes_{\Z} K_1(C_0(\R))=
K_0(C(\hat{\Gamma}))\otimes_{\Z} \Z\cdot[u]=K_0(C(\hat{\Gamma}))\cdot[u]
\cong K_0(C(\hat{\Gamma})).$
Thus, $[1_{\hat{\Gamma}}]$ in $K_0(C(\hat\Gamma))$ gets mapped to
the class $[1+(convolution\; by\; \hat{\varepsilon}\cdot1_{\hat{\Gamma}})]$
by the Thom isomorphism.

On the other hand, the map
$K_1((C_0(\R)^1)\longrightarrow K_1((\Gamma\rtimes C_0(\R))^1)$ takes 
$[u]\longmapsto [\delta_0\cdot\varepsilon + 1]$ and by the Fourier transform
this goes to $[(convolution\; by\; \hat{\varepsilon}\cdot1_{\hat{\Gamma}})+1]$
in $K_1(\R\rtimes C(\hat\Gamma)).$ Combining these we get:
$$1\in\Z\longmapsto [u]\in\Z\cdot[u]=K_1((C_0(\R))^1)=K_1(C_0(\R))
\longmapsto [1_{\hat{\Gamma}}]\in K_0((C(\hat\Gamma)).$$
\end{proof}
Now, by Proposition \ref{Zgroups} we know $\Gamma$ in many cases
so that these last groups are quite computable.
In the notation of \cite{Put2} we define the transformation 
groupoids:  
$$
G:=\R_\lambda\rtimes\Gamma,\;\;\;G^\prime :=\R\rtimes\Gamma,\;\;\;
{\rm and}\;\;\;H:=\Gamma\rtimes\Gamma.
$$
Then, $A^\lambda_0=C^*_r(G)$ is the reduced $C^*$-algebra of $G$;
$\Gamma\rtimes C_0(\R) =C^*_r(G^\prime)$ is the reduced $C^*$-algebra of 
$G^\prime$; and $\mathcal K(l^2(\Gamma))$ is the reduced $C^*$-algebra of $H$.
 By the proof of Proposition \ref{simple}
there is a continuous proper surjective map: $\R_\lambda \to \R,$ 
where
points in $\R$ which are not in $\Gamma$ each have a single pre-image, while 
points $\gamma\in\Gamma$ have exactly two pre-images in 
$\R_\lambda,$
which we denote by $\gamma^{-}$ and $\gamma^+.$ Thus, there are two disjoint 
embeddings of $\Gamma$ in $\R_\lambda:$
$$
i_0 , i_1 : \Gamma\to\R_\lambda\;:\;\;\;\; 
i_0(\gamma)=\gamma^- ,\;\;\;i_1(\gamma)=\gamma^+.
$$
Now in order to mesh with the notation of \cite{Put2}, we let $Y:=\Gamma$
with the equivalence relation, ``$=$'';
$X:= \R_\lambda,$ with the equivalence relation
$(i_0(\gamma)\sim i_1(\gamma));$ and quotient $\pi:X\to X^\prime:=\R$ where
$X^\prime=X/(i_0(\gamma)\sim i_1(\gamma))=\R;$ while the ``factor groupoid'' 
of $G=\R_\lambda\times\Gamma=X\times\Gamma$ is 
$G^\prime:=\R\times\Gamma=X^\prime\times\Gamma.$

We represent each of these three $C^*$-algebras on 
$\mathcal H:= l^2(\Gamma^+)\oplus l^2(\Gamma^-)$ where 
$\Gamma^\pm=\{\gamma^\pm\;|\;\gamma\in\Gamma\}$ in the following way.
First we denote the natural orthonormal basis elements of $\mathcal H$
by $\delta_{a^+}$ and $\delta_{a^-}$ for each $a\in \Gamma.$ Now the unitary
representation $U$ of $\Gamma$ on $\mathcal H$ is
$U_\gamma(\delta_{a^\pm})=\delta_{(a-\gamma)^\pm}.$ The actions of 
$C_0(\R_\lambda),$ $C_0(\R)$, and $C_0(\Gamma)$ on $\mathcal H$
are as follows for $f_1\in C_0(\R_\lambda),$ $f_2\in C_0(\R)$,  
$f_3\in C_0(\Gamma)$, and $\delta_{a^\pm}\in\mathcal H$
$$
\pi_1(f_1)(\delta_{a^\pm})=f_1(a^\pm) \delta_{a^\pm}\;\;\;
\pi_2(f_2)(\delta_{a^\pm})=f_2(a) \delta_{a^\pm}\;\;\;
\pi_3(f_3)(\delta_{a^\pm})=f_3(a) \delta_{a^\pm}.
$$
These three covariant pairs of representations, $(\pi_1,U)$, $(\pi_2,U)$, 
and $(\pi_3,U)$ define representations  of $C^*_r(G)=A^\lambda_0$,
$C^*_r(G^\prime)=\Gamma\rtimes C_0(\R)$, and $C^*_r(H)=\mathcal K(l^2(\Gamma))$
respectively on $\mathcal H.$ Since each of these $C^*$-algebras is simple
these representations are faithful.

Now, one  
checks that the hypotheses of Theorem 2.1 of \cite{Put2} are satisfied. As in
\cite{Put1,Put2} one shows that the two {\bf mapping cone algebras}
 of the inclusions:
$$
C^*_r(G^\prime)=\Gamma\rtimes C_0(\R)\longrightarrow A^\lambda_0 =
C^*_r(G)\;\;\;{\rm and}\;\;\;C^*_r(H)\longrightarrow C^*_r(H)\oplus C^*_r(H):
\;\;\;(\; x\mapsto (x,x)\;)
$$
have isomorphic $K$-Theory. One then pastes these isomorphisms into the 
mapping cone long exact sequence for 
$C^*_r(G^\prime)=\Gamma\rtimes C_0(\R)\longrightarrow A^\lambda_0 =C^*_r(G).$
Next one observes that for any $C^*$-algebra, $B$ the diagonal embedding 
$B\longrightarrow B\oplus B$
induces the diagonal embedding $K_*(B)\longrightarrow
 K_*(B)\oplus K_*(B)$ with quotient
isomorphic to $K_*(B)$ (this is true for any abelian group). This implies that
$K_*(B)\cong K_{*+1}(M(B,B\oplus B))$ so that
we get the six-term exact sequence from \cite{Put2}:
$$
\xymatrix{K_1(C_r^*(H)) \ar[r] &K_0(C_r^*(G^\prime))\ar[r]  &
K_0(C_r^*(G))\ar[d]\\
K_1(C_r^*(G))\ar[u] & K_1(C_r^*(G^\prime)) \ar[l]  &K_0(C_r^*(H))\ar[l]}
$$
In our set-up this becomes:
$$
\xymatrix{\{0\}\ar[r] & K_0(\Gamma\rtimes C_0(\R))\ar[r] & 
K_0(\Gamma\rtimes C_0(\R_\lambda))\ar[d]\\
K_1(\Gamma\rtimes C_0(\R_\lambda))\ar[u] & 
K_1(\Gamma\rtimes C_0(\R))\ar[l]&\Z\ar[l]}
$$
Which by Connes' Thom isomorphism becomes:
$$
\xymatrix{\{0\}\ar[r] & K_1(C(\hat{\Gamma}))\ar[r] & 
K_0(A_0^\lambda)\ar[d]\\
K_1(A_0^\lambda)\ar[u] & K_0(C(\hat{\Gamma}))\ar[l]&\Z\ar[l]}
$$

By Proposition \ref{Iantrick}, the nonzero element $[1_{\hat\Gamma}]$ in
$K_0(C_0(\hat\Gamma))\cong K_1(\Gamma\rtimes C_0(\R))$ is mapped to the
image of the class $[u]$ in $K_1(\Gamma\rtimes C_0(\R))$ by Connes' Thom 
isomorphism, and then the image of $[1_{\hat\Gamma}]$ in 
$K_1(\Gamma\rtimes C_0(\R_\lambda))$ is the same as the image of $[u]$
under the inclusion $K_1(\Gamma\rtimes C_0(\R))\longrightarrow
K_1(\Gamma\rtimes C_0(\R_\lambda)).$
However, this is clearly the same as the image of $[u]$ under the inclusion
$K_1(C_0(\R))\to K_1(C_0(\R_\lambda))
\to K_1(\Gamma\rtimes C_0(\R_\lambda)).$ This composition is $0$ since
$C_0(\R_\lambda)$ is an AF-algebra.
That is, the element $[1_{\hat\Gamma}]$ in
$K_0(C_0(\hat\Gamma))$ is mapped to $0$ in $K_1(A^\lambda_0)$ and hence is in 
the image of the map $\Z\longrightarrow K_0(C_0(\hat\Gamma)).$ 
Since 
$[1_{\hat\Gamma}]$ generates a copy of $\Z$ in $K_0(C_0(\hat\Gamma))$,
we have a nonzero homomorphism from $\Z$ to $\Z[1_{\hat\Gamma}]$ which is
onto and hence one-to-one. By the exactness, the map 
$K_0(A^\lambda_0)\longrightarrow \Z$ is the zero map.
$$
{\rm CONCLUSION:}\;\;\;
K_0(A^\lambda_0)\cong K_1(C(\hat\Gamma_\lambda))\;\;\;{\rm and}\;\;\;
K_1(A^\lambda_0)\cong K_0(C(\hat\Gamma_\lambda))/[1_{{\hat\Gamma}_\lambda}]\Z.
$$ 

\begin{prop}\label{irrational}
If $\lambda$ is irrational, then $K_1(A^\lambda_0)\neq \{0\}$ so that
$A^\lambda_0$ is not an AF-algebra.
\end{prop}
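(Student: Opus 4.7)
The plan is to read off the result from the boxed CONCLUSION displayed just before the statement, namely
\[
K_1(A^\lambda_0) \cong K_0(C(\hat{\Gamma}_\lambda))/[1_{\hat{\Gamma}_\lambda}]\Z,
\]
together with the rank computation in the Corollary following Proposition \ref{hatKtheory}. Since every AF-algebra has trivial $K_1$, it is enough to show that this quotient is nonzero whenever $\lambda$ is irrational.

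To see this I would tensor with $\Q$, which is exact, so the quotient becomes the $\Q$-vector space $K_0(C(\hat{\Gamma}_\lambda))\otimes_\Z\Q$ modulo the at-most-one-dimensional subspace generated by $[1_{\hat{\Gamma}_\lambda}]$. Thus it suffices to check that
\[
\dim_\Q\bigl(K_0(C(\hat{\Gamma}_\lambda))\otimes_\Z \Q\bigr)\geq 2.
\]
If $\lambda$ is irrational and algebraic with minimal polynomial of degree $d$, then Proposition \ref{Zgroups}(5) gives $\rank(\Gamma_\lambda)=d\geq 2$, and the Corollary to Proposition \ref{hatKtheory} yields $\rank K_0(C(\hat{\Gamma}_\lambda))=2^{d-1}\geq 2$. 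If $\lambda$ is transcendental, Proposition \ref{hatKtheory}(3) gives
\[
K_0(C(\hat{\Gamma}_\lambda))=\bigoplus_{k\,\text{even}}{\textstyle\bigwedge}^{k}(\Gamma_\lambda)
\]
with $\Gamma_\lambda$ free abelian of infinite rank, so $K_0$ already has infinite rank. In either case the required inequality holds, so the quotient is nontrivial and hence $K_1(A^\lambda_0)\neq\{0\}$, proving $A^\lambda_0$ is not AF.

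There is no real obstacle: all of the hard work (the six-term sequence of \cite{Put2}, the Connes--Thom identification, and the tracking of $[1_{\hat{\Gamma}_\lambda}]$ via Proposition \ref{Iantrick}) has already been carried out in the paragraphs leading up to the statement. The only delicate point one should mention explicitly is that $[1_{\hat{\Gamma}_\lambda}]$ generates a free cyclic subgroup of $K_0(C(\hat{\Gamma}_\lambda))$, which was already observed on the line preceding the CONCLUSION; this ensures that quotienting by $[1_{\hat{\Gamma}_\lambda}]\Z$ reduces the rank by exactly one rather than collapsing the group.
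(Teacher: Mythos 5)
Your proof is correct and follows essentially the same route as the paper: the paper's own argument is the one-line observation that by items (3) and (5) of Proposition \ref{hatKtheory} the group $K_0(C(\hat{\Gamma}_\lambda))$ is not singly generated when $\lambda$ is irrational, so the quotient $K_0(C(\hat\Gamma_\lambda))/[1_{\hat\Gamma_\lambda}]\Z$ in the CONCLUSION is nonzero. Your rank-after-tensoring-with-$\Q$ computation is just a slightly more explicit way of verifying that same non-single-generation, so there is nothing to add.
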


\begin{proof}
By items (3) and (5) of Proposition \ref{hatKtheory} we see that when $\lambda$ 
is irrational, $K_0(C(\hat{\Gamma}_\lambda))$ is not singly generated so that
$K_1(A^\lambda_0)\cong K_0(C(\hat\Gamma_\lambda))/[1_{{\hat\Gamma}_\lambda}]\Z
\neq \{0\}.$
\end{proof}
\subsection{$K$-theory computations of particular $\mathcal{Q}^\lambda$ for 
$\lambda$ irrational.}

\noindent{\bf Example(s) $\lambda=1/\sqrt{n}$:}\;\; for $n>1$ a square-free 
integer. Using
Proposition \ref{Zgroups}, we get:
$$
K_0(F^\lambda)=K_0(A^\lambda_0)=K_1(C(\hat{\Gamma_\lambda}))=
\Z[1/n]\oplus\Z[1/n]
$$
$$
K_1(F^\lambda)=K_1(A^\lambda_0)
=(K_0(C(\hat{\Gamma_\lambda})))\big/\Z[1]=
(\Z[1]\oplus\Z[1/n])\big/\Z[1] = \Z[1/n].
$$
To compute the $K$-theory of $\mathcal{Q}^\lambda$ in this case using the 
Pimsner-Voiculescu exact sequence, one must first compute the induced 
automorphism 
$\lambda_*$ on $K_1(C(\hat{\Gamma_\lambda}))$ and on 
$K_0(C(\hat{\Gamma_\lambda}))$ by a more detailed analysis of the proof of
\cite[Proposition 2.11]{Sc}. In the case of $K_1(C(\hat{\Gamma_\lambda}))$
we get a copy of the group $\Gamma_\lambda=\Z[1/n] + \Z[1/n]\sqrt{n}$ and the 
action on $\Gamma_\lambda$ is just multiplication by $\lambda = 1/\sqrt{n}.$
As an action translated to the abstract group $\Z[1/n]\oplus\Z[1/n],$ the 
automorphism becomes $\lambda_*(a,b) = (b, a/n).$ Therefore, $id_*-\lambda_*$
on  $K_0(A^\lambda_0)=\Z[1/n]\oplus\Z[1/n]$ to itself is clearly $1:1$.
Now it is an instructive exercise to show that the kernel of the homomorphism
$$
(a,b)\in\Z[1/n]\oplus\Z[1/n]\mapsto [a+b]\in \Z[1/n]\big/(1-1/n)\Z[1/n]
$$
is exactly the range of the homomorphism
$$
id_*-\lambda_* : \Z[1/n]\oplus\Z[1/n]\longrightarrow \Z[1/n]\oplus\Z[1/n].
$$ 
Hence, we have the isomorphisms:
$$
(\Z[1/n]\oplus\Z[1/n])\big/(id_*-\lambda_*)(\Z[1/n]\oplus\Z[1/n])\cong
\Z[1/n]\big/(1-1/n)\Z[1/n]\cong \Z\big/(n-1)\Z.
$$
where the last isomorphism follows from the proof of Proposition 
\ref{ratgroups} with $p=1$ and $q=n.$ 

Once we have computed the action of
$\lambda_*$ on $K_1(A^\lambda_0)=\Z[1/n]$ we will be ready to compute 
$K_*(\mathcal{Q}^\lambda).$ Now, by Proposition 2.11 of \cite{Sc} we have the 
isomorphism:
$$
K_0(C(\hat{\Gamma}_\lambda))\cong (\Z[1]\otimes_{\Z}\Z[1]) 
\oplus (\Z[1/n]\otimes_{\Z} \Z[1/n])=\Z[1]\oplus(\Z[1/n]\otimes_{\Z} \Z[1/n]).
$$
The action of $\lambda_*$ on $\Z[1]$ is of course the identity. However, the 
action of $\lambda_*$ on $(\Z[1/n]\otimes_{\Z} \Z[1/n])$ is just 
$x\otimes y\mapsto y\otimes x/n.$ If one combines this with the multiplication
isomorphism $x\otimes y\mapsto xy: \Z[1/n]\otimes_{\Z} \Z[1/n]\longrightarrow
\Z[1/n]$ we see that $\lambda_*$ acts as multiplication by $1/n$ on
$\Z[1/n]=\Z[1/n]\otimes_{\Z} \Z[1/n].$ Thus, $\lambda_*$ on the 
quotient $K_1(A^\lambda_0)=\Z[1/n]$ is just multiplication by $1/n.$
Therefore, $id_*-\lambda_*$ becomes multiplication by $(1-1/n)$ on $\Z[1/n]$
which is clearly $1:1.$ Applying the Pimsner-Voiculescu exact sequence and
recalling that $K_i(\mathcal{Q}^\lambda)=K_i(A^\lambda)$ we get
the isomorphisms: 
$$
K_0(\mathcal{Q}^\lambda)
\cong \Z\big/(n-1)\Z,\;\;{\rm and}\;\;K_1(\mathcal{Q}^\lambda)\cong
\Z\big/(n-1)\Z,\;\;{\rm for}\ \lambda=1/\sqrt{n}.
$$
For $n>2$ we get $K_1\neq 0$ and so these are not Cuntz algebras, in fact
not even Cuntz-Krieger algebras since $K_1$ has nonzero torsion. For 
$\lambda =1/\sqrt{2}$ however we get $K_0=0=K_1$ and by classification theory,
we must have $\mathcal Q^{1/\sqrt{2}}\cong O_2!$ However, even in this case 
the fixed 
point algebra, is NOT AF since it has $K_1=\Z[1/2]$, the tape-measure group.
So for the simplest irrational number $1/\sqrt{2}$ we get the Cuntz algebra,
$O_2$ with a strange gauge action of $\T.$

\begin{rems*}
In the examples below it is important to note that any polynomial 
of the form $f(x)=x^n +ax^{n-1}+\cdots+bx\pm 1$ has at most $n-1$ roots in
the open interval $(0,1)$ because the product of all the roots of $f$ must 
equal $\pm 1.$
\end{rems*}

\noindent{\bf Example(s)  quadratic integers and an algorithm:} 
If both $\lambda$ and 
$\lambda^{-1}$ 
are quadratic integers
with $\lambda\in (0,1),$ then $\lambda^2+a\lambda \pm 1=0$ where the 
integer polynomial
$f(x)=x^2+ax\pm 1$ is irreducible over $\mathbb{Q}.$ With these 
restrictions there 
are two cases, either $f(x)=x^2+ax-1$ where $a>0$ and 
$\lambda=1/2\cdot(\sqrt{a^2 +4}-a)\in (0,1);$ or $f(x)=x^2+ax+1$ where 
$a\leq -3$ and $\lambda=1/2\cdot(-\sqrt{a^2 -4}-a)\in (0,1).$ 

In the first case, $\lambda^2 +a\lambda-1=0,$ with $a>0,$ so 
that 
$\lambda +a-\lambda^{-1}=0$ and $\lambda^{-1}= a + \lambda.$ 
For this case we outline an algorithm using the ideas of the Smith Normal 
Form and the Pimsner-Voiculescu exact sequence to calculate the $K$-Theory.
By Proposition \ref{hatKtheory}, and the CONCLUSION before Proposition
\ref{irrational}, $\Gamma_\lambda =\Z + \Z\lambda$
and $K_0(A^\lambda_0)\cong K_1(C(\hat{\Gamma}_\lambda))=
\bigwedge^1(\Gamma_\lambda)=\Gamma_\lambda\cong \Z^2.$
Giving $\Gamma_\lambda$ its  $\Z$-basis $\{1,\lambda\}$ we see that 
the action of the automorphism $\lambda_*$ on 
$K_0(A^\lambda_0)\cong\Gamma_\lambda$ has matrix: 
$\left[\begin{array}{cc} 0 & 1\\1 & -a\end{array}\right].$ 
So, $(id-\lambda_*)=\left[\begin{array}{cc} 1& -1\\-1& (a+1) \end{array}\right]
:= M.$ To compute the kernel and cokernel of
this matrix mapping $\Z^2\to\Z^2$ we row and column-reduce $M$ over $\Z$
to obtain matrices $P,Q\in GL(2,\Z)$ so that $PMQ=D$ where $D$ is 
diagonal over $\Z.$ Then $\ker(M)\cong \ker(D)$ and 
$\Z^2/M(\Z^2)\cong\Z^2/D(\Z^2).$ In this case, we get 
$D=\left[\begin{array}{cc} 1 & 0\\0 & a\end{array}\right].$
Hence, on $K_0(A^\lambda_0)$ we have 
$$
\ker(id-\lambda_*)=\ker(M)\cong \ker(D) =\{0\}\;\;{\rm and}\;\;
{\rm coker}(id-\lambda_*)\cong {\rm coker}(D) =\Z/a\Z.
$$
Now we compute $(id-\lambda_*)$ on 
$$
K_1(A^\lambda_0)\cong
K_0(C(\hat{\Gamma}_\lambda))/\Z\cdot 1_o=(\Z\cdot 1_o\oplus\Z(1\wedge\lambda))
/\Z\cdot 1_o=\Z(1\wedge\lambda).
$$
Now, $\lambda_*(1\wedge\lambda)=\lambda\wedge\lambda^2 =
\lambda\wedge(1-a\lambda)=\lambda\wedge 1=(-1) 1\wedge\lambda.$ That is,
$\lambda_* = -id$ on $K_1(A^\lambda_0)\cong\Z.$ Therefore, 
$(id-\lambda_*) = {\rm multiplication\; by}\; 2 $ on $\Z(1\wedge\lambda)$
which has $\ker(id-\lambda_*)=\{0\}$ and ${\rm cokernel}(id-\lambda_*)\cong
\Z/2\Z$.

Applying these results to the Pimsner-Voiculescu exact
sequence we obtain:
$$
K_0(\mathcal{Q}^\lambda)=\Z/a\Z\;\;{\rm and}\;\;
K_1(\mathcal{Q}^\lambda)=\Z/2\Z,\;\;{\rm for}\;\; \lambda^2 +a\lambda -1 = 0,
\;n\geq 1.
$$
None of these examples are Cuntz-Krieger algebras since $K_1$
is not torsion-free. In particular, when $\lambda=(1/2)(\sqrt{5} -1)$ is the 
inverse of the golden mean, we get $K_0=\{0\}$ and $K_1=\Z/2\Z.$

In the second case, $\lambda^2 +a\lambda+1=0,$ we have as above,
$K_0(A^\lambda_0)\cong \Gamma_\lambda=\Z+\Z\lambda$ with 
$\Z$-basis $\{1,\lambda\};$ the diagonal version of $(id-\lambda_*)$ is
$D={\rm diag}[1,(a+2)]$ so that 
$\ker(id-\lambda_*)=\{0\}$ and ${\rm coker}(id-\lambda_*)\cong \Z/(a+2)\Z.$
On the other hand, $K_1(A^\lambda_0)\cong\Z(1\wedge\lambda)$ only now,
$\lambda_*=id$ here so that $(id-\lambda_*)=0$ and hence 
$\ker(id-\lambda_*)\cong\Z$ while ${\rm coker}(id-\lambda_*)\cong\Z.$ By 
Pimsner-Voiculescu we get
$$
K_0(\mathcal{Q}^\lambda)=\Z\oplus(\Z/(a+2)\Z)\;\;
{\rm and}\;\;K_1(\mathcal{Q}^\lambda)=\Z,\;\;{\rm for}\;\;
\lambda^2 + a\lambda +1=0,\;a\leq -3.
$$
We note that in this case, $\mathcal{Q}^\lambda$ has the correct $K$-theory
to be a Cuntz-Krieger algebra (and is therefore stably isomorphic to one),
and that in the case $a=-3$ 
(i.e., $\lambda = (1/2)(3-\sqrt{5})$) we have $K_0=\Z=K_1.$

\noindent{\bf Example cubic integers:} If $\lambda$ {\bf and} $\lambda^{-1}$ 
are cubic integers
with $\lambda\in (0,1),$ then $\lambda^3+a\lambda^2+b\lambda \pm 1=0$ where the 
integer polynomial
$f(x)=x^3+ax^2+bx\pm 1$ is irreducible over $\mathbb{Q}.$ Such an $f$ 
is irreducible
if and only if $f(1)\neq 0\neq f(-1).$ 
There are two cases depending on the constant, $\pm 1.$

First, consider $f(x)=x^3+ax^2+bx-1=0$ with $f(1)=a+b\neq 0$
and $f(-1)=a-b-2\neq 0$ so that $f$ is irreducible.
Now assume $a+b$ is {\bf positive} (but $a\neq b+2$). Then $f(0)=-1$ 
and $f(1)=a+b>0$ so that $f$ has a {\bf unique} root in 
$(0,1)$ since it is a cubic. 

Next consider the same polynomial, $f(x)=x^3+ax^2+bx-1=0,$
with $a+b$ {\bf negative} (but $a\neq b+2$). Since both $f(0)$ and $f(1)$ are 
negative, in
order to have a solution the function $f$ must have a local maximum on $(0,1).$
There are
examples with no solutions in $(0,1);$ for example, $f(x)=x^3-3x-1.$
In order to have a {\bf unique solution}, then considering $f^\prime(x)$,
one would need $4a^2-12b=0:$ while this has many solutions, they all satisfy
$|a|\leq b$ and so we can not have $a+b<0.$  So solutions are {\bf not} unique 
in this case.
But, there are infinitely many cubics with {\bf two distinct} solutions
in $(0,1);$ 
eg.,
$f(x)=x^3-(a+k)x^2+ax-1$ for $a\geq k+4$ and $k\geq 1$ has two solutions
in $(0,1),$ since $f(.5)>0.$ 

We now calculate the $K$-theory of 
$\mathcal{Q}^\lambda$ assuming that $\lambda$ satisfies 
$f(x)=x^3+ax^2+bx-1=0,$ where $a+b\neq 0,$ and $a-b\neq 2.$
Now, $\lambda^3 +a\lambda^2 +b\lambda-1=0,$ so that 
$\lambda^3 =1-a\lambda^2 -b\lambda$ and 
$\lambda^{-1}= \lambda^2+a\lambda+b.$ 
Then, 
$\Gamma_\lambda =\Z + \Z\lambda+\Z\lambda^2$
and $K_0(A^\lambda_0)\cong K_1(C(\hat{\Gamma}_\lambda))=
\bigwedge^{odd}(\Gamma_\lambda)
=\Gamma_\lambda\oplus (\Gamma_\lambda\wedge\Gamma_\lambda\wedge\Gamma_\lambda)
=\Gamma_\lambda\oplus(\Z(1\wedge\lambda\wedge\lambda^2))\cong\Z^4.$
Giving $\Gamma_\lambda$ its natural $\Z$-basis $\{1,\lambda,\lambda^2\}$  
the induced homomorphism $(id-\lambda_*)$ on 
$K_0(A^\lambda_0)\cong\Z^4$ yields
the diagonal matrix, 
$D={\rm diag}[1,1,(a+b),0]$
so that on $K_0(A^\lambda_0)$ we have 
$$
\ker(id-\lambda_*)\cong \ker(D)\cong\Z\;\;{\rm and}\;\;
{\rm coker}(id-\lambda_*)\cong {\rm coker}(D) =(\Z/(a+b)\Z)\oplus \Z.
$$
Now, $K_1(A^\lambda_0)\cong K_0(C(\hat{\Gamma}_\lambda))/\Z \cdot 1_o=
\bigwedge^2(\Gamma_\lambda)=\Z(1\wedge\lambda) + \Z(1\wedge\lambda^2) +
\Z(\lambda\wedge\lambda^2)\cong\Z^3.$ By similar computations we get
for $K_1(A^\lambda_0)\cong\Z^3;$ the matrix 
$D={\rm diag}[1,1,(a+b)].$
Hence, on $K_1(A^\lambda_0)$ we have
$$
\ker(id-\lambda_*)\cong \ker(D)=\{0\}\;\;{\rm and}\;\;
{\rm coker}(id-\lambda_*)\cong {\rm coker}(D) =\Z/(a+b)\Z.
$$
Applying these results to the Pimsner-Voiculescu exact
sequence we obtain: 
$$
K_0(\mathcal{Q}^\lambda)=\Z\oplus(\Z/(a+b)\Z)  \;\;{\rm and}\;\;
K_1(\mathcal{Q}^\lambda)= \Z\oplus(\Z/(a+b)\Z)\;\;{\rm for}\;\;
\lambda^3+a\lambda^2+b\lambda -1=0.$$

\begin{rems*} In case $a+b=1$ (which has infinitely many solutions 
corresponding to infinitely 
many distinct invertible cubic integers $\lambda\in (0,1)$)
we get $K_0(\mathcal{Q}^\lambda)=\Z=K_1(\mathcal{Q}^\lambda),$ which as 
noted above 
is also true for the invertible quadratic integer, 
$\lambda = (1/2)(3-\sqrt{5}).$ In the general cubic case with constant term 
 $-1$ we always have 
{\bf non-torsion} elements in both $K_0$ and $K_1:$ this is the opposite of
the case where the constant term is $+1,$ where we see below that $K_0$
and $K_1$ are both {\bf torsion} groups. A similar phenomenon occurs in the
quadratic case above, except that there we get {\bf torsion} in the $-1$ case
and {\bf non-torsion} in the $+1$ case! That this may be a periodic phenomenon
is supported by a calculation of two quartic examples: first,
the unique solution $\lambda\in(0,1)$ 
to the irreducible quartic $f(x)=x^4-3x^3+1$ gives us
$K_0=\Z$ and 
$K_1=\Z\oplus(\Z/3\Z)\oplus(\Z/3\Z);$ while, second,
the unique solution $\lambda\in(0,1)$ 
to the irreducible quartic $f(x)=x^4+3x^3-1$ gives us
$K_0=(\Z/3\Z)\oplus(\Z/3\Z)$ and 
$K_1=(\Z/9\Z)\oplus(\Z/2\Z),$ similar to the quadratic case. Proposition  
\ref{periodic} is further evidence.

When an irreducible polynomial 
$f(x)=x^n+ax^{n-1}+\cdots\pm1$ has two 
roots, $\lambda_1, \lambda_2\in (0,1),$  then $\Gamma_{\lambda_1}\cong
\Gamma_{\lambda_2}$ as {\bf rings} (but {\bf not}
as ordered rings, for that would imply equality). Still, 
$\mathcal Q^{\lambda_1}\cong\mathcal Q^{\lambda_2}$ (at least stably)
since the calculation of their $K$-groups are identical.
Their KMS states are not equivalent since the type ${\rm III}$ factors that they
generate are not isomorphic, as we will see below.
\end{rems*}

\begin{prop}\label{periodic}
Suppose $\lambda$ satisfies the irreducible (over $\Z$) polynomial,
$f(x)=x^n+\cdots\pm 1=0$.\\
{\rm (1)} For $n$ {\bf odd}, if
$f(x)=x^n+\cdots+1$ then $K_0(\mathcal{Q}^\lambda)$ has $\Z/2\Z$ as a 
summand. \\
While, if $f(x)=x^n+\cdots-1$ then $K_0(\mathcal{Q}^\lambda)$ has $\Z$ as a 
summand (so, by the next Proposition, ${\rm rank}(K_0)={\rm rank}(K_1)\geq 1$ 
in this case).\\
{\rm (2)} For $n$ {\bf even}, if
$f(x)=x^n+\cdots+1$ then $K_1(\mathcal{Q}^\lambda)$ has $\Z$ as a 
summand (so, by the next Proposition, ${\rm rank}(K_0)={\rm rank}(K_1)\geq 1$ 
in this case). \\
While, if $f(x)=x^n+\cdots-1$ then $K_1(\mathcal{Q}^\lambda)$ has 
$\Z/2\Z$ as a summand.
\end{prop}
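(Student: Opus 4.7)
The plan is to exploit the top exterior power $\bigwedge^n\Gamma_\lambda\cong\Z$, on which the automorphism $\lambda_*$ (multiplication by $\lambda$) acts as multiplication by the determinant of the companion matrix of $f$. Since $\lambda^{\pm 1}$ are algebraic integers, Proposition \ref{Zgroups}(2) gives $\Gamma_\lambda=\Z+\Z\lambda+\cdots+\Z\lambda^{n-1}\cong\Z^n$, and in the basis $\{1,\lambda,\dots,\lambda^{n-1}\}$ multiplication by $\lambda$ has determinant $(-1)^n a_0$, where $a_0=\pm 1$ is the constant term of $f$. Thus on $\bigwedge^n\Gamma_\lambda\cong\Z$ the map $\lambda_*$ is $+1$ when $(n,a_0)$ is (odd,$-1$) or (even,$+1$), and is $-1$ when $(n,a_0)$ is (odd,$+1$) or (even,$-1$). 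Correspondingly $\mathrm{id}-\lambda_*$ on this $\Z$ is either the zero map (kernel $\Z$, cokernel $\Z$) or multiplication by $2$ (kernel $0$, cokernel $\Z/2\Z$).

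Next I would locate this $\Z$ as a $\lambda_*$-equivariant direct summand of the appropriate $K$-group of $A_0^\lambda$. The exterior algebra decomposes as $\bigwedge^*\Gamma_\lambda=\bigoplus_k\bigwedge^k\Gamma_\lambda$, and each $\bigwedge^k\Gamma_\lambda$ is preserved by $\lambda_*$, so the decompositions of $\bigwedge^{even}\Gamma_\lambda$ and $\bigwedge^{odd}\Gamma_\lambda$ by homogeneous degree are decompositions as $\lambda_*$-modules. From Proposition \ref{hatKtheory}(2) and the Conclusion preceding Proposition \ref{irrational},
\[
K_0(A_0^\lambda)\cong\bigwedge\nolimits^{odd}\Gamma_\lambda\qquad\text{and}\qquad K_1(A_0^\lambda)\cong\bigwedge\nolimits^{even}\Gamma_\lambda\big/\Z[1_{\hat{\Gamma}_\lambda}],
\]
and $\bigwedge^n\Gamma_\lambda$ sits as an equivariant direct summand in $K_0(A_0^\lambda)$ when $n$ is odd, and in $K_1(A_0^\lambda)$ when $n\geq 2$ is even (the latter because the quotient is by the $\bigwedge^0$ piece, disjoint from $\bigwedge^n$). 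Hence both $\ker$ and $\operatorname{coker}$ of $\mathrm{id}-\lambda_*$ on the ambient $K_i(A_0^\lambda)$ contain the one-dimensional computation of the previous paragraph as a direct summand.

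Finally I would apply the Pimsner-Voiculescu six-term sequence to $A^\lambda=\Z\rtimes_\lambda A_0^\lambda$ to obtain the short exact sequences
\[
0\to\operatorname{coker}(\mathrm{id}-\lambda_*)\big|_{K_i(A_0^\lambda)}\to K_i(\mathcal{Q}^\lambda)\to\ker(\mathrm{id}-\lambda_*)\big|_{K_{i+1}(A_0^\lambda)}\to 0.
\]
The right-hand quotient is a subgroup of a finitely generated free abelian group (namely $\bigwedge^{odd}\Gamma_\lambda$ or $\bigwedge^{even}\Gamma_\lambda/\Z[1_{\hat{\Gamma}_\lambda}]$), so it is itself free abelian, and these extensions split. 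Any direct summand of $\operatorname{coker}(\mathrm{id}-\lambda_*)$ is therefore a direct summand of $K_i(\mathcal{Q}^\lambda)$. Combining with the earlier steps: for $n$ odd the relevant index is $i=0$, giving a $\Z/2\Z$ summand in $K_0(\mathcal{Q}^\lambda)$ when $a_0=+1$ and a $\Z$ summand when $a_0=-1$; for $n$ even the relevant index is $i=1$, giving a $\Z$ summand in $K_1(\mathcal{Q}^\lambda)$ when $a_0=+1$ and a $\Z/2\Z$ summand when $a_0=-1$, matching the four claims of the proposition.

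The main obstacle I foresee is purely bookkeeping: checking that the top-degree $\Z$ is literally split off from $\operatorname{coker}(\mathrm{id}-\lambda_*)$ rather than merely present as a subgroup, and that the PV extension splits so that summands persist into $K_*(\mathcal{Q}^\lambda)$. Both points follow from the $\lambda_*$-equivariant grading on the exterior algebra and the torsion-freeness of the ambient groups. No input beyond the companion-matrix determinant, the Künneth-type description of $K_*(C(\hat{\Gamma}_\lambda))$ already in the paper, and the Pimsner-Voiculescu sequence is required.
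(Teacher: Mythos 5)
Your proposal is correct and follows essentially the same route as the paper's proof: isolate the $\lambda_*$-invariant top exterior power $(1\wedge\lambda\wedge\cdots\wedge\lambda^{n-1})\Z$ inside $K_*(A_0^\lambda)$, observe that $\lambda_*$ acts there by $(-1)^n a_0=\pm 1$ so that $\mathrm{id}-\lambda_*$ is either $0$ or $2\cdot\mathrm{id}$, and push the resulting $\Z$ or $\Z/2\Z$ through Pimsner--Voiculescu. The only difference is that you spell out the bookkeeping (the placement of $\bigwedge^n$ in $K_0$ versus $K_1$ of $A_0^\lambda$ and the splitting of the PV extension via freeness of the kernel) which the paper leaves implicit.
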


\begin{proof}
In $K_*(A^\lambda_0)$ there is a $\lambda_*$-invariant summand,
$(1\wedge\lambda\wedge\lambda^2\wedge\cdots\wedge\lambda^{n-1})\Z.$
Depending on $n({\rm mod}\;2)$ and the constant term $\pm 1,$ the action of
$\lambda_*$ on this summand is $\pm id.$ Hence,
$(id-\lambda_*)$ here is either $0$ or $2 (id).$ Applying
Pimsner-Voiculescu gives a summand in 
$K_*(\mathcal{Q}^\lambda)$ of either $\Z$ or $\Z/2\Z.$
\end{proof}
\begin{prop}Suppose $\lambda$ is algebraic.\\
{\rm (1)} Then, $rank(K_0(\mathcal{Q}^\lambda))=
rank(K_1(\mathcal{Q}^\lambda))$ so that $\mathcal{Q}^\lambda$ is not
stably isomorphic to $O_\infty.$\\
{\rm (2)} If $\lambda$ and $\lambda^{-1}$ are both algebraic integers and 
$\mathcal {Q}^\lambda$ is stably isomorphic to a Cuntz algebra $O_n,$
then the minimal polynomial of $\lambda$ has odd degree and constant term $+1.$
Moreover, $n$ is congruent to $3(mod\;4)$ and all such Cuntz 
algebras appear this way.
\end{prop}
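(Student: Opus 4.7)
For (1), the plan is to tensor the Pimsner--Voiculescu six-term exact sequence for $A^\lambda=\Z\rtimes_\lambda A_0^\lambda$ with $\Q$. By the Corollary following Proposition \ref{hatKtheory} together with the CONCLUSION preceding Proposition \ref{irrational}, the groups $K_i(A_0^\lambda)$ have finite $\Q$-rank for algebraic $\lambda$. Since any $\Q$-linear endomorphism of a finite-dimensional space has equal-dimensional kernel and cokernel, the rationalized PV sequence yields
\[
\rank K_j(\mathcal Q^\lambda)\;=\;\dim\ker(1-\lambda_*|K_j(A_0^\lambda)\otimes\Q)\;+\;\dim\ker(1-\lambda_*|K_{j+1}(A_0^\lambda)\otimes\Q),
\]
which is symmetric in $j$, so $\rank K_0(\mathcal Q^\lambda)=\rank K_1(\mathcal Q^\lambda)$. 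Since $K_0(O_\infty)=\Z$ and $K_1(O_\infty)=0$ have different ranks, $\mathcal Q^\lambda$ cannot be stably isomorphic to $O_\infty$.

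For (2) I begin with a parity/sign analysis on the top exterior power. Under the hypotheses, $\Gamma_\lambda$ is free abelian of rank $d$ (Proposition \ref{Zgroups}(2)) and each $\bigwedge^k\Gamma_\lambda$ is a $\lambda_*$-invariant direct summand of $K_i(C(\hat\Gamma_\lambda))$ (Proposition \ref{hatKtheory}(2)). The key input is that on $\bigwedge^d\Gamma_\lambda\cong\Z$, $\lambda_*$ is multiplication by $\det(\lambda_*|\Gamma_\lambda)=(-1)^d a_0$, where $a_0=\pm 1$ is the constant term of the minimal polynomial $p$ of $\lambda$ (read off the companion matrix in the basis $\{1,\lambda,\ldots,\lambda^{d-1}\}$). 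I then split into four cases and rule out three: for $d$ even the wedge $\bigwedge^d$ sits in $K_1(A_0^\lambda)$ with $1-\lambda_*$ equal to $0$ or $2$, injecting $\Z$ or $\Z/2\Z$ into $K_1(\mathcal Q^\lambda)$ via PV; for $d$ odd and $a_0=-1$, $\bigwedge^d\subseteq K_0(A_0^\lambda)$ with $1-\lambda_*=0$, so its $\Z$ passes into $\ker(1-\lambda_*|K_0(A_0^\lambda))$ and hence into $K_1(\mathcal Q^\lambda)$. Thus $\mathcal Q^\lambda$ stably isomorphic to $O_n$ forces $d$ odd and $a_0=+1$.

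Assuming these, $K_1(\mathcal Q^\lambda)=0$ forces $1-\lambda_*$ to be an automorphism of the finitely generated abelian group $K_1(A_0^\lambda)=\bigoplus_{k=2,4,\ldots,d-1}\bigwedge^k\Gamma_\lambda$, which gives $|\det(I-\bigwedge^k\lambda_*)|=1$ for each even $k$ in this range, while $n-1=|K_0(\mathcal Q^\lambda)|=\prod_{k=1,3,\ldots,d}|\det(I-\bigwedge^k\lambda_*)|$ with the $k=d$ factor equal to $|1-(-1)|=2$. For each odd $k\le d-2$, I pair with the even $d-k$: since $\prod_i\lambda_i=(-1)^d a_0=-1$, the eigenvalues of $\bigwedge^{d-k}\lambda_*$ are $-\mu^{-1}$ as $\mu$ runs over the eigenvalues of $\bigwedge^k\lambda_*$, and a direct manipulation gives
\[
|\det(I+\bigwedge^k\lambda_*)|\;=\;|\det(\bigwedge^k\lambda_*)|\cdot|\det(I-\bigwedge^{d-k}\lambda_*)|\;=\;1.
\]
Reducing mod $2$ yields $\det(I-\bigwedge^k\lambda_*)\equiv\det(I+\bigwedge^k\lambda_*)\equiv 1\pmod 2$, so each such factor is odd. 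Multiplying gives $v_2(n-1)=1$, i.e.\ $n\equiv 3\pmod 4$.

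For the converse statement that every such $n$ appears, I will exhibit the family already available in degree three: for each integer $N\ge 1$ the cubic $p(x)=x^3-Nx^2-(N+1)x+1$ has no rational roots (check $\pm 1$) and is hence irreducible, has constant term $+1$ so that $\lambda^{-1}$ is an algebraic integer, and satisfies $p(0)=1>0>1-2N=p(1)$, giving a root $\lambda\in(0,1)$. Running the PV computation gives $K_0(\mathcal Q^\lambda)\cong\Z/(4N-2)\Z$ and $K_1(\mathcal Q^\lambda)=0$, so by the Kirchberg--Phillips classification (Corollary \ref{bootstrap3}) $\mathcal Q^\lambda$ is stably isomorphic to $O_{4N-1}$, and $4N-1$ sweeps through all positive integers $\equiv 3\pmod 4$. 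The principal obstacle will be the reciprocal-eigenvalue duality that converts $|\det(I-\bigwedge^{d-k}\lambda_*)|=1$ into oddness of $|\det(I-\bigwedge^k\lambda_*)|$: one must carefully track both the sign $\det(\bigwedge^k\lambda_*)=(-1)^{\binom{d-1}{k-1}}$ and the eigenvalue identification forced by $\prod_i\lambda_i=-1$, everything else being bookkeeping with PV and Smith normal forms.
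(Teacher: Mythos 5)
Your proposal is correct, and its overall architecture coincides with the paper's: part (1) is the same rank count obtained by tensoring the Pimsner--Voiculescu sequence with $\Q$ (the paper chases ranks around the exact hexagon, you use $\dim\ker=\dim\operatorname{coker}$ for endomorphisms of finite-dimensional $\Q$-vector spaces -- the same computation); your analysis of $\lambda_*$ acting on the top exterior power $\bigwedge^d\Gamma_\lambda$ by $\det(\lambda_*)=(-1)^d a_0$ is exactly the paper's Proposition \ref{periodic}; and the cubic family you exhibit is a reparametrization of the paper's family $x^3+mx^2+nx+1$ with $m=n+1$, $n\le -2$. The one step where you genuinely diverge is the derivation of $n\equiv 3\pmod 4$. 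The paper gets this in one line from the cyclicity of $K_0(O_n)$: since $\Z/2\Z$ is a direct summand of $K_0(\mathcal{Q}^\lambda)$ (the PV extension splits because $\ker(1-\lambda_*)$ on the free group $K_1(A_0^\lambda)$ is free, and the cokernel splits along the $\lambda_*$-invariant wedge summands), a finite cyclic group with $\Z/2\Z$ as a direct summand must have order congruent to $2$ modulo $4$. You instead prove directly that the $2$-adic valuation of $n-1=\prod_{k\ \mathrm{odd}}\lvert\det(I-\bigwedge^k\lambda_*)\rvert$ equals $1$, using the eigenvalue duality between $\bigwedge^k$ and $\bigwedge^{d-k}$ forced by $\det(\lambda_*)=-1$, together with $\lvert\det(I-\bigwedge^{d-k}\lambda_*)\rvert=1$ (a consequence of $K_1(\mathcal{Q}^\lambda)=0$), to show every factor with $k<d$ is odd while the $k=d$ factor is $2$; I checked the identity $\det(I+\bigwedge^k\lambda_*)=\det(\bigwedge^k\lambda_*)\det(I-\bigwedge^{d-k}\lambda_*)$ and the mod $2$ reduction, and they are sound. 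Your route is more computational, but it never invokes cyclicity of the target group and therefore yields the sharper conclusion that $\lvert K_0(\mathcal{Q}^\lambda)\rvert\equiv 2\pmod 4$ whenever $d$ is odd, the constant term is $+1$, and $K_1(\mathcal{Q}^\lambda)=0$; the paper's argument is shorter because it gets the splitting of the order for free from knowing $K_0(O_n)$ is cyclic.
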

\begin{proof}
To see part (1) we tensor the Pimsner-Voiculescu exact sequence by $\Q$
(which preserves exactness)
to obtain an exact hexagon of $\Q$-vector spaces:
$$
\xymatrix{V_1 \ar[r]^{\theta_1} & V_1\ar[r]^{\tau_1}  &
K_1^{\Q}\ar[d]^{\mu_1}\\
K_0^{\Q}\ar[u]^{\mu_0} & V_0 \ar[l]^{\tau_0}  & V_0\ar[l]^{\theta_0}}
$$
where $V_i=K_i(A^\lambda_0)\otimes\Q,$ and $K_i^{\Q}=K_i(A^\lambda)\otimes\Q.$ 
Then $$\dim(K_0^{\Q})=\rank(\mu_0)+{\rm nullity}(\mu_0)
=\rank(\mu_0)+\rank(\tau_0)\;\;{\rm and}\;\;
\dim(K_1^{\Q})=\rank(\mu_1)+\rank(\tau_1)$$
and
$$\rank(\tau_0)+\rank(\theta_0)=\dim(V_0)=\rank(\theta_0)+\rank(\mu_1)
\;\;{\rm so\;\;that}\;\;\rank(\tau_0)=\rank(\mu_1).$$
Similarly, $\rank(\tau_1)=\rank(\mu_0),$ so that:
$$\dim(K_0^{\Q})=\rank(\mu_0)+\rank(\tau_0)=\rank(\mu_1)+\rank(\tau_1)=
\dim(K_1^{\Q}).$$
That is, $\rank(K_0(\mathcal{Q}^\lambda))=
\dim K_0(\mathcal{Q}^\lambda)\otimes_{\Z}\Q=\dim K_0(A^\lambda)\otimes_{\Z}\Q=
\cdots \rank(K_1(\mathcal{Q}^\lambda)).$
By Proposition \ref{periodic}, if the minimal polynomial of $\lambda$ has even 
degree, then $K_1(\mathcal {Q}^\lambda)\neq \{0\},$ and so 
$\mathcal {Q}^\lambda$ cannot be stably isomorphic to a Cuntz algebra. If
$\mathcal {Q}^\lambda$ is stably isomorphic to $O_n$ then $n$ is finite by part
(1)and by Proposition \ref{periodic}, the order of $K_0(\mathcal {Q}^\lambda)$
must be even and therefore $n$ must be odd. Furthermore, the minimal
polynomial must have constant term $+1.$ In order for 
$K_0(\mathcal {Q}^\lambda)$ to be a finite cyclic group of even order,
it must be of the form $\Z/m\Z\oplus\Z/2\Z$ where $m$ is {\bf odd} since
$\Z/2\Z$ is a summand. Let $m=2k+1$ then
$$n=\sharp[\Z/m\Z\oplus\Z/2\Z] + 1 =2m+1=4k+3$$
as claimed.\\
\hspace*{.2in} In the examples below where $\lambda^3+a\lambda^2+b\lambda+1=0,$
and either $a-b=1$ and $b\leq -2$ OR $a=b=-1$ and $b\leq -1$, we obtain
(stably, at least) all the Cuntz algebras $O_n$ where $n\equiv 3( mod\;4).$
\end{proof}
Now consider the case of irreducible cubics
of the form $f(x)=x^3+mx^2+nx+1;$ so $f(1)=m+n+2\neq 0$ and 
$f(-1)=m-n\neq 0.$ 
Since $f(0)=1,$ if we have $f(1)=m+n+2 <0,$ then we have as 
above a {\bf unique} root in $(0,1).$

If $f(1)=m+n+2 >0,$ we can have distinct roots. 
For example, if $n=-4$ and $m=3$, then,
$f(x)=x^3+3x^2-4x+1$ has two roots in $(0,1).$  
If $n<<0,$ we get several solutions $m$ for each $n$: eg., $n=-7$ implies 
that any $m$ with $6\leq m\leq 9$ will yield a polynomial with 
two roots in $(0,1).$  

We now calculate the $K$-Theory of 
$\mathcal{Q}^\lambda$ assuming $\lambda$ satisfies $f(\lambda)=
\lambda^3+m\lambda^2+n\lambda+1=0.$ 
Again, 
$K_0(A^\lambda_0)\cong \Z^4,$ but now the diagonal matrix 
$D={\rm diag}[1,1,(m+n+2),2].$
On $K_1(A^\lambda_0)\cong \Z^3,$ 
the matrix $D={\rm diag}[1,1,(m-n)].$
Both matrices are $1:1$ since 
$m+n+2\neq 0\neq m-n.$ We get:
$$
K_0(\mathcal{Q}^\lambda)=\Z/(n+m+2)\Z\oplus \Z/2\Z\;\;{\rm and}\;\;
K_1(\mathcal{Q}^\lambda)=\Z/(m-n)\Z\;\;{\rm for}\;\; 
\lambda^3+m\lambda^2+n\lambda+1=0.
$$
To obtain Cuntz algebras, we need $m-n=\pm 1.$
It turns out $f(1)>0$ can not occur, so we must have $f(1)=m+n+2<0$ 
hence there is a unique root $\lambda$ in $(0,1).$ Combining this
inequality with $m-n=\pm 1$ we get exactly two infinite families of solutions;
$m=n+1$ for $n\leq -2$, OR $m=n-1$ for $n\leq -1.$
In either case, the sequence of numbers  $\{|m+n+2|\}$ 
is the same: $\{2k+1|k\geq 0\}.$
For this sequence we get the $K_0$ groups: 
$\Z/(2k+1)\Z\oplus\Z/2\Z\cong \Z/(4k+2)\Z.$ Since the $K_1$ groups are all 
$\{0\}$, by construction, the algebras $\mathcal{Q}^\lambda$ are 
(at least stably)
the Cuntz algebras, $O_{4k+3}$ for $k\geq 0.$ 
That is, $O_3$, $O_7$, $O_{11}$, etc.\\

\noindent{\bf Example, $\lambda$ transcendental:}
\begin{lemma}
Let $\varphi : \bigoplus_{n\in\Z} \Z\longrightarrow \Z$ be the surjective 
homomorphism, $\phi(\{a_n\}):=\sum_{n\in\Z} a_n;$ and let 
$S\in Aut(\bigoplus_{n\in\Z} \Z)$ be the shift $S(\{a_n\}_{n\in\Z}):
=\{a_{n-1}\}_{n\in\Z}.$ Then, $(id-S)$ is $1:1$ and
$\ker(\varphi)={\rm Im}(id-S),$ so that
$(\bigoplus_{n\in\Z} \Z)/{\rm Im}(id-S)\cong\Z.$
\end{lemma}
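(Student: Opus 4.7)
The plan is to unpack the finite-support hypothesis carefully and then check each assertion by a direct computation; nothing deeper is needed since both maps are defined entry-by-entry.

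First I would establish injectivity of $id-S$. Suppose $\{a_n\}\in\bigoplus_{n\in\Z}\Z$ satisfies $(id-S)(\{a_n\})=0$, i.e.\ $a_n=a_{n-1}$ for every $n\in\Z$. Thus the sequence $\{a_n\}$ is constant. But finite support forces this constant to be $0$, so $\{a_n\}=0$. This is where the hypothesis ``$\bigoplus$'' rather than ``$\prod$'' is essential; on the product $\prod_n\Z$ the map $id-S$ has a kernel equal to the constant sequences.

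Next I would show the two inclusions $\operatorname{Im}(id-S)=\ker\varphi$. For $(\subseteq)$, if $\{b_n\}=(id-S)(\{a_n\})$ then $b_n=a_n-a_{n-1}$, and since $\{a_n\}$ has finite support the sum telescopes:
\[
\varphi(\{b_n\})=\sum_{n\in\Z}(a_n-a_{n-1})=0.
\]
For $(\supseteq)$, given $\{b_n\}\in\ker\varphi$, define $a_n:=\sum_{k\le n}b_k$. Because $\{b_n\}$ is finitely supported, $a_n=0$ for $n$ sufficiently negative, and $a_n=\varphi(\{b_n\})=0$ for $n$ sufficiently positive; hence $\{a_n\}$ again lies in $\bigoplus_{n\in\Z}\Z$. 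A direct check gives $a_n-a_{n-1}=b_n$, i.e.\ $(id-S)(\{a_n\})=\{b_n\}$.

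Finally, the first isomorphism theorem applied to the surjection $\varphi$ with kernel $\operatorname{Im}(id-S)$ yields
\[
\bigl(\bigoplus_{n\in\Z}\Z\bigr)\big/\operatorname{Im}(id-S)\;\cong\;\Z,
\]
completing the proof. There is no real obstacle here; the only point to keep in mind is to invoke the finite-support condition at both places it is needed (injectivity and the construction of the preimage).
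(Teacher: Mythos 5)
Your proof is correct, and every step checks out: injectivity of $id-S$ from finite support, the telescoping sum for $\operatorname{Im}(id-S)\subseteq\ker\varphi$, the explicit partial-sum preimage $a_n=\sum_{k\le n}b_k$ for the reverse inclusion, and the first isomorphism theorem to finish. The route is genuinely different in presentation from the paper's, though the underlying mechanism is the same. The paper identifies $\bigoplus_{n\in\Z}\Z$ with the Laurent polynomial ring $\Z[x,x^{-1}]$, so that $\varphi$ becomes the augmentation map, $S$ becomes multiplication by $x$, and $id-S$ becomes multiplication by $1-x$; injectivity is then immediate because $\Z[x,x^{-1}]$ is an integral domain, and the inclusion $\ker\varphi\subseteq\operatorname{Im}(id-S)$ comes from the factor theorem: after clearing denominators, $p(1)=0$ forces $p(x)=(1-x)q(x)$, followed by a short argument that $q$ has integer coefficients. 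Your direct sequence-level argument is the same factorization written out explicitly --- your $a_n$ are exactly the coefficients of the paper's $q$ --- but it buys you two small things: integrality of the preimage is automatic rather than needing a separate remark, and the finite-support hypothesis is visibly invoked at exactly the two places it matters (you correctly flag that $id-S$ has kernel the constants on the full product $\prod_n\Z$). The paper's version buys a slicker one-line injectivity proof and a formulation that generalizes to other group rings. Either proof is complete.
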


\begin{proof}
As a model for $\bigoplus_{n\in\Z} \Z$ we use $\Z[x,x^{-1}]=
\bigoplus_{n\in\Z} \Z x^n,$ the ring of Laurent polynomials over 
$\Z$ (i.e., the group ring over $\Z$ of the group
$\{x^n| n\in\Z\}$). Here, $\varphi$ is the augmentation map,
$S$ is multiplication by $x,$ and $(id-S)$ is multiplication by $(1-x)$ which
is $1:1.$ Now, 
$$
\sum_{n=-N}^{N}a_n x^n \in\ker(\varphi) \Leftrightarrow \sum_{n=-N}^{N} a_n =0
\Leftrightarrow \sum_{n=-N}^{N}a_n x^{n+N} \in\ker(\varphi).
$$
Let $p(x)=\sum_{n=-N}^{N}a_n x^{n+N} \in \Z[x]$ so
$p(1)=\sum_{n=-N}^{N} a_n =0.$ Hence, $p(x)$ factors:
$p(x)=(1-x)q(x)$ where initially $q(x)\in\mathbb{Q}[x].$ Since $p(x)\in\Z[x]$ it
is easy to see that in fact, $q(x)\in\Z[x]$ also. Then,
$$
\sum_{n=-N}^{N}a_n x^n = x^{-N}p(x)=(1-x)x^{-N}q(x)\in(1-x)\Z[x,x^{-1}]
={\rm Im}(id-S).
$$
That is, $\ker(\varphi)\subseteq {\rm Im}(id-S),$ and the other 
containment is immediate.
\end{proof}

\begin{prop}
If $\lambda$ is transcendental then 
$$
K_0(\mathcal{Q}^\lambda)\cong \bigoplus_{n=1}^\infty \Z 
\cong K_1(\mathcal{Q}^\lambda).
$$
\end{prop}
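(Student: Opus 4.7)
First I would unpack what needs to be computed using the machinery already in place. By the CONCLUSION preceding Proposition~\ref{irrational} and Proposition~\ref{hatKtheory}(3), since $\lambda$ is transcendental $\Gamma_\lambda=\bigoplus_{k\in\Z}\Z\lambda^k$ is free abelian of countably infinite rank, and
$$
K_0(A_0^\lambda)\;\cong\;\bigwedge\nolimits^{odd}(\Gamma_\lambda),\qquad
K_1(A_0^\lambda)\;\cong\;\bigwedge\nolimits^{even}(\Gamma_\lambda)\big/\Z[1_{\hat{\Gamma}_\lambda}]\;\cong\;\bigoplus_{p\ge 2,\;p\text{ even}}\bigwedge\nolimits^{p}(\Gamma_\lambda),
$$
where in the last isomorphism we use that $[1_{\hat{\Gamma}_\lambda}]$ corresponds to the generator of $\bigwedge^{0}(\Gamma_\lambda)=\Z$. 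Since $A^\lambda=\Z\rtimes_\lambda A_0^\lambda$, the plan is to run the Pimsner--Voiculescu six-term exact sequence for $A^\lambda$ and hence for $\mathcal{Q}^\lambda\sim A^\lambda$.

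Next I would identify the action $\lambda_*$. On $\Gamma_\lambda=\bigoplus_{k\in\Z}\Z\lambda^k$ the automorphism is simply multiplication by $\lambda$, i.e.\ the two-sided shift $S:\lambda^k\mapsto\lambda^{k+1}$. On $\bigwedge^p(\Gamma_\lambda)$ with its natural basis $\{\lambda^{i_1}\wedge\cdots\wedge\lambda^{i_p}:i_1<\cdots<i_p\}$, the induced action shifts every index by $1$. The key structural observation is that this action partitions the basis into $S$-orbits, each orbit parametrised by the \emph{gap vector} $(i_2-i_1,\ldots,i_p-i_{p-1})\in\N^{p-1}$ (for $p=1$, there is a single orbit). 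Thus, for each $p\ge 1$, one gets a $\lambda_*$-equivariant decomposition
$$
\bigwedge\nolimits^{p}(\Gamma_\lambda)\;\cong\;\bigoplus_{\mathbf d\in\N^{p-1}}\Bigl(\bigoplus_{n\in\Z}\Z\Bigr),
$$
where on each summand $\lambda_*$ is the shift $S$ of the preceding lemma (the one immediately before this proposition).

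Now the computation becomes routine. By that lemma, on each summand $\bigoplus_{n\in\Z}\Z$ the map $id-S$ is injective with cokernel $\Z$. Taking direct sums over the countable index set of gap vectors, and then over all relevant $p$ (odd for $K_0$, even $\ge 2$ for $K_1$), we conclude that $id-\lambda_*$ is injective on both $K_0(A_0^\lambda)$ and $K_1(A_0^\lambda)$, with each cokernel isomorphic to $\bigoplus_{n=1}^\infty\Z$ (a countable sum of $\Z$'s, one for each gap vector in each relevant degree). The Pimsner--Voiculescu sequence then collapses to short exact sequences $0\to\operatorname{coker}(id-\lambda_*)\to K_i(\mathcal Q^\lambda)\to\ker(id-\lambda_*)\to 0$, and because the kernels vanish we obtain $K_i(\mathcal Q^\lambda)\cong\bigoplus_{n=1}^\infty\Z$ for $i=0,1$.

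The main obstacle is making the orbit decomposition precise enough to justify applying the previous lemma summand-by-summand, and being careful that after quotienting $\bigwedge^{even}(\Gamma_\lambda)$ by $[1_{\hat{\Gamma}_\lambda}]$ we really are left with the $\lambda_*$-invariant complement $\bigoplus_{p\ge 2,\;p\text{ even}}\bigwedge^{p}(\Gamma_\lambda)$ on which the shift-orbit analysis applies. Everything else amounts to passing direct sums through the injectivity and cokernel computation.
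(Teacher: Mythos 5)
Your proposal is correct and follows essentially the same route as the paper: identify $K_*(A_0^\lambda)$ with the odd/even exterior powers of $\Gamma_\lambda=\bigoplus_{k\in\Z}\Z\lambda^k$ via the CONCLUSION, decompose each $\bigwedge^p(\Gamma_\lambda)$ into shift-orbits of basis vectors (the paper indexes these by tuples in $\Z_+^{p-1}$, which is exactly your gap-vector parametrisation, worked out explicitly for $p=2,3$), apply the shift lemma summand-by-summand, and finish with Pimsner--Voiculescu. The only cosmetic slip is in which kernel pairs with which cokernel in the collapsed six-term sequence, but since both kernels vanish this is immaterial.
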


\begin{proof}
In this case, by Proposition \ref{hatKtheory} and the CONCLUSION before
Proposition \ref{irrational} we have:
$$
K_0(A^\lambda_0)=\bigoplus_{k=1}^\infty\bigwedge^{2k-1}(\Gamma_\lambda)\;\;
{\rm and}\;\;
K_1(A^\lambda_0)=\bigoplus_{k=1}^\infty\bigwedge^{2k}(\Gamma_\lambda)\;\;
{\rm where}\;\; \Gamma_\lambda=\bigoplus_{n=-\infty}^{\infty}\Z\lambda^n.
$$
Now, each individual summand 
$\bigwedge^{m}(\Gamma_\lambda)$ is invariant under $\lambda_*$ and yields 
(for $m>1$) an 
infinite direct sum of ($\lambda_*$-invariant) examples of the previous lemma 
where the action of $\lambda_*$ is just the shift.
The general case is notation-heavy, so we do 
the examples, $\bigwedge^2$ and $\bigwedge^3.$ Letting $\Z_+$ denote the 
positive integers, we have:
$$
\bigwedge^2(\Gamma_\lambda)=\bigoplus_{k\in\Z_+}\left(\bigoplus_{n\in\Z}
(\lambda^n\wedge\lambda^{n+k})\Z\right)\;\;{\rm and}\;\; 
\bigwedge^3(\Gamma_\lambda)=\bigoplus_{(k_1,k_2)\in\Z^2_+}
\left(\bigoplus_{n\in\Z}
(\lambda^n\wedge\lambda^{n+k_1}\wedge\lambda^{n+k_1+k_2})\Z\right).
$$
The case $m=1$ is just the group 
$\Gamma_\lambda=\bigoplus_{n\in\Z} \Z\lambda^n$ which yields a single instance
of the lemma.

Applying the lemma we see that $(id-\lambda_*)$ is $1:1$
on both $K_0(A^\lambda_0)$ and $K_1(A^\lambda_0)$ and that both 
$K_0(A^\lambda_0)/(id-\lambda_*)(K_0(A^\lambda_0))$ and
$K_1(A^\lambda_0)/(id-\lambda_*)(K_1(A^\lambda_0))$ are isomorphic to a 
countable direct sum of copies of $\Z.$ An application
of the Pimsner-Voiculescu exact sequence completes the proof. 
\end{proof}

{\bf Remark}. The classification theory of Kirchberg algebras implies that for 
$\lambda$ transcendental  we have a new realisation of the algebras found 
in \cite{Cu1} and denoted $\mathcal Q_{\N}$ there.

\subsection{The dual action of $\T^1$ on $A^\lambda$ and its restriction
to the gauge action on $\mathcal{Q}^\lambda$}

Recall, $G_\lambda^0=\{g\in G_\lambda\,|\,|g|=1\}$ is a normal subgroup
of $G_\lambda.$ The subgroup of $G_\lambda$ of elements of the form
$[\lambda^n\, :\,0]$ is isomorphic to $\Z$ and acts on $G_\lambda^0$
by conjugacy:
$$
[\lambda^n\,:\,0] [1\,:\,b] [\lambda^{-n}\,:\,0]
=[1\,:\,\lambda^n b].
$$
Thus $G_\lambda=\Z\rtimes G_\lambda^0$ is a semidirect product and we can
write $A^\lambda$ as an iterated crossed product:
$$
A^\lambda=G_\lambda \rtimes_\alpha C_0^\lambda(\R)=
\Z\rtimes(G^0_\lambda \rtimes_{\alpha} C_0^\lambda(\R))=\Z\rtimes A_0^\lambda.
$$
The dual action $\gamma$ of $\T^1$ on $A^\lambda$ is relative to this 
latter crossed 
product so that for each $z\in\T^1$ and $x$ in 
the Banach $*$-algebra, $l^1_\alpha(G_\lambda,C_0^\lambda(\R))$ we have:
$$
\gamma_z(x)(g)=z^nx(g)\ \text{if}
\ x\in l^1_\alpha(G_\lambda,C_0^\lambda(\R));\ g\in G_\lambda\ 
\text{and}\ |g|=\lambda^n.
$$
Since $A^\lambda$ is defined to be the completion of this Banach $*$-algebra
in its universal representation, the action $\gamma$ extends uniquely to an
action (also denoted by $\gamma$) of $\T^1$ as automorphisms of $A^\lambda.$
The fixed point subalgebra of the dual action is, of course, exactly
$A_0^\lambda=G^0_\lambda \rtimes_\alpha C_0^\lambda(\R).$

Since the projection $e$ is in $A_0^\lambda,$ the
action $\gamma$ restricts to an action of $\T^1$ on
$\mathcal{Q}^\lambda=eA^\lambda e,$ which we will also denote by $\gamma.$
We call this the {\bf gauge action} of $\T^1$ on $\mathcal{Q}^\lambda.$
Now, $\gamma$ is clearly a strongly continuous action of $\T^1$ on 
$\mathcal{Q}^\lambda$.  Averaging over $\gamma$ with respect to 
normalised Haar measure
gives a positive, faithful expectation $\Phi$ of $\mathcal{Q}^\lambda$ onto the 
fixed-point algebra which is clearly $F^\lambda$:

$$
\Phi(a):=\frac{1}{2\pi}\int_{\T^1} \gamma_z(a)\,d\theta\ \text{for}
\ a\in \mathcal{Q}^\lambda,\ \text{and}\  z=e^{i\theta}.
$$

\begin{prop}\label{fixedpoint}
The fixed point algebra, $F^\lambda=eA_0^\lambda e$ is the norm closure 
of finite 
linear combinations of elements of the form: 
$$
\mathcal X_{[a,b)}\cdot\delta_g\ \text{where}\ 
g=[1\,:\,c]\ \text{and}\ [a,b)\subseteq[0,1)\cap[c,1+c),
$$
for $a,b,c\in\Gamma_\lambda.$ 
Recall, $A_0^\lambda\cong\mathcal K(l^2(\Z))\otimes F^\lambda.$
\end{prop}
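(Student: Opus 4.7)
The plan is to assemble the proof from two ingredients already in hand: the explicit description of the dual action on the dense $*$-subalgebra, and the computational Lemma \ref{lambdagens} that describes the ``corner'' operation $x\mapsto exe$ on generators.

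First I would verify that the fixed-point subalgebra $\mathcal{Q}^{\lambda,\gamma}$ equals $F^\lambda=eA_0^\lambda e$. Since $\gamma_z(f\cdot\delta_g)=z^n f\cdot\delta_g$ whenever $|g|=\lambda^n$, an element of $l^1_\alpha(G_\lambda,C_0^\lambda(\R))$ is fixed by $\gamma$ if and only if it is supported on $G_\lambda^0=\{g:|g|=1\}$; by continuity of $\gamma$ and faithfulness of $\Phi$, the fixed-point algebra of $A^\lambda$ is exactly $A_0^\lambda$, and compressing by $e\in A_0^\lambda$ gives $F^\lambda$ as the fixed-point algebra of $\mathcal{Q}^\lambda$. (This is essentially what the paragraph preceding the proposition already observes.)

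Next I would carry out the explicit description. By Lemma \ref{lambdagens}, $F^\lambda$ is the norm closure of finite linear combinations of the elements $e(\mathcal X_{[a,b)}\cdot\delta_g)e$ for $g\in G_\lambda^0$ and $a,b\in\Gamma_\lambda$. Every $g\in G_\lambda^0$ has the form $g=[1:c]$ with $c\in\Gamma_\lambda$, and part (1) of Lemma \ref{lambdagens} computes
$$e(\mathcal X_{[a,b)}\cdot\delta_g)e=\mathcal X_{[a,b)\cap[0,1)\cap[c,1+c)}\cdot\delta_g,$$
using that $g(0)=c$ and $g(1)=1+c$. The intersection $[a,b)\cap[0,1)\cap[c,1+c)$ is either empty or a half-open interval $[a',b')$ with endpoints in $\Gamma_\lambda$ (since $\Gamma_\lambda$ is closed under the operations involved). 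So each generator of $F^\lambda$ is of the claimed form, and conversely every element $\mathcal X_{[a,b)}\cdot\delta_{[1:c]}$ with $[a,b)\subseteq[0,1)\cap[c,1+c)$ lies in $F^\lambda$ by Lemma \ref{lambdagens}(2). This gives exactly the asserted spanning set.

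Finally, the parenthetical ``Recall, $A_0^\lambda\cong\mathcal K(l^2(\Z))\otimes F^\lambda$'' is a direct quotation of the conclusion of Proposition \ref{stable}, so no further argument is required. There is no genuine obstacle in this proof; it is a bookkeeping consequence of the dual action formula together with the corner calculus already developed in Lemma \ref{lambdacalc} and Lemma \ref{lambdagens}.
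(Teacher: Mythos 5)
Your proof is correct and follows essentially the same route as the paper: the paper likewise observes that averaging over the gauge action kills every generator with $|g|\neq 1$, so the fixed points are spanned by generators supported on $G_\lambda^0$, and then invokes Lemma \ref{lambdagens}(2) for the interval condition. Your version merely spells out the corner computation $e(\mathcal X_{[a,b)}\cdot\delta_{[1:c]})e$ a little more explicitly, which is harmless.
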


\begin{proof}
Applying the integral formula for $\Phi$ to a finite linear combination of 
the generators for $\mathcal{Q}^\lambda$ we see that the only terms that 
survive are those where $|g|=1:$ that is, $g$ has the above form. Then
we apply item (2) of Lemma \ref{lambdagens} to obtain the condition on the
interval $[a,b).$ 
\end{proof}

\begin{cor}
The stabilised algebra $ \mathcal{Q}^\lambda\otimes\mathcal K$ is a crossed 
product
of the stabilised fixed-point algebra $F^\lambda\otimes\mathcal K $ by an
action of $\Z.$ For $\lambda=1/n$ this is a theorem of J. Cuntz.
\end{cor}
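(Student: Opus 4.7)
The plan is to chain together two structural results already established in the text. Recall that $G_\lambda = \Z \ltimes G_\lambda^0$, where $\Z$ is generated by the dilation $[\lambda : 0]$ acting on the normal subgroup $G_\lambda^0$ by conjugation, namely $[\lambda^n : 0][1:b][\lambda^{-n}:0] = [1:\lambda^n b]$. This semidirect decomposition is exactly the content of the iterated crossed product identity $A^\lambda = \Z \rtimes_\sigma A_0^\lambda$ already recorded in the text, where $\sigma$ is induced by conjugation by $[\lambda:0]$. Explicitly, $\sigma(f \cdot \delta_g) = \alpha_{[\lambda:0]}(f) \cdot \delta_{[\lambda:0]\, g\, [\lambda^{-1}:0]}$ for $g \in G_\lambda^0$, which lies in $A_0^\lambda$ precisely because $G_\lambda^0$ is normal in $G_\lambda$.

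Next I would invoke Proposition \ref{stable}. The partial isometries $V_{n,k} = \mathcal X_{[n,n+1)}\cdot\delta_{g_{n-k}}$ with $g_{n-k} = [1 : n-k]$ and the approximate-identity projections $E_N = \mathcal X_{[-N,N)}\cdot\delta_1$ used to prove $A^\lambda \cong \mathcal{Q}^\lambda \otimes \mathcal K(l^2(\Z))$ all have trivial $\Z$-component and therefore lie in $A_0^\lambda$. Hence the very same matrix-unit system simultaneously realises $A_0^\lambda \cong F^\lambda \otimes \mathcal K(l^2(\Z))$, and the canonical inclusion $A_0^\lambda \hookrightarrow A^\lambda$ transports to the canonical inclusion $F^\lambda \otimes \mathcal K \hookrightarrow \mathcal{Q}^\lambda \otimes \mathcal K$.

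Combining these two facts gives the chain
\begin{displaymath}
\mathcal{Q}^\lambda \otimes \mathcal K \;\cong\; A^\lambda \;=\; \Z \rtimes_\sigma A_0^\lambda \;\cong\; \Z \rtimes_{\tilde\sigma}\bigl(F^\lambda \otimes \mathcal K\bigr),
\end{displaymath}
where $\tilde\sigma$ is the transport of $\sigma$ through the stabilisation isomorphism; this is precisely the desired description.

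There is no serious obstacle: the only thing that needs verifying beyond bookkeeping is that $\sigma$ restricts to a genuine automorphism of $A_0^\lambda$ (rather than merely mixing $A_0^\lambda$ into $A^\lambda$), and this is immediate from the normality of $G_\lambda^0 \lhd G_\lambda$. Specialising to $\lambda = 1/n$ recovers Cuntz's theorem that $O_n \otimes \mathcal K$ is a crossed product of $UHF(n^\infty) \otimes \mathcal K$ by $\Z$, via the identifications $\mathcal{Q}^{1/n} \cong O_n$ and $F^{1/n} \cong UHF(n^\infty)$ from Corollary \ref{funnyF's} and Theorem \ref{cuntzalg}.
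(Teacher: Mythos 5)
Your proposal is correct and follows essentially the same route as the paper: combine the stabilisation isomorphisms $A^\lambda\cong \mathcal{Q}^\lambda\otimes\mathcal K$ and $A_0^\lambda\cong F^\lambda\otimes\mathcal K$ from Proposition \ref{stable} with the iterated crossed product decomposition $A^\lambda\cong\Z\rtimes A_0^\lambda$ coming from $G_\lambda=\Z\rtimes G_\lambda^0$. The extra bookkeeping you supply (that the $V_{n,k}$ and $E_N$ lie in $A_0^\lambda$, and that normality of $G_\lambda^0$ makes $\sigma$ an automorphism of $A_0^\lambda$) is a harmless elaboration of what the paper leaves implicit.
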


\begin{proof}
By Proposition \ref{stable}, 
$A_0^\lambda\cong F^\lambda\otimes\mathcal K ,$ and 
$A^\lambda\cong  \mathcal{Q}^\lambda\otimes\mathcal K.$
 By the discussion at the beginning of subsection 3.1,
$A^\lambda\cong\Z\rtimes A_0^\lambda$ 
 and the proof is complete.
See \cite[Section 2]{Cu}.
\end{proof}

\begin{rems*}
If we combine the previous observation that $F^\lambda$ is the fixed point
subalgebra of $\mathcal{Q}^\lambda$ under the gauge action
with Corollary \ref{funnyF's}
we get, for example,  $O_2\cong \mathcal Q^{2/3}$ with a gauge action whose 
fixed point
subalgebra $F^{2/3}$ is a UHF algebra of type $6^\infty.$ Interestingly,
$F^{3/4}$ is UHF of type $12^\infty=6^\infty$ which is therefore 
isomorphic to $F^{2/3}.$ So we have two gauge actions on $O_2$ with isomorphic
UHF fixed point subalgebras, with distinct, inequivalent KMS states:
one where $\beta=\log(3/2)$ and the other where $\beta=\log(4/3)$ by 
Proposition \ref{KMS} below. Moreover, the two von Neumann algebras 
generated by the
GNS representations of $O_2$ are not isomorphic as they are 
type ${\rm III}_\lambda$ factors for $\lambda$ equalling $2/3$ and $3/4,$ 
respectively, by Theorem \ref{III-lambda} below.
\end{rems*}
\subsection{The $\gamma$-invariant semifinite weight on $A^\lambda$ and its
restriction to  $\mathcal{Q}^\lambda$}

The aim of this subsection is to exhibit the unique KMS states for the 
gauge action on
$\mathcal{Q}^\lambda$. We first recall the definition of KMS states.

\begin{defn}
Let $A$ be a $C^*$-algebra with a continuous action 
$\gamma:\R\rightarrow Aut(A).$ Let $\psi$ be a state on $A$ and $\beta\in\R$
a real number. We define $\psi$ to be a ${\rm KMS}_\beta$ state for the action
$\gamma$ if
$$
\psi(x\,\gamma_{i\beta}(y))=\psi(yx)
$$
for all $x,y\in \mathcal A$ a dense $\gamma$-invariant $*$-subalgebra of 
$A_\gamma,$ the subalgebra of analytic elements for the action $\gamma.$
We refer to \cite[Section 2.5]{BR} for basic information on the
subalgebra of analytic elements, $A_\gamma$ and to \cite[Section 5.3]{BR2} 
for all the basic information on {\rm KMS} states.
\end{defn}
Since $G_\lambda$ is discrete it is well-known that the map
$$
x\mapsto x(1): l^1_\alpha(G_\lambda,C_0^\lambda(\R))\to C_0^\lambda(\R)
$$
extends uniquely to a faithful conditional expectation 
$E: A^\lambda\to C_0^\lambda(\R).$ Composing $E$ with the densely defined
(norm) lower semicontinuous weight on $C_0^\lambda(\R)$ given by integration, 
gives us a densely defined (norm) lower semicontinuous weight on $A^\lambda$
which we denote by $\bar\psi$. In particular, for 
$x\in l^1_\alpha(G_\lambda,C_0^\lambda(\R))$ we have:
$$
\bar\psi(xx^*) = \int_{\R} xx^*(1)(t)dt=\int_{\R}\left(\sum_{h\in G_\lambda}
x(h)\overline{x(h)}\right)(t)dt=\sum_{h\in G_\lambda}
\left(\int_{\R}|x(h)(t)|^2 dt\right).
$$
So that $\bar\psi$ is faithful.
We observe that $\bar\psi$ {\bf is not a trace}, since
$\bar\psi(x^*x) =\sum_{h\in G_\lambda}|h^{-1}|\int_{\R}|x(h)(t)|^2dt.$

\begin{prop}
The weight $\bar\psi$ on $A^\lambda$ restricts to a faithful semifinite trace
$\bar\tau$ on $A_0^\lambda$ and also restricts to a state denoted by $\psi$ 
on $\mathcal{Q}^\lambda$ satisfying:\\
\noindent(1) The gauge action $\gamma$ of $\T^1$ on $\mathcal{Q}^\lambda$
leaves the state $\psi$ invariant.\\
\noindent(2) The state $\psi$ restricted to the fixed point algebra,
$F^\lambda$ is a faithful (finite) trace denoted by $\tau;$ which is, of course,
the restriction of $\bar\tau$ on $A_0^\lambda$ to $F^\lambda.$\\
\noindent(3) With $\Phi: \mathcal{Q}^\lambda \to F^\lambda$ the
canonical expectation, we have $\psi=\tau\circ\Phi.$ 
\end{prop}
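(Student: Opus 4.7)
The plan is to handle the four assertions sequentially, with the first, that $\bar\tau := \bar\psi|_{A_0^\lambda}$ is a faithful semifinite trace, being the substantive point. The key observation is that $G_\lambda^0 \cong \Gamma_\lambda$ acts on $\R$ by translations (in contrast to the full $G_\lambda$, which scales $\R$ by $|g| = \lambda^n$), hence Lebesgue integration on $C_0^\lambda(\R)$ is $G_\lambda^0$-invariant. Since $\bar\psi = \int \circ E$ for the canonical faithful conditional expectation $E: A^\lambda \to C_0^\lambda(\R)$ attached to a discrete crossed product, the standard principle that a $G$-invariant trace on the coefficient algebra pulls back to a trace on $G \rtimes_\alpha B$ yields the trace identity. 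I would verify this directly on the dense $*$-subalgebra $A_{0,c}^\lambda$ using Lemma \ref{lambdacalc}: for generators $f\cdot\delta_g$ and $h\cdot\delta_k$ in $A_{0,c}^\lambda$, the expectation $E$ kills all terms of the product except when $k = g^{-1}$, in which case comparing $E((f\delta_g)(h\delta_{g^{-1}})) = f\,\alpha_g(h)$ with $E((h\delta_{g^{-1}})(f\delta_g)) = h\,\alpha_{g^{-1}}(f)$, and then using commutativity of $C_0^\lambda(\R)$ together with $\int \circ \alpha_g = \int$ for $g \in G_\lambda^0$, gives equality of their integrals. Faithfulness is immediate from the explicit sum-of-squares formula already recorded in the excerpt, and semifiniteness holds because $A_{0,c}^\lambda$ is dense in $A_0^\lambda$ and its elements visibly have finite $\bar\tau$.

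Next, to obtain the state $\psi$, I would note that $\bar\psi(e) = \int \mathcal{X}_{[0,1)}\,dt = 1$ and, by monotonicity of the weight, any positive $a \in \mathcal{Q}^\lambda = eA^\lambda e$ satisfies $0 \le a \le \|a\|\,e$ and hence $\bar\psi(a) \le \|a\|$; this makes $\psi := \bar\psi|_{\mathcal{Q}^\lambda}$ a bounded positive linear functional with $\psi(e)=1$, i.e., a state. Assertion (1) is immediate from $\gamma_z(x)(g) = z^n x(g)$ where $n$ is determined by $|g|=\lambda^n$: applying $E$ picks out $g = 1$, for which $n = 0$, so $\bar\psi \circ \gamma_z = \bar\psi$ and likewise for $\psi$. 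Assertion (2) is then formal: since $F^\lambda = eA_0^\lambda e \subset A_0^\lambda$, the functional $\tau := \psi|_{F^\lambda}$ is a restriction of $\bar\tau$ and inherits both the trace property and faithfulness.

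For (3), since $\Phi(a) \in F^\lambda$ and $\psi$ agrees with $\tau$ there, I would commute the bounded linear functional $\psi$ past the integral defining $\Phi$ and then apply gauge invariance to obtain
\[
\tau(\Phi(a)) \;=\; \psi(\Phi(a)) \;=\; \frac{1}{2\pi}\int_{\T^1} \psi(\gamma_z(a))\,d\theta \;=\; \frac{1}{2\pi}\int_{\T^1}\psi(a)\,d\theta \;=\; \psi(a).
\]
The main technical obstacle is extending the trace identity in the first step from $A_{0,c}^\lambda$ to all of $A_0^\lambda$: one must invoke lower semicontinuity of $\bar\tau$ (equivalently, approximate via a trace-norm-dense ideal of finite-trace elements) to pass from the $*$-algebraic identity on generators to the $C^*$-algebraic identity on arbitrary positive elements, and ensure that the unboundedness of $\bar\tau$ does not interfere. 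Once the trace identity is established, everything else is essentially formal.
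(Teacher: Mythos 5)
Your proposal is correct and follows essentially the same route as the paper: both arguments reduce everything to the faithful expectation $E:A^\lambda\to C_0^\lambda(\R)$, use translation-invariance of Lebesgue integration under $G_\lambda^0$ (equivalently, that the factor $|h^{-1}|$ in $\bar\psi(x^*x)=\sum_h|h^{-1}|\int|x(h)|^2$ is $1$ on $A_0^\lambda$) for the trace property, the normalization $\bar\psi(e)=1$ for the state, and the fact that $E$ only sees the $g=1$ component for gauge invariance and for $\psi=\tau\circ\Phi$. The only cosmetic difference is that for item (3) the paper observes directly that $E(x)=E(\Phi(x))$ since $x(1)=\Phi(x)(1)$, whereas you pass $\psi$ through the averaging integral and invoke invariance; these are interchangeable one-line arguments.
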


\begin{proof}
Since $\bar\psi (e)= \int_{\R}\mathcal X_{[0,1)}(t) dt =1,$ we see
that $\bar\psi$ restricted to $\mathcal{Q}^\lambda$ is a faithful state.
To see item (1), it suffices to see that the gauge action on 
$l^1_\alpha(G_\lambda,C_0^\lambda(\R))$ 
leaves $\bar\psi$ invariant. To this end, let $x\geq 0$ be in
$l^1_\alpha(G_\lambda,C_0^\lambda(\R))$, and let $z\in \T^1$. Then
$$
E(\gamma_z(x))=\gamma_z(x)(1)=z^0x(1)=x(1)=E(x)
$$
and so 
$$ 
\bar\psi(\gamma_z(x))=\int_\R\gamma_z(x)(1)(t) dt=\int_\R E(x)(t) dt
=\bar\psi(x).
$$
To see item (2) we use Proposition \ref{fixedpoint} and the above computation
that shows that while $\bar\psi$ is not generally a trace, to see that {\bf it is
a trace} when the group elements all have determinant $1$.

To see item (3), it suffices to see that for any 
$x\in \mathcal{Q}^\lambda$ we have $E(x)=E(\Phi(x))$, but this is the same as
$x(1)=\Phi(x)(1)$ which is clear since $\det(1)=1.$
\end{proof}

Now, since the state $\psi$ is invariant under the action $\gamma$, this 
action is unitarily implemented on $\mathcal L^2(\mathcal{Q}^\lambda,\psi).$  For
$z\in\T^1$ and $x\in \mathcal{Q}^\lambda_c$ we define:
$$
(u_z(x))_h=z^n x_h\ \text{for}\ h\in G_\lambda\ \text{with}\ 
|h|=\lambda^n.
$$ 
We define the spectral subspaces of this unitary group on 
$\mathcal L^2(\mathcal{Q}^\lambda,\psi)$ in the usual way. For each $k\in\Z$
let $\Phi_k$ be the operator on $\mathcal L^2(\mathcal{Q}^\lambda,\psi):$
$$
\Phi_k(x)=\frac{1}{2\pi}\int_{\T^1}z^{-k}u_z(x) d\theta,\ z=e^{i\theta},\ 
x\in \mathcal L^2(\mathcal{Q}^\lambda,\psi).
$$ 
We observe that if $x=f\cdot\delta_g$ is a typical generator of 
$\mathcal{Q}^\lambda$
considered as a vector in $\mathcal L^2(\mathcal{Q}^\lambda,\psi)$ then we have:
\bean
\Phi_k(f\cdot\delta_g)=\left\{\begin{array}{ll} f\cdot\delta_g & 
\mbox{if $|g|=\lambda^k$}\\ 0 & \mbox{otherwise}\end{array}\right.
\eean
More generally, on $\mathcal H:=\mathcal L^2(\mathcal{Q}^\lambda,\psi),$ 
we have
$\Phi_k(\mathcal H)=\{x\in \mathcal H\;|\; u_z(x)=
z^kx\;\;\text{for\;\;all}\;\;z\in\T^1\}.$ 
\begin{lemma}
For each $k\in\Z$ the subspace $\text{span}\{f\cdot\delta_g\in 
\mathcal{Q}^\lambda\;|\; 
|g|=\lambda^k\}$ is dense in the range of $\Phi_k.$
The operators $\Phi_k$ are mutually orthogonal projections on
$\mathcal H$ which sum to the identity operator 
$1=\pi(e).$
\end{lemma}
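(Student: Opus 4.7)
The plan is to treat the family $\{\Phi_k\}_{k\in\Z}$ as the Fourier coefficients of the strongly continuous unitary representation $u\colon\T^1\to U(\mathcal H)$ that implements the gauge action on $\mathcal H=\mathcal L^2(\mathcal Q^\lambda,\psi)$. Since $\psi$ is $\gamma$-invariant, $u$ is a well-defined unitary representation, strongly continuous because $\gamma$ is; hence for each $x\in\mathcal H$ the function $z\mapsto z^{-k}u_z(x)$ is norm-continuous on $\T^1$ and its Bochner integral defining $\Phi_k(x)$ converges in $\mathcal H$ with $\|\Phi_k\|\le 1$.

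First I would verify that each $\Phi_k$ is the orthogonal projection onto the spectral subspace $\mathcal H_k:=\{x\in\mathcal H\mid u_z(x)=z^k x\text{ for all }z\in\T^1\}$. A change of variable $z\mapsto wz$ in the defining integral yields $u_w\Phi_k(x)=w^k\Phi_k(x)$, so $\Phi_k(\mathcal H)\subseteq\mathcal H_k$; conversely, if $u_z x=z^k x$ then $\Phi_k(x)=x$ by direct substitution. Self-adjointness of $\Phi_k$ follows from $u_z^*=u_{\bar z}$ and the substitution $z\mapsto\bar z$, while the mutual orthogonality $\Phi_k\Phi_j=\delta_{k,j}\Phi_k$ is an application of Fubini together with the character orthogonality $\tfrac{1}{2\pi}\int_{\T^1}z^{j-k}\,d\theta=\delta_{j,k}$.

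For the density claim I would use that $\mathcal Q^\lambda_c=eA^\lambda_c e$ is norm-dense in $\mathcal Q^\lambda$ (by construction of the crossed product) and hence $\|\cdot\|_2$-dense in $\mathcal H$, since $\|x\|_2\le\|x\|$ for any state and $\psi$ is faithful. Every element of $\mathcal Q^\lambda_c$ is a finite linear combination of generators $f\cdot\delta_g$, and the formula displayed just before the lemma gives $\Phi_k(f\cdot\delta_g)=f\cdot\delta_g$ when $|g|=\lambda^k$ and $0$ otherwise. Since $\Phi_k$ is contractive, it sends a dense subspace of $\mathcal H$ onto a dense subspace of $\Phi_k(\mathcal H)$, namely $\mathrm{span}\{f\cdot\delta_g\mid|g|=\lambda^k\}$, proving the first assertion. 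For the completeness relation $\sum_k\Phi_k=1=\pi(e)$ in the strong operator topology, the same computation shows that on a generator $f\cdot\delta_g$ with $|g|=\lambda^n$ only the single term $\Phi_n$ is nonzero and recovers $f\cdot\delta_g$; by linearity the identity $\sum_k\Phi_k(x)=x$ holds on the dense subspace of finite linear combinations of generators, and the mutual orthogonality then forces the strong convergence on all of $\mathcal H$.

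The only subtlety worth watching is the passage from the norm-convergent Bochner integral defining $\Phi_k$ on $\mathcal H$ to the pointwise formula on generators viewed as vectors in $\mathcal H$; but this reduces to swapping a continuous finite integral with a finite sum, so no real obstacle arises. Everything else is the standard Peter--Weyl decomposition applied to the compact abelian group $\T^1$.
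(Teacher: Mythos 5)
Your proof is correct, and it is a legitimate, self-contained argument; it differs from the paper's in one respect worth noting. The paper disposes of the lemma in two sentences: the density statement is obtained by the same averaging computation on generators used for Proposition \ref{fixedpoint}, and the mutual orthogonality of the $\Phi_k$ is deduced directly from the Hilbert-space fact that $\langle f_1\cdot\delta_{g_1}\,|\,f_2\cdot\delta_{g_2}\rangle_\psi=0$ unless $g_1=g_2$ --- i.e.\ the spectral subspaces are visibly orthogonal because generators supported on distinct group elements are already orthogonal in $\mathcal L^2(\mathcal{Q}^\lambda,\psi)$, so the projections onto their closed spans must be orthogonal. You instead run the general Peter--Weyl machinery for the compact group $\T^1$: self-adjointness via $u_z^*=u_{\bar z}$, orthogonality via Fubini and character orthogonality $\tfrac{1}{2\pi}\int_{\T^1}z^{j-k}\,d\theta=\delta_{j,k}$, and the identification of $\Phi_k(\mathcal H)$ with the eigenspace $\mathcal H_k$ by the change of variable $z\mapsto wz$. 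What your route buys is independence from the specific form of the inner product on generators --- it would work verbatim for any strongly continuous unitary $\T^1$-action --- at the cost of being slightly longer; the paper's route exploits the concrete orthogonality of the $\delta_g$'s and is shorter but specific to this GNS space. Your treatment of the completeness relation $\sum_k\Phi_k=1=\pi(e)$ (dense subspace plus uniform boundedness of the partial sums) is the standard argument and fills in a step the paper leaves implicit.
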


\begin{proof}
The proof of the first statement is similar to the proof of Proposition 
\ref{fixedpoint}. The mutual orthogonality of the $\Phi_k$ follows from 
the fact that $\langle f_1\cdot\delta_{g_1}|f_2\cdot\delta_{g_2}\rangle_\psi=0$
unless $g_1=g_2.$
\end{proof} 

\begin{prop}\label{KMS}
The dense $*$-subalgebra of $\mathcal{Q}^\lambda$ consisting of finite linear 
combinations
of the partial isometries $\mathcal X_{[a,b)}\cdot \delta_g$ is contained
in the subset of entire elements, $\mathcal{Q}^\lambda_\gamma,$ for the action 
$\gamma$
considered as an action of $\R: t\mapsto \gamma_{e^{it}}.$ Moreover, $\psi$
is a ${\rm KMS}_\beta$ state for this action where $\beta=\log(\lambda^{-1}).$
In fact, $\psi$ is the {\bf unique} {\rm KMS} state for this action 
(regardless of $\beta$).
\end{prop}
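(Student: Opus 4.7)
The proof splits naturally into three parts: analyticity of the dense subalgebra of generators, verification of the KMS identity on these generators, and uniqueness across all $\beta$.

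First, each generator $v=\mathcal X_{[a,b)}\cdot\delta_g$ with $|g|=\lambda^n$ satisfies $\gamma_{e^{it}}(v)=e^{int}v$, so its orbit extends to the entire $\mathcal{Q}^\lambda$-valued function $z\mapsto e^{inz}v$ on $\C$. Hence every such $v$ lies in $\mathcal{Q}^\lambda_\gamma$, and since finite sums of entire elements are entire, the first assertion follows immediately.

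Second, I would verify the identity $\psi(x\,\gamma_{i\beta}(y))=\psi(yx)$ on typical generators $x=f_1\cdot\delta_g$ and $y=f_2\cdot\delta_h$ with $|g|=\lambda^n$ and $|h|=\lambda^m$. By Lemma \ref{lambdacalc} the product $xy$ is supported at $gh$, and since $\psi$ picks out (via the conditional expectation $E$ and integration) only the value at the identity, both $\psi(xy)$ and $\psi(yx)$ vanish unless $h=g^{-1}$, forcing $m=-n$. Under that assumption a direct computation gives $\psi(xy)=\int_\R f_1(t)f_2(g^{-1}t)\,dt$ and $\psi(yx)=\int_\R f_2(t)f_1(gt)\,dt$, and the affine change of variable $s=gt$ (Jacobian $|g|=\lambda^n$) reduces the latter to $\lambda^{-n}\psi(xy)$. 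Since $\gamma_{i\beta}(y)=e^{-m\beta}y=e^{n\beta}y$, the two sides coincide precisely when $e^{\beta}=\lambda^{-1}$, pinning $\beta=\log\lambda^{-1}$.

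Third, for uniqueness let $\phi$ be any KMS$_\beta$ state for $\gamma$ at any $\beta$. The standard argument of \cite[\S 5.3]{BR2} shows $\phi$ is automatically $\gamma$-invariant, and averaging over $\T^1$ yields $\phi=\phi\circ\Phi$. Since $\gamma$ is trivial on $F^\lambda$, the KMS condition there collapses to the tracial identity, so $\phi|_{F^\lambda}$ is a trace state. I would then establish uniqueness of the trace on $F^\lambda$ as follows: $e$ is a full projection in the stable algebra $A_0^\lambda$ by Proposition \ref{stable}, so trace states on $F^\lambda$ correspond bijectively to semifinite lower-semicontinuous traces $\bar\tau$ on $A_0^\lambda=G^0_\lambda\rtimes_\alpha C_0^\lambda(\R)$ normalised by $\bar\tau(e)=1$. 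Applying the tracial identity to partial isometries $\mathcal X_{[a,b)}\cdot\delta_g\in A_0^\lambda$ forces the restriction of $\bar\tau$ to $C_0^\lambda(\R)$ to be a $G^0_\lambda$-invariant weight; identifying $\R_\lambda$ with $\R$ off a countable set, this is a $\Gamma_\lambda$-invariant $\sigma$-finite Borel measure on $\R$, and density of $\Gamma_\lambda$ pins it down up to scale as Lebesgue measure. The normalisation then forces $\phi|_{F^\lambda}=\tau$, hence $\phi=\psi$. Evaluating the KMS relation on the isometry $S_{1,0}=\mathcal X_{[0,\lambda)}\cdot\delta_{[\lambda:0]}$ gives $\lambda=\psi(S_{1,0}S_{1,0}^*)=e^{-\beta}\psi(S_{1,0}^*S_{1,0})=e^{-\beta}$, so no KMS state exists at any other $\beta$.

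The main obstacle is the uniqueness of the trace on $F^\lambda$: one needs the correspondence between semifinite traces on the crossed product and $G^0_\lambda$-invariant measures on the spectrum to be a genuine bijection, which rests on the freeness (equivalently, the proper outerness established in Proposition \ref{simple}) of the action; and one needs the classical fact that any $\sigma$-finite Borel measure on $\R$ invariant under a dense subgroup of translations is a scalar multiple of Lebesgue measure. The remaining steps are essentially mechanical.
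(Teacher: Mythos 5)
Your proposal is correct, and its first two parts (entire analyticity of the generators, and the change-of-variables verification that the ${\rm KMS}_\beta$ identity on generators forces $\beta=\log(\lambda^{-1})$) coincide with the paper's argument. The uniqueness step is where you genuinely diverge. Both you and the paper reduce to showing that $F^\lambda$ carries a unique tracial state, but the paper does this by bare hands: it first kills the off-diagonal generators $\mathcal X_{[a,b)}\cdot\delta_g\in F^\lambda$ with $g\neq I$ by splitting them as $v_0+v_1+\cdots+v_n$ with each $v_k^2=0$, so that traciality gives $\phi(v_k)=\phi(v_kv_k^*v_k)=\phi(v_k^2v_k^*)=0$; it then extends $\phi$ to $\tilde{Tr}=\phi\otimes Tr$ on $A_0^\lambda\cong F^\lambda\otimes\mathcal K$ and proves $\tilde{Tr}(\mathcal X_{[0,b)}\cdot\delta_I)=b$ by sandwiching $bm$ between consecutive integers and using equivalence of projections. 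You instead invoke the general correspondence between traces on crossed products by (essentially) free actions and invariant measures on the spectrum, together with uniqueness of $\sigma$-finite measures invariant under a dense group of translations. That route is legitimate here (the translation action of $\Gamma_\lambda$ on $\R_\lambda$ is genuinely free, not merely properly outer, and local finiteness follows from $\bar\tau(e)=1$ plus invariance), and it is shorter; but be aware that it outsources precisely the two facts the paper proves elementarily -- the $v^2=0$ trick \emph{is} the proof that traces are supported on the diagonal, and the rational-approximation argument \emph{is} the proof that the invariant measure is Lebesgue. Your closing observation that evaluating the KMS identity on $S_{1,0}$ forces $e^{-\beta}=\lambda$ is a nice explicit way to see that no other inverse temperature can occur. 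One small gap to repair: automatic $\gamma$-invariance of a KMS state (your appeal to \cite{BR2}) requires $\beta\neq 0$, and a ${\rm KMS}_0$ state is a trace; the paper disposes of that case first by noting that $\mathcal{Q}^\lambda$ is purely infinite and hence traceless. You should add that one line before invoking invariance.
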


\begin{proof}
Let $y=\mathcal X_{[a,b)}\cdot\delta_g\in \mathcal{Q}^\lambda$ where 
$\det(g)=\lambda^k.$
Then, $t\mapsto \gamma_{e^{it}}(y)=e^{ikt}y;\ t\in\R$ obviously extends to 
the entire
function $w\mapsto \gamma_{e^{iw}}(y)=e^{ikw}y;\ w\in\C$. For 
$w=\log(\lambda^{-1})i$, this equation becomes 
$\gamma_{e^{iw}}(y)=\gamma_{\lambda}(y)=\lambda^k y$. Letting 
$\beta=\log(\lambda^{-1})$, we have $\gamma_{\beta i}(y)=\lambda^k y$.
Now, let $x=\mathcal X_{[c,d)}\cdot h$ so we want to see that: 
$\lambda^k \psi(xy) = \psi(x\gamma_{\beta i}(y))=\psi(yx).$ That is, we 
{\bf want} 
$\lambda^k \psi(xy) =\psi(yx).$ Now both sides of this equation are zero
unless $h=g^{-1}.$ But, when $h=g^{-1},$ we have
$$
xy=\mathcal X_{[c,d)}\cdot\mathcal X_{[g^{-1}(a),g^{-1}(b))}\cdot\delta_I\ \ 
{\rm while}\ \ 
yx=\mathcal X_{[a,b)}\cdot\mathcal X_{[g(c),g(d))}\cdot\delta_I.
$$
Moreover,
 $$
 s\in [c,d)\cap [g^{-1}(a),g^{-1}(b)) \Longleftrightarrow
 g(s)\in [g(c),g(d))\cap [a,b).
 $$
Since $\det(g)=\lambda^k$ the transformation $g$ increases the measure by a 
factor of $\lambda^k$ and the result follows. That is, $\psi$ is a 
${\rm KMS}_\beta$
state for the action $\gamma$ of $\R$ for $\beta=\log(\lambda^{-1}).$

Now let $\phi$ be a KMS state on $\mathcal{Q}^\lambda$ for 
the action
$\gamma.$ Since $\mathcal{Q}^\lambda$ is purely infinite it has no nontrivial 
traces
and so $\phi$ must be KMS for some nonzero $\beta.$ Hence by 
\cite[Proposition 5.3.3]{BR2}, $\phi$ is invariant under the action of $\gamma$.
Now, if $\mathcal X_{[a,b)}\cdot\delta_g\in \mathcal{Q}^\lambda$ with 
$\det(g)=\lambda^k$,
then we have for all $z\in\T$:
$$
\phi(\mathcal X_{[a,b)}\cdot\delta_g)=
\phi(\gamma_z(\mathcal X_{[a,b)}\cdot\delta_g))=
z^k\phi(\mathcal X_{[a,b)}\cdot\delta_g).
$$
That is, if $\det(g)\neq 1$ we must have 
$\phi(\mathcal X_{[a,b)}\cdot\delta_g)=0,$ and so $\phi$ is supported on
$F^\lambda.$ Since $F^\lambda$ is $\gamma$-invariant and $\phi$ is KMS
for some nonzero $\beta$, $\phi$ is a trace on $F^\lambda$ by 
\cite[5.3.28]{BR2}.

Now, if $x=\mathcal X_{[a,b)}\cdot\delta_g\in F^\lambda$ and 
$g\neq I$, then we claim that $\phi(x)=0$. For
suppose $g=[1\,:\,c]$ with $c>0$. Then there is a positive integer $n$ such 
that $a+nc<b\leq a+(n+1)c$ and so
$$
x=\mathcal X_{[a,b)}\cdot\delta_g = \mathcal X_{[a,c)}\cdot\delta_g +
\mathcal X_{[a+c,a+2c)}\cdot\delta_g +\cdots + 
\mathcal X_{[a+nc,b)}\cdot\delta_g:= v_0 + v_1 +\cdots + v_n.
$$
Now each of these partial isometries $v_k$ satisfies $v_k^2=0,$ and so
$\phi(v_k)=\phi(v_k v_k^* v_k)= \phi(v_k^2 v_k^*)=0$ since $\phi$ is a trace
on $F^\lambda.$ Thus, $\phi(x)=0$ as claimed.

Hence $\phi$ is supported on the commutative subalgebra 
$$
\mathcal C:=\overline{\mbox{span}}\{f\cdot\delta_I\;|\; 
f\in C_0^\lambda(\R)\;\;{\rm and}\;\; supp(f)\subseteq [0,1)\}.
$$ 
Morever,
if $f_1,f_2$ are characteristic functions of subintervals of $[0,1)$ with
endpoints in $\Gamma_\lambda$ and having the same length they give equivalent 
elements $f_i\cdot\delta_I$ in $F^\lambda$ and therefore have the same value 
under $\phi.$ 

Now, since $A^\lambda_0\cong F^\lambda\otimes \mathcal K$
we can define a lower semicontinuous, densely defined trace, $\tilde{Tr}$ on
$A^\lambda_0$ via $\tilde{Tr}=\phi\otimes Tr,$ where $Tr$ is the trace on
$\mathcal K.$ So, for $\mathcal X_{[a,b)}\cdot \delta_I \in F^\lambda$ we have
$\tilde{Tr}(\mathcal X_{[a,b)}\cdot \delta_I)=
\phi(\mathcal X_{[a,b)}\cdot \delta_I).$
Then, for $k_1 < k_2 \in\Z$ the element 
$\mathcal X_{[k_1,k_2)}\cdot \delta_I$ is the sum of $(k_2-k_1)$ projections
in $A^\lambda_0$ each equivalent to $\mathcal X_{[0,1)}\cdot \delta_I$ which
has trace equal to $1;$ that is, 
$\tilde{Tr}(\mathcal X_{[k_1,k_2)}\cdot \delta_I)= (k_2-k_1).$ Now, for any
$a<b$ in $\Gamma_\lambda,$ we have $\mathcal X_{[a,b)}\cdot \delta_I\sim
\mathcal X_{[0,b-a)}\cdot \delta_I$ and so 
$\tilde{Tr}(\mathcal X_{[a,b)}\cdot \delta_I) =
\tilde{Tr}(\mathcal X_{[0,b-a)}\cdot \delta_I),$ and these values are finite
since both these projections are dominated by 
$\mathcal X_{[-N,N)}\cdot \delta_I$ for a sufficiently large integer $N.$
It now suffices to prove the following. {\bf Claim:} 
$\tilde{Tr}(\mathcal X_{[a,b)}\cdot \delta_I) = 
b-a\;\;{\rm for}\;\; a<b\in\Gamma_\lambda.$

By the previous discussion we can assume that $a=0$ so that
$b>0.$ Given $\varepsilon >0$ we choose positive integers $m,n$ such that
$$
\frac{1}{m} \leq \varepsilon\;\;{\rm and}\;\; \frac{n-1}{m}\leq b <
\frac{n}{m},
$$
so that $(n-1)\leq bm < n$ and $(n-1), bm, n \in\Gamma_\lambda.$ Hence
$$
(n-1)=\tilde{Tr}(\mathcal X_{[0,(n-1))}\cdot \delta_I)\leq
\tilde{Tr}(\mathcal X_{[0,bm)}\cdot \delta_I)\leq
\tilde{Tr}(\mathcal X_{[0,n)}\cdot \delta_I)=n.
$$
But, 
$$
\mathcal X_{[0,bm)}\cdot \delta_I=\mathcal X_{[0,b)}\cdot \delta_I+
\mathcal X_{[b,2b)}\cdot \delta_I+\cdots+
\mathcal X_{[(m-1)b,bm)}\cdot \delta_I
$$
and these projections are mutually equivalent in $A^\lambda_0.$ That is,
$$
(n-1)\leq m\tilde{Tr}(\mathcal X_{[0,b)}\cdot \delta_I)\leq n\;\;\; 
{\rm so\;\;that}\;\;\;
\frac{n-1}{m}\leq\tilde{Tr}(\mathcal X_{[0,b)}\cdot \delta_I)\leq\frac{n}{m}.
$$
Hence, $|\tilde{Tr}(\mathcal X_{[0,b)}\cdot \delta_I) -b|<\frac{1}{m}\leq
\varepsilon.$ That is, $b=\tilde{Tr}(\mathcal X_{[0,b)}\cdot \delta_I)
=\phi(\mathcal X_{[0,b)}\cdot \delta_I)$ and $\phi$ agrees with the given trace
$\tau$ on $F^\lambda$ and therefore $\phi$ agrees with $\psi$ on 
$\mathcal{Q}^\lambda.$ 
\end{proof}

\begin{rems*}
The above proof shows that the algebra $F^\lambda$ has a unique (faithful)
tracial state
$\tau,$ and that $A^\lambda_0$ has a unique (faithful) lower semicontinuous,
densely defined trace normalized so that it has value $1$ at
$e=\mathcal X_{[0,1)}\cdot\delta_I.$
\end{rems*}
\subsection{The von Neumann algebra $\pi(A^\lambda)^{-wo}$ acting on 
$\mathcal L^2(A^\lambda,\bar\psi)$ is a type ${\rm III}_\lambda$ factor.}
To prove this we will show that it is unitarily equivalent
to a version of the Murray-von Neumann ``group-measure space'' construction
of type ${\rm III}$ factors on $l^2(G_\lambda)\otimes\mathcal L^2(\R):$
see \cite[Chapter 1, Section 9]{Dix}.
We conclude that it is a ${\rm III}_\lambda$ factor by an appeal to Connes'
thesis \cite{C0}. In order to be consistent with our use of right
$C^*$-modules later, we will do our GNS constructions so that our inner products
are linear in the {\bf second} variable.

\begin{prop}
The $*$-algebra $A^\lambda_c$ is a Tomita algebra with the inner product:
$$\langle y|x\rangle_{\bar\psi} =\bar\psi(y^*x)=\sum_{h\in G_\lambda}
|h|^{-1}\langle x_h|y_h\rangle_{\mathcal L^2(\R)}.$$ 
Here we denote $x_h$ in place of $x(h)$ to simplify notation. In this setting 
we have for $x\in A^\lambda_c$:\\
$(1)\;\;\;\text{Sharp:}\;\; S(x)_h=\alpha_h(\overline{x_{h^{-1}}});$\\
$(2)\;\;\;\text{Flat:}\;\;  F(x)_h=|h|\alpha_h(\overline{x_{h^{-1}}});$\\
$(3)\;\;\;\text{Delta:}\;\; \Delta(x)_h=|h|x_h.$ 
\end{prop}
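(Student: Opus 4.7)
The plan is to verify, in three steps, the inner product formula, the explicit formulas (1)--(3), and then the axioms of a Tomita algebra.

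First I would expand $\bar\psi(y^*x)=\int_\R (y^*x)(1)(t)\,dt$ using Lemma \ref{lambdacalc}. The convolution formula gives $(y^*x)(1)=\sum_h y^*(h)\,\alpha_h(x(h^{-1}))$, and the adjoint formula combined with the commutativity of $C_0^\lambda(\R)$ yields $y^*(h)=\alpha_h(\overline{y(h^{-1})})$. After summing and integrating, each term becomes $\int_\R \alpha_h(\overline{y(h^{-1})}\,x(h^{-1}))(t)\,dt$; the change of variable $s=h^{-1}t$ introduces a Jacobian $|h|$, and the relabelling $k=h^{-1}$ converts this into $|k|^{-1}$, yielding the claimed formula $\sum_h |h|^{-1}\langle x_h\mid y_h\rangle_{L^2(\R)}$. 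As a byproduct, positivity and faithfulness of $\bar\psi$ on $A_c^\lambda$ are reproved.

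Second, formula (1) is just a restatement of the crossed-product adjoint $x^*(h)=\alpha_h(\overline{x(h^{-1})})$, using that $C_0^\lambda(\R)$ is commutative. For formula (3) I would take $\Delta(x)_h=|h|x_h$ as the candidate modular operator and verify its defining property by direct computation of $\langle \Delta(x)\mid y\rangle_{\bar\psi}=\langle x\mid F(y)\rangle_{\bar\psi}$ on generators $f\cdot\delta_g$: the factor $|h|$ introduced by $\Delta$ cancels the Jacobian factor $|h|^{-1}$ already present in the inner product, restoring the tracial-type symmetry that $\bar\psi$ itself lacks. Formula (2) is then forced by the Tomita--Takesaki identity $F=\Delta\circ S$ applied to (1) and (3), giving $F(x)_h=|h|\alpha_h(\overline{x(h^{-1})})$.

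Third, to verify the Tomita-algebra axioms I would introduce the complex one-parameter family $\Delta^\alpha(x)_h:=|h|^\alpha x_h$. The $*$-algebra structure of $A_c^\lambda$ is immediate from Lemma \ref{lambdacalc}; the axiom $\langle xy\mid z\rangle_{\bar\psi}=\langle y\mid x^*z\rangle_{\bar\psi}$ follows by unwinding the inner product formula and using associativity. The algebra-automorphism property $\Delta^\alpha(xy)=\Delta^\alpha(x)\,\Delta^\alpha(y)$ reduces to the multiplicativity $|g_1g_2|=|g_1||g_2|$ together with the $\alpha_h$-invariance of $C_0^\lambda(\R)$; the compatibility $(\Delta^\alpha x)^*=\Delta^{-\bar\alpha}(x^*)$ and the entire dependence of $\alpha\mapsto \Delta^\alpha x$ are direct inspections on finitely supported $x$, as is density of the ranges of $1+\Delta^\alpha$. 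The main obstacle is strictly a bookkeeping one: keeping track of the antilinear conventions for $S$ and $F$ and the placement of the Jacobian factors, so that $|h|^{-1}$ in the inner product is matched by $|h|$ (and not $|h|^{-1}$) in $\Delta$, and so that $S\Delta^\alpha=\Delta^{-\bar\alpha}S$ holds with the correct conjugation. No new idea is required beyond translating the crossed-product $*$-structure into the Tomita algebra of the weight $\bar\psi$.
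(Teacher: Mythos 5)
Your overall strategy is the paper's: derive the inner product formula from the crossed-product operations and the Jacobian of the $G_\lambda$-action on $\R$, read off $S$ as the adjoint, obtain $F$ and $\Delta$ from the Tomita relations, and check Takesaki's axioms by direct computation on finitely supported elements. Two steps need repair, however. The relation you propose to verify for $\Delta$, namely $\langle \Delta(x)\,|\,y\rangle_{\bar\psi}=\langle x\,|\,F(y)\rangle_{\bar\psi}$, cannot hold: $F$ is conjugate-linear, so with inner products linear in the second variable the left side is linear in $y$ while the right side is antilinear in $y$. (Concretely, the left side reduces to $\sum_h\langle y_h|x_h\rangle_{\mathcal L^2(\R)}$ after the factor $|h|$ from $\Delta$ cancels the $|h|^{-1}$ in the inner product, whereas the right side still involves $\alpha_h(\overline{y_{h^{-1}}})$.) The correct defining relation is the one between Sharp and Flat, $\langle S(y)\,|\,x\rangle_{\bar\psi}=\langle F(x)\,|\,y\rangle_{\bar\psi}$, in which both sides are linear in both arguments; this is what the paper verifies to establish (2), and (3) then follows from $\Delta=FS$ rather than the other way around. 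Your derivation of (2) from $F=\Delta\circ S$ is fine once (3) is known, but as written (3) rests on the malformed identity, so the logical order should be reversed.

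Second, your list of axioms omits two that must be checked: that left multiplication $\pi(x)$ is a bounded operator for each $x\in A^\lambda_c$, and that $A^\lambda_c\cdot A^\lambda_c$ is dense in $A^\lambda_c$. The paper obtains the latter from the local units $E_N=\mathcal X_{[-N,N)}\cdot\delta_1$, which satisfy $E_N\cdot y=y$ whenever $\operatorname{supp}(y_h)\subseteq[-N,N)$ for all $h$; density of the ranges of $1+\Delta^\alpha$, which you do mention, is a separate axiom and not a substitute. Everything else in your outline -- the computation of $\langle y|x\rangle_{\bar\psi}$ via the change of variables $s=h^{-1}t$, the multiplicativity of $\Delta^z$ coming from $|g_1g_2|=|g_1||g_2|$, and the entireness of $z\mapsto\langle x|\Delta^z(y)\rangle_{\bar\psi}$ because the sum over $h$ is finite -- matches the paper's argument.
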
\label{tomita1}
\begin{proof}
We refer to \cite{Ta} for Takesaki's version of the axioms for a Tomita algebra.
Since $Sharp$ is defined to be the adjoint operation on the algebra, item (1) is
immediate. A straightforward calculation shows that for all $x,y\in A^\lambda_c$
we have that the defining equation for $Flat$ holds, namely:$
\langle S(y)|x\rangle_{\bar\psi}=\langle F(x)|y\rangle_{\bar\psi}$
so that item (2) holds. By definition, $\Delta=FS$ and so a simple
calculation shows that $\Delta(x)_h=|h|x_h$ and (3) holds. From this formula for
$\Delta$ we see that for each $z\in \C$ we have $\Delta^z(x)_h=|h|^z x_h$
and a straightforward calculation shows that 
$\Delta^z(x\cdot y)=(\Delta^z(x))\cdot(\Delta^z(y))$ so that each $\Delta^z$
is an algebra homomorphism of $A^\lambda_c$ as required. That each left
multiplication $\pi(x)$ is bounded when $x$ is supported on a single group
element is straightforward and the generalization to finitely supported
elements is then trivial. The fact that it is a $*$-representation holds as it 
does for the GNS representation for any weight.\\
\hspace*{.2in}In order to see that products are dense we recall that we have
local units. That is, for each positive integer $N$ we have defined 
$E_N=\mathcal X_{[-N,N)}\cdot\delta_1,$ and have noted that for each
$y\in A^\lambda_c$ that satisfies $\text{supp}(y_h)\subseteq [-N,N)$ 
for all $h,$
we have $E_N\cdot y=y.$ Axioms IV, V, VI in \cite{Ta} are simple calculations
involving the definitions of $S$, $F$, and $\Delta$ which we leave to 
the reader.\\
\hspace*{.2in}Since our inner products are linear in the second variable, we 
modify Tomita's Axiom VIII to read: 
$z\mapsto\langle x|\Delta^x(y)\rangle_{\bar\psi}$
is analytic on $\C$ for all $x,y\in A^\lambda_c.$ We easily calculate that
$\langle x|\Delta^z(y)\rangle_{\bar\psi}=
\sum_h|h|^{z-1}\langle y_h|x_h\rangle_{\mathcal L^2(\R)}.$
This function is analytic since the sum is finite.
\end{proof}

\begin{lemma}
The representation of $A^\lambda_c$ on $\mathcal L^2(A^\lambda_c,\bar\psi)$
decomposes as the integrated form of a covariant pair of representations:
\bean 
&&{\rm (1)}\;\; \tilde\pi:C^\lambda_{00}(\R)\to\mathcal 
B(L^2(A^\lambda_c,\bar\psi)),\;\;{\rm where:}\;\;  (\tilde\pi(f)(y))_h=
f\cdot y_h\;\;{\rm for}\;\; f\in C^\lambda_{00}(\R)\;\;{\rm and}\;\; 
y\in A^\lambda_c;\\
&&{\rm (2)}\;\; U:G_\lambda\to U(L^2(A^\lambda_c,\bar\psi))\;\;{\rm where:}\;\;
(U_g(y))_h=\alpha_g(y_{g^{-1}h})\;\;{\rm for}\;\; g\in G_\lambda\;\;{\rm and}
\;\; y\in A^\lambda_c.
\eean 
\end{lemma}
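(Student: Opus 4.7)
The plan is to work on the dense subspace $A^\lambda_c \subset \mathcal L^2(A^\lambda_c,\bar\psi)$ and verify four things in turn: that $\tilde\pi$ is a bounded $*$-representation, that each $U_g$ is unitary, that they satisfy the covariance relation, and that their integrated form recovers the GNS representation $\pi$ of $A^\lambda_c$ coming from Proposition \ref{tomita1}.

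For $\tilde\pi$: pointwise multiplication by $f\in C^\lambda_{00}(\R)$ is bounded on each $\mathcal L^2(\R)$ summand by $\|f\|_\infty$, so the estimate
$$\|\tilde\pi(f)y\|_{\bar\psi}^2 = \sum_{h\in G_\lambda}|h|^{-1}\int_\R |f\cdot y_h|^2\,dt \;\leq\; \|f\|_\infty^2\,\|y\|_{\bar\psi}^2$$
shows $\tilde\pi(f)$ extends to a bounded operator, and the $*$-homomorphism property is immediate because the inner product on each summand is the ordinary $\mathcal L^2(\R)$ inner product.

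For $U_g$: the crucial observation is that the affine action $g\cdot t = |g|t+a$ has Jacobian $|g|$, so $\int|\alpha_g(\phi)|^2\,dt = |g|\int|\phi|^2\,dt$ for $\phi\in\mathcal L^2(\R)$. Combining this with the substitution $k=g^{-1}h$ (whence $|h|=|g||k|$) gives
$$\|U_g y\|_{\bar\psi}^2 = \sum_h |h|^{-1}\int |\alpha_g(y_{g^{-1}h})|^2\,dt = \sum_h |h|^{-1}|g|\int|y_{g^{-1}h}|^2\,dt = \sum_k |k|^{-1}\int|y_k|^2\,dt = \|y\|_{\bar\psi}^2,$$
so $U_g$ is isometric. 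Using $\alpha_{g_1}\alpha_{g_2}=\alpha_{g_1g_2}$ one checks $(U_{g_1}U_{g_2}y)_h = \alpha_{g_1g_2}(y_{(g_1g_2)^{-1}h}) = (U_{g_1g_2}y)_h$, so $U$ is a group homomorphism and each $U_g$ is unitary with inverse $U_{g^{-1}}$. The covariance follows directly:
$$(U_g\tilde\pi(f)U_g^{-1}y)_h = \alpha_g\bigl(f\cdot(U_g^{-1}y)_{g^{-1}h}\bigr) = \alpha_g\bigl(f\cdot\alpha_{g^{-1}}(y_h)\bigr) = \alpha_g(f)\cdot y_h = (\tilde\pi(\alpha_g(f))y)_h.$$

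Finally, for $x=\sum_g x_g\cdot\delta_g\in A^\lambda_c$ the integrated form gives
$$\Bigl(\sum_g \tilde\pi(x_g)U_g\, y\Bigr)_h = \sum_g x_g\,\alpha_g(y_{g^{-1}h}),$$
which is precisely the crossed-product convolution formula for $(x\cdot y)_h$ recorded in the Notation after Proposition \ref{bootstrap1}. Hence $\sum_g\tilde\pi(x_g)U_g = \pi(x)$ on $A^\lambda_c$, and the decomposition is established. The main bookkeeping obstacle is the unitarity check: one has to balance the weight factor $|h|^{-1}$ in $\bar\psi$ against the Jacobian $|g|$ of the action $\alpha_g$ via the substitution $k=g^{-1}h$, and it is only because these three factors combine correctly that $U_g$ preserves the norm — this is really where the noncommutativity of $\bar\psi$ (as opposed to a trace) is compensated by the geometry of the ``$ax+b$'' action.
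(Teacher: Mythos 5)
Your proposal is correct and follows essentially the same route as the paper: verify that $\tilde\pi$ and $U$ form a covariant pair and then check that the integrated form $\sum_g\tilde\pi(x_g)U_g$ reproduces the crossed-product convolution, i.e.\ left multiplication by $x$. The only difference is that you write out in full the unitarity of $U_g$ (balancing the weight $|h|^{-1}$ against the Jacobian $|g|$ under $k=g^{-1}h$), which the paper dismisses as ``straightforward to verify''; your computation of that step is accurate.
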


\begin{proof}
It is straightforward to verify that $U$ is a unitary representation of
$G_\lambda$ and that $\tilde\pi$ is a $*$-representation of 
$C^\lambda_{00}(\R).$
To see the covariance condition:
\bean
(U_g\tilde\pi(f)U_{g^{-1}}(y))_h
=\cdots= \alpha_g(f\cdot \alpha_{g^{-1}}(y_{gg^{-1}}))=\alpha_g(f)\cdot y_h
=(\tilde\pi(\alpha_g(f))y)_h.
\eean
That is, $U_g\tilde\pi(f)U_{g^{-1}} = \tilde\pi(\alpha_g(f)).$

 Now, by Proposition 7.6.4 of \cite{Ped} the integrated form of
this covariant pair is the representation:
$$
(\tilde\pi \times U)(y)=\sum_h \tilde\pi(y_h)U_h\;\;\text{for}\;\;y\in
A^\lambda_c.
$$
Now, we evaluate this operator on the vector $x\in A^\lambda_c:$
\bean 
[((\tilde\pi\times U)(y))(x)]_k
=\sum_h[\tilde\pi(y_h)U_h(x)]_k
= \sum_h y_h\alpha_h(x_{h^{-1}k})=(y\cdot x)(k)=(\pi(y)(x))_k.
\eean
That is, $(\tilde\pi\times U)(y)=\pi(y)$ the operator left multiplication
by $y.$
\end{proof}

\subsubsection{ A representation of $A^\lambda$ on 
$l^2(G_\lambda)\otimes\mathcal L^2(\R)$}
We define a covariant pair of representations of $C^\lambda_0(\R)$ and
$G_\lambda$ on $l^2(G_\lambda)\otimes\mathcal L^2(\R)$ as follows:
$$
\text{(1)\;for}\;\;f\in C^\lambda_0(\R)\;\;\text{let}\;\;
\overline\pi(f)=1\otimes M_f,\;\;
\text{and\;\;(2)\;for}\;\;g\in G_\lambda\;\;\text{let}\;\;
\overline{U}_g=\Lambda(g)\otimes V_g
$$
where $\Lambda$ is the left regular representation of 
$G_\lambda$ on $l^2(G_\lambda):$
$$
(\Lambda(g)\xi)(h)=\xi(g^{-1}h)\;\;\text{for}\;\;\xi\in l^2(G_\lambda);
$$ 
and $V$ is the unitary action of $G_\lambda$ on $\mathcal L^2(\R)$ induced by 
the action of $G_\lambda$ on $\R:$  
$$
(V_g(f))(t)=|g|^{-1/2}f(g^{-1}t)\\;\;\text{for}\;\;f\in\mathcal L^2(\R).
$$
Using these equations one easily checks the covariance condition for 
$g\in G_\lambda$ and $f\in C^\lambda_0(\R):$
$$
\overline{U}_g\overline\pi(f)\overline{U}_g^*=\overline\pi(\alpha_g(f)).
$$
Clearly the representation $\overline\pi$ extends uniquely by weak-operator
continuity to the usual representation $1\otimes M$ of $\mathcal L^\infty(\R)$
on $l^2(G_\lambda)\otimes\mathcal L^2(\R)$ and is covariant with the
unitary representation $\overline{U}$ of $G_\lambda$ for the action $\alpha$
of $G_\lambda$ on $\mathcal L^\infty(\R).$ Clearly, the von Neumann algebra on
$l^2(G_\lambda)\otimes\mathcal L^2(\R)$ generated by the unitaries 
$\overline{U}_g$ and the operators $1\otimes M_f$ for $g\in G_\lambda$
and $f\in C^\lambda_{00}(\R)$, is the same as the von Neumann algebra
generated by the unitaries 
$\overline{U}_g$ and the operators $1\otimes M_f$ for $g\in G_\lambda$
and $f\in \mathcal L^\infty(\R)$.
The second item of the following Proposition is clear.

\begin{prop}\label{prop:typeIII}
(1) The representation $\pi=(\tilde\pi\times U)$ of $A^\lambda$ on 
$\mathcal L^2(A^\lambda_c,\bar\psi)$ is unitarily equivalent to the
representation $(\overline\pi\times\overline{U})$ of $A^\lambda$ on
$l^2(G_\lambda)\otimes\mathcal L^2(\R).$\\ 
(2) $(\overline\pi\times\overline{U})(A^\lambda)^{\prime\prime}$ is the von 
Neumann
crossed product (in the sense of the group-measure space construction
of Chapter 1 Section 9 of \cite{Dix}) 
$G_\lambda \rtimes_\alpha \mathcal L^\infty(\R).$\\
(3) This von Neumann algebra is a type ${\rm III}$ factor.
\end{prop}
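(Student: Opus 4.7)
The plan is to handle the three parts in sequence, with (1) providing the key technical unitary that transports everything to the familiar group-measure-space picture, (2) following essentially by definition, and (3) reducing to standard ergodic-theoretic criteria together with Connes's classification.

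For part (1), I will define a map $W:\mathcal L^2(A_c^\lambda,\bar\psi)\to l^2(G_\lambda)\otimes\mathcal L^2(\R)$ on the dense subspace $A_c^\lambda$ by
\[
W(x)(h):=|h|^{-1/2}\,x_h,
\]
viewing $W(x)$ as a square-summable $\mathcal L^2(\R)$-valued function on $G_\lambda$. The formula for $\bar\psi$ recorded in the Tomita-algebra proposition yields $\|W(x)\|^2=\sum_h|h|^{-1}\|x_h\|^2_{\mathcal L^2(\R)}=\|x\|^2_{\bar\psi}$, so $W$ is isometric, and it plainly has dense range. The intertwining $W\tilde\pi(f)=(1\otimes M_f)W$ is immediate. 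For the $G_\lambda$-piece I must verify $WU_g=(\Lambda(g)\otimes V_g)W$; here the factor $|g|^{-1/2}$ coming from $V_g$ combines with $|g^{-1}h|^{-1/2}=|g|^{1/2}|h|^{-1/2}$ coming from $W$ evaluated at $g^{-1}h$ to yield exactly the normalization $|h|^{-1/2}$ on the other side. Thus $W$ is a unitary equivalence of the two covariant pairs, and therefore of their integrated forms $\pi$ and $\overline\pi\times\overline U$.

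Part (2) is then essentially definitional. Since $C_0(\R)\subseteq C_0^\lambda(\R)$ is weak-operator dense in $\mathcal L^\infty(\R)$ when acting by multiplication on $\mathcal L^2(\R)$, and $\overline\pi(f)=1\otimes M_f$, the weak closure of $\overline\pi(C_0^\lambda(\R))$ is $1\otimes\mathcal L^\infty(\R)$. Together with the covariant unitaries $\overline U_g=\Lambda(g)\otimes V_g$ this is by construction the group-measure-space crossed product $G_\lambda\rtimes_\alpha\mathcal L^\infty(\R)$ of \cite[Ch.~1, Sec.~9]{Dix}.

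For part (3), I use the standard criterion that the group-measure-space von Neumann algebra for $(G_\lambda,\R,\text{Lebesgue})$ is a factor iff the action is free and ergodic. Freeness: any nontrivial $g=[\lambda^n:a]$ satisfies $g\cdot t=t$ iff $t(\lambda^n-1)=-a$, which has either no solution (when $n=0$, $a\neq0$) or the unique solution $t=-a/(\lambda^n-1)$ (when $n\neq0$), hence a null fixed-point set. Ergodicity: the subgroup $G_\lambda^0\cong\Gamma_\lambda$ is a countable dense subgroup of $\R$ acting by translations, which is classically ergodic for Lebesgue measure, so the larger $G_\lambda$-action is \emph{a fortiori} ergodic. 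To rule out types $\mathrm I$ and $\mathrm{II}$ I observe that by $\Gamma_\lambda$-ergodicity any $\sigma$-finite $\Gamma_\lambda$-invariant measure equivalent to Lebesgue is a scalar multiple of Lebesgue, but Lebesgue measure is rescaled by $\lambda\neq 1$ under the dilation $t\mapsto\lambda t\in G_\lambda$, so no $G_\lambda$-invariant equivalent $\sigma$-finite measure exists. (The refinement to $\mathrm{III}_\lambda$ of the section heading then follows from Connes's classification \cite{C0}, since the Radon--Nikodym cocycle $h\mapsto|h|$ has range $\{\lambda^n:n\in\Z\}$, making the asymptotic ratio set $\{\lambda^n:n\in\Z\}\cup\{0\}$.) I expect the main obstacle to be the careful bookkeeping in part (1): the exponents of $|h|$, $|g|$, and $|g^{-1}h|$ must be arranged so that the intertwining holds on the nose, and it is easy to pick the wrong normalization; once the correct $W$ is pinned down, the remainder is mechanical.
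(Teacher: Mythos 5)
Your proposal is correct and follows essentially the same route as the paper: the intertwining unitary $W(x)(h)=|h|^{-1/2}x_h$ is exactly the paper's $W$, with the same $|g|^{\pm 1/2}$ bookkeeping, and part (2) is treated as definitional in both. For part (3) the paper simply cites the proof of Dixmier's theorem for the rational $ax+b$ group (Chapter 1, Section 9 of \cite{Dix}) with $G_\lambda$, $G^0_\lambda$ substituted, and your free-plus-ergodic-plus-no-invariant-equivalent-measure argument is precisely the content of that cited proof, so it is the same argument written out rather than a different one.
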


\begin{proof}
To see item (3) we use the proof of  
\cite[Theorem 2, Section 9, Chapter 1]{Dix} 
where instead of the $ax+b$ group $G$ with $a,b\in \mathbb{Q}$
and $a>0$ and its subgroup $G_0$ (with $a=1$), we use $G_\lambda$ and 
its subgroup $G_\lambda^0$ (with $|g|=1$), to conclude that our 
von Neumann algebra is a type ${\rm III}$ factor.

To see item (1), we first define a unitary
$W:\mathcal L^2(A^\lambda_c,\bar\psi)\to 
l^2(G_\lambda)\otimes\mathcal L^2(\R)$ as follows:
$$
W\left(\sum_{i=1}^m f_i\cdot \delta_{h_i}\right)=
\sum_{i=1}^m |h_i|^{-1/2}\delta_{h_i}\otimes f_i.
$$
On the left side of this equation we are using the formalism $f\cdot \delta_h$ 
for singly supported elements in $A^\lambda_c$ with $f\in C^\lambda_{00}(\R)$
and $h\in G_\lambda.$ On the right of this equation we are using $\delta_h$
to denote the canonical orthonormal basis elements in $l^2(G_\lambda)$
and regarding $f\in C^\lambda_{00}(\R)\subset \mathcal L^2(\R).$
Clearly, $W$ is well-defined and linear with dense range. One easily checks 
that:
for all $x,y\in A^\lambda_c$ we have
$$
\langle y|x\rangle_{\bar\psi}=
\langle W(x)|W(y)\rangle_{l^2\otimes \mathcal L^2}
$$
recalling that the inner product on $A^\lambda_c$ is linear in the second
coordinate. Thus $W$ is a unitary and its inverse (adjoint) defined at
first on the elements in $l^2(G_\lambda)\otimes\mathcal L^2(\R)$ of the form
$\sum_{i=1}^m \delta_{h_i}\otimes f_i$ with the $f_i\in C^\lambda_{00}(\R)$,
is given by:
$$
W^*\left(\sum_{i=1}^m \delta_{h_i}\otimes f_i\right)=
\sum_{i=1}^m |h_i|^{1/2}f_i\cdot \delta_{h_i}.
$$
One then verifies the following two equations for $f\in C^\lambda_{00}(\R)$ and
$g\in G_\lambda:$
$$
\text{(1)}\;\; W\tilde\pi(f)W^*=1\otimes M_f
=\overline\pi(f)\;\;\;\text{and}\;\;\;
\text{(2)}\;\; WU_gW^*=\Lambda(g)\otimes V_g=
\overline{U}_g.
$$
The second equation is  more subtle and requires the observation:
$U_g(f\cdot\delta_h)=|g|^{1/2}V_g(f)\cdot \delta_{gh}.$\\ 
This completes the proof of the proposition.
\end{proof}

\subsubsection{The factor $\pi(A^\lambda)^{\prime\prime}$ acting on 
$\mathcal L^2(A^\lambda_c,\bar\psi)$ is type ${\rm III}_\lambda$}

We work in the unitarily equivalent setting of 
$(\overline\pi\times\overline{U})(A^\lambda)^{\prime\prime}$ acting on
$l^2(G_\lambda)\otimes\mathcal L^2(\R)$ afforded by Proposition 
\ref{prop:typeIII}.
Recall that the subgroup of $G_\lambda$ of matrices of the form
$[\lambda^n\,:\, 0]$ is isomorphic to $\Z$ and acts on the normal
subgroup $G_\lambda^0$ by conjugacy, and so
$G_\lambda=\Z\rtimes G_\lambda^0$ is a semidirect product and we can
write a canonical right coset decomposition of $G_\lambda:$
$$
G_\lambda =\bigcup_{n\in \Z}G_\lambda^0 \cdot[\lambda^n\,:\, 0].
$$
This gives us an internal orthogonal decomposition of $l^2(G_\lambda):$
$$
l^2(G_\lambda)=\sum_{n\in\Z}\oplus\; l^2\left(G_\lambda^0\cdot
[\lambda^n\,:\,0]\right)\cong l^2(\Z)\otimes l^2(G_\lambda^0).
$$
Here the latter isomorphism is given explicitly on basis elements by the map
which takes the $\delta$-function at $g\cdot[\lambda^n\,:\,0]$
to $\delta_n\otimes \delta_g$ for $n\in\Z$ and $g\in G_\lambda^0.$

One checks that the restriction of the representation 
$(\overline\pi\times\overline{U})$ of $A^\lambda=G_\lambda\rtimes 
C^\lambda_0(\R)$
to $A_0^\lambda:=G_\lambda^0\rtimes C^\lambda_0(\R)$ on
$l^2(G_\lambda)\otimes\mathcal L^2(\R)$ is unitarily equivalent to the
representation on $l^2(\Z)\otimes l^2(G_\lambda^0)\otimes\mathcal L^2(\R)$
via the covariant pair:
$$
1_{\Z}\otimes\Lambda(h)\otimes V_h=1_{\Z}\otimes\overline{U}_h\;\;\text{for}
\;\;h\in G_\lambda^0\;\;\text{and}
$$
$$
1_{\Z}\otimes 1\otimes M_f=1_{\Z}\otimes\overline\pi(f)
\;\;\text{for}\;\;f\in C^\lambda_0(\R).
$$
Therefore, the von Neumann subalgebra of 
$(\overline\pi\times\overline{U})(A^\lambda)^{\prime\prime}$ generated by
$(\overline\pi\times\overline{U})(A_0^\lambda)$ is isomorphic to the 
von Neumann algebra 
on $l^2(G_\lambda^0)\otimes\mathcal L^2(\R)$ generated by the operators
$\Lambda(h)\otimes V_h$ for $h\in G_\lambda^0$ and $1\otimes M_f$ for
$f\in C^\lambda_0(\R).$ This is clearly the same as the von Neumann algebra
generated by the operators
$\Lambda(h)\otimes V_h$ for $h\in G_\lambda^0$ and $1\otimes M_f$ for
$f\in \mathcal L^\infty(\R),$ and this von Neumann algebra is a factor of 
type $II_\infty$
by the methods of \cite[Chapter 1, Section 9]{Dix}. Thus,
$(\overline\pi\times\overline{U})(A_0^\lambda)^{\prime\prime}$ is a 
type $II_\infty$
subfactor of the type ${\rm III}$ factor, 
$(\overline\pi\times\overline{U})(A^\lambda)^{\prime\prime}.$
Moreover, the faithful normal semifinite trace on 
$(\overline\pi\times\overline{U})(A_0^\lambda)^{\prime\prime}$
is given by the restriction of $\bar\psi.$

Finally, conjugation by the unitary, 
$\overline{U}_g$ for $g=[\lambda\,:\,0]$, which lies in our type 
$III$ factor, defines an automorphism $\beta$ of the type $II_\infty$ subfactor 
which scales the trace by $\lambda.$ If $\cn_0$ is our type $II_\infty$ factor
acting on $l^2(G_\lambda^0)\otimes\mathcal L^2(\R)$ then our type ${\rm III}$
factor, say $\mathcal A_\lambda$ acting on 
$l^2(\Z)\otimes l^2(G_\lambda^0)\otimes\mathcal L^2(\R)$
is unitarily equivalent to the von Neumann crossed product
$\mathcal A_\lambda \cong \Z\rtimes_\beta \cn_0$ and hence is a 
type ${\rm III}_\lambda$ factor by \cite[Theorem 4.4.1]{C0}. We have proved 
the following Proposition.

\begin{prop}
The von Neumann algebra $\pi(A^\lambda)^{\prime\prime}$ acting on 
$\mathcal L^2(A^\lambda_c,\bar\psi)$ is a type ${\rm III}_\lambda$ factor.
Moreover, it is unitarily equivalent to 
$(\overline\pi\times\overline{U})(A^\lambda)^{\prime\prime}$ acting on
$l^2(G_\lambda)\otimes\mathcal L^2(\R).$ The von Neumann subalgebra of 
$(\overline\pi\times\overline{U})(A^\lambda)^{\prime\prime}$ generated by
$(\overline\pi\times\overline{U})(A_0^\lambda)$ is a type $II_\infty$ factor.
The space $l^2(G_\lambda)\otimes\mathcal L^2(\R)$ factors as
$l^2(\Z)\otimes l^2(G_\lambda^0)\otimes\mathcal L^2(\R)$ and with this
factorization, our $II_\infty$ factor has the form 
$\cn_0=1_\Z\otimes \tilde\cn_0$
where $\tilde\cn_0$ acts on $l^2(G_\lambda^0)\otimes\mathcal L^2(\R).$
Thus, our type ${\rm III}_\lambda$ factor is unitarily equivalent to the von Neumann 
crossed product $\Z\rtimes_\beta \cn_0$ where the automorphism $\beta$ of
$\cn_0$ is given by $\beta=Ad(\overline{U}_g)$ where 
$g=[\lambda\;:\;0].$
\end{prop}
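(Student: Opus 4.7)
The proposition is a synthesis of facts that have essentially been assembled in the paragraphs immediately preceding it, so my plan is to stitch them together cleanly.

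First I would invoke Proposition \ref{prop:typeIII}: the unitary $W:\mathcal L^2(A^\lambda_c,\bar\psi)\to l^2(G_\lambda)\otimes\mathcal L^2(\R)$ defined there intertwines $\pi=(\tilde\pi\times U)$ with $(\overline\pi\times\overline U)$, so the two von Neumann algebras in question are unitarily equivalent and it suffices to work on the spatially explicit side. Part (3) of that same proposition already tells us $(\overline\pi\times\overline U)(A^\lambda)''$ is a type ${\rm III}$ factor via the group-measure space construction of \cite[Chapter 1, Section 9]{Dix}, with $G_\lambda$ (and its normal subgroup $G^0_\lambda$) playing the role of the rational $ax+b$ group in Dixmier's argument.

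Next I would identify the canonical $II_\infty$ subfactor. Using the semidirect product decomposition $G_\lambda=\Z\rtimes G^0_\lambda$, the right coset decomposition yields the Hilbert space factorization $l^2(G_\lambda)\cong l^2(\Z)\otimes l^2(G^0_\lambda)$, and a direct calculation shows that restricting $(\overline\pi\times\overline U)$ to $A_0^\lambda$ gives the representation $1_\Z\otimes(\Lambda(h)\otimes V_h)$ for $h\in G^0_\lambda$ together with $1_\Z\otimes 1\otimes M_f$ for $f\in C_0^\lambda(\R)$. Thus $(\overline\pi\times\overline U)(A_0^\lambda)''=1_\Z\otimes\tilde\cn_0$ where $\tilde\cn_0$ is the von Neumann algebra on $l^2(G^0_\lambda)\otimes\mathcal L^2(\R)$ generated by $\Lambda(h)\otimes V_h$ and $1\otimes M_f$ (now for $f\in\mathcal L^\infty(\R)$). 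Since $G^0_\lambda\cong\Gamma_\lambda$ acts on $\R$ by translation, which preserves Lebesgue measure and is free and ergodic (properness and density of orbits were proved in Proposition \ref{simple}), the Murray--von Neumann construction produces $\tilde\cn_0$ as a type $II_\infty$ factor, with a faithful normal semifinite trace restricting $\bar\psi$.

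The final step is to implement the $\Z$-action by the unitary $\overline U_g$ with $g=[\lambda\,:\,0]$, which lies in $(\overline\pi\times\overline U)(A^\lambda)''$. Under the factorization $l^2(G_\lambda)\otimes\mathcal L^2(\R)=l^2(\Z)\otimes l^2(G^0_\lambda)\otimes\mathcal L^2(\R)$, conjugation by $\overline U_g$ gives an automorphism $\beta=\mathrm{Ad}(\overline U_g)$ of $\cn_0=1_\Z\otimes\tilde\cn_0$, and the whole algebra is the von Neumann crossed product $\Z\rtimes_\beta\cn_0$. A short computation using $\overline U_g=\Lambda(g)\otimes V_g$ and the identity $V_g M_f V_g^*=M_{\alpha_g(f)}$ shows that $\beta$ scales the trace by the factor $|g|=\lambda$ (translation by $\Gamma_\lambda$ is unchanged, but the measure on $\R$ is rescaled).

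The main obstacle, and the only genuinely non-bookkeeping step, is invoking \cite[Theorem 4.4.1]{C0}: once we know that $\cn_0$ is a type $II_\infty$ factor and that $\beta$ is a trace-scaling automorphism with scaling constant $\lambda$ generating a free $\Z$-action, Connes' classification of the resulting crossed product identifies $\Z\rtimes_\beta\cn_0$ as a type ${\rm III}_\lambda$ factor, completing the proof.
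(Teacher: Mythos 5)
Your proposal is correct and follows essentially the same route as the paper: reduce to the spatial picture via Proposition \ref{prop:typeIII}, use the coset decomposition $l^2(G_\lambda)\cong l^2(\Z)\otimes l^2(G^0_\lambda)$ to exhibit $(\overline\pi\times\overline U)(A_0^\lambda)''=1_\Z\otimes\tilde\cn_0$ as a type $II_\infty$ factor by the Dixmier group-measure-space argument, and then recognise the whole algebra as $\Z\rtimes_\beta\cn_0$ with $\beta=\mathrm{Ad}(\overline U_{[\lambda\,:\,0]})$ scaling the trace by $\lambda$, so that \cite[Theorem 4.4.1]{C0} gives type ${\rm III}_\lambda$. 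The only difference is cosmetic: you spell out the freeness/ergodicity of the $\Gamma_\lambda$-translation action and the trace-scaling computation slightly more explicitly than the paper, which simply cites the methods of \cite[Chapter 1, Section 9]{Dix}.
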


\subsection{The von Neumann algebra, $\pi_0(\mathcal{Q}^\lambda)^{-wo}$ acting 
on  
$\mathcal L^2(\mathcal{Q}^\lambda,\psi)$ is type ${\rm III}_\lambda$}

\begin{thm}\label{III-lambda}
The von Neumann algebra, $\pi_0(\mathcal{Q}^\lambda)^{-wo}$ acting on 
$\mathcal L^2(\mathcal{Q}^\lambda,\psi)$ is type ${\rm III}_\lambda.$
Moreover, the von Neumann subalgebra, $\pi_0(F^\lambda)^{-wo}$ is a
type $II_1$ factor with unique faithful normal state given by the restriction
of the vector state, $\psi$ which is the same as $\tau$ on $F^\lambda.$ By
the general theory of type ${\rm III}$ factors, $\pi_0(\mathcal{Q}^\lambda)^{-wo}$
is isomorphic to  $\pi(A^\lambda)^{-wo}$ acting on 
$\mathcal L^2(A^\lambda_c,\bar\psi).$ 
\end{thm}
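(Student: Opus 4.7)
\begin{proof*}[Proof proposal for Theorem \ref{III-lambda}]
The plan is to identify $\pi_0(\mathcal{Q}^\lambda)^{-wo}$ with the reduced (corner) von Neumann algebra $\pi(e)\mathcal{M}\pi(e)$, where $\mathcal{M}=\pi(A^\lambda)^{-wo}$ is the type $\mathrm{III}_\lambda$ factor constructed in the previous subsection, and then invoke standard facts about corners of type $\mathrm{III}$ factors. All the hard work (realising $\mathcal{M}$ as a group-measure space factor $G_\lambda\rtimes_\alpha \mathcal{L}^\infty(\mathbb{R})$ and identifying its type) has already been done.

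First I would construct a spatial identification. Since $\bar\psi(e)=1$, the vector $\Omega_\psi\in \mathcal{L}^2(\mathcal{Q}^\lambda,\psi)$ representing $e$ satisfies $\|\Omega_\psi\|=1$, and I would define
$$
W:\mathcal{L}^2(\mathcal{Q}^\lambda,\psi)\longrightarrow \pi(e)\mathcal{L}^2(A^\lambda,\bar\psi),\qquad W(x\,\Omega_\psi)=\pi(x)\bar\Omega
$$
for $x\in \mathcal{Q}_c^\lambda$, where $\bar\Omega\in\mathcal{L}^2(A^\lambda,\bar\psi)$ is the class of $e$. Because $\psi=\bar\psi|_{\mathcal{Q}^\lambda}$, this map is an isometry, and since $\pi(e)\bar\Omega=\bar\Omega$ and $x=exe$ for $x\in \mathcal{Q}^\lambda$, its range lies in (and is dense in) the closed subspace $\mathcal{K}:=[\pi(e)\mathcal{M}\pi(e)\bar\Omega]$. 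A direct check on generators shows $W\pi_0(x)W^*$ equals $\pi(x)|_{\mathcal{K}}=\pi(e)\pi(x)\pi(e)|_{\mathcal{K}}$ for all $x\in \mathcal{Q}^\lambda$.

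Next, using the normality of the map $y\mapsto \pi(e)y\pi(e)$ on $\mathcal{M}$ and the ultraweak density of $\mathcal{Q}^\lambda$ in $\pi(e)\mathcal{M}\pi(e)$, I conclude that $W\bigl(\pi_0(\mathcal{Q}^\lambda)^{-wo}\bigr)W^*$ equals the full reduced algebra $\pi(e)\mathcal{M}\pi(e)$ acting on $\mathcal{K}$. Since $\bar\Omega$ is cyclic and separating for $\pi(e)\mathcal{M}\pi(e)$ on $\mathcal{K}$ (this is the standard form of the reduced factor with respect to the normal faithful state $\hat\psi:=\bar\psi(\pi(e)\,\cdot\,\pi(e))$, which extends $\psi$), the spatial isomorphism is complete. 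Now $\mathcal{M}$ is a type $\mathrm{III}_\lambda$ factor and in a type $\mathrm{III}$ factor every nonzero projection is Murray--von Neumann equivalent to the identity; consequently $\pi(e)\mathcal{M}\pi(e)\cong \mathcal{M}$ as von Neumann algebras, so $\pi_0(\mathcal{Q}^\lambda)^{-wo}$ is also a type $\mathrm{III}_\lambda$ factor, isomorphic to $\pi(A^\lambda)^{-wo}$.

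For the statement about $F^\lambda$, I would repeat the identical corner argument using the restriction $\bar\tau$ of $\bar\psi$ to $\pi(A_0^\lambda)^{-wo}$, which is the type $\mathrm{II}_\infty$ subfactor $\mathcal{N}_0$ identified in the previous subsection. Since $\bar\tau(\pi(e))=\bar\psi(e)=1<\infty$, the projection $\pi(e)$ is finite in $\mathcal{N}_0$, so the reduced algebra $\pi(e)\mathcal{N}_0\pi(e)$ is a type $\mathrm{II}_1$ factor with unique faithful normal tracial state given by $\bar\tau(\pi(e)\,\cdot\,\pi(e))$. Running the spatial identification through the GNS representation for $\tau=\psi|_{F^\lambda}$ shows this $\mathrm{II}_1$ factor is precisely $\pi_0(F^\lambda)^{-wo}$, and uniqueness of the trace forces the vector state from $\Omega_\psi$ to agree with $\tau$ on $F^\lambda$. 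The main (and really only) obstacle is bookkeeping in the spatial identification---verifying that $W$ is well defined, isometric with dense range, and intertwines the two representations---after which the type classification is an immediate consequence of the already-established structure of $\pi(A^\lambda)^{-wo}$ and $\pi(A_0^\lambda)^{-wo}$.
\end{proof*}
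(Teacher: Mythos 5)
Your proposal is correct and follows essentially the same route as the paper: identify $\pi_0(\mathcal{Q}^\lambda)^{-wo}$ with the corner $\pi(e)\,\pi(A^\lambda)^{-wo}\,\pi(e)$ realised on the subspace $\pi^\prime(e)\pi(e)\mathcal{L}^2(A^\lambda,\bar\psi)=\mathcal{L}^2(\mathcal{Q}^\lambda,\psi)$, use that compression by a commutant projection of a factor is an isomorphism (your cyclic-and-separating argument is the same fact the paper cites from Dixmier), conclude type ${\rm III}_\lambda$ from $\pi(e)\sim 1$, and obtain the type $II_1$ statement from the finiteness of $\pi(e)$ in the $II_\infty$ subfactor. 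The only differences are cosmetic (your explicit unitary $W$ versus the paper's direct use of $\pi^\prime(e)$).
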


\begin{proof}
Recall that $\mathcal{Q}^\lambda=eA^\lambda e$ where 
$e=\mathcal X_{[0,1)}\cdot\delta_1
\in A^\lambda.$ Then 
$$
\pi(e)(\pi(A^\lambda)^{-wo})\pi(e)=
(\pi(e)\pi(A^\lambda)\pi(e))^{-wo}=\pi(\mathcal{Q}^\lambda)^{-wo}
$$ 
and the 
cut-down
of the type ${\rm III}$ factor $\pi(A^\lambda)^{-wo}$ (on its separable Hilbert 
space) by the nonzero projection $\pi(e)$
is isomorphic to $\pi(A^\lambda)^{-wo}$ since $\pi(e)$ is
Murray-von Neumann equivalent to the identity operator. Of course the
{\bf cut-down mapping} by $\pi(e)$ is {\bf not} an isomorphism.  Moreover, 
by left Hilbert algebra
theory, the operator {\bf right} multiplication by $e$ which is denoted by
$\pi^\prime(e)$ is in the commutant of $\pi(A^\lambda)^{-wo}$ acting on
$\mathcal L^2(\mathcal{Q}^\lambda,\bar\psi)$ and since we are in a factor the 
{\bf mapping}
$\pi(A^\lambda)^{-wo}\to \pi^\prime(e)\pi(A^\lambda)^{-wo}$ {\bf is an 
isomorphism} by \cite[Chapter 1, Section 2, Prop. 2]{Dix}.
Restricting this isomorphism to $\pi(\mathcal{Q}^\lambda)^{-wo}$ gives us an 
isomorphism
$\pi(\mathcal{Q}^\lambda)^{-wo}\to \pi^\prime(e)\pi(\mathcal{Q}^\lambda)^{-wo}$ 
which acts on
the Hilbert space $\pi^\prime(e)\pi(e)(\mathcal L^2(A^\lambda,\bar\psi)),$
which has as a dense subspace 
$\pi^\prime(e)\pi(e)(A^\lambda_c)=eA^\lambda_c e\subset eA^\lambda e=
\mathcal{Q}^\lambda$ 
with the inner product given by $\bar\psi$ which is the same as the inner
product on $eA^\lambda_c e$ given by the state $\psi.$ The completion
of this space is, of course, 
$\mathcal L^2(\mathcal{Q}^\lambda,\psi)$ with the action of $\mathcal{Q}^\lambda$ 
being the
GNS representation afforded by the state $\psi.$ We denote this 
representation of $\mathcal{Q}^\lambda$ on 
$\mathcal L^2(\mathcal{Q}^\lambda,\psi)$ by $\pi_0$
to distinguish it from the representation $\pi$ of $A^\lambda$ on the larger
space, $\mathcal L^2(A^\lambda_c,\bar\psi).$

Similar considerations applied to the type $II_\infty$ subfactor,
$\pi(A_0^\lambda)^{-wo}\subset\pi(A^\lambda)^{-wo}\;\;\text{on}\;\;
\mathcal L^2(A^\lambda_c,\bar\psi),$
show that:
$$
\pi(e)(\pi(A_0^\lambda)^{-wo})\pi(e)=
(\pi(e)\pi(A_0^\lambda)\pi(e))^{-wo}=\pi(F^\lambda)^{-wo}.
$$
Now the projection $\pi(e)$ is actually in
the type $II_\infty$ subfactor $\pi(A_0^\lambda)^{-wo}$ of 
$\pi(A^\lambda)^{-wo}$ and has finite ($\psi$) trace $=1$ there. 
Therefore, $\pi(F^\lambda)^{-wo}$ is a type $II_1$ factor on 
$\mathcal L^2(\mathcal{Q}^\lambda,\psi)$ with trace given
by the vector state $\psi.$ We remark that this is clearly a larger
space than the subspace, 
$\mathcal L^2(F^\lambda,\tau)\subset\mathcal L^2(\mathcal{Q}^\lambda,\psi).$ 
\end{proof}

\begin{prop}
The $*$-algebra $\mathcal{Q}^\lambda_c$ is a Tomita algebra with the inner 
product:
$\langle y|x\rangle_{\psi} =\psi(y^*x).$ 
Again we denote $x_h$ in place of $x(h)$ to simplify notation. In this setting 
we
have for $x\in \mathcal{Q}^\lambda_c$:\\
$(1)\;\;\;\text{Sharp:}\;\; S(x)_h=\alpha_h(\overline{x_{h^{-1}}});$\\
$(2)\;\;\;\text{Flat:}\;\;  F(x)_h=|h|\alpha_h(\overline{x_{h^{-1}}});$\\
$(3)\;\;\;\text{Delta:}\;\; \Delta(x)_h=|h|x_h.$ 
\end{prop}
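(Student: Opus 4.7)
The plan is to deduce this from Proposition \ref{tomita1} by recognizing that $\mathcal{Q}^\lambda_c = e A^\lambda_c e$ is a unital $*$-subalgebra of the Tomita algebra $A^\lambda_c$ which (a) inherits its inner product from $\bar\psi$ and (b) is invariant under the operations $S$, $F$, and $\Delta$ already defined on $A^\lambda_c$. Once these two observations are in place, the Tomita algebra axioms (I--VIII as used in Proposition \ref{tomita1}) restrict without further work, and the formulas (1)--(3) are nothing but the restrictions of the formulas previously verified on $A^\lambda_c$.

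First I would observe that $\psi$ is by construction the restriction of $\bar\psi$ to $\mathcal{Q}^\lambda = e A^\lambda e$, so that for all $x, y \in \mathcal{Q}^\lambda_c$ we have
\[
\langle y|x\rangle_\psi = \psi(y^*x) = \bar\psi(y^*x) = \langle y|x\rangle_{\bar\psi}.
\]
Hence the Hilbert space completion $\mathcal L^2(\mathcal{Q}^\lambda,\psi)$ sits isometrically inside $\mathcal L^2(A^\lambda_c,\bar\psi)$. Moreover $e$ is a unit for $\mathcal{Q}^\lambda_c$, so the density-of-products axiom in Takesaki's definition is automatic (every element is its own product with $e$), bypassing the role played by the local units $E_N$ in Proposition \ref{tomita1}.

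The central step is the invariance verification. By Lemma \ref{lambdagens}, an element $x \in A^\lambda_c$ lies in $\mathcal{Q}^\lambda_c$ precisely when, for each $h \in G_\lambda$, one has $\mathrm{supp}(x_h) \subseteq [0,1) \cap [h(0), h(1))$. For $\Delta(x)_h = |h| x_h$ the support is unchanged, so $\Delta$ preserves $\mathcal{Q}^\lambda_c$ (and so do all complex powers $\Delta^z$). For $S(x)_h = \alpha_h(\overline{x_{h^{-1}}})$, the support of $x_{h^{-1}}$ is contained in $[0,1) \cap [h^{-1}(0), h^{-1}(1))$ by the hypothesis $x \in \mathcal{Q}^\lambda_c$, and applying $\alpha_h$ translates this set to $h([0,1)) \cap [h(0), h(1)) = [h(0),h(1)) \cap [0,1)$, which is exactly the support condition required for $S(x) \in \mathcal{Q}^\lambda_c$. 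Since $F = \Delta \circ S$ differs from $S$ only by the scalar factor $|h|$, invariance under $F$ follows. Closure under multiplication is immediate from $(eae)(ebe) = e(aebe)e$.

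Given invariance, the Tomita axioms for $\mathcal{Q}^\lambda_c$ follow directly by restriction from those already proven for $A^\lambda_c$ in Proposition \ref{tomita1}: the $S$--$F$ duality $\langle S(y)|x\rangle_\psi = \langle F(x)|y\rangle_\psi$, the identity $\Delta = FS$, the fact that each $\Delta^z$ is an algebra homomorphism, and the analyticity of $z \mapsto \langle x|\Delta^z(y)\rangle_\psi$ all transfer verbatim because they are pointwise identities in the inner product on a common ambient Hilbert space. The formulas (1), (2), (3) of the present proposition are then literally the formulas from Proposition \ref{tomita1} restricted to $\mathcal{Q}^\lambda_c$. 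The only mildly technical point is the support bookkeeping in the invariance step; everything else is bookkeeping or direct inheritance.
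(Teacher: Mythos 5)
Your proposal is correct and takes essentially the same route as the paper, which simply asserts that $\mathcal{Q}^\lambda_c$ is a Tomita-subalgebra of $A^\lambda_c$ and invokes Proposition 2.30; you supply the invariance verification (that $S$, $F$, $\Delta$ preserve the support condition of Lemma \ref{lambdagens}) that the paper leaves implicit. The observation that $e$ is a genuine unit, making the density-of-products axiom automatic, is a nice touch consistent with the paper's intent.
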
\label{tomita2}

\begin{proof}
This is really a corollary of Proposition \ref{tomita1}, as 
$\mathcal{Q}^\lambda_c$
is just a Tomita-subalgebra of $A^\lambda_c.$
\end{proof}

\section{The modular spectral triple of the algebra $\mathcal{Q}^\lambda$}

Having introduced the main features of the algebras $\mathcal{Q}^\lambda$, we
now turn briefly to the modular index theory of \cite{CNNR,CPR2,CRT}. We 
begin with some
semifinite preliminaries.

\subsection{Semifinite noncommutative geometry}
We need to explain some semifinite versions of standard definitions and
results following \cite{CPRS2}. 
Let $\phi$ be a fixed faithful, normal, semifinite trace
on a von Neumann algebra ${\mathcal N}$. Let ${\mathcal
K}_{\mathcal N }$ be the $\phi$-compact operators in ${\mathcal
N}$ (that is the norm closed ideal generated by the projections
$E\in\mathcal N$ with $\phi(E)<\infty$).

\begin{defn} A {\bf semifinite
spectral triple} $(\A,\HH,\D)$ is given by a Hilbert space $\HH$, a
$*$-algebra $\A\subset \cn$ where $\cn$ is a semifinite von
Neumann algebra acting on $\HH$, and a densely defined unbounded
self-adjoint operator $\D$ affiliated to $\cn$ such that
$[\D,a]$ is densely defined and extends to a bounded operator
in $\cn$ for all $a\in\A$ and $(\lambda-\D)^{-1}\in\K_\cn$ for all 
$\lambda\not\in{\R}.$
The triple is said to be {\bf even} if there is $\Gamma\in\cn$ such
that $\Gamma^*=\Gamma$, $\Gamma^2=1$,  $a\Gamma=\Gamma a$ for all
$a\in\A$ and $\D\Gamma+\Gamma\D=0$. Otherwise it is {\bf odd}.
\end{defn}

Note that if $T\in\cn$ and
$[\D,T]$ is bounded, then $[\D,T]\in\cn$. 

We recall from \cite{FK}
that if $S\in\mathcal N$, the {\bf t-th generalized singular
value} of $S$ for each real $t>0$ is given by
$$
\mu_t(S)=\inf\{\Vert SE\Vert\ : \ E \mbox{ is a projection in }
{\mathcal N} \mbox { with } \phi(1-E)\leq t\}.
$$
The ideal $\LL^1({\mathcal N},\phi)$ consists of those operators 
$T\in{\mathcal N}$ such that $\Vert T\Vert_1:=\phi( |T|)<\infty$ where
$|T|=\sqrt{T^*T}$. In the Type I setting this is the usual trace
class ideal. We will denote the norm on $\LL^1(\cn,\phi)$ by
$\n\cdot\n_1$. An alternative definition in terms of singular
values is that 
$T\in\LL^1(\cn,\phi)$ if $\|T\|_1:=\int_0^\infty \mu_t(T) dt<\infty.$
When ${\mathcal N}\neq{\mathcal
B}({\mathcal H})$, $\LL^1(\cn,\phi)$ need not be complete in this norm but it is
complete in the norm $\Vert\cdot\Vert_1 + \Vert\cdot\Vert_\infty$. (where
$\Vert\cdot\Vert_\infty$ is the uniform norm). We use the notation
$$
{\mathcal L}^{(1,\infty)}({\mathcal N},\phi)=
\left\{T\in{\mathcal N}\ : \Vert T\Vert_{_{{\mathcal L}^{(1,\infty)}}} :=   
\sup_{t> 0}\frac{1}{\log(1+t)}\int_0^t\mu_s(T)ds<\infty\right\}.
$$

 The reader
should note that ${\mathcal L}^{(1,\infty)}(\cn,\phi)$ is often taken to
mean an ideal in the algebra $\widetilde{\mathcal N}$ of
$\phi$-measurable operators affiliated to ${\mathcal N}$. Our
notation is however consistent with that of \cite{C} in the
special case ${\mathcal N}={\mathcal B}({\mathcal H})$. With this
convention the ideal of $\phi$-compact operators, 
${\mathcal  K}({\mathcal N})$,
consists of those $T\in{\mathcal N}$ (as opposed to
$\widetilde{\mathcal N}$) such that 
$\mu_\infty(T):=\lim_{t\to \infty}\mu_t(T)  = 0.$

\begin{defn}\label{summable} A semifinite  spectral triple
$(\A,\HH,\D)$ relative to $(\cn,\phi)$
with $\A$ unital is
$(1,\infty)$-summable if 
$(\D-\lambda)^{-1}\in\LL^{(1,\infty)}(\cn,\phi)\ \mbox{for all}\ 
\lambda\in\C\setminus\R.$ 
\end{defn}

It follows that if $(\A,\HH,\D)$ is $(1,\infty)$-summable then it is
$n$-summable (with respect to the trace $\phi$) for all $n>1$.   
We next need to briefly discuss Dixmier traces. For
more information on semifinite Dixmier traces, see \cite{CPS2,CRSS}.
For $T\in\LL^{(1,\infty)}(\cn,\phi)$, $T\geq 0$, the function 
\ben
F_T:t\to\frac{1}{\log(1+t)}\int_0^t\mu_s(T)ds 
\een 
is bounded. 
There are certain $\omega\in L^\infty(\R_*^+)^*$, \cite{CPS2,C}, which
define (Dixmier) traces on
 $\LL^{(1,\infty)}(\cn,\phi)$ by setting
$$ 
\phi_\omega(T)=\omega(F_T), \ \ T\geq 0
$$
and  extending to all of
$\LL^{(1,\infty)}(\cn,\phi)$ by linearity.
For each such $\omega$ we write $\phi_\omega$ for
the associated Dixmier trace.
Each Dixmier trace $\phi_\omega$ vanishes on the ideal of trace class
operators. Whenever the function $F_T$ has a limit at infinity,
all Dixmier traces return that limit as their value. 
This leads to the notion of a measurable operator \cite{C,LSS},
that is, one on which all Dixmier traces take the same value.

\subsection{The Kasparov module and modular spectral triple}

We have seen that the algebras $\mathcal {Q}^\lambda$ do not possess a 
faithful gauge invariant
trace but that there is a ${\rm KMS}_\beta$ where 
$\beta=-\log(\lambda)$ for the gauge 
action, $\gamma,$ namely $\psi:=\tau\circ\Phi:\mathcal{Q}^\lambda\to\C$,
where $\Phi:\mathcal{Q}^\lambda\to F^\lambda$ is the expectation and 
$\tau:F^\lambda\to\C$ is a faithful normalised trace. In fact, $\psi$ is the 
only KMS state for the gauge action (for any $\beta$), by Proposition 
\ref{KMS}.  
We show below that the generator of the gauge action $\D$
acting on a suitable $C^*$-$F^\lambda$-module $X$ gives us a Kasparov module
$(X,\D)$ whose class lies in 
$KK^{1,\T}(\mathcal{Q}^\lambda,F^{\lambda})$.  In some examples, including 
the case
$\lambda\in\mathbb{Q}$, we have $K_1(\mathcal {Q}^\lambda)=\{0\}$ and so pairing
with ordinary $K_1$ would be fruitless. However, 
following \cite{CPR2,CNNR} we may compute a numerical pairing 
using a `modular spectral triple' constructed from the Kasparov module.

We now review this construction adapted to the present situation.
Let $\HH=\LL^2(\mathcal {Q}^\lambda)$ be the GNS Hilbert space given by the 
faithful state
$\psi$ with the inner product on $\mathcal {Q}^\lambda$ defined by
$\la a,b\ra=\psi(a^*b)=(\tau\circ\Phi)(a^*b).$
Then $\D$ is a self-adjoint unbounded operator on $\HH$, \cite{CPR2}.
The representation of $\mathcal{Q}^\lambda$ on $\HH$ by left multiplication 
(which
we now denote by $\pi$ in place of $\pi_0$) is bounded and
nondegenerate: the left action of an element $a\in \mathcal{Q}^\lambda$ by 
$\pi(a)$ 
satisfies $\pi(a)b=ab$ for all $b\in \mathcal{Q}^\lambda.$  This distinction 
between 
elements of
$\mathcal{Q}^\lambda$ as vectors in $\LL^2(\mathcal{Q}^\lambda)$ and operators on 
$\LL^2(\mathcal{Q}^\lambda)$ is sometimes crucial.
The dense subalgebra $\mathcal Q_c^\lambda:=eA_c^\lambda e$
which is the finite span of elements in $\mathcal{Q}^\lambda$ of the form 
$\mathcal X_{[a,b)}\cdot\delta_g$
is in the smooth domain of the derivation 
$\delta=\mbox{ad}(|\D|)$.
We remind the reader that the {\rm KMS} condition on the modular automorphism 
group of the state
$\psi$, \cite{Ta}, (for $t=i$) is: $\psi(xy)=\psi(\s_i(\pi(y))x)=
\psi(\s(y)x)$
for $x,y\in\pi(\mathcal{Q}^\lambda),$ where $\sigma(y)=\Delta^{-1}(y).$ 

\begin{lemma} The group of modular
automorphisms  of the von Neumann algebra 
$\pi(\mathcal{Q}^\lambda)^{\prime\prime}$
is given on the generators by  
\be 
\s_t(\pi(f\cdot\delta_g)):=\Delta^{it}\pi(f\cdot\delta_g)\Delta^{-it}=
\pi(\Delta^{it}(f\cdot\delta_g))=|g|^{it}\pi(f\cdot\delta_g)
=\det(g)^{it}\pi(f\cdot\delta_g).
\label{eq:kms-flow}
\ee
\end{lemma}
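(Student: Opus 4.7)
The plan is to deduce the formula as a direct consequence of the Tomita algebra structure on $\mathcal{Q}^\lambda_c$ established in Proposition~\ref{tomita2}. Recall that the GNS inner product $\langle y\mid x\rangle_\psi = \psi(y^*x)$ realising $\pi=\pi_0$ on $\LL^2(\mathcal{Q}^\lambda,\psi)$ is precisely the inner product making $\mathcal{Q}^\lambda_c$ a Tomita algebra, and that the identity element $e=\mathcal X_{[0,1)}\cdot\delta_1$ is a cyclic and separating vector because $\psi$ is a faithful state. Consequently, the modular operator and conjugation $(\Delta,J)$ arising from the left Hilbert algebra structure on $\mathcal{Q}^\lambda_c$ are exactly those of Tomita--Takesaki theory for the pair $(\pi(\mathcal{Q}^\lambda)'',\psi)$, and the modular automorphism group $\sigma_t=\mathrm{Ad}(\Delta^{it})$ is the one featured in the statement.

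Next I would invoke the standard intertwining relation for the left regular representation of a Tomita algebra (see Takesaki \cite{Ta}): for every $x\in\mathcal{Q}^\lambda_c$,
\[
\Delta^{it}\pi(x)\Delta^{-it}\;=\;\pi\bigl(\Delta^{it}(x)\bigr),
\]
which is the algebraic content one needs to translate the spatial action of $\Delta^{it}$ into an action on elements. Applied to a singly-supported generator $x=f\cdot\delta_g$, item~(3) of Proposition~\ref{tomita2} together with the functional calculus (equivalently, the power-series argument already used in the proof of Proposition~\ref{tomita1}) gives $\Delta^{z}(x)_h=|h|^{z}x_h$ for all $z\in\C$, so that
\[
\Delta^{it}(f\cdot\delta_g)\;=\;|g|^{it}\,(f\cdot\delta_g),
\]
which visibly remains in $\mathcal{Q}^\lambda_c$. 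Combining these two displays with the linearity of $\pi$ yields
\[
\sigma_t(\pi(f\cdot\delta_g))
=\pi\bigl(\Delta^{it}(f\cdot\delta_g)\bigr)
=\pi(|g|^{it}f\cdot\delta_g)
=|g|^{it}\pi(f\cdot\delta_g),
\]
and the final equality $|g|^{it}=\det(g)^{it}$ is just the notational convention $|g|:=\det(g)=\lambda^n$ fixed earlier.

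There is essentially no obstacle here beyond bookkeeping: the substantive work was carried out in Proposition~\ref{tomita2}, which identified $\Delta$ explicitly. The only conceptual point that must be checked is that the modular theory associated to the vector state $\psi$ on $\pi(\mathcal{Q}^\lambda)''$ agrees with the one produced by the Tomita algebra structure on $\mathcal{Q}^\lambda_c$. This is automatic from the identification of inner products above together with the fact that $e$ is cyclic and separating, so no additional argument is required.
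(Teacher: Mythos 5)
Your proposal is correct and follows the same route as the paper, whose entire proof is the one-line remark that the formula is immediate from the Tomita algebra structure of $\mathcal{Q}^\lambda_c$ (Proposition on Sharp/Flat/Delta) once one notes $|g|=\det(g)$. You have simply spelled out the details the paper leaves implicit: the intertwining relation $\Delta^{it}\pi(x)\Delta^{-it}=\pi(\Delta^{it}(x))$ from Takesaki's theory and the computation $\Delta^{it}(f\cdot\delta_g)=|g|^{it}f\cdot\delta_g$.
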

\begin{proof} This is immediate from Lemma \ref{tomita2} if we note that 
$|g|=\det(g).$
\end{proof}

\begin{cor}\label{DeltaD} With $\mathcal{Q}^\lambda$ acting on 
$\mathcal{H}:=\LL^2(\mathcal{Q}^\lambda)$ and 
with $\D$ the generator of the natural unitary implementation of the gauge 
action of $\T^1$ on $\mathcal{Q}^\lambda,$  we have
$\Delta=\lambda^{\D}\  {\rm  or  }\ 
e^{it\D}=\Delta^{it/\log \lambda}.$
\end{cor}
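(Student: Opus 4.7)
The plan is to verify the identity on the dense core of ``monomial'' vectors $f\cdot\delta_g\in \mathcal{Q}^\lambda_c$ and then invoke the functional calculus for self-adjoint operators. Both $\Delta$ and $\D$ are simultaneously diagonalised by these vectors: the Tomita computation of Lemma~\ref{tomita2} tells us that $\Delta(f\cdot\delta_g)=|g|\,(f\cdot\delta_g)=\lambda^n(f\cdot\delta_g)$ whenever $\det(g)=\lambda^n$, while by the very definition of the implementing unitaries $u_z$ of the gauge action recalled above (namely $(u_z x)_h=z^n x_h$ when $|h|=\lambda^n$) we have $u_z(f\cdot\delta_g)=z^n (f\cdot\delta_g)$, so that the generator $\D$ acts on this same vector by the integer eigenvalue $n$.

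First, I would make these two eigenvalue statements precise: fix $g$ with $\det(g)=\lambda^n$, note that $f\cdot\delta_g$ lies in the range of the spectral projection $\Phi_n$ (so it is an eigenvector of $\D$ of eigenvalue $n$, hence an eigenvector of $\lambda^\D$ of eigenvalue $\lambda^n$), and compare with $\Delta(f\cdot\delta_g)=\lambda^n(f\cdot\delta_g)$ from Lemma~\ref{tomita2}. By linearity, $\Delta$ and $\lambda^\D$ agree on all finite linear combinations in $\mathcal{Q}^\lambda_c$, which is dense in $\HH$ and is contained in the analytic domains of both $\Delta$ and $\D$ (the elements are finite sums over the gauge-spectral decomposition). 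A standard core argument --- both sides being closable positive self-adjoint operators that agree on a common core consisting of analytic vectors --- then promotes the identity to the equality $\Delta=\lambda^{\D}$ of unbounded operators.

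From this the equivalent exponentiated form is simply a matter of rewriting: substituting $s=t\log\lambda$ in the one-parameter group identity $\Delta^{it}=e^{it\log\lambda\cdot \D}$ gives $e^{is\D}=\Delta^{is/\log\lambda}$, which is the claim.

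I do not expect any substantive obstacle; the only point requiring a little care is the core/analytic-vector argument that upgrades eigenvalue agreement on the algebraic span to the operator identity, but this is routine since $\D$ already has pure point spectrum on $\Z$ (as the generator of a $\T^1$-action) and the monomials exhaust the joint eigenbasis.
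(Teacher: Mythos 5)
Your argument is correct and is essentially the proof the paper intends: the corollary is presented as an immediate consequence of the Tomita-algebra formula $\Delta(x)_h=|h|x_h$ together with the identification of the gauge spectral subspaces $\Phi_k(\HH)$ (whose generator spans are dense and sum to the identity), exactly the two eigenvalue computations you make. The core/analytic-vector remark you add is the only point the paper leaves tacit, and your handling of it is fine.
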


To simplify notation, we let $A=\mathcal{Q}^\lambda$  and
$F=F^\lambda=A^\gamma$, the fixed point algebra for the $\T^1$ gauge
action, $\gamma.$ For convenience we will suppress the 
notations $\D\otimes 1_k$ and so on. 
The algebras $A_c,F_c$ are defined as the finite linear
span of the generators.
Right multiplication makes $A$ into a right $F$-module, and
similarly $A_c$ is a right module over $F_c$. We define an
$F$-valued inner product $(\cdot|\cdot)_R$ on both these modules
by
 $ (a|b)_R:=\Phi(a^*b).$
\begin{defn}\label{mod} Let $X$ be the right $F$ $C^*$-module obtained by
completing $A$ (or $A_c$) in the norm
$$ 
\Vert x\Vert^2_X:=\Vert (x|x)_R\Vert_F=\Vert
\Phi(x^*x)\Vert_F.
$$
\end{defn}
The algebra $A$ acting by left multiplication on  $X$
provides a representation of $A$ as adjointable operators on $X$.
Let $X_c$ be the copy of $A_c\subset X$. The $\T^1$ action on $X_c$ is
unitary and extends to $X$, \cite{CNNR,pr}.
For all $k\in\Z$, the  projection operator onto the $k$-th spectral subspace
of the $\T^1$ action is also denoted (somewhat carelessly) $\Phi_k$ on $X$:  
$$
\Phi_k(x)=\frac{1}{2\pi}\int_{\T^1}z^{-k}u_z(x)d\theta,
\ \ z=e^{i\theta},\ \ x\in X.
$$ 
Observe
 that $\Phi_0$ restricts to $\Phi$ on $A$
and on generators of $\mathcal{Q}^\lambda$ we have 
\be
\label{proj}\Phi_k(f\cdot\delta_g)=\left\{\begin{array}{ll} f\cdot\delta_g & 
\mbox{if $|g|=\lambda^k$}\\ 0 & \mbox{otherwise}\end{array}\right.
\ee
Of course $\mathcal L^2(\mathcal{Q}^\lambda)$ and $X$ have a common dense 
subspace 
$\mathcal Q_c^\lambda$ on which these projections are identical.
Let $A_k=\Phi_k (A)$ and observe from (\ref{proj}) that 
$\overline{A_k^*A_k}= F=\overline{A_kA_k^*}$ so that the gauge action
$\gamma$ on $\mathcal{Q}^\lambda$  has {\bf full spectral subspaces}.

We quote the following result from \cite{pr}, the proof in our case is the same.
\begin{lemma}\label{phiendo} The operators $\Phi_k$ are adjointable 
endomorphisms of the $F$-module $X$ such that $\Phi_k^*=\Phi_k=\Phi_k^2$ and
$\Phi_k\Phi_l=\delta_{k,l}\Phi_k$. If $K\subset\Z$ then the sum
$\sum_{k\in K}\Phi_k$ converges strictly to a projection in the
endomorphism algebra. The sum $\sum_{k\in\Z}\Phi_k$ converges to
the identity operator on $X$. For all $x\in X$, the sum
$x=\sum_{k\in\Z}\Phi_kx=\sum_{k\in\Z}x_k$ converges in $X$.
\end{lemma}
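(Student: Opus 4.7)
The plan is to verify the algebraic identities on the dense subalgebra $\mathcal Q_c^\lambda \subset X$, establish a Parseval-type identity for the $F$-valued inner product to obtain uniform norm bounds, and then extend everything by continuity.

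First I would show each $\Phi_k$ is adjointable with $\Phi_k^* = \Phi_k$. Since the gauge action $\gamma$ fixes $F$ pointwise, each $u_z$ commutes with right multiplication by $F$ and preserves $(\cdot|\cdot)_R$. Using the $*$-automorphism property of $u_z$ together with the substitution $z \mapsto z^{-1}$ in the defining integral yields $\Phi_k(x)^* = \Phi_{-k}(x^*)$. On $\mathcal Q_c^\lambda$ any $x$ decomposes as a finite sum $x = \sum_l x_l$ with $x_l \in A_l$; the inclusion $A_k^* A_l \subseteq A_{l-k}$ combined with $\Phi|_{A_m} = 0$ for $m\ne 0$ shows that both $(\Phi_k x|y)_R$ and $(x|\Phi_k y)_R$ collapse to $\Phi(x_k^* y_k)$, and this extends by continuity. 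Idempotence $\Phi_k^2 = \Phi_k$ and orthogonality $\Phi_k\Phi_l = \delta_{k,l}\Phi_k$ follow from the observation that $\Phi_l x$ lies in the $l$-th spectral subspace, so that the Fubini-enabled iterated integral defining $\Phi_k \Phi_l$ is just the $(k-l)$-th Fourier coefficient of $z \mapsto z^l$, which equals $\delta_{k,l}$.

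The crucial ingredient is the Parseval identity
\[
(x|x)_R \;=\; \sum_{k \in \Z} (\Phi_k x | \Phi_k x)_R,
\]
established first on $\mathcal Q_c^\lambda$, where the sum is finite: orthogonality of spectral subspaces under $\Phi$ gives $(x|x)_R = \Phi(x^*x) = \sum_{k,l}\Phi(x_k^* x_l) = \sum_k(x_k|x_k)_R$. For any finite $K_0 \subset \Z$, the operator $P_{K_0} := \sum_{k\in K_0}\Phi_k$ is a projection on $\mathcal Q_c^\lambda$, and for $y \in \mathcal Q_c^\lambda$ the identity gives $(P_{K_0}y|P_{K_0}y)_R \le (y|y)_R$ in the positive cone of $F$, hence $\|P_{K_0}y\|_X \le \|y\|_X$. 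By density $P_{K_0}$ extends to a contractive projection on $X$. For an arbitrary $K \subset \Z$, I would show that the net $\{P_{K_0}x\}$ indexed by finite $K_0 \subset K$ is Cauchy in $X$ via a three-$\varepsilon$ argument: approximate $x$ by $y \in \mathcal Q_c^\lambda$ with $\|x-y\|_X < \varepsilon/3$; since $y$ has finite spectral support, $P_{K_0}y$ stabilizes once $K_0$ contains that support, and the uniform contractivity of the $P_{K_0}$ controls the remaining errors. This yields strict convergence of $\sum_{k\in K}\Phi_k$ to a projection $P_K$ in the endomorphism algebra. Taking $K = \Z$, the same argument (with $P_{K_0} y = y$ eventually for $y \in \mathcal Q_c^\lambda$) delivers $P_\Z x = x$, simultaneously giving $\sum_{k\in\Z}\Phi_k = 1_X$ strictly and the pointwise decomposition $x = \sum_k \Phi_k x$ in $X$.

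The main obstacle is the contractivity of the finite partial sums $P_{K_0}$ in the $F$-module norm, since $F$ is noncommutative and one cannot directly invoke a scalar Bessel inequality. The key point is that Parseval on $\mathcal Q_c^\lambda$ provides the operator-order domination $\sum_{k \in K_0}(\Phi_k y|\Phi_k y)_R \le (y|y)_R$ in $F$, which is strong enough to extract the norm bound. Once the contractive extension to $X$ is in place, the passage from finite to infinite sums and to arbitrary $K \subseteq \Z$ is a routine density argument.
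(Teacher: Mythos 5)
Your proof is correct. The paper itself gives no argument here --- it simply quotes the lemma from \cite{pr} with the remark that ``the proof in our case is the same'' --- and your proposal is essentially that standard argument: identify $\Phi_k$ as the $k$-th Fourier coefficient of the circle action, verify symmetry and orthogonality on the finitely supported homogeneous elements, use the Parseval identity $(x|x)_R=\sum_k(x_k|x_k)_R$ coming from $\Phi|_{A_m}=0$ for $m\neq 0$ to get uniform contractivity of the partial sums, and extend by density. The only point you gloss slightly is the passage from pointwise (strong) convergence of the net $P_{K_0}$ to \emph{strict} convergence in $\operatorname{End}_F(X)=M(\operatorname{End}^0_F(X))$, but this follows routinely from the uniform bound together with self-adjointness of the $P_{K_0}$.
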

The unbounded operator of the next proposition is of course
the generator of the
$\T^1$ action on $X$.
We refer to Lance's book, \cite[Chapters 9,10]{L}, for information
on unbounded operators on $C^*$-modules.

\begin{prop}\label{dee}\cite{pr} Let $X$ be the right $C^*$-$F$-module of
Definition \ref{mod}. Define $\D:X_\D\subset X$ to be the linear space
$$ 
X_\D=
\{x=\sum_{k\in\Z}x_k\in X:\Vert \sum_{k\in\Z}k^2(x_k|x_k)_R\Vert<\infty\}.
$$
For $x\in X_\D$ define $ \D(x)=\sum_{k\in\Z}kx_k.$  Then $\D:X_\D\to X$ is a
is self-adjoint, regular operator on $X$.
\end{prop}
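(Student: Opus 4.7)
The plan is to follow the strategy of \cite{pr}: exploit the spectral projections $\Phi_k$ supplied by Lemma \ref{phiendo} to build the resolvent of $\D$ by hand, and then invoke Lance's criterion for regularity and self-adjointness of unbounded operators on Hilbert $C^*$-modules (\cite[Lemma 9.8 and Theorem 9.9]{L}): a densely defined symmetric operator whose $\pm i$ perturbations are surjective is necessarily closed, regular, and self-adjoint.

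First I would check that $X_\D$ is a dense right $F$-submodule. Density is immediate: for any $x=\sum_{k\in\Z}x_k\in X$, the finite truncations $x^{(N)}:=\sum_{|k|\le N}x_k$ lie in $X_\D$ (only finitely many terms) and converge to $x$ in the $X$-norm by Lemma \ref{phiendo}. Stability under the right $F$-action uses $\Phi_k(x\cdot f)=\Phi_k(x)\cdot f$ together with $(x_k f|x_k f)_R\le \|f\|_F^2\,(x_k|x_k)_R$, so the summability condition defining $X_\D$ is preserved. Symmetry is equally direct: for $j\ne k$, $(y_j|x_k)_R=\Phi(y_j^*x_k)=0$ because $y_j^*x_k$ has gauge weight $k-j\ne 0$; combined with $\sum_k\Phi_k=1$ this gives $(y|x)_R=\sum_k(y_k|x_k)_R$, and since $k$ is real,
\begin{equation*}
(\D y|x)_R=\sum_k k\,(y_k|x_k)_R=\sum_k(y_k|k x_k)_R=(y|\D x)_R,\qquad x,y\in X_\D.
\end{equation*}

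The key step is the explicit construction of the resolvent. Define
\begin{equation*}
R_\pm:=\sum_{k\in\Z}\frac{1}{k\pm i}\,\Phi_k.
\end{equation*}
Since $|(k\pm i)^{-1}|\le 1$ and the $\Phi_k$ are mutually orthogonal self-adjoint projections summing strictly to $1$ by Lemma \ref{phiendo}, the partial sums are uniformly bounded in operator norm by $1$ and converge strictly to adjointable endomorphisms $R_\pm$ of $X$ with $(R_+)^*=R_-$. A direct computation in the $F$-valued inner product shows $R_\pm(X)\subset X_\D$:
\begin{equation*}
\Big\|\sum_k k^2\bigl((R_\pm x)_k\,|\,(R_\pm x)_k\bigr)_R\Big\|=\Big\|\sum_k\frac{k^2}{k^2+1}\,(x_k|x_k)_R\Big\|\le \|(x|x)_R\|,
\end{equation*}
and term-by-term calculation yields $(\D\pm i)R_\pm=\mathrm{id}_X$ and $R_\pm(\D\pm i)=\mathrm{id}_{X_\D}$, so $\D\pm i$ are surjective with bounded inverse $R_\pm\in\mathrm{End}_F^*(X)$.

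Applying Lance's criterion now completes the proof. The potential obstacle is that, in contrast to the Hilbert-space setting, every convergence statement must be verified in the strict topology on $\mathrm{End}_F^*(X)$ rather than in the strong operator topology; however, the pairwise orthogonality of the $\Phi_k$ reduces the analysis to the uniformly bounded scalar coefficients $(k\pm i)^{-1}$, making this a routine transcription of the spectral-theoretic argument for multiplication operators on $\ell^2(\Z)$.
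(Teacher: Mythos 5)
Your argument is correct and is essentially the proof of the cited source: the paper itself gives no proof, stating only that the result is quoted from \cite{pr} and that ``the proof in our case is the same,'' and the proof in \cite{pr} is exactly this one --- orthogonality of the spectral projections $\Phi_k$, the explicit adjointable resolvents $\sum_k(k\pm i)^{-1}\Phi_k$, and Lance's criterion for self-adjointness and regularity. No gaps.
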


This should be compared to the following Hilbert space version.

\begin{prop}
The generator $\D$ of the one-parameter unitary group $\{u_z\;|\;z\in\T^1\}$
on $\mathcal L^2(\mathcal{Q}^\lambda,\psi)$ has eigenspaces given by the 
ranges of the
$\Phi_k$ and $\D(x)=kx$ iff $\Phi_k(x)=x.$ In particular
$$
{\rm dom}(\D)=\{x=\sum_k x_k\;|\;\Phi_k(x_k)=x_k\;\;\text{and}\;\; 
\sum_{k}k^2\Vert x_k\Vert^2 < \infty\},
$$
and $\D(\sum_k x_k)=\sum_k kx_k.$
\end{prop}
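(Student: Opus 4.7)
The plan is to apply the SNAG theorem (equivalently, Peter--Weyl for $\T^1$) to the strongly continuous unitary representation $\{u_z\}_{z\in\T^1}$ on $\mathcal H:=\mathcal L^2(\mathcal{Q}^\lambda,\psi)$, and then identify the resulting spectral projections with the $\Phi_k$ already defined. Since $\T^1$ is compact abelian with dual $\Z$, SNAG gives an orthogonal decomposition $\mathcal H=\bigoplus_{k\in\Z} \mathcal H_k$, where $\mathcal H_k=\{x\in\mathcal H\;|\;u_z(x)=z^k x\;\forall z\in\T^1\}$, together with a self-adjoint generator $\D$ satisfying $u_{e^{it}}=e^{it\D}$, whose eigenspaces are exactly the $\mathcal H_k$ with eigenvalue $k$.

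Next I would identify $\mathcal H_k$ with the range of $\Phi_k$. By construction $\Phi_k=\frac{1}{2\pi}\int_{\T^1}z^{-k}u_z\,d\theta$, so for any $w\in\T^1$ a change of variables gives $u_w\Phi_k=w^k\Phi_k$. Hence $\operatorname{range}(\Phi_k)\subseteq\mathcal H_k$. Conversely, if $x\in\mathcal H_k$ then $\Phi_k(x)=\frac{1}{2\pi}\int_{\T^1}z^{-k}z^k x\,d\theta=x$, proving the reverse containment and also the stated ``iff''. The earlier lemma showed that the $\Phi_k$ are mutually orthogonal projections summing to the identity, so they already implement the Peter--Weyl decomposition; this is consistent with SNAG.

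It then remains to describe the domain. By the functional calculus for the self-adjoint operator $\D$ associated to $\{u_z\}$, the maximal domain is
\begin{equation*}
\operatorname{dom}(\D)=\Big\{x=\sum_k x_k\in\mathcal H\;\Big|\;\sum_k k^2\|x_k\|^2<\infty\Big\},
\end{equation*}
and on this domain $\D\big(\sum_k x_k\big)=\sum_k k\,x_k$ (with convergence in $\mathcal H$), where $x_k:=\Phi_k(x)$. This is the standard description of the generator in terms of its spectral resolution $\D=\sum_k k\,\Phi_k$. Since the $\Phi_k(x)$ are exactly the spectral components singled out in the statement, the characterization of $\operatorname{dom}(\D)$ follows.

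The only point requiring a little care is the interchange of the integral defining $\Phi_k$ with the strong topology (to get $u_w\Phi_k=w^k\Phi_k$), which is routine for a strongly continuous unitary group integrated against a bounded measurable function on the compact group $\T^1$. No serious obstacle arises; the proposition is essentially a packaging of SNAG/Stone together with the observation that the $\Phi_k$, already defined as Fourier coefficients of $u_z$, coincide with the spectral projections of $\D$.
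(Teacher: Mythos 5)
Your proposal is correct. Note that the paper itself supplies no proof of this proposition: it is stated without argument as the Hilbert-space counterpart of Proposition \ref{dee} (the $C^*$-module version quoted from \cite{pr}), with the earlier lemma on the $\Phi_k$ doing the real work. Your SNAG/Stone argument is exactly the standard reasoning that the omission presupposes: the change-of-variables computation $u_w\Phi_k=w^k\Phi_k$ and its converse identify the ranges of the $\Phi_k$ with the spectral subspaces, and the domain description is then the spectral-theorem formula for the generator $\D=\sum_k k\,\Phi_k$. The only ingredient worth flagging explicitly is the strong continuity of $z\mapsto u_z$ on $\mathcal L^2(\mathcal{Q}^\lambda,\psi)$ (needed to invoke Stone's theorem and to integrate against $z^{-k}$), which follows from the strong continuity of the gauge action on $\mathcal{Q}^\lambda$ together with the $\gamma$-invariance of $\psi$; with that observed, your proof is complete.
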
 
\noindent{\bf Remark.}
On generators in $\mathcal{Q}^\lambda$ regarded as elements of either
$X$ or $L^2(\mathcal{Q}^\lambda,\psi)$
we have $\D(f\cdot\delta_g)=(\log_\lambda(|g|))f\cdot\delta_g.$

To continue, we recall the underlying right $C^*$-$F^\lambda$-module, $X$,
which is the completion of $\mathcal{Q}^\lambda$ for the norm $\Vert
x\Vert_X^2=\Vert\Phi(x^*x)\Vert_{F^\lambda}$.
Introduce the rank one operators on $X:$
$\Theta^{R}_{x,y}$ by $\Theta^{R}_{x,y}z=x(y|z)_R$. Then using the operators
$S_{k,m}$ defined above, we obtain formulas for the projections
$\Phi_k$ similar to those of 
\cite[Lemma 4.7]{pr} with some important differences. First recall 
\cite[Lemma 3.5]{CPR2}. 

\begin{lemma}
Any $F^\lambda$-linear endomorphism $T$ of the module $X$
which preserves the copy of $\mathcal{Q}^\lambda$ inside $X$, extends uniquely 
to a 
bounded 
operator on the Hilbert space $\HH=\LL^2(\mathcal{Q}^\lambda).$
\end{lemma}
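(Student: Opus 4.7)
The plan is to exploit the compatibility between the two inner products in play: for $a,b \in \mathcal{Q}^\lambda$ one has
\[ \la a,b\ra_\HH = \psi(a^*b) = \tau\bigl(\Phi(a^*b)\bigr) = \tau\bigl((a|b)_R\bigr), \]
so the Hilbert space inner product on $\HH$ is obtained from the $F^\lambda$-valued inner product on $X$ by composition with the faithful tracial state $\tau$. In particular, on the common dense subspace $\mathcal{Q}^\lambda$ the two norms are related by $\Vert x\Vert_\HH^2 = \tau((x|x)_R)$. This is the key identity that reduces the extension problem to a positivity/ordering argument in $F^\lambda$.

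The main step is to observe that an adjointable $F^\lambda$-linear endomorphism $T$ of $X$ has a positive operator $T^*T \in \operatorname{End}_{F^\lambda}(X)$ with $T^*T \leq \Vert T\Vert^2\cdot\mathrm{id}_X$ in the natural order on adjointable endomorphisms. Pairing this inequality with an arbitrary $x \in X$ yields the module-valued estimate
\[ (Tx|Tx)_R = (x|T^*Tx)_R \leq \Vert T\Vert^2 (x|x)_R \quad \text{in } F^\lambda, \]
because $\Vert T\Vert^2(x|x)_R - (Tx|Tx)_R = (x|(\Vert T\Vert^2\mathrm{id} - T^*T)x)_R$ is manifestly positive in $F^\lambda$.

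Now specialize to $x \in \mathcal{Q}^\lambda$. The hypothesis that $T$ preserves the copy of $\mathcal{Q}^\lambda$ inside $X$ guarantees $Tx \in \mathcal{Q}^\lambda$, so both sides of the displayed inequality are genuine elements of $F^\lambda$ to which the trace $\tau$ may be applied. Since $\tau$ is positive, the inequality survives, and via the identity of the first paragraph this becomes
\[ \Vert Tx\Vert_\HH^2 \leq \Vert T\Vert^2_{\operatorname{End}(X)}\Vert x\Vert_\HH^2. \]
Thus $T|_{\mathcal{Q}^\lambda}$ is bounded for the $\HH$-norm, and because $\mathcal{Q}^\lambda$ is dense in $\HH = \LL^2(\mathcal{Q}^\lambda,\psi)$ by the GNS construction, $T$ extends uniquely by continuity to a bounded operator on $\HH$, with operator norm majorised by $\Vert T\Vert_{\operatorname{End}(X)}$.

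There is no real obstacle; the lemma is essentially a repackaging of the standard principle that a positive trace on the coefficient algebra turns a Hilbert $C^*$-module into a pre-Hilbert space on which adjointable endomorphisms act boundedly. The only point worth stating carefully is the interpretation of ``endomorphism'': one needs $T$ to be adjointable for the inequality $T^*T \leq \Vert T\Vert^2\mathrm{id}_X$ to be available, which is the default convention in Lance's framework \cite{L} used throughout this paper.
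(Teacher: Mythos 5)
Your argument is correct and is essentially the proof the paper is implicitly invoking: this lemma is quoted from \cite[Lemma 3.5]{CPR2} without proof, and the argument there is exactly your reduction, namely the module inequality $(Tx|Tx)_R\leq \Vert T\Vert^2 (x|x)_R$ in $F^\lambda$ followed by an application of the positive trace $\tau$ and the identity $\la a,b\ra_{\HH}=\tau((a|b)_R)$ on the common dense subspace. One small remark: you do not actually need adjointability, since the inequality $(Tx|Tx)_R\leq\Vert T\Vert^2(x|x)_R$ holds for any bounded $F^\lambda$-linear map by the standard trick of testing $T$ against $x\cdot((x|x)_R+\varepsilon)^{-1/2}$ and letting $\varepsilon\to 0$ (cf.\ \cite{L}), so the lemma applies verbatim to all bounded module endomorphisms preserving $\mathcal{Q}^\lambda$.
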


In particular, the finite rank endomorphisms of the pre-$C^*$ module
$\mathcal{Q}^\lambda_c$ (acting on the left) 
satisfy this condition, and we denote the algebra of all these
endomorphisms by $End_F^{00}(\mathcal{Q}^\lambda_c)$.

\begin{lemma}\label{Phiformula}Compare \cite[Lemma 4.7]{pr}.
The following formulas hold in both 
$\mathcal L(X)$ and in $\mathcal B(\HH).$\\
{\rm (1)} For $k\geq 0,$ we have 
$$
\Phi_0=\Theta^R_{e,e}\;\;{\rm while\;\;for}\;\;k>0,\;\;
\Phi_k=\sum_{m=0}^{m_k}\Theta^R_{S_{k,m},S_{k,m}}.
$$
{\rm (2)} For $-k<0,$ we have
$$
\Phi_{-k}=\Theta^R_{S^*_{k,m},S^*_{k,m}}\;\;{\rm for\;\;any}
\;\;m=0,1,...,m_k-1\;\;{\rm and\ also\;\;for}\;\; m_k\;\;{\rm if}\;\; 
\lambda^{-k}=m_k+1.
$$
\end{lemma}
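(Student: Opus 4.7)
The plan is to verify the two formulas pointwise on a norm-dense set of generators $x=f\cdot\delta_g\in\mathcal{Q}^\lambda_c$ and then extend by continuity. The key computational identity is
\[
\Theta^R_{y,y}(x)=y\,(y\mid x)_R=y\,\Phi(y^*x),
\]
and the main ingredients already available are: Lemma \ref{Smusub}, which gives $\sum_{m=0}^{m_k}S_{k,m}S_{k,m}^*=e$ together with $S_{k,m}^*S_{k,m}=e$ for $0\le m<m_k$ (and also for $m=m_k$ exactly when $\lambda^{-k}=m_k+1$); Lemma \ref{lambdacalc}(1), which tracks determinants and supports under multiplication; and equation \eqref{proj}, which says $\Phi_k$ fixes $f\cdot\delta_g$ when $|g|=\lambda^k$ and annihilates it otherwise.

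For part (1), the case $k=0$ is immediate: $\Theta^R_{e,e}(x)=e\,\Phi(e^*x)=\Phi(x)=\Phi_0(x)$ since $e$ is the unit of $\mathcal{Q}^\lambda$. For $k>0$, take $x=f\cdot\delta_g$ with $|g|=\lambda^j$. The product $S_{k,m}^*x$ is supported on a single group element of determinant $\lambda^{j-k}$, so $\Phi(S_{k,m}^*x)=0$ unless $j=k$, in which case $\Phi(S_{k,m}^*x)=S_{k,m}^*x$. Summing,
\[
\sum_{m=0}^{m_k}\Theta^R_{S_{k,m},S_{k,m}}(x)=\Bigl(\sum_{m=0}^{m_k}S_{k,m}S_{k,m}^*\Bigr)x=e\cdot x=x \quad\text{when }j=k,
\]
and $0$ otherwise---exactly the action of $\Phi_k$.

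For part (2) with $-k<0$, the symmetric bookkeeping gives $\Theta^R_{S_{k,m}^*,S_{k,m}^*}(x)=S_{k,m}^*\,\Phi(S_{k,m}x)$; the inner $\Phi$ now forces $j=-k$, and then the operator acts as $S_{k,m}^*S_{k,m}$ on $x$. The useful asymmetry with part (1) is that $S_{k,m}^*S_{k,m}=e$ already for each $m<m_k$ (and for $m=m_k$ precisely when $S_{k,m_k}$ is itself an isometry), so a single value of $m$ is enough---there is no need to sum to reconstruct $e$ from range projections. Finally, the equalities hold on $\mathcal{Q}^\lambda_c$, which is dense in both $X$ and $\HH$; they pass to $\mathcal{L}(X)$ by the adjointability of the $\Phi_k$ (Lemma \ref{phiendo}) and to $\mathcal{B}(\HH)$ via the unnumbered lemma preceding the statement. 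No serious obstacle is anticipated: Definition \ref{Sk,m} of $S_{k,m}$ was engineered precisely so that these rank-one formulas recover the spectral projections.
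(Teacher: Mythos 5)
Your proof is correct and follows essentially the same route as the paper's: both verify the identity on the spectral components of a generator using $\Theta^R_{y,y}(x)=y\Phi(y^*x)$, invoke Lemma \ref{Smusub} to get $\sum_m S_{k,m}S_{k,m}^*=e$ (resp.\ $S_{k,m}^*S_{k,m}=e$ for a single $m<m_k$), and extend to $\mathcal L(X)$ and $\mathcal B(\HH)$ via the preceding extension lemma. No gaps.
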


\begin{proof}
Since both $\Phi_k$ and the finite rank endomorphisms satisfy the hypotheses
of the previous lemma, the first statement of this lemma will follow from
calculations done on generators. The following calculations are based
on the formulas in Lemma \ref{Smusub}.\\
(1) Let $k>0$ and let $x=\sum_l x_l$ be a finite sum of generators, $x_l$
satisfying $\Phi_l(x_l)=x_l.$ Then
\bean 
\sum_{m=0}^{m_k}\Theta^R_{S_{k,m},S_{k,m}}(x) 
&=&\sum_l\sum_{m=0}^{m_k}\Theta^R_{S_{k,m},S_{k,m}}(x_l)
=\sum_l\sum_{m=0}^{m_k}S_{k,m}\Phi(S^*_{k,m}x_l)=
\sum_{m=0}^{m_k}S_{k,m}\Phi(S^*_{k,m}x_k)\\
&=&\sum_{m=0}^{m_k}S_{k,m}S^*_{k,m}x_k=ex_k=x_k=\Phi_k(x).
\eean
For $k=0$ this is a similar but far easier calculation.\\
(2) Let $-k<0$ and let $x=\sum_l x_l$ be a finite sum of generators as above.
 Then, for $0\leq m<m_k$
\bean 
\Theta^R_{S^*_{k,m},S^*_{k,m}}(x) 
&=&\sum_l\Theta^R_{S^*_{k,m},S^*_{k,m}}(x_l)
=\sum_lS^*_{k,m}\Phi(S_{k,m}x_l)=
S^*_{k,m}\Phi(S_{k,m}x_{-k})\\
&=&S^*_{k,m}S_{k,m}x_{-k}=ex_{-k}=x_{-k}=\Phi_{-k}(x).
\eean
\end{proof}

We recall the following result discussed in Section 3 of \cite {CNNR}
(a `bare hands' proof can be given by the method in \cite{CPR2}).
\begin{prop}\label{tildetrace} Let $\cn$ be the von Neumann algebra
$  \cn=(End^{00}_F(\mathcal Q_c^\lambda))'',$
where we take the commutant inside $\B(\HH)$. Then $\cn$ is 
semifinite, and there exists a faithful,
semifinite, normal trace $\tilde\tau:\cn\to\C$ such that for all rank
one endomorphisms $\Theta^{R}_{x,y}$ of $\mathcal{Q}^\lambda_c$, 
$$
\tilde\tau(\Theta^{R}_{x,y})=(\tau\circ\Phi)(y^*x),\ \ \ x,y\in 
\mathcal{Q}^\lambda_c.
$$
In addition, $\D$ is affiliated to $\cn$ and $\pi(\mathcal{Q}^\lambda)$ 
is a subalgebra of $\cn$.
\end{prop}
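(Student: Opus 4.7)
The plan is to prove the four subclaims (i) $\D$ is affiliated to $\cn$, (ii) $\pi(\mathcal{Q}^\lambda)\subseteq\cn$, (iii) the formula $\tilde\tau(\Theta^R_{x,y})=\psi(y^*x)$ defines a positive tracial linear functional on $End^{00}_F(\mathcal{Q}_c^\lambda)$, and (iv) $\tilde\tau$ extends to a faithful, normal, semifinite trace on $\cn$ (so in particular $\cn$ is semifinite).

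Parts (i) and (ii) should follow quickly from Lemma \ref{Phiformula} and Lemma \ref{phiendo}. For (i): each spectral projection $\Phi_k$ of $\D$ is, by Lemma \ref{Phiformula}, a finite sum of rank-one endomorphisms $\Theta^R_{S_{k,m},S_{k,m}}$ (with the analogous formula for $k<0$), so lies in $End^{00}_F(\mathcal{Q}_c^\lambda)\subseteq\cn$; hence $\D$ is affiliated to $\cn$. For (ii): left multiplication by $a\in\mathcal{Q}^\lambda_c$ is $F^\lambda$-linear on $X$, and Lemma \ref{Phiformula} gives
\begin{equation*}
\pi(a)\Phi_m = \sum_l \Theta^R_{aS_{m,l},S_{m,l}} \in End^{00}_F(\mathcal{Q}_c^\lambda).
\end{equation*}
Summing over $m$ and using the strict convergence $\sum_m\Phi_m = 1$ of Lemma \ref{phiendo}, $\pi(a)$ appears as a strong limit of elements of $End^{00}_F(\mathcal{Q}_c^\lambda)$, so $\pi(a)\in\cn$. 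Norm continuity of $\pi$ then upgrades this to $\pi(\mathcal{Q}^\lambda)\subseteq\cn$.

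For (iii), positivity is immediate from $\tilde\tau(\Theta^R_{x,x}) = \psi(x^*x)\geq 0$. For the trace identity, the composition rule $\Theta^R_{x_1,y_1}\Theta^R_{x_2,y_2} = \Theta^R_{x_1\Phi(y_1^*x_2),y_2}$, combined with the $F^\lambda$-bimodule property of $\Phi$, gives
\begin{equation*}
\tilde\tau(\Theta^R_{x_1,y_1}\Theta^R_{x_2,y_2}) = \tau(\Phi(y_2^*x_1)\Phi(y_1^*x_2)),
\end{equation*}
and the right-hand side is symmetric in $(x_1,y_1)\leftrightarrow(x_2,y_2)$ by the trace property of $\tau$ on $F^\lambda$.

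The main obstacle is (iv): extending $\tilde\tau$ from $End^{00}_F(\mathcal{Q}_c^\lambda)$ to a faithful, normal, semifinite tracial weight on $\cn$. My approach would be to regard $End^{00}_F(\mathcal{Q}_c^\lambda)$ as a norm-dense $*$-subalgebra of the $C^*$-algebra $\mathcal K(X)$ of $F^\lambda$-compact operators on the Hilbert $F^\lambda$-module $X$, invoke the standard dual-weight construction (using the faithful trace $\tau$ on $F^\lambda$) to obtain a faithful, lower-semicontinuous, densely-defined trace on $\mathcal K(X)$ extending $\tilde\tau$, and then produce a normal semifinite extension to $\cn = \mathcal K(X)''$ along the lines of Section 3 of \cite{CNNR}. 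Faithfulness descends from faithfulness of $\psi$ on $\mathcal{Q}^\lambda$, while semifiniteness is witnessed by the increasing family of spectral projections $E_N = \sum_{|k|\leq N}\Phi_k$, each a finite sum of rank-one endomorphisms with finite $\tilde\tau$-trace, converging strongly to the identity. This simultaneously establishes the semifiniteness of $\cn$.
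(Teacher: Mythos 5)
The paper itself offers no proof of this Proposition: it simply cites Section~3 of \cite{CNNR} and remarks that a ``bare hands'' proof can be given by the method of \cite{CPR2}. Your outline is essentially a reconstruction of that standard argument, and the steps that you carry out in detail check out: the spectral projections $\Phi_k$ are finite-rank endomorphisms by Lemma \ref{Phiformula}, so $\D$ is affiliated to $\cn$; the identity $\pi(a)\Theta^R_{x,y}=\Theta^R_{ax,y}$ together with $E_N=\sum_{|k|\le N}\Phi_k\to 1$ strongly puts $\pi(\mathcal Q_c^\lambda)$, and hence $\pi(\mathcal{Q}^\lambda)$, inside the strongly closed algebra $\cn$; and the computation $\tilde\tau(\Theta^R_{x_1,y_1}\Theta^R_{x_2,y_2})=\tau\bigl(\Phi(y_2^*x_1)\Phi(y_1^*x_2)\bigr)$ gives the trace property from traciality of $\tau$ on $F^\lambda$.

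Two points deserve more care than you give them. First, positivity of $\tilde\tau$ on the positive cone of $End^{00}_F(\mathcal Q_c^\lambda)$ does not follow from $\tilde\tau(\Theta^R_{x,x})=\psi(x^*x)\ge 0$ alone, since a general positive finite-rank endomorphism is of the form $T^*T$ with $T=\sum_i\Theta^R_{x_i,y_i}$; one must check $\tilde\tau(T^*T)=\sum_{i,j}\tau\bigl(\Phi(y_j^*y_i)\Phi(x_i^*x_j)\bigr)\ge 0$, which reduces to the positivity of the matrices $[\Phi(x_i^*x_j)]$ and $[\Phi(y_j^*y_i)]$ over $F^\lambda$ and the complete positivity of $\tau$ --- standard, but it is the actual content of positivity here. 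Second, the passage from the densely defined trace on the $C^*$-algebra of $F^\lambda$-compacts to a faithful normal semifinite trace on its weak closure is the genuinely nontrivial step, and you (like the paper) ultimately defer it to \cite{CNNR}; your remark that semifiniteness is witnessed by the $E_N$, each of finite trace $\tilde\tau(E_N)=\sum_{|k|\le N}\lambda^{-k}$, and that faithfulness descends from faithfulness of $\psi$, is the right skeleton for that step.
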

The fact that $\tilde\tau(\Phi_k)=\lambda^{-k}$
implies that with respect to the trace $\tilde\tau$ we can not expect 
$\mathcal D$ to satisfy a finite summability criterion. We solve this problem 
exactly as in \cite{CPR2}.

\begin{defn} 
We define a new weight on $\cn^+$: let $T\in\cn^+$ then
$\tau_\Delta(T):=\sup_N\tilde\tau(\Delta_N T)$ where 
$\Delta_N=\Delta(\sum_{|k|\leq N}\Phi_k).$ 
\end{defn}

\noindent{\bf Remarks}. Since $\Delta_N$ is 
$\tilde\tau$-trace-class, we see that $T\mapsto\tilde\tau(\Delta_N T)$
is a normal positive linear functional on $\cn$ and hence 
$\tau_\Delta$ is a normal weight on $\mathcal N^+$ which is easily seen to 
be faithful and semifinite. 

As in \cite{CPR2}, we now give another way to define 
$\tau_\Delta$ which is not only conceptually useful but also makes a
number of important properties straightforward to verify. Many proofs
require only trivial notation changes and the substitution of $n^{\pm}$
with $\lambda^{\mp}.$

\noindent {\bf Notation}. Let $\cm$ be the relative commutant
in $\mathcal N$ of the operator $\Delta$. Equivalently, $\cm$ is the
relative commutant of the set of spectral projections $\{\Phi_k | k\in\Z\}$ 
of $\D.$  Clearly, $\cm=\sum_{k\in\Z}\;\Phi_k\cn\Phi_k.$

\begin{defn} As $\tilde\tau$ restricted to each  $\Phi_k\cn\Phi_k$ is a 
faithful finite trace with $\tilde\tau(\Phi_k)=\lambda^{-k}$
we define $\widehat\tau_k$ on $\Phi_k\cn\Phi_k$ to be $\lambda^k$
times the restriction of $\tilde\tau.$ Then, 
$\widehat\tau:=\sum_k\widehat\tau_k$
on $\cm=\sum_{k\in\Z}\Phi_k\cn\Phi_k$ is a faithful normal semifinite
trace $\widehat\tau$ with $\widehat\tau(\Phi_k)=1$ for all $k.$ 
\end{defn}
We use $\widehat\tau$ to 
give an alternative  expression for $\tau_\Delta$ below

\begin{lemma}\label{commutes} An element $m\in\cn$ is in $\cm$ if and
only if it is in the fixed point algebra of the action, $\s_t^{\tau_\Delta}$
on $\cn$ defined for $T\in\cn$ by 
$\s_t^{\tau_\Delta}(T)=\Delta^{it}T\Delta^{-it}.$ Both  $\pi(F^\lambda)$ and 
the 
projections $\Phi_k$ belong to $\cM$. The map $\Psi :\cn\to\cm$ defined by
$\Psi(T)=\sum_k \Phi_k T\Phi_k$ is a conditional expectation onto $\cm$
and $\tau_{\Delta}(T)=\widehat\tau(\Psi(T))$ for all $T\in\cn^+.$ That is,
$\tau_{\Delta}= \widehat\tau\circ\Psi$ so that $\widehat\tau(T)=
\tau_{\Delta}(T)$ for 
all $T\in\cm^+.$ Finally, if one of $A,B\in\cm$ is $\widehat\tau$-trace-class
and $T\in\cn$ then $\tau_\Delta(ATB)=\tau_\Delta(A\Psi(T)B)=
\widehat\tau(A\Psi(T)B).$
\end{lemma}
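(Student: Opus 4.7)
The plan is to handle the six assertions in roughly the order they are stated, reducing everything to the explicit identity $\Delta\Phi_k=\lambda^k\Phi_k$ (which follows from Corollary \ref{DeltaD}) and the rescaling $\widehat\tau_k=\lambda^k\,\tilde\tau|_{\Phi_k\cn\Phi_k}$.

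For claim (1), I would simply unwind definitions: $\cm$ is the relative commutant of $\Delta$ in $\cn$, but $\Delta$ is positive and nonsingular, so by the Borel functional calculus $m\Delta=\Delta m$ if and only if $m$ commutes with every spectral projection of $\Delta$, i.e. every $\Phi_k$, if and only if $\Delta^{it}m\Delta^{-it}=m$ for all $t\in\R$. For claim (2), $F^\lambda$ is the fixed point algebra of the gauge action, so $\pi(F^\lambda)$ preserves each spectral subspace of the generator $\D$, hence commutes with each $\Phi_k$ and lies in $\cm$; the $\Phi_k$ are themselves in $\cm$ by definition. For claim (3), the formula $\Psi(T)=\sum_k\Phi_k T\Phi_k$ converges strongly (since $\sum_k\Phi_k=1$ strongly) and defines a normal completely positive contraction with $\Psi(1)=1$; idempotency and the $\cm$-bimodule property $\Psi(ATB)=A\Psi(T)B$ for $A,B\in\cm$ follow from $A\Phi_k=\Phi_k A$ and $B\Phi_k=\Phi_k B$.

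The main computation is (4), from which (5) is an immediate specialisation. Using $\Delta\Phi_k=\lambda^k\Phi_k$, one has $\Delta_N=\sum_{|k|\le N}\lambda^k\Phi_k$. For $T\in\cn^+$, each $\Phi_k$ is $\tilde\tau$-finite (trace $\lambda^{-k}$), so the trace property of $\tilde\tau$ gives
\begin{equation*}
\tilde\tau(\Delta_N T)=\sum_{|k|\le N}\lambda^k\,\tilde\tau(\Phi_k T)
=\sum_{|k|\le N}\lambda^k\,\tilde\tau(\Phi_k T\Phi_k)
=\sum_{|k|\le N}\widehat\tau_k(\Phi_k T\Phi_k),
\end{equation*}
where the last equality is the definition of $\widehat\tau_k$. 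Taking the supremum over $N$ (the partial sums are monotone in $N$ because $T\ge 0$) and using normality of $\widehat\tau$ yields
\begin{equation*}
\tau_\Delta(T)=\sum_{k\in\Z}\widehat\tau(\Phi_k T\Phi_k)
=\widehat\tau\!\left(\sum_{k\in\Z}\Phi_k T\Phi_k\right)=\widehat\tau(\Psi(T)).
\end{equation*}
When $T\in\cm^+$ one has $\Psi(T)=T$, giving (5). Finally, for (6), if $A,B\in\cm$ then bimodularity gives $\Psi(ATB)=A\Psi(T)B$, which lies in $\cm$, so
\begin{equation*}
\tau_\Delta(ATB)=\widehat\tau(\Psi(ATB))=\widehat\tau(A\Psi(T)B)=\tau_\Delta(A\Psi(T)B).
\end{equation*}
The trace-class hypothesis on $A$ or $B$ is what ensures that all expressions are finite and that the tracial manipulation $\tilde\tau(\Phi_k T)=\tilde\tau(\Phi_k T\Phi_k)$ propagates to the shifted weight $\widehat\tau$.

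The only genuine obstacle is bookkeeping at the level of unbounded weights: I must justify the interchange of the sum over $k$ with $\widehat\tau$ and with the supremum defining $\tau_\Delta$. This is handled by normality of $\widehat\tau$ on $\cm$ together with the positivity of each summand $\Phi_k T\Phi_k$, so the partial sums $\sum_{|k|\le N}\Phi_k T\Phi_k$ increase strongly to $\Psi(T)$ in $\cm^+$, and monotone convergence for the normal weight $\widehat\tau$ closes the argument.
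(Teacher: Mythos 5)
Your proof is correct, and it takes the same route as the paper: the paper's ``proof'' is just a citation to \cite[Lemma 3.9]{CPR2} with $\lambda^k$ replacing $n^{-k}$, and the argument there is exactly the computation you give, reducing everything to $\Delta\Phi_k=\lambda^k\Phi_k$, the rescaling $\widehat\tau_k=\lambda^k\tilde\tau|_{\Phi_k\cn\Phi_k}$, and the bimodularity of $\Psi$. Your attention to the convergence/normality bookkeeping at the level of unbounded weights is exactly the point that needs care, and you handle it correctly.
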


\begin{proof}The proof is the same as the proof of \cite[Lemma 3.9]{CPR2}
with $\lambda^k$ in place of $n^{-k}.$
\end{proof}

\begin{lemma} The modular automorphism group $\s_t^{\tau_\Delta}$ of
$\tau_\Delta$ is inner and given by
$\s_t^{\tau_\Delta}(T)=\Delta^{it}T\Delta^{-it}$. The weight
$\tau_\Delta$ is a {\rm KMS} weight for the group $\s_t^{\tau_\Delta}$, and 
$\s_t^{\tau_\Delta}|_{\mathcal{Q}^\lambda}=\s_t^{\tau\circ\Phi}.$
\end{lemma}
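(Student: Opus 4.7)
The plan is to introduce the candidate modular group $\sigma_t(T):=\Delta^{it}T\Delta^{-it}$ on $\cn$, verify that it is a $\sigma$-weakly continuous inner one-parameter automorphism group, then check that $\tau_\Delta$ is $\sigma_t$-invariant and satisfies the KMS condition at inverse temperature $1$; by uniqueness of the modular group of a faithful normal semifinite weight, this will identify $\sigma_t^{\tau_\Delta}$. Since $\D$ is affiliated to $\cn$ (Proposition \ref{tildetrace}) and $\Delta=\lambda^{\D}$ (Corollary \ref{DeltaD}), each $\Delta^{it}$ is a unitary in $\cn$, so $\sigma_t$ is indeed inner.

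For invariance I would use that each spectral projection $\Phi_k$ of $\D$ commutes with $\Delta^{it}$, which means every element of $\cm=\sum_k \Phi_k\cn\Phi_k$ is $\sigma_t$-fixed. Consequently $\Psi\circ\sigma_t=\Psi$, and Lemma \ref{commutes} gives $\tau_\Delta\circ\sigma_t=\widehat\tau\circ\Psi\circ\sigma_t=\widehat\tau\circ\Psi=\tau_\Delta$.

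For the KMS identity I would test on the homogeneous slices $T_{j,k}:=\Phi_j T\Phi_k$. From $\Delta^{it}\Phi_r=\lambda^{irt}\Phi_r$ it follows that $\sigma_t(T_{j,k})=\lambda^{i(j-k)t}T_{j,k}$, so each $T_{j,k}$ is entire analytic with $\sigma_{-i}(T_{j,k})=\lambda^{j-k}T_{j,k}$. Taking $S=\Phi_j X\Phi_k$ and $T=\Phi_l Y\Phi_m$, the only terms surviving in $\tau_\Delta(ST)=\widehat\tau(\Psi(ST))$ and $\tau_\Delta(TS)=\widehat\tau(\Psi(TS))$ are those with $k=l$ and $j=m$, reducing the comparison to $\widehat\tau(\Phi_j X\Phi_k Y\Phi_j)$ versus $\widehat\tau(\Phi_k Y\Phi_j X\Phi_k)$. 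Using $\widehat\tau|_{\Phi_r\cn\Phi_r}=\lambda^r\widetilde\tau|_{\Phi_r\cn\Phi_r}$ together with cyclicity of $\widetilde\tau$, these two quantities differ by exactly the factor $\lambda^{j-k}$, which matches the scaling in $\sigma_{-i}(S)=\lambda^{j-k}S$. Hence $\tau_\Delta(ST)=\tau_\Delta(T\sigma_{-i}(S))$, the required KMS condition, and by linearity and normality it extends to all analytic $S$ and all $T$ in the domain of definition of the weight.

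Finally, the restriction statement is read off from \eqref{eq:kms-flow}: it gives $\sigma^{\tau\circ\Phi}_t(\pi(f\cdot\delta_g))=|g|^{it}\pi(f\cdot\delta_g)$, whereas directly $\Delta^{it}\pi(f\cdot\delta_g)\Delta^{-it}=|g|^{it}\pi(f\cdot\delta_g)$ because $f\cdot\delta_g$ is a $\Delta$-eigenvector with eigenvalue $|g|=\det(g)$. Agreement on generators extends by linearity and continuity to all of $\mathcal Q^\lambda$. The main obstacle will be the bookkeeping in the KMS step: one must justify the reduction to homogeneous $T_{j,k}$ using strong convergence of the partial sums $\sum_{|k|\le N}\Phi_k$ together with density of analytic vectors, but this is a routine consequence of the spectral decomposition afforded by the $\Phi_k$ and Lemma \ref{commutes}.
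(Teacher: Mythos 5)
Your proof is correct, but it takes a genuinely different route from the paper's. The paper disposes of the whole lemma in one line by citing \cite[Thm 9.2.38]{KR} --- the general Radon--Nikodym-type result that a weight of the form $\tilde\tau(\Delta\,\cdot)$, with $\Delta$ positive and affiliated to $\cn$ and $\tilde\tau$ a faithful normal semifinite trace, has modular group ${\rm Ad}(\Delta^{it})$ --- and then notes the restriction statement is clear. You instead verify the KMS condition by hand using the block decomposition $T\mapsto\sum_{j,k}\Phi_jT\Phi_k$: invariance because $\Psi\circ\sigma_t=\Psi$, and the KMS identity because cyclicity of $\tilde\tau$ on $\tilde\tau(\Phi_jX\Phi_kY\Phi_j)$ trades against the discrepancy $\lambda^{j}$ versus $\lambda^{k}$ in the normalisation $\widehat\tau|_{\Phi_r\cn\Phi_r}=\lambda^r\tilde\tau$, producing exactly the factor $\lambda^{j-k}$ carried by $\sigma_{-i}(\Phi_jX\Phi_k)$; uniqueness of the modular group of a faithful normal semifinite weight then pins down $\s_t^{\tau_\Delta}$. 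Your computation is right (and consistent, up to the usual convention ambiguity, with the form $\psi(xy)=\psi(\s_i(y)x)$ used elsewhere in the paper), and your treatment of innerness and of the restriction to $\mathcal{Q}^\lambda$ matches what the paper intends. What your route buys is a self-contained, transparent proof that exhibits \emph{why} ${\rm Ad}(\Delta^{it})$ is the modular group for this particular weight; what it costs is that the ``routine bookkeeping'' you defer at the end --- working with the definition ideal of the weight, analytic elements, and the convergence of $\sum_{|k|\le N}\Phi_k$ --- is precisely the technical content that the citation to \cite{KR} handles once and for all. Either is acceptable; if you keep the direct argument you should at least state the convergence/domain hypotheses under which your block-by-block identity sums to the KMS condition for the (unbounded) weight.
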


\begin{proof} This follows from: \cite[Thm 9.2.38]{KR}, which gives
us the KMS properties of $\tau_\Delta$: the modular group is inner since
$\Delta$ is affiliated to $\cn.$ The final
statement about the restriction of the modular group to $\mathcal{Q}^\lambda$ 
is clear.
\end{proof}

We now have the key lemma:

\begin{lemma}\label{tracesplit}
Suppose $g$ is a function on $\R$ such that $g(\D)$ is $\tau_\Delta$ trace-class
in $\cm$, then for all $f\in F^\lambda$ we have
$$
\tau_\Delta(\pi(f)g(\D))=\tau_\Delta(g(D))\tau(f)=
\tau(f)\sum_{k\in\Z} g(k).
$$
\end{lemma}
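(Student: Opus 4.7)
The plan is to reduce the statement to the single identity $\widehat\tau(\pi(f)\Phi_k)=\tau(f)$, valid for every $k\in\Z$, and then to establish this identity using the decomposition of $\Phi_k$ in Lemma \ref{Phiformula} together with the KMS property of $\psi$.

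First I will observe that, because $f\in F^\lambda=(\mathcal{Q}^\lambda)^\gamma$, the operator $\pi(f)$ commutes with every spectral projection $\Phi_k$ of $\D$ and therefore lies in the modular fixed-point algebra $\cm$. Since $g(\D)=\sum_k g(k)\Phi_k$ already sits in $\cm$, Lemma \ref{commutes} and the trace-class hypothesis let me replace $\tau_\Delta$ by $\widehat\tau$ and interchange sum and weight:
$$
\tau_\Delta(\pi(f)g(\D))=\widehat\tau\bigl(\pi(f)g(\D)\bigr)=\sum_{k\in\Z}g(k)\,\widehat\tau(\pi(f)\Phi_k).
$$
If $\widehat\tau(\pi(f)\Phi_k)=\tau(f)$ holds for every $k$, then the right-hand side equals $\tau(f)\sum_k g(k)$. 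Specialising to $f=e$ then gives $\tau_\Delta(g(\D))=\sum_k g(k)$, and the three expressions in the statement coincide.

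For the key identity I fix $k\geq 0$, use Lemma \ref{Phiformula}(1) to write $\Phi_k=\sum_{m=0}^{m_k}\Theta^R_{S_{k,m},S_{k,m}}$, and combine $\pi(f)\Theta^R_{x,y}=\Theta^R_{fx,y}$ with $\tilde\tau(\Theta^R_{x,y})=\psi(y^*x)$ to obtain $\tilde\tau(\pi(f)\Phi_k)=\sum_m\psi(S_{k,m}^*f\,S_{k,m})$. Because $S_{k,m}$ has determinant $\lambda^k$, equation (\ref{eq:kms-flow}) gives $\sigma(S_{k,m})=\lambda^{-k}S_{k,m}$, so the KMS condition converts each summand into $\lambda^{-k}\psi(S_{k,m}S_{k,m}^*f)$. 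The element $S_{k,m}S_{k,m}^*f$ lies in $F^\lambda$, where $\psi$ agrees with $\tau$, and the completeness relation $\sum_m S_{k,m}S_{k,m}^*=e$ from Lemma \ref{Smusub} collapses the sum to $\lambda^{-k}\tau(f)$. Multiplying by $\lambda^k$ in the definition of $\widehat\tau_k$ yields $\widehat\tau(\pi(f)\Phi_k)=\tau(f)$. The case $k<0$ is handled symmetrically, using $\Phi_{-k}=\Theta^R_{S_{k,m}^*,S_{k,m}^*}$ from Lemma \ref{Phiformula}(2), the flipped KMS factor $\sigma(S_{k,m}^*)=\lambda^{k}S_{k,m}^*$, and the companion identity $S_{k,m}^*S_{k,m}=e$ for $m<m_k$.

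The main subtlety will be the bookkeeping in the KMS step, because the $S_{k,m}$ need not all be isometries: for positive index one uses the range-projection identity $\sum_m S_{k,m}S_{k,m}^*=e$, while for negative index one uses the source-projection identity $S_{k,m}^*S_{k,m}=e$, so the two sign regimes draw on different parts of Lemma \ref{Phiformula}. The structural reason the answer is $k$-independent, and hence the formula factorises, is that the factor $\lambda^{\mp k}$ produced by $\sigma$ exactly cancels the normalising factor $\lambda^{\pm k}$ built into the definition $\widehat\tau_k=\lambda^k\tilde\tau|_{\Phi_k\cn\Phi_k}$.
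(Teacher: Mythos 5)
Your proposal is correct, and the reduction in your first two paragraphs (replace $\tau_\Delta$ by $\widehat\tau$ on $\cm$, expand $g(\D)=\sum_k g(k)\Phi_k$, and reduce everything to $\widehat\tau(\pi(f)\Phi_k)=\tau(f)$, equivalently $\tilde\tau(\pi(f)\Phi_k)=\lambda^{-k}\tau(f)$) is exactly the paper's. Where you genuinely diverge is in how that key identity is established. The paper's proof is a soft von Neumann--algebraic argument: it invokes Theorem \ref{III-lambda} to see that $\pi(F^\lambda)^{\prime\prime}$ is a type $II_1$ factor whose unique normalised trace $Tr$ extends $\tau$, notes that $\Phi_k$ lies in the commutant so that $T\mapsto T\Phi_k$ is a normal isomorphism onto $\pi(F^\lambda)^{\prime\prime}\Phi_k$, and then concludes by uniqueness of the normalised trace that $\tilde\tau(\,\cdot\,\Phi_k)=\tilde\tau(\Phi_k)Tr(\cdot)=\lambda^{-k}\tau(\cdot)$; this treats all $k$ uniformly and never touches the partial isometries $S_{k,m}$. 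Your argument is instead an explicit computation: the rank-one decomposition of $\Phi_k$ from Lemma \ref{Phiformula}, the formula $\tilde\tau(\Theta^R_{x,y})=\psi(y^*x)$ from Proposition \ref{tildetrace}, and the KMS property of $\psi$ producing the factor $\lambda^{\mp k}$ that cancels the normalisation $\lambda^{\pm k}$ in $\widehat\tau_k$. This buys transparency --- it exhibits the mechanism by which the $k$-dependence disappears, and it bypasses the factoriality/unique-trace input entirely --- at the cost of the case analysis on the sign of $k$ and the non-isometric edge element $S_{k,m_k}$, which you handle correctly (using the range-projection completeness $\sum_m S_{k,m}S_{k,m}^*=e$ for $k>0$ and the source-projection identity $S_{k,m}^*S_{k,m}=e$, $m<m_k$, for $k<0$; note $m_k\geq 1$ always, so such an $m$ exists). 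Two small points you should make explicit: the KMS identity and the rank-one trace formula are stated on dense subalgebras ($\mathcal Q_c^\lambda$, analytic elements), so for general $f\in F^\lambda$ you need the same norm-continuity extension in $f$ that the paper performs at the end of its proof (the estimate $|\tau_\Delta(Tg(\D))|\leq\Vert T\Vert\,\tau_\Delta(|g(\D)|)$); and the interchange of $\sum_k$ with $\widehat\tau$ should be justified by $|\widehat\tau(\pi(f)\Phi_k)|\leq\Vert f\Vert$ together with summability of $g$.
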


\begin{proof}
First note that $\tau_\Delta(g(\D))=\widehat\tau(\sum_{k\in\Z}g(k)\Phi_k)
=\sum_{k\in\Z}g(k)\widehat\tau(\Phi_k)=\sum_{k\in\Z}g(k).$
We first do the computation for $f\in F^\lambda_c$ so that all the sums are 
finite.
Now, 
$$
\tau_\Delta(\pi(f)g(\D))=\widehat\tau(\pi(f)\sum_{k\in\Z}g(k)\Phi_k)
=\sum_{k\in\Z}g(k)\widehat\tau(\pi(f)\Phi_k)
$$
$$
=\sum_{k\in\Z}g(k)\widehat\tau_k(\pi(f)\Phi_k)
=\sum_{k\in\Z}g(k)\lambda^k\tilde\tau(\pi(f)\Phi_k).
$$
So it suffices to see for each $k\in\Z$, we have 
$\tilde\tau(\pi(f)\Phi_k)=\lambda^{-k}\tau(f).$

Now, by Theorem \ref{III-lambda} $\pi(F^\lambda)^{\prime\prime}$ is a 
type $II_1$ factor on $\HH$ whose unique trace say $Tr$ (with norm one) 
extends the trace $\tau$ on $F^\lambda$ in the sense that $Tr(\pi(f))=\tau(f).$
Since the projection $\Phi_k$
is in the commutant of the factor $\pi(F^\lambda)^{\prime\prime}$ the map
$$
T\in \pi(F^\lambda)^{\prime\prime}\mapsto T\Phi_k=\Phi_k T\Phi_k
$$ 
is a normal isomorphism
by \cite[Chapter 1, section 2, Prop. 2]{Dix} and so it has a unique
normalised trace also given by $Trace(T\Phi_k)=Tr(T).$ 
But $\tilde\tau(T\Phi_k)$ is a trace on
$\Phi_k\pi(F^\lambda)^{\prime\prime}\Phi_k=\pi(F^\lambda)^{\prime\prime}\Phi_k$ 
and so must be $\tilde\tau(\Phi_k)=\lambda^{-k}$ times the unique norm one
trace. That is, we get the required formula: 
$$
\tilde\tau(\pi(f)\Phi_k)=\lambda^{-k}Trace(\pi(f)\Phi_k)
=\lambda^{-k}Tr(\pi(f))=\lambda^{-k}\tau(f).
$$

So for $f\in F^\lambda_c,$ we have the formula: 
$$
\tau_\Delta(\pi(f)g(\D))=\tau_\Delta(g(D))\tau(f)=
\sum_{k\in{\Z}}g(k)\tau(f).
$$
Now, the right hand side is a norm-continuous function of $f$.
To see that the left side is norm-continuous we do it in more generality.
Let $T\in\cn$, then since $\widehat\tau$ is a trace on $\cm$ we get:
$$
|\tau_{\Delta}(Tg(\D))|=|\widehat\tau(\Psi(Tg(\D))|=
|\widehat\tau(\Psi(T)g(\D))|\leq
\Vert \Psi(T)\Vert \widehat\tau(|g(\D)|)\leq
\Vert T\Vert \widehat\tau((|g(\D)|)=
\Vert T\Vert \tau_{\Delta}(|g(\D)|).
$$
That is the left hand side is norm-continuous in $T$ and so we have 
the formula:
$$
\tau_\Delta(\pi(f)g(\D))=\tau_\Delta(g(\D))\tau(f)=
\sum_{k\in{\Z}}g(k)\tau(f)
$$ 
for all $f\in F^\lambda.$
\end{proof}

\begin{prop}\label{dixycomp} (i) We have 
$(1+\D^2)^{-1/2}\in\LL^{(1,\infty)}(\cM,\tau_\Delta)$. That is,
$\tau_\Delta((1+\D^2)^{-s/2})<\infty$ for all $s>1.$ Moreover, for
all $f\in F^\lambda$
$$
\lim_{s\to 1^+}(s-1)\tau_{\Delta}(\pi(f)(1+\D^2)^{-s/2})=2\tau(f)
$$
so that $\pi(f)(1+\D^2)^{-1/2}$ is a measurable operator in the sense of 
\cite{C}.

(ii)  For $\pi(a)\in \pi(\mathcal{Q}^\lambda)\subset \cn$ the following (ordinary)
limit exists and
$$
\widehat\tau_\omega(\pi(a))=\frac{1}{2}\lim_{s\to 1^+}(s-1)
\tau_\Delta(\pi(a)(1+\D^2)^{-s/2})=\tau\circ\Phi(a),
$$
the original {\rm KMS} state $\psi=\tau\circ\Phi$ on $\mathcal{Q}^\lambda.$
\end{prop}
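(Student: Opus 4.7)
The plan is to reduce both parts to Lemma \ref{tracesplit} together with the conditional expectation $\Psi$ from Lemma \ref{commutes}.

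For part (i), I would apply Lemma \ref{tracesplit} with the function $g(x)=(1+x^2)^{-s/2}$. Since $\widehat\tau(\Phi_k)=1$ for every $k\in\Z$, the formula $\tau_\Delta(g(\D))=\sum_k g(k)$ gives
$$
\tau_\Delta((1+\D^2)^{-s/2})=\sum_{k\in\Z}(1+k^2)^{-s/2},
$$
which is finite exactly when $s>1$, yielding $(1+\D^2)^{-1/2}\in\LL^{(1,\infty)}(\cm,\tau_\Delta)$. A standard comparison with the Riemann zeta function shows $(s-1)\sum_{k\in\Z}(1+k^2)^{-s/2}\to 2$ as $s\to 1^+$; combining this with Lemma \ref{tracesplit} applied to $f\in F^\lambda$ gives the residue formula $\lim_{s\to 1^+}(s-1)\tau_\Delta(\pi(f)(1+\D^2)^{-s/2})=2\tau(f)$, and measurability in Connes' sense because the ordinary limit exists.

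For part (ii), the strategy is to show that only the gauge-invariant part of $a$ contributes. Because $(1+\D^2)^{-s/2}\in\cm$ is $\widehat\tau$-trace class for $s>1$, Lemma \ref{commutes} gives
$$
\tau_\Delta(\pi(a)(1+\D^2)^{-s/2})=\widehat\tau(\Psi(\pi(a))(1+\D^2)^{-s/2})=\tau_\Delta(\Psi(\pi(a))(1+\D^2)^{-s/2}).
$$
When $a\in\mathcal Q_c^\lambda$ decomposes as a finite sum $a=\sum_j a_j$ of spectral components, $\pi(a_j)$ maps $\Phi_k\HH$ into $\Phi_{k+j}\HH$, so $\Phi_k\pi(a_j)\Phi_k=0$ for $j\neq 0$, while $\pi(a_0)=\pi(\Phi(a))\in\pi(F^\lambda)\subset\cm$ commutes with each $\Phi_k$. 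Summing then gives $\Psi(\pi(a))=\pi(\Phi(a))$, and part (i) applied to $f=\Phi(a)$ yields $\tfrac12\lim_{s\to 1^+}(s-1)\tau_\Delta(\pi(a)(1+\D^2)^{-s/2})=\tau(\Phi(a))=\psi(a)$ for $a\in\mathcal Q_c^\lambda$.

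To pass from the dense subalgebra $\mathcal Q_c^\lambda$ to all of $\mathcal Q^\lambda$, I would invoke the norm bound
$$
|(s-1)\tau_\Delta(\pi(a)(1+\D^2)^{-s/2})|\leq\Vert a\Vert\cdot(s-1)\tau_\Delta((1+\D^2)^{-s/2})
$$
established at the end of the proof of Lemma \ref{tracesplit}. Since the right-hand side stays bounded as $s\to 1^+$, the family of linear functionals $a\mapsto(s-1)\tau_\Delta(\pi(a)(1+\D^2)^{-s/2})$ is equicontinuous on $\mathcal Q^\lambda$, and pointwise convergence on the dense subalgebra combined with norm continuity of $\psi=\tau\circ\Phi$ extends the limit to all of $\mathcal Q^\lambda$. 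The main obstacle I anticipate is the identification $\Psi(\pi(a))=\pi(\Phi(a))$: for a finite spectral sum this follows directly from the spectral-subspace shifting of the $\pi(a_j)$'s, but for general $a$ it relies on the norm-continuity bootstrap rather than a termwise computation; the zeta-type residue calculation is routine.
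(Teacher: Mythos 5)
Your argument is correct, and it is worth noting that it is more self-contained than what the paper actually provides: the published proof of Proposition \ref{dixycomp} consists of the two sentences ``(i) is identical to \cite[Proposition 3.12]{CPR2}; (ii) is the same as \cite[Proposition 3.14]{CPR2} with $\mathcal{Q}^\lambda, F^\lambda$ replacing $O_n, F$.'' You have reconstructed the omitted argument entirely from the paper's own toolkit, and the reconstruction matches the intended one: part (i) is the zeta-type residue computation $\tau_\Delta((1+\D^2)^{-s/2})=\sum_{k\in\Z}(1+k^2)^{-s/2}$ coming from $\widehat\tau(\Phi_k)=1$, combined with Lemma \ref{tracesplit}; part (ii) reduces $\pi(a)$ to $\Psi(\pi(a))=\pi(\Phi(a))$ via the final clause of Lemma \ref{commutes} (applied with $A=1$, $T=\pi(a)$, $B=(1+\D^2)^{-s/2}$) and the spectral-subspace shifting $\pi(a_j)\Phi_k\HH\subseteq\Phi_{k+j}\HH$, and then extends from $\mathcal Q_c^\lambda$ to $\mathcal Q^\lambda$ by the uniform bound $|(s-1)\tau_\Delta(\pi(a)(1+\D^2)^{-s/2})|\leq \Vert a\Vert\,(s-1)\tau_\Delta((1+\D^2)^{-s/2})$. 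Two points you glossed are genuinely routine but deserve a word each: membership in $\LL^{(1,\infty)}(\cM,\tau_\Delta)$ should be checked from the singular value function itself (each eigenvalue $(1+k^2)^{-1/2}$ has $\tau_\Delta$-multiplicity one, so $\int_0^t\mu_s\,ds\sim 2\log t$), since finiteness of the zeta function for $s>1$ does not by itself give $(1,\infty)$-summability; and the passage from existence of the limit $\lim_{s\to1^+}(s-1)\tau_\Delta(\,\cdot\,)$ to measurability in Connes' sense rests on the Hardy--Littlewood Tauberian link between the zeta residue and the Ces\`aro mean $F_T$, as in \cite{CPS2,CRSS}. Neither affects the correctness of your proof.
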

\begin{proof} (i) This proof is identical to \cite[Proposition 3.12]{CPR2}.\\
(ii) This proof is the same as \cite[Proposition 3.14]{CPR2}
with $\mathcal{Q}^\lambda , F^\lambda$ replacing $O_n, F.$ 
\end{proof}

\begin{defn}\label{modspectrip} 
The triple $(\A,\HH,\D)$ along with $\gamma,\ \psi,\ \cn,\ \tau_\Delta$ 
satisfying properties (0) to (3) below
is called the  {\bf modular spectral triple} of the dynamical system 
$(\mathcal{Q}^\lambda,\gamma,\psi)$ \\
0) The $*$-subalgebra $\A=\mathcal Q_c^\lambda$ of the algebra 
$\mathcal{Q}^\lambda$ is
faithfully represented in $\cn$ with the latter acting on the  Hilbert space  
$\HH=\LL^2(\mathcal{Q}^\lambda,\psi)$,\\
1) there is a faithful normal semifinite weight $\tau_\Delta$
on $\cn$ such that the modular automorphism group of $\tau_\Delta$ is an
inner automorphism group $\sigma_t$ (for $t\in\C$) of (the Tomita algebra of) 
$\cn$ with
$\sigma_i|_\A=\s$ in the sense that $\s_i(\pi(a))=\pi(\s(a)),$
where $\s$ is the automorphism $\s(a)=\Delta^{-1}(a)$ on $\A$,\\
2) $\tau_\Delta$ restricts to a faithful semifinite trace $\widehat\tau$ 
on $\cM=\cn^\s$, with a faithful normal projection $\Psi: \cn\to\cm$
satisfying $\tau_\Delta=\widehat\tau\circ\Psi$ on $\cn$,\\
3) with $\D$ the generator of the one parameter group implementing the
gauge action of $\T$ on $\HH$ we have:\;\;
 $[\D,\pi(a)]$ extends to a bounded operator (in $\cn$) for all $a\in\A$
and for $\lambda$ in the resolvent set of $\D$,
$(\lambda-\D)^{-1} \in \K(\cM,\tau_\Delta)$, where  
$\K(\cM,\tau_\Delta)$ is the ideal of compact operators in $\cM$ relative to
$\tau_\Delta$. In particular, $\D$ is affiliated to $\cM$.
\end{defn} 

For matrix 
algebras $\A= \mathcal Q_c^\lambda\otimes M_k$ over $\mathcal Q_c^\lambda$,
$(\mathcal Q_c^\lambda\otimes M_k,\HH\otimes M_k ,\D\otimes Id_k)$
is also a modular spectral triple in the obvious fashion. 

We need some technical lemmas
for the discussion in the next Section. A function $f$ from a complex domain 
$\Omega$ into a Banach space $X$ is called {\bf holomorphic} if it is complex
differentiable in norm on $\Omega.$ The following is proved in 
\cite[Lemma 3.15]{CPR2}.
\begin{lemma}

(1) Let $\mathcal B$ be a $C^*$-algebra and let 
$T\in\mathcal B^+.$ The mapping $z\mapsto T^z$ is holomorphic (in operator norm)
in the half-plane $Re(z)>0.$\\
(2) Let $\mathcal B$ be a von Neumann algebra with faithful normal 
semifinite trace $\phi$ and let $T\in\mathcal B^+$ be in 
$\LL^{(1,\infty)}(\mathcal B,\phi).$ Then, the mapping $z\mapsto T^z$ is 
holomorphic (in trace norm) in the half-plane $Re(z)>1.$\\
(3) Let $\mathcal B$, and $T$ be as in item (2) and let $A\in \mathcal B$
then the mapping $z\mapsto \phi(A T^z)$ is holomorphic for $Re(z)>1.$
\end{lemma}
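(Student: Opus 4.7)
For part (1), the plan is to combine continuous functional calculus with a regularization. For $\epsilon > 0$ the element $T_\epsilon := T + \epsilon \cdot 1 \in \B^+$ is invertible, so $\log T_\epsilon \in \B$ is bounded self-adjoint, which makes $T_\epsilon^z := \exp(z \log T_\epsilon)$ an entire $\B$-valued function. I would then show that $T_\epsilon^z \to T^z$ in operator norm as $\epsilon \to 0^+$, uniformly for $z$ in compact subsets $K \subset \{\mathrm{Re}(z) > 0\}$; by continuous functional calculus this reduces to the scalar estimate
\begin{equation*}
\sup_{t \in [0,\Vert T\Vert]}\bigl|(t+\epsilon)^z - t^z\bigr| \longrightarrow 0
\end{equation*}
uniformly for $z \in K$, which is elementary since $\mathrm{Re}(z) \geq \delta > 0$ on any such $K$. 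Weierstrass' theorem on uniform limits of holomorphic Banach-space-valued functions then yields holomorphy of $z \mapsto T^z$.

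For part (2), I would first verify $T^z \in \LL^1(\B,\phi)$ for $\mathrm{Re}(z)>1$ via the identity $\phi(T^{\mathrm{Re}(z)}) = \int_0^\infty \mu_s(T)^{\mathrm{Re}(z)}\,ds$: the $\LL^{(1,\infty)}$ hypothesis gives $s\,\mu_s(T) \leq \int_0^s \mu_u(T)\,du = O(\log(1+s))$ by monotonicity of $\mu_\cdot(T)$, so $\mu_s(T) = O(s^{-1}\log(1+s))$ at infinity, which combined with the bound $\mu_s(T)\leq \Vert T\Vert$ near zero makes $\mu_s(T)^{\mathrm{Re}(z)}$ integrable. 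For holomorphy in trace norm at a fixed $z_0$ with $\mathrm{Re}(z_0)>1$, pick $\delta>0$ with $\mathrm{Re}(z_0)-\delta>1$ and use the functional-calculus semigroup identity to write, for small $|h|$,
\begin{equation*}
\frac{T^{z_0+h} - T^{z_0}}{h} = T^{z_0-\delta}\cdot\frac{T^{\delta+h} - T^{\delta}}{h}.
\end{equation*}
The first factor is fixed and trace class, while the second converges in operator norm as $h\to 0$ by part (1); hence the product converges in trace norm, so the derivative of $T^z$ exists in $\LL^1(\B,\phi)$.

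Part (3) is then immediate: the functional $S \mapsto \phi(AS)$ is bounded on $\LL^1(\B,\phi)$ (dominated by $\Vert A\Vert\cdot\Vert S\Vert_1$), so $z \mapsto \phi(AT^z)$ is the composition of the trace-norm holomorphic map from (2) with a continuous linear functional, hence is scalar-valued holomorphic. I expect the main obstacle to be the uniform scalar estimate in part (1): the functions $t \mapsto t^z$ develop a singularity at $t=0$ as $\mathrm{Re}(z)\to 0^+$, so one must carefully control both the regularization error and its dependence on $z$ restricted to a compact subset of the right half-plane. Everything else reduces to routine use of the semigroup property for the functional calculus and the standard identity $\phi(|S|)=\int_0^\infty \mu_t(S)\,dt$.
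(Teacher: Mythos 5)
Your proof is correct, but there is nothing in the paper to compare it against: the paper does not prove this lemma, it simply cites \cite[Lemma 3.15]{CPR2}. Taken on its own terms, your argument is sound and self-contained. In (1), the regularization $T_\epsilon=T+\epsilon$ together with the entire function $z\mapsto\exp(z\log T_\epsilon)$ and Weierstrass' theorem works, and the scalar estimate you flag as the main obstacle does hold uniformly on compacta: for $\mathrm{Re}(z)\geq\delta$ and $|z|\leq M$ one has $|(t+\epsilon)^z-t^z|\leq\frac{M}{\delta}\bigl((t+\epsilon)^{\mathrm{Re}(z)}-t^{\mathrm{Re}(z)}\bigr)$, which is $O(\epsilon^{\delta})$ uniformly in $t$. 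In (2), the key points all check out: $\mu_s(T^r)=\mu_s(T)^r$ for $T\geq 0$ gives $\phi(T^{\mathrm{Re}(z)})=\int_0^\infty\mu_s(T)^{\mathrm{Re}(z)}\,ds<\infty$ via your bound $\mu_s(T)=O(s^{-1}\log(1+s))$; the factorization $T^{z_0+h}-T^{z_0}=T^{z_0-\delta}(T^{\delta+h}-T^\delta)$ combined with $\Vert AB\Vert_1\leq\Vert A\Vert_1\Vert B\Vert$ converts the operator-norm convergence from (1) into trace-norm convergence of the difference quotient, and the limit $T^{z_0-\delta}\cdot(T^\delta\log T)$ is an explicit element of $\LL^1$, so the non-completeness of $\LL^1(\mathcal B,\phi)$ in $\Vert\cdot\Vert_1$ causes no trouble. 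Part (3) then follows from $|\phi(AS)|\leq\Vert A\Vert\,\Vert S\Vert_1$ as you say. The one place you should be a little more careful in a final write-up is to note that $|T^z|=T^{\mathrm{Re}(z)}$ (so that $\Vert T^z\Vert_1=\phi(T^{\mathrm{Re}(z)})$), which is what actually justifies the trace-class claim for complex exponents rather than just real ones.
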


\begin{lemma}\label{phiclosed}
In these modular spectral triples $(\A,\HH,\D)$ for matrices over the
algebras $\mathcal{Q}^\lambda$ we have
$(1+\D^2)^{-s/2}\in\LL^1(\cM,\tau_\Delta)$ for all
$s> 1$ and for $x\in \cn,$ 
$\tau_\Delta(x(1+\D^2)^{-r/2})$ is holomorphic for 
$Re(r)>1$ and we have for $a\in \mathcal Q_c^\lambda$,
$\tau_\Delta([\D,\pi(a)](1+\D^2)^{-r/2})=0,$ for $Re(r)>1.$
\end{lemma}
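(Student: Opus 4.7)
The plan is to handle the three assertions in order, since each essentially reduces to a tool already assembled above.

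First, for $(1+\D^2)^{-s/2}\in\LL^{1}(\cM,\tau_\Delta)$: by Proposition \ref{dixycomp}(i) we have $\tau_\Delta((1+\D^2)^{-s/2})<\infty$ for all $s>1$, and $(1+\D^2)^{-s/2}$ commutes with all spectral projections $\Phi_k$ of $\D$, so it lies in $\cM$. This is immediate, and passes to matrix algebras with no change because $\tau_\Delta\otimes\mathrm{tr}_k$ is just a finite sum of copies of $\tau_\Delta$.

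Second, for the holomorphicity of $r\mapsto\tau_\Delta(x(1+\D^2)^{-r/2})$ on $\{\mathrm{Re}(r)>1\}$: using the last assertion of Lemma \ref{commutes} with $A=1$, $B=(1+\D^2)^{-r/2}\in\cM$ (trace-class for $\mathrm{Re}(r)>1$ by the first claim), $T=x\in\cn$, we rewrite
\[
\tau_\Delta\bigl(x(1+\D^2)^{-r/2}\bigr)=\widehat\tau\bigl(\Psi(x)(1+\D^2)^{-r/2}\bigr).
\]
Now apply part (3) of the preceding lemma with $\mathcal{B}=\cM$, $\phi=\widehat\tau$, $T=(1+\D^2)^{-1/2}\in\LL^{(1,\infty)}(\cM,\tau_\Delta)$, and $A=\Psi(x)\in\cM$: the function $r\mapsto\widehat\tau(\Psi(x)T^{r})$ is holomorphic on $\mathrm{Re}(r)>1$, which is what we want.

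Third, for the vanishing $\tau_\Delta([\D,\pi(a)](1+\D^2)^{-r/2})=0$: by linearity it is enough to treat a single generator $a=f\cdot\delta_g$ with $|g|=\lambda^k$. The natural unitary implementation of $\gamma$ gives $u_z\pi(a)u_z^*=z^k\pi(a)$, so differentiating yields $[\D,\pi(a)]=k\,\pi(a)$. If $k=0$ the commutator is zero and there is nothing to prove. If $k\neq 0$, then $\pi(a)$ carries $\Phi_j\HH$ into $\Phi_{j+k}\HH$, so $\Phi_j\pi(a)\Phi_j=0$ for every $j$ and hence $\Psi(\pi(a))=\sum_j\Phi_j\pi(a)\Phi_j=0$. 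Invoking Lemma \ref{commutes} once more with $A=1$, $T=\pi(a)$, $B=(1+\D^2)^{-r/2}$ gives
\[
\tau_\Delta\bigl([\D,\pi(a)](1+\D^2)^{-r/2}\bigr)
=k\,\widehat\tau\bigl(\Psi(\pi(a))(1+\D^2)^{-r/2}\bigr)=0.
\]
The same argument applies verbatim in the matrix case, since $\D\otimes 1_k$ commutes with $1\otimes M_k$ and the gauge degree of a matrix-valued generator is read off from the $\mathcal Q^\lambda$ entries. The only place where care is needed is the justification that $\Psi$ and $\tau_\Delta$ interact as above on elements that are not themselves in $\cM$; this is precisely the content of the trace-class case in Lemma \ref{commutes}, so no new obstacle arises.
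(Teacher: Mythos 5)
Your proposal is correct and follows essentially the same route as the paper: the $\LL^1$ membership comes from summing $\tau_\Delta(\Phi_k)=1$ over the integer spectrum (which is what Proposition \ref{dixycomp}(i) records), the holomorphicity is obtained by passing through $\Psi$ via Lemma \ref{commutes} and then applying part (3) of the preceding holomorphicity lemma, and the vanishing follows from $[\D,\pi(f\cdot\delta_g)]=\log_\lambda(|g|)\,\pi(f\cdot\delta_g)$ together with $\Psi(\pi(f\cdot\delta_g))=0$ when $|g|\neq 1$. Your phrasing of that last step via $\Phi_j\pi(a)\Phi_j=0$ is just the spectral-subspace form of the paper's observation that $\Psi\circ\pi=\pi\circ\Phi$ kills generators of nonzero gauge degree.
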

\begin{proof} We include a brief proof since there are some small but 
important differences from \cite[Lemma 3.16]{CPR2}. 
Since the eigenvalues for $\D$ are precisely the set of
integers, and the projection $\Phi_k$ on the eigenspace with eigenvalue $k$
satisfies $\tau_\Delta(\Phi_k)=1,$ it is clear that 
$(1+\D^2)^{-s/2}\in\LL^1(\cM,\tau_\Delta).$
Now, $\tau_\Delta(x(1+\D^2)^{-r/2})=
\widehat\tau(\Psi(x)(1+\D^2)^{-r/2})$ is holomorphic for $Re(r)>1$
by item (3) of the previous lemma.

To see the last statement, we observe that 
$\tau_\Delta([\D,\pi(a)](1+\D^2)^{-r/2})=
\tau_\Delta(\Psi([\D,\pi(a)])(1+\D^2)^{-r/2}),$ so it suffices to see that
$\Psi([\D,\pi(a)])=0$ for $a\in \A=\mathcal Q_c^\lambda.$   To this end, 
let $a=f\cdot\delta_g$ where $det(g)=\lambda^n$
is one of the linear generators of $\mathcal Q_c^\lambda.$  Then by calculating 
the action
of the operator $\D\pi(f\cdot\delta_g)$ on the linear 
generators $f_i\cdot\delta_{h_i}$ of the Hilbert space, $\HH$, 
we obtain:
$$
\D\pi(f\cdot\delta_g)=n\pi(f\cdot\delta_g)
+\pi(f\cdot\delta_g)\D\;\;\;{\rm that\;\; is}\;\;\;
[\D,\pi(f\cdot\delta_g)]=
\log_{\lambda}(|g|)\pi(f\cdot\delta_g).
$$
More generally,
$$
[\D,\pi(\sum_{i=1}^m c_i f_i\cdot\delta_{h_i})]=
\sum_{i=1}^m c_i(\log_{\lambda}(|h_i|))\pi(f_i\cdot\delta_{h_i}).
$$
If we apply $\Psi$ to this equation, we see that 
$\Psi(\pi(f_i\cdot\delta_{h_i}))=\pi(\Phi(f_i\cdot\delta_{h_i}))=0$
whenever $\log_{\lambda}(|h_i|)\neq 0,$ and so the whole sum is $0$. We also
observe that $[\D,\pi(a)]\in \pi(\mathcal Q_c^\lambda)$ for all 
$a\in \mathcal Q_c^\lambda.$ 
This is not
too surprising since $\D$ is the generator of the action $\gamma$ of $\T$ on
$\mathcal{Q}^\lambda.$
\end{proof}

\subsection{Modular $K_1$}

We now make appropriate modifications to \cite[Section 4]{CPR2}) using 
\cite{CNNR}
introducing elements of these modular spectral triples 
$(\A,\HH,\D)$ (where $\A$ is a matrix algebra over 
$\mathcal Q_c^\lambda$) that will have a
well defined pairing with our Dixmier functional
 $\widehat\tau_{\omega}$. Let
$A=\mathcal{Q}^\lambda$.
Following \cite{HR} we say that a unitary (invertible, projection,...) 
in the $n\times n$ matrices  over $\mathcal{Q}^\lambda$ for some $n$ is 
a unitary (invertible, projection,...) {over} $\mathcal{Q}^\lambda$.
We write $\s_t$ for the automorphism $\s_t\otimes Id_n$ of $\A$.

\begin{defn} Let $v$ be a partial isometry in the $*$-algebra $\A$. We say 
that $v$
satisfies the {\bf modular condition} with respect to $\s$ 
if the operators
$ v\s_t(v^*)$ and $v^*\s_t(v)$
are in the fixed point algebra $F\subset\A$ for all $t\in\Rl$.
Of course, any partial isometry in $F$
is a modular partial isometry. 
\end{defn}

\begin{lemma}\label{noinv} (\cite[Lemma 4.8]{CPR2}) Let  $v\in \A$ be
a modular partial
isometry. Then we have
$$
u_v=\bma 1-v^*v & v^*\\ v & 1-vv^*\ema
$$
is a modular unitary over $\A$. Moreover there is a modular homotopy
$u_v\sim u_{v^*}$.
\end{lemma}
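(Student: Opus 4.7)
The plan is to prove the lemma in two stages: first that $u_v$ is a modular unitary over $\A$, and second to exhibit the claimed modular homotopy $u_v \sim u_{v^*}$.

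For the first stage, unitarity is a direct block computation: $u_v = u_v^*$ is immediate from the block form, while $u_v^2 = I$ reduces to the partial isometry identities $(v^*v)^2 = v^*v$, $(vv^*)^2 = vv^*$, and $v^*vv^* = v^*$. These give $(1-v^*v)^2 + v^*v = 1$ on the diagonal and $(1-v^*v)v^* + v^*(1-vv^*) = 0$ off the diagonal. For the modular condition, the key observation is that setting $t=0$ in $v^*\sigma_t(v), v\sigma_t(v^*) \in F$ forces $v^*v, vv^* \in F$, so the diagonal entries $1-v^*v$ and $1-vv^*$ of $u_v$ are $\sigma_t$-fixed. A routine consequence is that $\sigma_t(v)$ is itself a partial isometry with the same initial projection $v^*v$ and final projection $vv^*$ as $v$; hence $v^*v\,\sigma_t(v^*) = \sigma_t(v^*)$ and $(vv^*)\sigma_t(v) = \sigma_t(v)$, together with the symmetric identities. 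Plugging these into $u_v\sigma_t(u_v)$ and expanding block by block, the diagonal entries become $(1-v^*v) + v^*\sigma_t(v)$ and $(1-vv^*) + v\sigma_t(v^*)$, both in $F$ by the modular hypothesis on $v$, while the off-diagonal entries collapse to zero. Thus $u_v\sigma_t(u_v^*) = u_v\sigma_t(u_v) \in M_2(F)$, confirming the modular condition.

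For the homotopy, the structural observation is that any unitary $W \in M_2(F)$ is automatically modular (since $\sigma_t(W) = W$), and conjugating a modular unitary $M$ by such $W$ preserves modularity: $(WMW^*)\sigma_t((WMW^*)^*) = W(M\sigma_t(M^*))W^* \in M_2(F)$. I would first conjugate $u_v$ along the scalar rotation path $R(s) = \bma \cos s & -\sin s \\ \sin s & \cos s \ema \in M_2(\C) \subset M_2(F)$ for $s \in [0,\pi/2]$, producing a continuous modular homotopy from $u_v$ to
\[
R(\pi/2)\, u_v\, R(\pi/2)^{-1} = \bma 1-vv^* & -v \\ -v^* & 1-v^*v \ema,
\]
which differs from $u_{v^*} = \bma 1-vv^* & v \\ v^* & 1-v^*v \ema$ only by a sign on the off-diagonal entries. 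A second conjugation along the diagonal phase path $Y(s) = \bma e^{is} & 0 \\ 0 & 1 \ema \in M_2(\C)$ for $s \in [0,\pi]$ rotates the off-diagonal entries by the factors $-e^{is}$ and $-e^{-is}$ respectively, and at $s=\pi$ these both equal $+1$, landing precisely at $u_{v^*}$. Concatenating the two norm-continuous paths produces the required modular homotopy.

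The main obstacle is simply ensuring that the intermediate elements of the path remain \emph{modular} unitaries over $\A$, not merely unitaries. This is precisely where one exploits the fact that the conjugating matrices $R(s)$ and $Y(s)$ lie in the scalar subalgebra $M_2(\C) \subset M_2(F)$, so their $\sigma_t$-invariance transfers the modular property from $u_v$ to every intermediate unitary along the homotopy. Once this stability under $M_2(F)$-conjugation is in hand, the remainder is a standard $U(2)$-rotation trick of the type familiar from Kasparov-theoretic doubling constructions.
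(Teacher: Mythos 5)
Your argument is correct. Note that the paper itself offers no proof of this lemma -- it is quoted verbatim from \cite[Lemma 4.8]{CPR2} -- so there is nothing internal to compare against; your write-up supplies exactly the standard argument one would expect there. The verification that $u_v$ is a self-adjoint unitary and that $u_v\sigma_t(u_v)\in M_2(F)$ (using $\sigma_0=\mathrm{id}$ to get $v^*v,\,vv^*\in F$, hence that $\sigma_t(v)$ has the same source and range projections as $v$, which kills the off-diagonal terms) is exactly right, and the homotopy via conjugation by the norm-continuous paths $R(s)$ and $Y(s)$ in $M_2(\C)\subset M_2(F)$, together with the observation that conjugation by a $\sigma_t$-fixed unitary preserves the modular condition, is the standard rotation trick and lands precisely on $u_{v^*}$.
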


Note that in \cite{CPR2} we used a different approach which is implied by the 
one
given here. In \cite{CPR2} we defined modular unitaries in terms of the
regular automorphism:
$$
\pi(\s(a))=\pi(\Delta^{-1}(a))=\Delta^{-1}\pi(a)\Delta=\s_i(\pi(a)).
$$
That is we said that a unitary in $\A$ is modular if $u\s(u^*)$ and $u^*\s(u)$ 
are 
in the fixed point algebra.

{\bf Examples}.\\ 
(1) For $k,j>0$ recall $S_{k,m}\in \mathcal Q_c^\lambda$ with $m<m_k$ 
(see Definition
\ref{Sk,m}) we write 
$P_{k,m}=S_{k,m}S_{k,m}^*=\mathcal X_{[m\lambda^k,(m+1)
\lambda^k)}\cdot\delta_1$ which is in clearly $F^\lambda$. Then 
for each
$\{k,m\},\{j,n\}$ we have a unitary
$$ 
u_{\{k,m\},\{j,n\}}=\bma 1-P_{k,m} & S_{k,m} S_{j,n}^*\\ S_{j,n} S_{k,m}^* &
1-P_{j,n}\ema.
$$ 
It is simple to check that this a self-adjoint
unitary satisfying the modular condition, and that $\tau(P_{k,m})=\lambda^k$
and $\tau(P_{j,n})=\lambda^j.$ These examples behave very much like the 
$S_\mu S_\nu^*$ examples of \cite{CPR2}.\\

(2) For $k,j>0$ consider the ``leftover'' partial isometries $S_{k,m_k}$
and $S_{j,m_j}$ of Definition 3.13 which we will denote by $S_k$ and $S_j$ to 
lighten the notation. We let $v_{j,k}=S_jS_k^*$ and calculate its range and 
initial projections which are both in $F^\lambda$:
$$
P_j = S_jS_k^*S_kS_j^*=\mathcal X_{[m_j\lambda^j , m_j\lambda^j + 
\lambda^j(\lambda^{-k} - m_k))}\cdot\delta_1,\;\;{\rm and}
$$
$$
P_k = S_kS_j^*S_jS_k^*=\mathcal X_{[m_k\lambda^k , m_k\lambda^k + 
\lambda^k(\lambda^{-j} - m_j))}\cdot\delta_1.
$$

We note for future reference that:
$$
\tau(P_j)=\lambda^j(\lambda^{-k} - m_k)\;\;{\rm and}\;\;
\tau(P_k)=\lambda^k(\lambda^{-j} - m_j).
$$
We also note that we have a modular unitary $u_{j,k}$:
$$
u_{j,k}=\bma 1-P_{k} & S_{k} S_{j}^*\\ S_{j} S_{k}^* & 1-P_{j}\ema.
$$ 
Define the modular $K_1$ group  as follows.

\begin{defn}\label{def:modular K1} Let $K_1(A,\s)$ be the abelian group with one
generator $[v]$ for each partial isometry $v$ over~$A$ satisfying the
modular condition and with the following relations: 
\bean 
1)&& [v]=0\
\mbox{if}\ v\ \mbox{is over}\ F,\nno 2)&& [v]+[w]=[v\oplus w],\nno 3)&&
\mbox{if }v_t,\ t\in[0,1],\ \mbox{is a continuous path of modular
partial isometries in some matrix algebra over }A\nno && \mbox{then}\
[v_0]=[v_1].
\eean
\end{defn}

One could use modular unitaries as in \cite{CPR2} in place of these modular
partial isometries. 

The following can now be seen to hold.
\begin{lemma}\label{centre} (Compare \cite[Lemma 4.9]{CPR2}) Let 
$(\A,\HH,\D)$ be 
our modular spectral triple relative
to $(\cn,\tau_\Delta)$ 
and set $F=\A^\s$ and $\s:\A\to\A$.
Let $L^\infty(\Delta)=L^\infty(\D)$ be the von Neumann algebra generated by 
the spectral projections of $\Delta$
then $L^\infty(\Delta)\subset{\mathcal Z}(\cM)$.
Let $v\in\A$ be a partial isometry with $vv^*,\,v^*v\in F$. 
Then $\pi(v)Q\pi(v^*)\in\cM$ and 
$\pi(v^*)Q\pi(v)\in\cM$ for all spectral 
projections $Q$ 
of $\D$, if and only if $v$ is modular. That is, $\pi(v)\Delta \pi(v^*)$ 
and $\pi(v^*)\Delta \pi(v)$
(or $\pi(v)\D \pi(v^*)$ and $\pi(v^*)\D \pi(v)$) 
are both affiliated to $\cM$ if and only if $v$ is modular.
\end{lemma}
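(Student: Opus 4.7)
The plan is to prove the central inclusion $L^\infty(\Delta)\subset\mathcal{Z}(\cM)$ first, and then establish the main equivalence by reducing it to an algebraic condition on the spectral components of $v$.

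For the inclusion: by Lemma \ref{commutes}, $\cM$ consists exactly of the elements fixed by $\s_t^{\tau_\Delta}=\mathrm{Ad}(\Delta^{it})$, so every $m\in\cM$ commutes with each $\Delta^{it}$, and therefore with all spectral projections $\Phi_k$ of $\D$. Taking strong-operator closures, the abelian von Neumann algebra $L^\infty(\Delta)$ generated by the $\Phi_k$ sits inside $\cM$, and the same commutation places it in the center.

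For the main equivalence, I would first note that every spectral projection of $\D$ has the form $\sum_{k\in S}\Phi_k$ for some $S\subseteq\Z$, so the stated condition reduces to requiring $\pi(v)\Phi_k\pi(v^*)\in\cM$ and $\pi(v^*)\Phi_k\pi(v)\in\cM$ for every $k\in\Z$. Next, decompose $v\in\A$ as a finite sum $v=\sum_n v_n$, with $v_n$ collecting the generators $f\cdot\delta_g$ in $v$ having $\det(g)=\lambda^n$. Since $v_n\in A_n$, the operator $\pi(v_n)$ shifts the spectral grading by $n$, giving $\pi(v_n)\Phi_k=\Phi_{k+n}\pi(v_n)$ and, by adjunction, $\Phi_k\pi(v_m^*)=\pi(v_m^*)\Phi_{k+m}$. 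Substituting and applying the conditional expectation $\Psi$ from Lemma \ref{commutes}, a direct computation yields
\begin{equation*}
\pi(v)\Phi_k\pi(v^*)=\sum_{n,m}\pi(v_nv_m^*)\Phi_{k+m},\qquad \Psi\bigl(\pi(v)\Phi_k\pi(v^*)\bigr)=\sum_n\pi(v_nv_n^*)\Phi_{k+n}.
\end{equation*}
Therefore $\pi(v)\Phi_k\pi(v^*)\in\cM$ for every $k$ iff the off-diagonal pieces vanish, i.e., $v_nv_m^*=0$ for all $n\ne m$ (using faithfulness of $\pi$ and the fact that $\Phi_{k+m}$ runs through every $\Phi_j$ as $k$ varies). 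The parallel analysis with $\pi(v^*)\Phi_k\pi(v)$ gives $v_n^*v_m=0$ for $n\ne m$.

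Finally I would identify these off-diagonal vanishings with the modular condition. Using $\s_t(v_m^*)=\lambda^{-imt}v_m^*$, expand $v\s_t(v^*)=\sum_{n,m}\lambda^{-imt}v_nv_m^*$; applying the gauge expectation $\Phi$ kills every term with $n\ne m$ (since $v_nv_m^*$ has spectral degree $n-m$), so the hypothesis $v\s_t(v^*)\in F^\lambda$ forces $\sum_{n\ne m}\lambda^{-imt}v_nv_m^*=0$ for all $t\in\R$; linear independence of the exponentials in $m$ together with the orthogonality of distinct spectral subspaces then gives $v_nv_m^*=0$ for $n\ne m$, and the converse is immediate. The analogous computation with $v^*\s_t(v)=\sum_{n,m}\lambda^{imt}v_n^*v_m$ handles the other half of the modular condition. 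The affiliation formulation is then just an interpretation of the same content: affiliation of $\pi(v)\D\pi(v^*)$ (equivalently $\pi(v)\Delta\pi(v^*)=\pi(v)\lambda^{\D}\pi(v^*)$) to $\cM$ means, via functional calculus on $\D$, that $\pi(v)\chi_S(\D)\pi(v^*)\in\cM$ for every Borel $S\subseteq\R$, which is exactly the condition already analyzed. The main obstacle throughout is tracking the spectral-grading shifts produced by $\pi(v_n)$ and $\pi(v_m^*)$; once those bookkeeping identities are fixed, the three conditions collapse to the single statement that $v_nv_m^*=0=v_n^*v_m$ for all $n\ne m$.
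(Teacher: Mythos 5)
Your proof is correct. Note that the paper itself gives no argument for this lemma (it is stated with ``The following can now be seen to hold,'' deferring to \cite[Lemma 4.9]{CPR2}), so there is no in-text proof to compare against; your route --- decomposing $v=\sum_n v_n$ into its spectral components, tracking the grading shifts $\pi(v_n)\Phi_k=\Phi_{k+n}\pi(v_n)$, and showing that both the membership condition $\pi(v)\Phi_k\pi(v^*),\,\pi(v^*)\Phi_k\pi(v)\in\cM$ and the modular condition collapse to $v_nv_m^*=0=v_n^*v_m$ for $n\neq m$ --- is exactly the standard argument the citation supplies, and your verification of each equivalence is sound.
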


Thus we see that modular
partial isometries conjugate $\Delta$ to an operator affiliated to $\cM$, and so 
$v\Delta v^*$ commutes with $\Delta$ (and $v\D v^*$ commutes with $\D$). 

We will next show  that there is an analytic pairing between (part of) modular
$K_1$ and modular spectral triples. To do this, we are going to use
the analytic formulae for spectral flow 
in \cite{CP2}.

\subsection{The mapping cone algebra}
Our aim in the remainder is to calculate an index pairing explicitly for
the matrix algebras $\A$ over the smooth subalgebra $\mathcal Q_c^\lambda$
of $\mathcal{Q}^\lambda$. 
In the following few pages we will sometimes abuse notation and write $a$ 
in place of $\pi(a)$ for $a\in\A$ in order to make our formulae more readable. 
Whenever we do this, however, we will use $\s_i(\cdot)=\Delta^{-1}(\cdot)\Delta$
the spatial version of the algebra homomorphism, $\s$. We will generally use
the spatial version $\s_i$ when in the presence of operators not in $\pi(\A).$

 We briefly review some results from \cite{CNNR}, that provide
an interpretation of the modular index pairing
given by the spectral flow.

If $F\subset A$ is a sub-$C^*$-algebra of the $C^*$-algebra $A$, then
the mapping cone algebra for the inclusion is 
$$
M(F,A)=\{f:\R_+=[0,\infty)\to A: f\ \mbox{is continuous and vanishes at
infinity},\ f(0)\in F\}.
$$
When $F$ is an ideal in $A$ it is known that $K_0(M(F,A))\cong
K_0(A/F)$, \cite{Put1}.  In general, $K_0(M(F,A))$ is the set of
homotopy classes of partial isometries $v\in M_k(A)$ with range and
source projections $vv^*,\ v^*v$ in $M_k(F)$, with operation the
direct sum and inverse $-[v]=[v^*]$. All this is proved in \cite{Put1}. 

It is shown in \cite{CNNR} that there is a natural map that injects 
$K_1(A,\sigma)$ into 
$K_0^\T(M,F)$, the equivariant $K$-theory of the mapping cone algebra. Note 
that the
$\T$ action on $A$ lifts in the obvious way to the mapping cone.
Now, it was shown in \cite{CPR1} that certain Kasparov
$A,F$-modules extend to Kasparov $M(F,A),F$-modules, and this was 
extended to the equivariant case in \cite{CNNR}.
Importantly the theory applies to
the equivariant Kasparov module coming from a circle action.
The extension is explicit, namely  there is a pair
$(\hat{X},\hat\D)$ which is a graded unbounded Kasparov module for
the mapping cone algebra $M(F,A)$ constructed using a generalised APS
construction, \cite{APS3}.

If $v$ is a partial isometry in  $M_k(\A)$, setting 
$$
e_v(t)=\bma
1-\frac{vv^*}{1+t^2} & -iv\frac{t}{1+t^2}\\iv^*\frac{t}{1+t^2}&
\frac{v^*v}{1+t^2}\ema,
$$
defines $e_v$ as a projection
over $M(F,A)$.

Then in \cite{CNNR} we showed that if $v\in \A$ is a modular partial isometry 
we have
\bea
\la [e_v]-\left[\bma 1 & 0\\ 0 & 0\ema\right],
[(\hat{X},\hat{\D})]\ra&=&-{\rm Index}(PvP:v^*vP(X)\to vv^*P(X))\in K_0(F)\nno
&=&{\rm Index}(Pv^*P:vv^*P(X)\to v^*vP(X))\in K_0^\T(F).
\label{pisomindex}
\eea

We thus obtain an index map $K_1(A,\sigma)\to K_0^\T(F)$. The latter may
be thought of as the ring of Laurent polynomials $K_0(F)(\chi,\chi^{-1})$
where we think of $\chi,\chi^{-1}$ as generating the representation ring of $\T$.
We may obtain a real valued invariant from this map by evaluating
$\chi$ at $e^{-\beta}$ where $\beta$ is the inverse temperature of our KMS state
and applying the trace to the resultant element of $K_0(F)$.
Then one of the main results of \cite{CNNR} is that the real valued invariant
so obtained is identical with the spectral flow invariant of the next subsection.
However the general theory of \cite{CNNR} does not tell us the range of this
index map and it is the latter that is of interest for these explicit 
calculations.

\subsection{A local index formula for the algebras $\mathcal{Q}^\lambda$}

Using the fact that we have full spectral subspaces we know from \cite{CNNR} 
that there is a formula for spectral flow which is analogous to the local 
index formula in noncommutative geometry. We remind the reader that 
$\tau_\Delta=\widehat\tau\circ\Psi$ where $\Psi:\cn\to\cm$ is the canonical
expectation, so that $\tau_\Delta$ {\bf restricted to} $\cm$ is $\widehat\tau.$

\begin{thm}\label{first} (Compare \cite[Theorem 5.5]{CPR2}) Let
$(\A,\HH,\D)$ be the $(1,\infty)$-summable, 
modular spectral triple for the algebra $\mathcal{Q}^\lambda$
we have constructed previously.
 Then for any modular
partial isometry $v$
and for any
Dixmier trace $\widehat{\tau}_{\tilde\omega}$ associated to $\widehat\tau$, 
we have
spectral flow as an actual limit
$$
sf_{\widehat\tau}(vv^*\D,v\D v^*)=\frac{1}{2}\lim_{s\to 1+}
(s-1)\widehat\tau(v[\D,v^*](1+\D^2)^{-s/2})
=\frac{1}{2}\widehat{\tau}_{\tilde\omega}(v[\D,v^*](1+\D^2)^{-1/2})
=\tau\circ\Phi(v[\D,v^*]).
$$
The functional on $\A\otimes\A$ defined by
$a_0\otimes a_1\mapsto
\frac{1}{2}\lim_{s\to 1^+}(s-1)\tau_\Delta(a_0[\D,a_1](1+\D^2)^{-s/2})$
is a $\s$-twisted $b,B$-cocycle (see the proof below for the definition). 
\end{thm}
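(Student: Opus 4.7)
The plan is to mimic \cite[Theorem 5.5]{CPR2}, decomposing the statement into four sub-claims: (i) spectral flow equals a $\zeta$-type residue; (ii) the residue equals $2\,\tau\circ\Phi(v[\D,v^*])$; (iii) this also equals twice the Dixmier-trace value; (iv) the resulting bilinear functional is a $\s$-twisted $(b,B)$-cocycle.

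For (i) I would apply the Carey--Phillips analytic spectral flow formula \cite{CP2} to the straight-line path $D_t := vv^*\D + t\,v[\D,v^*]$, $t\in[0,1]$. Since $v[\D,v^*] = v\D v^* - vv^*\D$ is a bounded element of $\cM$ (using Lemma \ref{centre} and modularity of $v$, so that $vv^*, v^*v\in F$ commute with $\D$), this path interpolates between $D_0 = vv^*\D$ and $D_1 = v\D v^*$, each $D_t$ is self-adjoint and affiliated to $\cM$, and Proposition \ref{dixycomp} provides the $(1,\infty)$-summability. Then
$$
sf_{\widehat\tau}(D_0,D_1) \;=\; \lim_{s\to 1^+}\frac{1}{C_{s/2}}\int_0^1 \widehat\tau\bigl(v[\D,v^*](1+D_t^2)^{-s/2}\bigr)\,dt,
$$
with $C_{s/2}=\int_\R(1+x^2)^{-s/2}dx \sim 2/(s-1)$. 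A resolvent perturbation argument (as in \cite[App.]{CPR2}), using that $D_t-\D$ equals the bounded operator $(vv^*-1)\D+t\,v[\D,v^*]$ while $vv^*$ commutes with $\D$, shows the $s=1$ residue of the integrand is $t$-independent and equals the residue at $t=0$, yielding
$$
sf_{\widehat\tau}(D_0,D_1)=\tfrac12\lim_{s\to 1^+}(s-1)\widehat\tau\bigl(v[\D,v^*](1+\D^2)^{-s/2}\bigr).
$$

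For (ii) I would exploit modularity to decompose $v=\sum_k v_k$ with $v_k\in A_k$. The requirement $v\,\s_t(v^*)\in F$ for all $t$ forces $v_l v_k^*=0$ for $l\neq k$, hence
$$
v[\D,v^*]=\sum_k v\,[\D,v_k^*]=-\sum_k k\,v_k v_k^*\;\in\;F^\lambda\subset\cM.
$$
Now Lemma \ref{tracesplit} applies directly with $f=v[\D,v^*]$ and $g(\D)=(1+\D^2)^{-s/2}$:
$$
\widehat\tau\bigl(v[\D,v^*](1+\D^2)^{-s/2}\bigr)=\tau\bigl(v[\D,v^*]\bigr)\sum_{k\in\Z}(1+k^2)^{-s/2}.
$$
Standard integral comparison gives $(s-1)\sum_k(1+k^2)^{-s/2}\to 2$ as $s\to 1^+$, so the residue equals $2\,\tau(v[\D,v^*])=2\,\tau\!\circ\!\Phi(v[\D,v^*])$ (the last because $v[\D,v^*]\in F$). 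Step (iii) is then immediate: the existence of the ordinary residue limit for $T:=v[\D,v^*](1+\D^2)^{-1/2}\in\LL^{(1,\infty)}(\cM,\widehat\tau)$ means $T$ is $\widehat\tau$-measurable in the sense of Connes, so by the Tauberian correspondence between $\zeta$-residues and Dixmier traces on $\LL^{(1,\infty)}$ \cite{CPS2,CRSS}, every $\widehat\tau_{\tilde\omega}$ returns the common value, giving the second equality in the theorem.

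For (iv), setting $\phi_1(a_0,a_1):=\tfrac12\lim_{s\to 1^+}(s-1)\tau_\Delta(a_0[\D,a_1](1+\D^2)^{-s/2})$, one computes
$$
(b_\s\phi_1)(a_0,a_1,a_2):=\phi_1(a_0a_1,a_2)-\phi_1(a_0,a_1a_2)+\phi_1(\s(a_2)a_0,a_1)
$$
using the Leibniz rule $[\D,a_1a_2]=[\D,a_1]a_2+a_1[\D,a_2]$, the tracial property of $\widehat\tau$ on $\cM$ (after projecting by $\Psi$), and the identity $\Delta^{-1}a_2\Delta=\s(a_2)$, which is exactly what appears when cycling $a_2$ past the $\s_t$-invariant resolvent $(1+\D^2)^{-s/2}$; the three terms cancel in the residue. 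The $B$-condition $B\phi_1=0$ follows from Lemma \ref{phiclosed}, which gives $\tau_\Delta([\D,a](1+\D^2)^{-r/2})=0$ for $\textrm{Re}(r)>1$. The main obstacle throughout is the $t$-independence claim in step (i): making rigorous that $(1+D_t^2)^{-s/2}-(1+\D^2)^{-s/2}$ contributes nothing to the $s\to 1^+$ residue requires the Laplace representation $(1+D_t^2)^{-s/2}=\Gamma(s/2)^{-1}\int_0^\infty u^{s/2-1}e^{-u(1+D_t^2)}du$ plus careful $\LL^1(\cM,\tau_\Delta)$-norm estimates on heat-kernel differences, leveraging again that $v[\D,v^*]\in F^\lambda$ so that Lemma \ref{tracesplit} keeps every perturbative error term trace-class.
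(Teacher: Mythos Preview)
Your proposal is correct and follows essentially the approach the paper intends: the paper does not supply an independent proof of this theorem but defers to \cite[Theorem~5.5]{CPR2} and \cite{CNNR}, having verified the hypotheses (full spectral subspaces, $(1,\infty)$-summability via Proposition~\ref{dixycomp}, and Lemmas~\ref{tracesplit} and~\ref{phiclosed}). Your reconstruction tracks that argument closely; in particular your step~(ii), using modularity to force $v_lv_k^*=0$ for $l\neq k$ so that $v[\D,v^*]=-\sum_k k\,v_kv_k^*\in\pi(F^\lambda)$, is exactly the mechanism that lets Lemma~\ref{tracesplit} and Proposition~\ref{dixycomp}(ii) finish the computation of the residue and Dixmier trace, and your step~(iv) correctly invokes Lemma~\ref{phiclosed} for $B\phi_1=0$.

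One small point worth tightening in step~(i): the spectral flow formula of \cite{CP2} is stated for unitaries, so to apply it to a modular \emph{partial} isometry $v$ you should either work in the corner $vv^*\cM vv^*$ (where $vv^*\D$ is the relevant unbounded operator and $(1+D_t^2)^{-s/2}$ must be interpreted there), or first pass to the modular unitary $u_v$ of Lemma~\ref{noinv} and then observe that the off-diagonal contributions vanish under $\Psi$. The paper's route through \cite{CNNR} handles this by the mapping-cone index pairing (equation~\eqref{pisomindex}) rather than the straight-line integral directly, which sidesteps the corner issue; your Laplace-transform perturbation estimate will also work once you note that $v[\D,v^*]\in\pi(F^\lambda)$ actually \emph{commutes} with $\D$, making the $t$-dependence of $(1+D_t^2)^{-s/2}$ in the corner explicit and the residue computation elementary.
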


\noindent{\bf Remark}. Spectral flow in this setting is independent of the path 
joining the endpoints of unbounded self adjoint operators affiliated to 
$\mathcal M$ however
it is not obvious that this is enough to show that it is constant on homotopy 
classes of modular unitaries. This latter fact is true but the 
proof is lengthy so we refer to \cite{CNNR}.

\begin{thm} We let 
$(\mathcal Q_c^\lambda\otimes M_2,\HH\otimes\C^2,\D\otimes 1_2)$ 
be the modular spectral triple of $(\mathcal Q_c^\lambda\otimes M_2).$\\
{\rm (1)} Let $u$ be  a modular unitary defined in Section 5 of the form
$$ 
u_{\{k,m\},\{j,n\}}=\bma 1-P_{k,m} & S_{k,m} S_{j,n}^*\\ S_{j,n} S_{k,m}^* &
1-P_{j,n}\ema.
$$
Then the spectral flow is positive being given by
\bean sf_{\tau_\Delta}(\D, u\D u^*)&=&
(k-j)(\lambda^j-\lambda^k)
\in \Z[\lambda]\subset\Gamma_\lambda.\eean
{\rm (2)} Let $u$ be a modular unitary defined in Section 5 of the form:
$$
u_{j,k}=\bma 1-P_{k} & S_{k} S_{j}^*\\ S_{j} S_{k}^* &
1-P_{j}\ema,
$$  
where $S_{k} S_{j}^*=S_{k,m_k} S_{j,m_j}^*$  and $P_k$ and $P_j$ are its 
range and initial projections, respectively. Then the spectral flow is 
 given by
\bean 
sf_{\tau_\Delta}(\D, u\D u^*)&=&
(k-j)[\lambda^j(\lambda^{-k}-m_k)-\lambda^k(\lambda^{-j}-m_j)]
\in \Gamma_\lambda.
\eean
\end{thm}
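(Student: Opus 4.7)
The plan is to apply Theorem~\ref{first} to each modular unitary $u$ viewed as an element of $M_2(\mathcal Q_c^\lambda)$. Since $uu^* = 1$ the spectral flow formula specialises to
\[
sf_{\tau_\Delta}(\D,u\D u^*) = ((\tau\circ\Phi)\otimes\tr_2)(u[\D,u^*]),
\]
so the whole calculation should reduce to an elementary commutator and matrix-trace computation. As a first step I would verify that each $u$ really is a modular unitary: writing $v = f\cdot\delta_g$ one has $\s_t(v) = |g|^{it}v$, so $v\s_t(v^*) = |g|^{-it}vv^*\in F^\lambda$ whenever $vv^*\in F^\lambda$, and in both families the relevant range and source projections ($P_{k,m},P_{j,n}$ in part~(1), $P_j,P_k$ in part~(2)) lie in $F^\lambda$ by construction.

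For part~(1), set $v = S_{j,n}S_{k,m}^*$. Because $S_{k,m}\in A_k$ and $S_{j,n}\in A_j$, one has $v\in A_{j-k}$. I would invoke the identity $[\D,\pi(f\cdot\delta_g)] = \log_\lambda(|g|)\pi(f\cdot\delta_g)$ established in the proof of Lemma~\ref{phiclosed} (equivalently, $\D$ acts as multiplication by $k$ on $A_k$) to obtain $[\D,v] = (j-k)v$ and $[\D,v^*] = -(j-k)v^*$, while the diagonal entries $1-P_{k,m}$ and $1-P_{j,n}$ of $u$ lie in $F^\lambda = A_0$ and commute with $\D$. A direct multiplication, combined with the partial-isometry identities $P_{k,m}v^* = v^*vv^* = v^*$ and $vP_{k,m} = v$ (hence $(1-P_{k,m})v^* = 0$ and $(1-P_{j,n})v = 0$), should collapse the off-diagonal entries and produce
\[
u[\D,u^*] = \bma (j-k)P_{k,m} & 0 \\ 0 & -(j-k)P_{j,n} \ema.
\]
Applying $(\tau\circ\Phi)\otimes\tr_2$ with $\tau(P_{k,m}) = \lambda^k$ and $\tau(P_{j,n}) = \lambda^j$ then yields $(j-k)(\lambda^k - \lambda^j) = (k-j)(\lambda^j - \lambda^k)$, as required.

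Part~(2) should be handled by exactly the same recipe, the only substantive change being the endpoint projections. Since $v_{j,k} = S_jS_k^*$ still lies in $A_{j-k}$, the identical matrix manipulation produces a diagonal matrix with entries $(j-k)P_k$ and $-(j-k)P_j$, and substituting the traces $\tau(P_j) = \lambda^j(\lambda^{-k}-m_k)$ and $\tau(P_k) = \lambda^k(\lambda^{-j}-m_j)$ recorded in the text immediately gives the stated expression $(k-j)[\lambda^j(\lambda^{-k}-m_k) - \lambda^k(\lambda^{-j}-m_j)]$.

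Since all the heavy analytic machinery (the spectral flow formula of Theorem~\ref{first}, the identification of the Dixmier functional with $\tau\circ\Phi$, and the bounded-commutator property of $[\D,\pi(a)]$ for $a\in\mathcal Q_c^\lambda$) is already in place, I do not anticipate any genuine analytic obstacle. The points that will require care are purely bookkeeping: identifying each matrix entry's degree under the $\T$-grading, using the partial-isometry relations to kill the cross terms, and tracking signs between the conventions $sf(\D,u\D u^*)$ of the statement and $sf(vv^*\D,v\D v^*)$ of Theorem~\ref{first} (the latter specialising to the former since $uu^* = 1$).
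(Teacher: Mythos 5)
Your proposal is correct and follows essentially the same route as the paper: both reduce the statement to Theorem \ref{first}, use the identity $[\D,\pi(f\cdot\delta_g)]=\log_\lambda(|g|)\pi(f\cdot\delta_g)$ from the proof of Lemma \ref{phiclosed} to compute $u[\D,u^*]$ as the diagonal matrix $(k-j)\,\mathrm{diag}(-P_{k,m},P_{j,n})$ (respectively $(k-j)\,\mathrm{diag}(-P_k,P_j)$), and then evaluate $\tau$ on the projections. Your extra care with the partial-isometry identities that kill the off-diagonal entries is a detail the paper leaves implicit, but the argument is the same.
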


\begin{proof} We have already observed that these are, in fact modular
unitaries.
For the computations we use a calculation from the proof of 
Lemma \ref{phiclosed}
to get in example {\rm (1)}:
\bean 
&&u[\D\otimes 1_2 ,u]
=\bma 1-P_{k,m} & S_{k,m} S_{j,n}^*\\ S_{j,n} S_{k,m}^* &
1-P_{j,n}\ema\bma 0 & {[\D ,S_{k,m} S_{j,n}^*]}\\
{[\D ,S_{j,n} S_{k,m}^* ]} & 0\ema\\
&=&\bma 1-P_{k,m} & S_{k,m} S_{j,n}^*\\ S_{j,n} S_{k,m}^* &
1-P_{j,n}\ema\bma 0 & (k-j)S_{k,m} S_{j,n}^*\\
(j-k)S_{j,n} S_{k,m}^* & 0\ema=(k-j)\bma -P_{k,m} & 0\\ 0 &
P_{j,n}\ema.
\eean
So using Theorem \ref{first} and our previous computation of the
Dixmier trace, Proposition \ref{dixycomp}, and the fact that
$P_{k,m}=S_{k,m}S_{k,m}^*=\mathcal X_{[m\lambda^k,(m+1)
\lambda^k)}\cdot\delta_1$ so that $\tau(P_{k,m})=\lambda^k$
we have
$$
sf_{\tau_\Delta}(\D,u_{k,m}\D u_{k,m})=(k-j)\tau(P_{j,n}-P_{k,m})
=(k-j)(\lambda^j-\lambda^k).
$$
This number is always positive as the reader may check, and is
contained in $\Z[\lambda].$

The computations in example {\rm (2)} 
are similar and use the fact that
$P_k =\mathcal X_{[m_k\lambda^k , m_k\lambda^k + 
\lambda^k(\lambda^{-j} - m_j))}\cdot\delta_1,$ so that
$\tau(P_k)=\lambda^k(\lambda^{-j} - m_j)\in\Gamma_\lambda.$ In these 
examples, the spectral flow is {\bf not} contained in the smaller polynomial
ring, $\Z[\lambda].$
\end{proof}
\begin{rems*}
The observation of \cite{CPR2} that the twisted residue cocycle formula for
spectral flow is calculating Araki's relative entropy of two KMS states 
\cite{Ar} also applies to the examples in this subsection.
\end{rems*}
\noindent{\bf Acknowledgements} We would like to thank 
Nigel Higson, Ryszard Nest, Sergey Neshveyev, Marcelo Laca, Iain Raeburn 
and Peter Dukes for advice and comments. 
The first and fourth named authors
were supported by the Australian Research
Council. The second and third named authors acknowledge the support of
NSERC (Canada).

\end{document}
 The interesting point about this observation is its relation to the
quantum index theory of Longo. In Theorem 1.1(ii) of \cite{Lo1} a
relationship between certain modular operators
 and a number, the DHR statistical dimension \cite{DHR}, is derived. In
our context this relationship would arise in a von Neumann algebra for
which the fixed point algebra of the modular group is trivial. For, in
this case, the modular condition would require modular unitaries to
satisfy
 $$(u\log\Delta u^*-\log\Delta)=d \in \R.\ \ (*)$$
In the situation of \cite{Lo1} $d$ is related in a simple fashion to the
statistical dimension.
If it were applicable in this more general setting the index theory
described in this paper
would identify $d$ with a multiple of spectral flow  from  $u\D u^*$ to
$\D$.

Note that here is nothing deep in (*) at this point; it is just a
rewriting of
a consequence of the  modular unitary condition in a form which makes it
comparable to the relationship described in Theorem 1.1(ii) of \cite{Lo1}.
However, in \cite{Lo1,Lo2} it is proved for the situation arising in
algebraic quantum field theory,
that $d$ is quantised,  determines the statistical dimension of \cite{DHR}
and, via the theory of subfactors, is related to the Jones index.  This
suggests the aim of generalising the index formula we have proved for the
Cuntz algebra (a type $III_{1/n}$ situation) to the type $III_1$ setting.
The motivation would be to try to give a
semifinite  index theory interpretation to the statistical dimension of
algebraic quantum field theory.
A secondary motivation would be to explore a connection, if any between
our index for modular unitaries
and theory of  subfactors of von Neumann algebras.

\section{Questions and comments} 

$\bullet$ What is the relationship, if any, between $K_1(A,\s)$ and
$K_0(M(F,A))$? 

Our experience with the examples suggests the following conjecture.

\begin{conj*} If $\s$ is a regular automorphism of the smooth
  $*$-algebra $A$ with fixed point algebra $F$, then
$$ K_1(A,\s)\cong K_1(F)\oplus {\mathcal S}$$
where ${\mathcal S}$ is the semigroup of modular homotopy classes of
unitaries over $A$ of the form $u_v$ where $v$ is a partial isometry
over $A$ with $vv^*,\ v^*v,\ v\s(v^*)\ \mbox{and}\ v^*\s(v)$ all over
$F$.
\end{conj*}

If true, this conjecture tells us that the `interesting' part of
modular $K_1$ actually arises from similar constructions as
$K_0(M(F,A))$. The precise relationship is not clear.

$\bullet$ What is the Chern character of 
a modular unitary? or even a modular unitary like $u_{\mu,\nu}$?

We will return to this subject in a future work. The answer is related
to our mapping cone index pairing, and the Chern character in that setting.

$\bullet$ Our twisted chern character for the spectral triple lies in
twisted cyclic for a very smooth subalgebra of the Cuntz algebra. 
In fact it lies in a
subspace of cochains vanishing on $(a_0,a_1,...,a_k)$ whenever $a_j\in
F$, $j\geq 1$. Thus any hope that we could consider pairings
with the cyclic homology of the fixed point algebra is immediately
dashed, \cite{Gos}.

We now introduce (a special case of) 
the analytic spectral flow formula of 
\cite{CP1,CP2}. 
This formula starts with a semifinite spectral triple $(\A,\HH,\D)$
and computes the $\phi$ spectral flow from $\D$ to $u\D u^*$, 
where $u\in\A$ is unitary with $[\D,u]$ bounded, in the case
where $(\A,\HH,\D)$ is $n$-summable for $n> 1$ (Theorem 9.3 of \cite{CP2}):
\be sf_\phi(\D,u\D u^*)
=\frac{1}{C_{n/2}}\int_0^1\phi(u[\D,u^*](1+(\D+tu[\D,u^*])^2)^{-n/2})dt,
\label{basicformula}\ee
with $C_{n/2}=\int_{-\infty}^\infty(1+x^2)^{-n/2}dx$. 
This real number $sf_\phi(\D,u\D u^*)$
is a pairing of the $K$-homology class
$[\D]$ of $\mathcal A$ with the $K_1({\mathcal A})$ class $[u]$
\cite{CPRS2}. There is a geometric way to view this formula. It is shown in 
\cite {CP2} that
the functional $X\mapsto \phi(X(1+(\D+Y)^2)^{-n/2})$ 
determines an exact one-form for $X$ in the tangent space, 
${\mathcal N}_{sa},$ of
an affine space $\D+{\mathcal N}_{sa}$ modelled on  ${\mathcal N}_{sa}$. 
Thus (\ref{basicformula})
represents the integral of this one-form along the path 
$\{\D_t=(1-t)\D+ tu\D u^*\}$
provided one appreciates that $\dot\D_t=u[\D,u^*]$ is a tangent vector
to this path.


In \cite{CPRS2}, the local index formula in noncommutative geometry 
of \cite{CM} was extended to semifinite spectral triples. In the
simplest terms, the local index formula is a
pairing of a finitely summable spectral triple $(\A,\HH,\D)$ with
the $K$-theory of the $C^*$-algebra $\overline{\A}$. Our approach in this paper
is inspired by the following theorem (see also
\cite{CPRS2,CM,Hig}).
\begin{thm}[\cite{CPS2}] Let $(\A,\HH,\D)$ be an odd 
$(1,\infty)$-summable semifinite spectral triple, relative
to $(\cn,\phi)$. Then for $u\in\A$ unitary the pairing of $[u]\in
K_1(\overline{\A})$ with $(\A,\HH,\D)$ is given by
$$ \la
[u],(\A,\HH,\D)\ra=sf_\phi(\D,u\D u^*)=
\lim_{s\to 0^+}s\ \phi(u[\D,u^*](1+\D^2)^{-1/2-s}).$$ 
In particular, the
limit on the right exists.
\end{thm}

$\bullet$  Smoothness. We need to replace completions with respect to the
$\delta$-topology by something much smaller. Our suggestion is  
to take all those $a\in\A$ such that $z\to \s^z(a)$ is
holomorphic. This algebra is stable under the holomorphic  functional calculus.

This choice is because we need to consider a 
dense subalgebra of analytic elements for the modular group.
Now $a$ is analytic if $\sum_n \Vert[\D,[\D,[\ldots[\D,a]\ldots]]\Vertt^n/n!$
($n$ commutators) is finite. Here we are taking $\D$ as the generator of
the modular group. We see that being analytic is close to being
$QC^\infty$. There is clearly a natural Frechet topology on the analytic
elements.

$\bullet$ Invariance properties of modular spectral triples. Due to
the need for $\D$ to commute with $F$, it is unlikely that modular
spectral triples are stable with respect to bounded perturbations from
$F$. 
If
these conditions are essential for our results, and we suspect they
are, then we have very little stability left. This tells us that
modular spectral triples are essentially of the form
$(\A,\HH_\phi,\log\Delta_\phi)$ for $\A\subset\cn$ a suitable subalgebra.

This suggests that modular $K_1$ does not have an even analogue, and
we are really only probing the one dimensional phenomena arising from
the action of a modular group.

$\bullet$ If this is a theory of $C^*$-algebras (or their dense smooth
subalgebras), then we suspect that the basic cycles of the dual to
modular $K_1$ for an algebra $A$
are the {\rm KMS} states (weights) with given real action $\s_t$. 
The above comments on the lack of
stability suggests that the resulting `modular $K$-homology' has very
few relations between its elements.

$\bullet$ Just as semifinite spectral triples give rise to
$KK$-classes, modular spectral triples give rise to $KK$-classes. This
follows in the same way as the semifinite case, \cite{NR}. The
relationship to the $KK$-index pairing is obviously very different, however.
%

\section{Relative entropy}

As in \cite{CPR2} we can interpret our index in terms of relative entropy.
Let $u$ be a modular unitary over $\mathcal{Q}^\lambda$. Let $\psi$ be our {\rm KMS}
state on $\mathcal{Q}^\lambda.$ Let $\psi_u$ be the state
$\psi\circ Ad u$ on $\mathcal{Q}^\lambda.$ The modular group for
$\psi_u$ is $t \to u\Delta^{it}u^*$
$t\in \R$. The relative entropy of a pair of {\rm KMS} states
on a von Neumann algebra was introduced by Araki \cite{Ar}.
The Hilbert space 
${\mathcal H}=\LL^2(\mathcal{Q}^\lambda,\psi)$ has a cyclic and separating 
vector for the
action of $\mathcal{Q}^\lambda$, which remains cyclic and separating 
for $\pi(\mathcal{Q}^\lambda)^{\prime\prime}$ in 
$\mathcal N$. We denote this vector by $\Omega.$  

For $a\in \mathcal{Q}^\lambda$, $\psi(a) =\langle\Omega, \pi(a)\Omega\rangle$
so we may extend $\psi$ to all $T\in \pi(\mathcal{Q}^\lambda)^{\prime\prime}$ by
$\psi(T)=\langle\Omega, T\Omega\rangle.$
Then we regard $\psi_u$ and 
$\psi$ as a pair of {\rm KMS} states on $\pi(\mathcal{Q}^\lambda)^{\prime\prime}$.
The relative entropy of $\psi_u$ and $\psi$ is \cite{Ar}
$$S(\psi_u,\psi)=-\langle\Omega, \log(u\Delta u^*)\Omega\rangle.$$
This can be written as
 $$S(\psi_u,\psi):= -\psi(u(\log\Delta) u^*-\log\Delta)$$
This is because $\Delta\Omega =\Omega$ implies that $(\log\Delta)(\Omega)=0.$
As in \cite{CPR2}, we relate
the  relative entropy for this pair of {\rm KMS} states on the weak closure
of $\pi(\mathcal{Q}^\lambda)$ to spectral flow when we have a modular
unitary $u$.
We just use the formula $\log\Delta=(\log \lambda )\D$ and then 
by Theorem \ref{first} we see that this relative entropy is just
 $$-(\log \lambda) \psi(u\D u^* -\D)=(\log \lambda^{-1})\psi(u[\D,u^*])=
(\log \lambda^{-1})\tau\circ\Phi(u[\D,u^*])=(\log \lambda^{-1})sf(\D,u\D u^*).$$
That is, the relative entropy is just $\log \lambda^{-1}$ times the spectral
flow from $\D$ to $u\D u^*$. The relative entropy is always 
positive \cite{Ar}.

\end{document}